\newtheorem{theorem}{Theorem}[section]
\newtheorem{cor}[theorem]{Corollary}
\newtheorem{lem}[theorem]{Lemma}
\newtheorem{prop}[theorem]{Proposition}
\newtheorem{thm}[theorem]{Theorem}
\newtheorem{example}[theorem]{Example}
\newtheorem*{question}{Question}
\newenvironment{lemma*}
 {\pushQED{\qed}\lem}
 {\popQED\endlem}
\theoremstyle{definition}
\newtheorem{defn}[theorem]{Definition}
\theoremstyle{remark}
\newtheorem{rem}[theorem]{Remark}
\numberwithin{equation}{section}
\def\l@subsection{\@tocline{2}{0pt}{1.5pc}{4pc}{}}
\setlist[itemize]{topsep = 0pt}
\begin{document}

\newcommand{\nc}{\newcommand}
\nc{\thmref}[1]{theorem~\ref{#1}}
\nc{\secref}[1]{Sect.~\ref{#1}}
\nc{\lemref}[1]{Lemma~\ref{#1}}
\nc{\propref}[1]{Proposition~\ref{#1}}
\nc{\corref}[1]{Corollary~\ref{#1}}
\nc{\remref}[1]{Remark~\ref{#1}}
\nc{\conjref}[1]{Conjecture~\ref{#1}}
\nc{\Z}{{\mathbb Z}}
\nc{\C}{{\mathbb C}}
\nc{\CC}{{\mathcal C}}
\nc{\arr}{\rightarrow}
\nc{\ri}{\rangle}
\nc{\lef}{\langle}
\nc{\su}{\widehat{\mathfrak{sl}}_2}
\nc{\g}{{\mathfrak g}}
\nc{\h}{{\mathfrak h}}
\nc{\ttt}{{\mathfrak t}}
\nc{\bor}{{\mathfrak b}}
\nc{\bi}{\bibitem}
\nc{\ds}{\displaystyle}
\nc{\uqg}{U_q(\g)}
\nc{\uqgm}{U_{q^{-1}}(\g)}
\nc{\uqbm}{U_{q^{-1}}(\bor)}
\nc{\uqgpm}{U_{q^{\pm 1}}(\g)}
\nc{\uqbpm}{U_{q^{\pm 1}}(\bor)}
\nc{\uqb}{U_q(\bor)}
\nc{\uqhp}{U_q(\h^+)}
\nc{\F}{\mathcal{F}}
\nc{\Om}{\mathcal{O}^-}
\nc{\Op}{\mathcal{O}^+}
\nc{\qbin}[3]{\genfrac{[}{]}{0pt}{}{#1}{#2}_{#3}}
\nc{\pp}{\sigma}
\nc\Eval[3]{\left.#1\right\rvert_{#2}^{#3}}
\nc{\uqgmu}[1]{U_q^{#1}(\g)}
\nc{\ups}{\Omega}

\title[A functor for $R$-matrices in the category $\mathcal{O}$]{A functor for constructing $R$-matrices in the\\ category $\mathcal{O}$ of Borel quantum loop algebras}

\author{Théo Pinet}

\address{\parbox{\linewidth}{Universit\'e de Paris and Sorbonne Universit\'e, CNRS,
  IMJ-PRG, F-75006, Paris, France\\ Universit{\'e} de Montr{\'e}al, DMS,
Montr{\'e}al, Qu{\'e}bec, Canada, H3C 3J7\\ \textit{Email adress}: \textnormal{tpinet@imj-prg.fr}}}

\begin{abstract} We tackle the problem of constructing $R$-matrices for the category $\mathcal{O}$ associated to the Borel subalgebra of an arbitrary untwisted quantum loop algebra $\uqg$. For this, we define an exact functor $\F_q$ from the category $\mathcal{O}$ linked to $\uqgm$ to the one linked to $\uqg$. This functor $\F_q$ is compatible with tensor products, preserves irreducibility and interchanges the subcategories $\Op$ and $\Om$ of (D. Hernandez, B. Leclerc, Algebra Number Theory, 2016). We construct $R$-matrices for $\Op$ by applying $\F_q$ on the braidings already found for $\Om$ in (D. Hernandez, Rep. Theory, 2022). We also use the factorization of the latter intertwiners in terms of stable maps to deduce an analogous factorization for our new braidings. We finally obtain as byproducts new relations for the Grothendieck ring $K_0(\mathcal{O})$ as well as a functorial interpretation of a remarkable ring isomorphism $K_0(\Op)\simeq K_0(\Om)$ of Hernandez--Leclerc.
\end{abstract} 
\maketitle
\thispagestyle{empty}
{\small
\setlength{\parskip}{0in}
\tableofcontents}
\section{Introduction}\label{sec:Intro} Consider $q \in \mathbb{C}^{\times}$ not a root of unity with $\uqg$ an untwisted quantum loop algebra (i.e.~the quotient of some untwisted quantum affine algebra at level 0). It is well-known (see e.g.~\cite{dr1}) that the category $\CC$ of finite-dimensional $\uqg$-modules admits isomorphisms $V\otimes W \simeq W\otimes V$ for generic simple objects $V$ and $W$. These generic 
braidings, called \textit{$R$-matrices}, give solutions to the Yang--Baxter equation and are typically obtained using the universal $R$-matrix of $\uqg$.\par
To obtain such braidings for infinite-dimensional modules, a natural path is to replace the category $\mathcal{C}$ with the category $\mathcal{O}$ associated to the Borel subalgebra $\uqb$ of $\uqg$. The latter category, introduced in \cite{hj}, is however not generically braided (see e.g.~\cite{bjmst, her}) and we thus need to restrict the class of simple modules considered in order to define new $R$-matrices. This was done recently by Hernandez in \cite{her} who found a way to obtain braidings for a notable monoidal subcategory $\Om$ of $\mathcal{O}$. His approach relies crucially on the fact that, for every simple module $V$ in $\Om$, there exists a sequence $(V_k)_{k\geq 1}$ of finite-dimensional simple $\uqg$-modules for which the sequence of normalized $q$-characters $\overline{\chi}_q(V_k)$ (which are generating functions for the dimensions of the eigenspaces related to the action of a commutative subalgebra of $\uqb$) tends to the normalized $q$-character $\overline{\chi}_q(V)$ (in some ring of formal series, 
see Proposition \ref{prop:limitOm}). \par
The category $\mathcal{O}$ contains all finite-dimensional $\uqb$-modules as well as the prefundamental representations $L_{i,a}^{\pm}$ of \cite{hj}. The latter representations are parametrized by a sign $\pm$, a scalar $a \in \mathbb{C}^{\times}$ and an element $i$ of the set $I=\{1,...,n\}$ where $n$ is the rank of the finite-dimensional simple Lie algebra $\dot{\g}$ underlying $\uqg$. These representations play a central role in the study of Baxter $Q$-operators and were used by Frenkel--Hernandez \cite{fh1} to prove a conjecture of Frenkel--Reshitikhin \cite{fr} about the spectra of quantum integrable systems. \par 
The subcategory $\Om$ is generated by the negative prefundamental representations $L_{i,a}^-$ along with the finite-dimensional $\uqb$-modules. There is also a similar subcategory $\Op$ where the negative prefundamental representations are replaced by the positive ones. The subcategories $\mathcal{O}^{\pm}$ are of primordial importance in the study of monoidal categorifications of cluster algebras (see e.g.~\cite{hl2, kkop2,kkop3}) and are related by an isomorphism $D:K_0(\Om)\simeq K_0(\Op)$ of Grothendieck rings that sends classes of simple modules to classes of simple modules. We will call this isomorphism \textit{Hernandez--Leclerc's duality}.
\par
One can ask the following natural questions:
\begin{itemize}
\item[1.] Can we construct explicitly the $R$-matrices for simple modules of the positive subcategory $\mathcal{O}^+$? Are these explicit $R$-matrices related to the ones already constructed by Hernandez for the negative subcategory $\mathcal{O}^-$?
\item[2.] Can we relate $\Om$ and $\Op$ with some invertible exact functor that behaves well with respect to tensor product of modules? Are the subcategories $\mathcal{O}^{\pm}$ related by something deeper than Hernandez--Leclerc's duality $D:K_0(\Om) \simeq K_0(\Op)$? 
\end{itemize} 
The relation between the normalized $q$-characters of simple objects in $\Op$ and of simple finite-dimensional $\uqg$-modules is more intricate than in the case of the negative subcategory $\Om$ (see Section \ref{sec:qcaract}) and the approach taken in \cite{her} cannot be used directly for Question 1. This disrepancy regarding normalized $q$-characters is moreover not the only difference between the subcategories $\mathcal{O}^{\pm}$ and answering positively Question 2 may thus seem hopeless at first glance. For example, the stable maps defined in \cite{her} (which are remarkable automorphisms of tensor products of $\uqb$-modules) are uniquely determined when the underlying modules are simple objects of $\mathcal{O}^{-}$, but can be non-uniquely defined for simple modules in $\mathcal{O}^+$ (see Section \ref{sec:Stab}). The categories  $\mathcal{O}^{\pm}$ also relate differently to representations of shifted quantum affine algebras. \par
The shifted quantum affine algebra $U_q^{\mu}(\g)$ is a variation of the quantum loop algebra $\uqg$. It originated in the context of quantized $K$-theoretic Coulomb branches of 3d $N=4$ SUSY quiver gauge theories (see \cite{ft}) and is parametrized by a coweight $\mu$ of the finite-dimensional Lie algebra $\dot{\g}$ underlying $\uqg$. In \cite{hshift}, Hernandez defined a category $\mathcal{O}$ (denoted by $\mathcal{O}_{\mu}$) for $U_q^{\mu}(\g)$ and constructed analogs of positive (negative) prefundamental representations for this algebra when $\mu=\omega_i^{\vee}$ (resp.~ when $\mu=-\omega_i^{\vee}$) where $\omega_i^{\vee}$ is the $i$th-fundamental coweight of $\dot{\g}$ (see also \cite{z1} for an analogous construction for shifted yangians). The so-constructed negative prefundamental representations are infinite-dimensional simple modules whereas the positive ones all have dimension 1. This asymmetry comes from the fact that the shifted algebra $U_q^{\mu}(\g)$ contains a copy of the Borel subalgebra $\uqb\subseteq \uqg$ when the coweight $\mu$ is antidominant (while this is not true when $\mu$ is dominant). The $\uqb$-module $L_{i,a}^-$ of $\Om$ can thus be realized as the restriction of a simple $U_q^{\mu}(\g)$-module (for $\mu = -\omega_i^{\vee}$) whereas this cannot be done for the positive prefundamental representations of $\uqb$ in $\Op$.

However, even with these technical differences, one can relate the subcategories $\mathcal{O}^{\pm}$ corresponding to distinct quantum parameters $q$. This was already partially done in \cite{hj} where a procedure is given for constructing a positive prefundamental representation of $\uqb$ from a negative prefundamental representation of $\uqbm$. Unfortunately, the given procedure is not functorial and cannot \textit{a priori} be extended to all objects of $\Op$.  \par
We resolve this problem in the present paper and define a functor $\F_q$ from the category $\mathcal{O}$ linked to $\uqgm$ to the one linked to $\uqg$. This functor sends negative (positive) prefundamental representations of $\uqbm$ to positive (negative, resp.) prefundamental representations of $\uqb$ and satisfies the conditions given in Question 2. It arises naturally as the pullback by an isomorphism of algebras $\pp_q:\uqg\arr\uqgm$ which is given on the usual Drinfeld--Jimbo generating set $\{e_i,f_i,k_i^{\pm 1}\}_{i=0}^n$ of $\uqg$ by 
$$\pp_q(e_i)=-k_i^{-1}e_i,\quad \pp_q(f_i) = -f_ik_i \ \ \text{and} \ \  \pp_q(k_i^{\pm 1}) = k_i^{\mp 1}.$$ 
It is clear that $\pp_q$ restricts to an isomorphism $\uqb\simeq \uqbm$ and that the pullback $\pp_q^*$ by $\pp_q$ gives an exact invertible functor which preserves dimension and irreducibility of modules. This pullback is moreover compatible with the notion of category $\mathcal{O}$ and thus induces a functor $\F_q : \mathcal{O}_{q^{-1}} \arr \mathcal{O}_q$ with $\mathcal{O}_{q^{\pm 1}}$ the category $\mathcal{O}$ associated to $\uqgpm$. Our first main results are:
\begin{theorem}\label{thm:ComonIntro} Fix $V,W$ in $\mathcal{O}_{q^{-1}}$. Then $\F_q(V\otimes W)\simeq \F_q(W)\otimes \F_q(V)$ as $\uqb$-modules.
\end{theorem}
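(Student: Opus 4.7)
The plan is to exhibit the standard flip map $\tau:V\otimes W \to W\otimes V$, $v\otimes w\mapsto w\otimes v$, as the desired isomorphism. The intertwining property of $\tau$ reduces to a single algebraic identity comparing the coproduct $\Delta_{q^{-1}}$ of $\uqgm$ with the opposite coproduct $\Delta_q^{op}$ of $\uqg$ via $\pp_q$.

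The key step is to establish
$$\Delta_{q^{-1}}\circ \pp_q \;=\; (\pp_q \otimes \pp_q)\circ \Delta_q^{op}$$
as algebra homomorphisms $\uqb \to \uqbm \otimes \uqbm$. Since all maps involved are algebra morphisms (the opposite of a coproduct is still an algebra map), it suffices to check the identity on the Borel generators $e_i$ and $k_i^{\pm 1}$ for $i=0,\ldots,n$. For $k_i$ this is immediate. For $e_i$, using $\Delta_q(e_i) = e_i\otimes 1 + k_i \otimes e_i$ and $\pp_q(e_i) = -k_i^{-1}e_i$, a direct computation gives
$$\Delta_{q^{-1}}(-k_i^{-1}e_i) \;=\; -k_i^{-1}e_i\otimes k_i^{-1} - 1\otimes k_i^{-1}e_i \;=\; (\pp_q\otimes \pp_q)\bigl(1\otimes e_i + e_i\otimes k_i\bigr),$$
and the right-hand side is precisely $(\pp_q\otimes\pp_q)\Delta_q^{op}(e_i)$.

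Given the identity, I would then unpack the two actions. By definition of the pullback $\F_q$, the $\uqb$-action on $\F_q(V\otimes W)$ is
$x\cdot(v\otimes w) = \Delta_{q^{-1}}(\pp_q(x))(v\otimes w)$,
whereas the $\uqb$-action on $\F_q(W)\otimes \F_q(V)$ (using $\Delta_q$ to tensor the pullback modules) is
$x\cdot(w\otimes v) = \bigl[(\pp_q\otimes \pp_q)\Delta_q(x)\bigr](w\otimes v)$.
Combining the elementary flip identity $\tau\circ (a\otimes b) = (b\otimes a)\circ\tau$ with the coproduct-twist identity above yields
$$\tau\circ \Delta_{q^{-1}}(\pp_q(x)) \;=\; \bigl[(\pp_q\otimes\pp_q)\Delta_q(x)\bigr]\circ \tau$$
for every $x\in\uqb$, showing that the linear isomorphism $\tau$ is $\uqb$-equivariant.

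The main (and essentially only) concrete work is verifying the coproduct-twist identity on the generators, a short but sign-sensitive calculation. No deeper obstacle is expected: once that identity is in hand, the compatibility of $\tau$ with the two $\uqb$-structures is automatic, and the statement that $\F_q(V\otimes W)$ and $\F_q(W)\otimes\F_q(V)$ belong to $\mathcal{O}_q$ is guaranteed by the exactness and category-$\mathcal{O}$ compatibility of $\F_q$ already noted in the text.
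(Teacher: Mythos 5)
Your proof is correct and rests on the same key observation as the paper's: the flip map $\tau$ is the desired isomorphism because $\pp_q$ is a coalgebra anti-morphism, i.e., $\Delta_{q^{-1}}\circ\pp_q = (\pp_q\otimes\pp_q)\circ\Delta_q^{op}$. The paper gets this identity instantly from the factorization $\pp_q = S_{q^{-1}}\circ\Xi_q$ (an antipode followed by a coalgebra isomorphism), whereas you verify it by a short direct computation on the Drinfeld--Jimbo generators; both are sound, and yours is marginally more self-contained.
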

\begin{theorem}\label{thm:PosNeg} The functor $\F_q$ maps the full subcategory $\mathcal{O}^{\pm}_{q^{-1}}$ of $\mathcal{O}_{q^{-1}}$ to the subcategory $\mathcal{O}^{\mp}_{q}$ of $\mathcal{O}_q$. Moreover there exists a $\gamma\in \C^{\times}$ such that $ \F_q(L_{i,a}^{\pm, q^{-1}})\simeq L_{i,a\gamma}^{\mp,q} $ for all $i\in I$ and $a\in \C^{\times}$ (with $L_{i,a}^{\pm, q^{-1}}$ and $L_{i,a}^{\pm, q}$ the prefundamental representations of $\uqbm$ and $\uqb$, respectively). 
\end{theorem}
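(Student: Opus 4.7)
The plan is to determine how $\pp_q$ acts on the Drinfeld loop generators of $\uqg$ (in particular, on the commutative loop Cartan subalgebra of $\uqb$, whose joint spectrum determines the $\ell$-weights) and thereby compute explicitly the highest $\ell$-weight of each $\F_q(L_{i,a}^{\pm,q^{-1}})$. Since $\pp_q$ is an isomorphism restricting to $\uqb\simeq\uqbm$ and since its action on the Chevalley generators is triangular, the pullback $\F_q$ preserves irreducibility and sends highest $\ell$-weight $\uqbm$-modules to highest $\ell$-weight $\uqb$-modules, with the cyclic vector preserved. It thus suffices to compute the transformation of the $\ell$-weight itself.

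The key computation is to express the loop Cartan currents $\phi_i^{+}(z)$ of $\uqg$ in terms of the Drinfeld--Jimbo generators and then to apply $\pp_q$. Heuristically, since $\pp_q$ inverts every $k_i$ and since the leading Laurent coefficient of $\phi_i^{+}(z)$ is essentially $k_i$, the image $\pp_q(\phi_i^{+}(z))$, viewed as an element of $\uqbm$, should be the formal series inverse of the loop Cartan current $\phi_i^{+}(z)$ of $\uqbm$, up to a global substitution $z\mapsto \gamma z$ for some $\gamma\in \C^{\times}$. Applied to the cyclic vector of $L_{i,a}^{-,q^{-1}}$, whose $\ell$-weight is the rational series characteristic of the negative prefundamental, this ``inversion'' produces exactly the $\ell$-weight characteristic of the positive prefundamental $L_{i,a\gamma}^{+,q}$; irreducibility then yields the claimed isomorphism, and the case of $L_{i,a}^{+,q^{-1}}$ follows symmetrically.

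To deduce $\F_q(\mathcal{O}^{\pm}_{q^{-1}}) \subseteq \mathcal{O}^{\mp}_{q}$, recall that $\Op$ and $\Om$ are the smallest full abelian monoidal subcategories of $\mathcal{O}$ containing the finite-dimensional $\uqb$-modules together with the corresponding prefundamentals. By \thmref{thm:ComonIntro}, $\F_q$ is monoidal up to reversing tensor order, and it is exact by construction; it sends finite-dimensional modules to finite-dimensional ones (which lie in both $\Op$ and $\Om$) and prefundamentals of one sign to prefundamentals of the opposite sign by the previous step, so the desired inclusion follows.

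The main technical obstacle is the explicit calculation of $\pp_q$ on the Drinfeld currents. One must translate the defining formulas $\pp_q(e_i) = -k_i^{-1}e_i$, $\pp_q(f_i) = -f_ik_i$, $\pp_q(k_i^{\pm 1}) = k_i^{\mp 1}$ into Drinfeld's new realization (including for the affine node $i=0$) and track carefully the interchange $q\leftrightarrow q^{-1}$ in every defining relation in order to isolate the single global scalar $\gamma$ uniformly in $i$ and $a$. Once the explicit formula for $\pp_q$ restricted to the loop Cartan is in hand, the remainder of the proof is a direct unwinding of the definition of the prefundamentals.
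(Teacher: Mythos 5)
Your plan is precisely the ``naive strategy'' that the paper dismisses at the start of Section~\ref{sec:ImSimple}, where the author writes that obtaining an expression for $\pp_q(\phi_{i,r}^+)$ in terms of the Drinfeld generators of $\uqgm$ ``seems however difficult (if possible),'' and then explicitly switches to a different route. The core of your argument is left as a heuristic: you assert that $\pp_q(\phi_i^{+}(z))$ ``should be'' the formal series inverse of $\phi_i^{+}(z)$ in $\uqbm$ up to a substitution $z\mapsto\gamma z$, but you give no derivation. This is not a routine unwinding: the Drinfeld--Jimbo $\arr$ Drinfeld transition is intricate (already the expression of $e_0,f_0$ in terms of $x^{\pm}_{i,r}$ is nontrivial and non-unique for general $\g$, cf.~Example~\ref{ex:DJDsl2}), and you must then push $\pp_q=S_{q^{-1}}\circ\Xi_q$ through that transition. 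Nothing in your sketch justifies that the result has the specific form you claim, and the author in fact chose an indirect strategy precisely to avoid this computation.

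The paper's actual proof proceeds very differently. First it shows (\lemref{lemma:varpiPsiF}) that $\varpi(\Psi^{\F_q})=\varpi(\Psi)^{-1}$, then (\propref{prop:Fgroup}) that $\Psi\mapsto\Psi^{\F_q}$ is a group automorphism of $\mathfrak{r}$, which reduces everything to the generators $Y_{i,1}$ and $\Psi_{i,1}$. For the finite-dimensional fundamental $Y_{i,1}^{(q^{-1})}$ (\thmref{thm:FondF}), it exploits finite-dimensionality, the lowest-$\ell$-weight structure of simple finite-dimensional $\uqb$-modules from~\cite{fm}, and a weight estimate for fundamental representations (\lemref{lem:weightsFond}) to rule out a swap to the node $i^*$. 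For $\Psi_{i,1}$ (\thmref{thm:preFondF}), it applies $\F_q$ to the identity $\Psi_{i,1}=\overline{\omega_i}\,Y_{i,q_i^{-1}}^{(q^{-1})}\,\Psi_{i,1}(q_i^{-2})$ in $\mathfrak{r}$ and solves the resulting functional equation. None of this involves computing $\pp_q$ on the Drinfeld currents.

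There is a second, independent gap in your proposal: even if the heuristic held for each fixed $i$, you would only obtain a shift $\gamma_i$ depending \emph{a priori} on $i$. Your claim that one can ``isolate the single global scalar $\gamma$ uniformly in $i$'' is precisely what \thmref{thm:SpecShiftEq} proves, via a nontrivial argument involving the composite automorphism $\mathcal{H}_q=\F_q\circ D_{q^{-1}}\circ\F_{q^{-1}}\circ D_q$, the strict positivity of inverse Cartan matrix entries, a reducibility criterion from~\cite{fm}, and the generic simplicity theorem (\thmref{thm:genSim}). Your proposal does not even flag this as an issue requiring proof. Only your last paragraph --- deducing $\F_q(\mathcal{O}^{\pm}_{q^{-1}})\subseteq\mathcal{O}^{\mp}_q$ from exactness, tensor-reversal, and the behaviour on generators --- matches the paper's Corollary following \thmref{thm:preFondF} and is fine once the earlier steps are established.
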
 
In addition to answering positively (a slightly modified version of) Question 2, these results also answer Question 1 as they allow us to find explicit braidings for the positive subcategory $\Op_q$ of $\mathcal{O}_q$ by using $\F_q$ and the braidings of $\Om_{q^{-1}}$ given in \cite{her}. Let us clarify. \par 
It is well-known (see e.g.~\cite{her}) that a $\uqb$-module $V$ can be deformed into a module $V(u)$ over $U_{q,u}(\bor) = \uqb\otimes \C(u)$ where $u$ is a formal variable called the \textit{spectral parameter}. (This is done using an automorphism $\tau_u$ of $U_{q,u}(\bor)$, see Section \ref{sec:DefQaff}.) With this formalism, the affine $R$-matrix given in \cite{her} for the simple objects $V,W$ of $\mathcal{O}_{q^{-1}}^{-}$ is a $U_{q,u}(\bor)$-linear isomorphism
$$R_{V,W}(u) : V(u)\otimes W \rightarrow W\otimes V(u)$$
which is meromorphic in $u$ and specializes to a $\uqb$-linear braiding $V\otimes W\simeq W\otimes V$ if $u=1$ is neither a singularity of $R_{V,W}(u)$ nor of its inverse. \par 
Take now $V$ and $W$ simple objects in $\Op_q$. Let $V'$ and $W'$ be simple modules in $\Om_{q^{-1}}$ with $V=\F_q(V')$ and $W=\F_q(W')$ (see Theorem \ref{thm:PosNeg}). Let also $$R'(u) : W'(u)\otimes V' \rightarrow V'\otimes W'(u)$$
be the affine $R$-matrix given in \cite{her}. Then Theorem 1 can be used to get a invertible map $$V\otimes W(u^{-1}) \simeq \F_q(W'(u^{-1})\otimes V')\xrightarrow{R'(u^{-1})} \F_q(V'\otimes W'(u^{-1})) \simeq W(u^{-1})\otimes V$$
which induces a meromorphic $U_{q,u}(\bor)$-linear isomorphism $R_{V,W}(u):V(u)\otimes W \arr W\otimes V(u)$
by spectral deformation. The latter isomorphism is the wanted explicit $R$-matrix for $V,W$. It has exactly the same set of poles as the map $R'(u^{-1})$ above and specializes to a $\uqb$-linear braiding $V\otimes W \simeq W\otimes V$ if $u=1$ is neither a pole of $R'(u)$ nor of its inverse. \par 

The functor $\F_q$ can also be used to obtain a functorial interpretation of Hernandez--Leclerc's duality $D$ on an interesting subcategory $\mathcal{O}_{\Z,q}$ of $\mathcal{O}_q$ (see Section \ref{sec:FuncD}). More precisely, (assuming that $q$ is a formal variable and changing the base field) we can compose the functor $\F_q$ with a canonical functor $\mathcal{B}_{q^{-1},q}$ to get an autofunctor $\mathcal{D}_q$ of $\mathcal{O}_q$. This leads to our third main result:
\begin{theorem}\label{thm:DCatIntro}
The autofunctor $\mathcal{D}_q$ preserves irreducibility and induces a ring automorphism of $K_0(\mathcal{O}_q)$ that is equal to $D^{\pm 1}$ on the Grothendieck ring of the intersection of $\mathcal{O}^{\pm}_q$ and $\mathcal{O}_{\Z,q}$.
\end{theorem}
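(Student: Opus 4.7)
My plan is to decompose $\mathcal{D}_q = \F_q \circ \mathcal{B}_{q^{-1},q}$ and exploit the properties of each factor, reducing the identification with $D^{\pm 1}$ to a check on a generating family of simple modules.

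First, I would handle the two structural claims. Since $\F_q$ is the pullback by the algebra isomorphism $\pp_q:\uqg\arr\uqgm$, it is exact and preserves irreducibility; the canonical functor $\mathcal{B}_{q^{-1},q}$ is an equivalence of categories (essentially a reparametrisation of the formal variable $q$) and hence shares these properties, so $\mathcal{D}_q$ preserves irreducibility. For the ring automorphism statement, I would combine exactness of both factors with \thmref{thm:ComonIntro} (the anti-monoidality of $\F_q$) and the monoidality of $\mathcal{B}_{q^{-1},q}$: since $K_0(\mathcal{O}_q)$ is a commutative ring, the resulting map is a well-defined ring homomorphism, and invertibility comes from $\pp_q$ being an isomorphism of algebras and $\mathcal{B}_{q^{-1},q}$ being an equivalence.

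The core of the argument is the identity $[\mathcal{D}_q(V)] = D^{\pm 1}[V]$ for $V$ a simple object in $\mathcal{O}^{\pm}_q \cap \mathcal{O}_{\Z,q}$. Both $\mathcal{D}_q$ and $D^{\pm 1}$ are ring homomorphisms sending classes of simple modules to classes of simple modules, so it suffices to verify the identity on a monoidal generating family of $K_0(\mathcal{O}^{\pm}_q \cap \mathcal{O}_{\Z,q})$; one may take this family to consist of the prefundamental representations $L_{i,a}^{\pm,q}$ (for spectral parameters in the appropriate $\Z$-shifted lattice) together with the finite-dimensional simple $\uqb$-modules in $\mathcal{O}_{\Z,q}$. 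On prefundamentals, \thmref{thm:PosNeg} yields $\F_q(L_{i,a}^{\pm,q^{-1}})\simeq L_{i,a\gamma}^{\mp,q}$, and composing with the natural action of $\mathcal{B}_{q^{-1},q}$ on prefundamental representations pins down $[\mathcal{D}_q(L_{i,a}^{\pm,q})]$; this matches the prescribed image of $D^{\pm 1}$ on the corresponding simple class, up to the explicit spectral shift by $\gamma$.

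The main obstacle will be verifying the identity on the finite-dimensional generators, where the action of $D^{\pm 1}$ is described only combinatorially via $q$-characters (and the Hernandez--Leclerc cluster-algebraic model) rather than by an explicit formula on simple objects. I would handle this by tracking normalized $q$-characters through $\mathcal{D}_q$: the definition of $\pp_q$, which swaps the role of the positive and negative Drinfeld--Jimbo generators, induces an explicit involution on $q$-characters that can be composed with the effect of $\mathcal{B}_{q^{-1},q}$ and compared term-by-term with the Hernandez--Leclerc description of $D^{\pm 1}$ on the lattice $\mathcal{O}_{\Z,q}$. Once established on the chosen generators, multiplicativity of the two ring homomorphisms propagates the equality to the entire intersection $K_0(\mathcal{O}^{\pm}_q\cap\mathcal{O}_{\Z,q})$.
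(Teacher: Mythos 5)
Your high-level structure (decompose $\mathcal{D}_q$, treat each factor, then verify on simple objects) is broadly the right shape, but there are two substantive problems.

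First, you take $\mathcal{D}_q = \F_q\circ\mathcal{B}_{q^{-1},q}$, following the informal phrasing in the introduction. The actual definition in Section~\ref{sec:FuncD} is $\mathcal{D}_q = \mathcal{G}_q\circ\mathcal{B}_{q^{-1},q}$, where $\mathcal{G}_q = \tau_{\gamma^{-1},q}^*\circ\F_q$ incorporates a corrective spectral shift by $\gamma^{-1}$. This shift is not cosmetic: with your version one gets
$\mathcal{D}_q(L_q(\Psi(q)))\simeq L_q\bigl(((\Psi(q^{-1}))^{-1})(\gamma)\bigr)$,
which differs from $L_q\bigl((\Psi(q^{-1}))^{-1}\bigr)$ by a shift of $\gamma$ in every spectral parameter, and since $\gamma\neq 1$ in general (e.g.\ $\gamma=q^{-2}$ for $\su$) the identity with $D^{\pm 1}$ genuinely fails. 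Your sentence ``this matches $\ldots$ up to the explicit spectral shift by $\gamma$'' records the discrepancy but does not fix it; the fix is exactly to replace $\F_q$ by $\mathcal{G}_q$, which is why the paper introduces $\mathcal{G}_q$ in the first place (Corollary~\ref{cor:F}: $\mathcal{G}_q(L_{q^{-1}}(\Psi))\simeq L_q(\Psi^{-1})$ with no residual $\gamma$).

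Second, the part you flag as ``the main obstacle'' — verifying the identity on finite-dimensional generators via a term-by-term $q$-character comparison — is unnecessary. The paper's route is more direct: Corollary~\ref{cor:F} and Proposition~\ref{prop:B} together give the uniform formula $\mathcal{D}_q(L_q(\Psi(q)))\simeq L_q\bigl((\Psi(q^{-1}))^{-1}\bigr)$ for \emph{every} simple object, and Hernandez--Leclerc (Theorem 7.9 of~\cite{hl2}) provides the matching explicit formula $D_q^{\pm 1}[L_q(\Psi(q))] = [L_q((\Psi(q^{-1}))^{-1})]$ for $L_q(\Psi(q))$ in $\mathcal{O}^{\pm}_{\Z,q}$. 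So the comparison is a one-line identification on all simple classes, not a generator-by-generator check followed by propagation. Your belief that $D^{\pm 1}$ is ``described only combinatorially via $q$-characters (and the cluster-algebraic model) rather than by an explicit formula on simple objects'' is not accurate. Finally, if you do argue by propagation from generators, you should note (as the paper does) that both maps are compatible with countable sums of simple classes in $K_0$; multiplicativity alone is not quite enough in this infinite-rank Grothendieck ring.
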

This produces a partial extension of the duality
$D$ on $K_0(\mathcal{O}_{\Z,q})$. We can however go further than this and construct an extension $\tilde{D}_q$ of $D$ on the full Grothendieck ring $K_0(\mathcal{O}_q)$ (hence answering a question of Hernandez--Leclerc). This is done in Appendix \ref{app:K0D} (for $q\in \C^{\times}$ not a root of unity or $q$ a formal variable) using again 
$\F_q$ with the intrinsic structure of $K_{0}(\mathcal{O}_q)$. The extension thus obtained is nevertheless not induced from an autofunctor of $\mathcal{O}_q$ and it is not clear whether or not it sends simple modules to simple modules. \par
We conclude this introduction by describing two additional uses for the functor $\F_q$. Other applications are given at the end of Section \ref{sec:RMat} but their study is kept for future work.
\par First, the 
isomorphism $K_0(\mathcal{O}_{q^{-1}})\simeq K_0(\mathcal{O}_q)$ of Grothendieck rings given by $[V] \mapsto [\F_q(V)]$ on equivalence classes of modules can be used to induce new relations for $K_0(\mathcal{O}_q)$ from already known relations of $K_0(\mathcal{O}_{q^{-1}})$. This lead is followed in Section \ref{sec:FGroth} of the present work and we apply the above isomorphism to two remarkable systems of relations in $K_0(\mathcal{O}_{q^{-1}})$, namely:
\begin{itemize}
\item[1.] the $Q\widetilde{Q}$-system of \cite{fh2} (which is deeply related to Bethe Ansatz equations) and 
\item[2.] the $QQ^*$-system of \cite{hl2} (which describes a cluster algebra structure on $K_0(\mathcal{O}_q^+)$).
\end{itemize} Only the first application yields new relations for $K_0(\mathcal{O}_q)$. The last one (with results of \cite{her}) nevertheless produces a novel ``categorified version'' of the $QQ^*$-system in terms of a non-split short exact sequence with simple extremal terms.
\par 
On the other hand, it was shown by Hernandez that affine $R$-matrices for the category $\mathcal{O}^-_{q}$ factorize in terms of stable maps. Applying the functor $\F_q$ to the corresponding factorization enables us to factorize the $R$-matrices of $\mathcal{O}_q^+$ obtained above in terms of what we call \textit{modified stable maps}. Unlike Hernandez's stable maps (for simple objects of $\mathcal{O}_q^-$), these modified maps are not morphisms for the action of the Cartan--Drinfeld subalgebra $\uqhp$ of $\uqb$, but are rather morphisms for the action of the subalgebra $\pp_q(U_{q^{-1}}(\mathfrak{h}^+))\subseteq \uqb$. This is the only real distinction between the stable maps (for simple objects of $\mathcal{O}_q^-$) and their modified analogs. \par
\begin{rem} 
We were informed, after completing the first version of this work, of a similarity between some of our results 
and those presented in \cite[Lemma 1.9]{z2} for the Lie superalgebra $\dot{\g}=\mathfrak{gl}(M|N)$ (where $M,N \geq 0$). Indeed, in \cite{z2}, Zhang uses a well-chosen algebra morphism to define a functor relating the category $\mathcal{O}$ associated to $\dot{\g}=\mathfrak{gl}(M|N)$ with the one associated to $\dot{\g}=\mathfrak{gl}(N|M)$ (for the same quantum parameter $q$). Zhang then shows, using the so-called RTT (or RLL) realization of $\uqg$ for $\g$ of type A (see e.g.~\cite{df,frt,z4}), that the functor thus defined reverses tensor products and inverts the sign of prefundamental representations. He also expects that his results can be understood as a categorification of Hernandez--Leclerc's duality $D$ between $K_0(\Op)$ and $K_0(\Om)$. This resembles our own Theorems \ref{thm:ComonIntro}, \ref{thm:PosNeg} and \ref{thm:DCatIntro}. However, the algebra morphism given in \cite{z2} is different from our isomorphism $\pp_q$ and cannot be used for Lie algebras $\g$ outside of type A. Our results, which treat uniformly the case of all Lie algebras $\g$, are hence of interest even in type A (and have no known analogs in general).
\end{rem}
The paper is organized as follows. Section \ref{sec:Qaff} recalls facts concerning quantum loop algebras, their Borel subalgebras and the category $\mathcal{O}$. It also recalls the definition of the categories $\mathcal{O}^{\pm}$ and that of Hernandez--Leclerc's duality $D:K_0(\mathcal{O}^-)\simeq K_0(\mathcal{O}^+)$. In Section \ref{sec:Fq}, we define the functor $\F_q:\mathcal{O}_{q^{-1}}\arr \mathcal{O}_q$ and show Theorems \ref{thm:ComonIntro} and \ref{thm:PosNeg}. We moreover apply the functor $\F_q$ to relations of $K_0(\mathcal{O}_{q^{-1}})$ and study the relations of $K_0(\mathcal{O}_q)$ hence obtained. The section then concludes with the functorial interpretation of the duality $D$. Finally, Section \ref{sec:RMat} is devoted to the explicit construction (and factorization) of braidings for the category $\mathcal{O}^+$. The associated constructions (and factorizations) for finite-dimensional modules and objects of the negative category $\mathcal{O}^-$ are recalled for completeness. An appendix ends the text with a proof that the map $D$ can be extended to a ring automorphism of all of $K_0(\mathcal{O})$.
\addtocontents{toc}{\setcounter{tocdepth}{-10}}
\subsection*{Acknowledgements} This work would have been impossible without the guidance of David Hernandez whom we warmly thank. We are also grateful to Ryo Fujita, Yvan Saint-Aubin and Huafeng Zhang for useful discussions. The author finally thanks Alexis Langlois-Rémillard for his reading of the first draft of this paper. This project has received funding from the European Union’s Horizon 2020 research and innovation programme under the Marie Skłodowska-Curie grant agreement \includegraphics[scale=0.07]{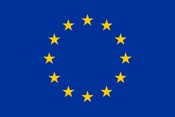} No 945332. This work was also supported by a scholarship (CGS-D) from the Natural Sciences and Engineering Research Council of Canada (NSERC). This support is gratefully acknowledged.
\addtocontents{toc}{\setcounter{tocdepth}{2}}
\section{Quantum loop algebras and the category $\mathcal{O}$}\label{sec:Qaff}
We recall here some known definitions and results about quantum loop algebras and the representation theory of their Borel subalgebras. We remind in particular the definition of the categories $\mathcal{O}$ and $\mathcal{O}^{\pm}$. We refer to \cite{hj,fh1} for details and to \cite{cp1} for a proper introduction to quantum affine algebras. We also refer to \cite{hbourb,kash,mo} for applications of the representation theory of quantum loop algebras. \par 
In this paper, $\mathbb{N} = \Z_{\geq 0}$ and all vector spaces, algebras and tensor products are defined over $\C$ unless otherwise specified.
\subsection{Definition of the algebras}\label{sec:DefQaff} Let $C = (C_{i,j})_{0\leq i,j\leq n}$ be an indecomposable Cartan matrix of untwisted affine type and let $\g$ be the corresponding affine Kac-Moody algebra. Set $D$ to be the unique diagonal matrix with relatively prime diagonal entries $d_0,\dots ,d_n \in \mathbb{N}_{>0}$ such that $DC$ is symmetric. There is a unique vector $\vec{a}$ with relatively prime entries $a_0,\dots ,a_n \in \mathbb{N}$ such that $a_0 = 1$ and $C \vec{a} = 0$ (see \cite[Chapter 4]{kac}). \par
Let $I = \{1,\dots ,n\}$ and denote by $\dot{\g}$ the simple finite-dimensional Lie algebra with Cartan matrix $(C_{i,j})_{i,j\in I}$. We let $\{\alpha_i\}_{i\in I}$ and $\{\omega_i\}_{i\in I}$ be respectively the simple roots and fundamental weights of $\dot{\g}$ with $\dot{\h}$ its Cartan subalgebra. Set $\alpha_0 = -(a_1\alpha_1+\dots +a_n\alpha_n)$ with 
$$ Q = \oplus_{i\in I} \Z\alpha_i,\, Q^+ = \oplus_{i\in I} \mathbb{N} \alpha_i\text{ and }P = \oplus_{i\in I} \Z \omega_i. $$
We also set $P_{\mathbb{Q}} = P\otimes_{\Z}\mathbb{Q}$ with its partial ordering given by $\omega' \leq \omega$ if and only if $\omega-\omega' \in Q^+$. \par 
Fix a quantum parameter $q \in \C^{\times}$ which is not a root of unity and suppose that $q=e^h$ for some $h \in \C$ so that $q^r$ is well-defined for any $r \in \C$. We will use $q_i = q^{d_i}$ for $i\in I$ with
$$\ds [m]_x = \frac{x^m-x^{-m}}{x-x^{-1}},\, \ds [m]_x! = \prod_{r=1}^m [r]_x\text{ and }\ds \qbin{m}{p}{x} = \frac{[m]_x!}{[p]_x![m-p]_x!}$$
for $x\in \C^{\times}$ not a root of unity and $m,p \in \mathbb{N}$ satisfying $m\geq p$. We let $\qbin{m}{p}{x}=0$ if $m<p$. \par\noindent
The quantum loop algebra $\uqg$ is the (unital) $\C$-algebra generated by $\{e_i,f_i,k_i^{\pm 1}\}_{i=0}^n$ with 
\begin{center}
$k_ik_j = k_jk_i, \quad k_i^{\pm 1}k_i^{\mp 1} = 1 = k_0^{a_0}k_1^{a_1}\dots k_n^{a_n}, \quad k_ie_jk_i^{-1} = q_i^{C_{i,j}}e_j,\quad k_if_jk_i^{-1} = q_i^{-C_{i,j}}f_j,$\smallskip\\
$\ds[e_i,f_j] = \delta_{i,j}\frac{k_i-k_i^{-1}}{q_i-q_i^{-1}}$, \quad $\ds\sum_{r=0}^{1-C_{i,j}}(-1)^re_i^{(1-C_{i,j}-r)}e_je_i^{(r)}=\sum_{r=0}^{1-C_{i,j}}(-1)^rf_i^{(1-C_{i,j}-r)}f_jf_i^{(r)} = 0$
\end{center}
for $0\leq i,j \leq n$ and where, for the last relation, $i\neq j$ and $x_i^{(r)} = x_i^r/[r]_{q_i}!$ (for $x_i = e_i,f_i$). It is a Hopf algebra for the coproduct and antipode (which is an anti-automorphism of $\uqg$) given by (for $0\leq i\leq n$)
\begin{center}
$\Delta(e_i) = e_i\otimes 1+k_i\otimes e_i, \quad \Delta(f_i) = f_i\otimes k_i^{-1}+1\otimes f_i, \quad \Delta(k_i) = k_i\otimes k_i$ \smallskip\\
$ S(e_i) = -k_i^{-1}e_i, \quad S(f_i) = -f_ik_i, \quad S(k_i) = k_i^{-1}.$
\end{center}
Since the seminal work of Drinfeld, Beck and Damiani (cf.~\cite{dr1,b,da1,da2}) it is known that $\uqg$ has another set of generators $\{x_{i,r}^{\pm}, \phi_{i,r}^{\pm}\,|\, i\in I,\, r \in \Z\}$ with $\phi_{i,0}^{\pm}=k_i^{\pm 1}$ and $\phi_{i,\pm m}^{\pm} = 0$ if $m < 0$. The defining relations for these generators are $k_i^{\pm 1}k_i^{\mp 1} = 1 = k_0^{a_0}k_1^{a_1}\dots k_n^{a_n}$ with the additional relations ($i,j \in I$ and $r,s,m \in \mathbb{Z}$ with $m\neq 0$)
\begin{center}
$[\phi_{i,r}^{\pm},\phi_{j,s}^{\pm}] = [\phi_{i,r}^{\pm},\phi_{j,s}^{\mp}] = 0$, \ \ 
$k_i x_{j,r}^{\pm} k_i^{-1} = q_i^{\pm C_{i,j}}x_{j,r}^{\pm}$, \ \ 
$[h_{i,m},x_{j,r}^{\pm}] = \pm \frac{1}{m}[mC_{i,j}]_{q_i}x_{j,r+m}^{\pm}$, \\ $\ds[x_{i,r}^+,x_{j,s}^-] = \delta_{i,j}\frac{\phi_{i,r+s}^+-\phi_{i,r+s}^-}{q_i-q_i^{-1}}$, \quad
$x_{i,r+1}^{\pm}x_{j,s}^{\pm}-q_i^{\pm C_{i,j}}x_{j,s}^{\pm}x_{i,r+1}^{\pm}=q_i^{\pm C_{i,j}}x_{i,r}^{\pm}x_{j,s+1}^{\pm}-x_{j,s+1}^{\pm}x_{i,r}^{\pm}$
\end{center}
as well as, for $r',r_1,\dots ,r_p \in \Z$ and $i\neq j$ with $p = 1-C_{i,j}$,
\begin{center}
$\ds \sum_{\pi \in \Sigma_p}\sum_{0\leq l\leq p}(-1)^l \qbin{p}{l}{q_i}x_{i,r_{\pi(1)}}^{\pm}\dots x_{i,r_{\pi(l)}}^{\pm}x_{j,r'}^{\pm}x_{i,r_{\pi(l+1)}}^{\pm}\dots x_{i,r_{\pi(p)}}^{\pm}=0$
\end{center}
where $\Sigma_p$ is the symmetric group on $p$ letters. The elements $h_{i,m}$ appearing in these relations are defined by the equality of generating functions
\begin{center}
$\ds\phi_i^{\pm}(z)=\sum_{m\geq 0} \phi_{i,\pm m}^{\pm} z^{\pm m} = k_i^{\pm 1}\exp\left(\pm(q_i-q_i^{-1})\sum_{m>0}h_{i,\pm m}z^{\pm m}\right)$.
\end{center}
\begin{example}[see e.g.~{\cite[Example 2.1]{fh1}}]\label{ex:DJDsl2} For $\g = \su$, a correspondence between the two generating sets is given by
\begin{center}
$e_0 = k_1^{-1}x_{1,1}^-$, \quad $e_1 = x_{1,0}^+$, \quad $f_0 = x_{1,-1}^+k_1$ \ and \ $f_1 = x_{1,0}^-$.
\end{center}
In general, we still have $e_i = x_{i,0}^+$ and $f_i = x_{i,0}^-$ for $i\in I$, but the possible expressions for $e_0$ and $f_0$ are somewhat more complicated (and not unique, see \cite{cp2} for details).
\end{example}
The algebra $\uqg$ is $Q$-graded by $\deg x_{i,r}^{\pm} = \pm \alpha_i$ and $\deg \phi_{i,r}^{\pm} = 0$ for $r \in \Z$. It also admits a $\Z$-grading given by $\deg e_0 = -\deg f_0 = 1$ with $\deg e_i = \deg f_i = \deg k_i^{\pm 1} = 0$ for $i\in I$. The latter $\Z$-grading verifies $\deg x_{i,r}^{\pm}= \deg \phi_{i,r}^{\pm}=r$ for any $i\in I$ and $m\in \Z$. Fixing $a \in \C^{\times}$, we have a Hopf algebra automorphism $\tau_{a}$ of $\uqg$ such that $\tau_{a}(x)=a^m x$ when $x$ is homogeneous of $\Z$-degree $m$. We can also replace $a$ by a formal variable $u$ to get an automorphism of the algebra $U_{q,u}(\g) = \uqg\otimes \C(u)$. (We always imply that the base field is $\C(u)$ when using this variable $u$. We will thus in particular write $\otimes$ for $\otimes_{\C(u)}$ when a tensor product of $\C(u)$-vector spaces is considered. This is the only exception to the rule mentioned at the beginning of the section.) The pullback of a module $V$ with respect to $\tau_{a}$ and $\tau_{u}$ are denoted $V(a)$ and $V(u)$. \par
In this paper, we focus on the representation theory of the Borel subalgebra $\uqb\subseteq \uqg$ which is defined as the subalgebra generated by the set $\{e_i,k_i^{\pm 1}\}_{i=0}^n$. It contains the elements $\phi_{i,r}^+$, $x_{i,r}^{+}$ and $x_{i,m}^{-}$ for $i\in I$, $r \geq 0$ and $m>0$ with the commutative subalgebra $\uqhp\subseteq \uqg$ generated by $\{k_i^{\pm 1},h_{i,m}\,|\, i\in I,\, m>0\}$ (cf.~\cite{hj}). The algebra $\uqb$ is also a Hopf subalgebra of $\uqg$ and inherits the two gradings defined above. In particular, the \textit{shift} $V(a)$ of a given $\uqb$-module $V$ with respect to $a \in \C^{\times}$ is a well-defined $\uqb$-module. \par 
We have a vector-space isomorphism \cite{b,bcp}
\begin{center}
$\uqg \simeq U^-_q(\g)\otimes U^0_q(\g) \otimes U^+_q(\g)$
\end{center}
where $U_q^{\pm}(\g)$ and $U_q^0(\g)$ are respectively the subalgebras generated by $\{x_{i,r}^{\pm}\,|\,i\in I,\, r \in \Z\}$ and $\{\phi_{i,r}^{+},\phi_{i,-r}^-\,|\, i\in I,\, r\in \mathbb{N}\}$. There is also a triangular decomposition \cite{b,hj}
\begin{center}
$\uqb \simeq U^-_q(\bor)\otimes U^0_q(\bor) \otimes U^+_q(\bor)$
\end{center}
where $U_q^{\pm}(\bor) = \uqb\cap U_q^{\pm}(\g)$ and $U_q^0(\bor) = \uqb\cap U_q^0(\g)$. By the results of \cite{hj}, we have
\begin{center}
$U_q^{+}(\bor)= \langle x_{i,m}^+\rangle_{i\in I,m\geq 0}$ and $U_q^0(\bor) = \langle \phi_{i,m}^+, k_i^{\pm 1}\rangle_{i\in I,m>0}$
\end{center}
but the subalgebra $U_q^-(\bor)$ has no such simple description in terms of the Drinfeld generators (except when $\g = \su$ since then $U_q^-(\bor) = \langle x_{1,m}^-\rangle_{m>0}$).
\subsection{Representation theory and the category $\mathcal{O}$}\label{sec:O}
 Denote by $\ttt\subseteq \uqb$ the commutative subalgebra generated by $\{k_i^{\pm 1}\}_{i\in I}$ and consider $\ttt^{\times} = (\C^{\times})^I$ with the group structure induced from pointwise multiplication. For a $\uqb$-module $V$ and $\mu = (\mu_i)_{i\in I} \in \ttt^{\times}$, the weight-space $V_{\mu}$ of $V$ associated to $\mu$ is the $\ttt$-eigenspace
\begin{center}
$V_{\mu} = \{v\in V\,|\, k_i v = \mu_i v\text{ for any } i\in I\}$.
\end{center}
We say that $\mu$ is a weight of $V$ if $V_{\mu}\neq 0$ and we denote by $P(V)$ the set of weights of $V$. \par We have an injective morphism of groups $\overline{\textcolor{white}{\alpha}} : P_{\mathbb{Q}} \rightarrow \ttt^{\times}$ defined on fundamental weights by $\overline{\omega_i} = (\overline{\omega_i}(j))_{j\in I}$ with $\overline{\omega_i}(j) = q_i ^{\delta_{ij}}$. We use this morphism to carry the natural order on $P_{\mathbb{Q}}$ to $\ttt^{\times}$. In other terms, we write $\mu \leq \nu$ in $\ttt^{\times}$ if $\nu\mu^{-1}\in\overline{Q_+}$. Clearly,
\begin{center}
$\phi_{i,r}^{\pm} V_{\mu} \subseteq V_{\mu}$ and $x_{i,r}^{\pm} V_{\mu} \subseteq V_{\mu\overline{\alpha_i}^{\pm 1}}$.
\end{center}
We have the following analogue of the BGG category $\mathcal{O}$ of usual Lie algebra theory.
\begin{defn}[{\cite[Definition 3.8]{hj}}]\label{def:O} The category $\mathcal{O}$ is the full monoidal subcategory of the category of all $\uqb$-modules whose objects are the modules $V$ satisfying
\begin{itemize}
\item[(i)] $V$ is $\ttt$-diagonalizable, that is $V = \bigoplus_{\mu \in \ttt^{\times}} V_{\mu}$;
\item[(ii)] $\dim V_{\mu} < \infty$ for all $\mu \in \ttt^{\times}$ and
\item[(iii)] $P(V) \subseteq \bigcup_{i=1}^s D(\lambda_i)$ for some $\lambda_1,\dots ,\lambda_s\in \ttt^{\times}$ where $D(\lambda) = \{\mu \in \ttt^{\times}\,|\, \mu\leq \lambda\}$.
\end{itemize}
\end{defn}
In other words, a $\uqb$-module $V$ is in $\mathcal{O}$ if and only if it decomposes into a direct sum of finite-dimensional weight spaces with appropriately bounded above weights (for $\leq$). \par An important notion in the study of the category $\mathcal{O}$ is the one of $\ell$-weight spaces (where $\ell$ stands for \textit{loop}). These are the (simultaneous) generalized eigenspaces of a module $V$ with respect to the commutative algebra $\uqhp$. More precisely, denote by $\ttt_{\ell}^{\times}$ the set of sequences $\Psi \in (\Psi_{i,r})_{i\in I,r\geq 0} \subseteq \C$ satisfying $\Psi_{i,0} \neq 0$ for each $i\in I$. Then, for such a $\Psi \in \ttt^{\times}_{\ell}$ and a fixed $\uqb$-module $V$, we call the subspace 
\begin{center}
$V_{\Psi} = \{v\in V\,|\, \text{there is } p \in \mathbb{N} \text{ such that } (\phi_{i,r}^+-\Psi_{i,r})^pv = 0 \text{ for all } i\in I \text{ and } r \geq 0\}$
\end{center}
the $\ell$-weight space of $V$ associated to $\Psi$. We say that $\Psi$ is a $\ell$-weight of $V$ if $V_{\Psi} \neq 0$. \par
We associate a given sequence $\Psi = (\Psi_{i,r})_{i\in I,r\geq 0}\in \ttt_{\ell}^{\times}$ with the corresponding sequence $(\Psi_i(z))_{i\in I}$ of generating functions $\Psi_i(z) = \sum_{r\geq 0} \Psi_{i,r}z^r$. We moreover endow $\ttt^{\times}_{\ell}$ with a group structure by using the standard multiplication of formal power series. There is then a natural surjective group morphism $\varpi:\ttt^{\times}_{\ell}\arr \ttt^{\times}$ given by the evaluation at $z =0$
\begin{center}
$\varpi((\Psi_{i}(z))_{i\in I})=(\Psi_i(0))_{i\in I}= (\Psi_{i,0})_{i\in I}$
\end{center}
and we have, for every $\Psi \in \ttt^{\times}_{\ell}$, a factorization $\Psi = \varpi(\Psi)\Psi_{\text{norm}}$ with $\Psi_{\text{norm}}$ such that $\varpi(\Psi_{\text{norm}})$ is trivial. We call $\varpi(\Psi)$ the constant part of $\Psi$.  \par 
A first reason justifying the use of $\ell$-weights in our study is that every object $V$ of $\mathcal{O}$ is the sum of its $\ell$-weight spaces (see e.g.~\cite{her,fr}). These spaces are also all finite-dimensional as
$V_{\Psi}\subseteq V_{\varpi(\Psi)}$ for every $\Psi\in \ttt^{\times}_{\ell}$. However, the most important fact regarding $\ell$-weights is that they can be used to parametrize the simple objects of the category $\mathcal{O}$. 
\begin{defn}[{\cite[Definition 3.2]{hj}}] Let $\Psi = (\Psi_{i,r})_{i\in I,r\geq 0}\in \ttt^{\times}_{\ell}$ and fix $V$ a $\uqb$-module. Then $V$ is of highest $\ell$-weight $\Psi$ if there is a non-zero $v \in V$ verifying $V = \uqb v$ and $e_iv = 0$ with $\phi_{i,r}^+ v = \Psi_{i,r}v$ for all $i \in I$ and $r \geq 0$. This $v$ is then said to be a highest $\ell$-weight vector.
\end{defn}
The highest $\ell$-weight $\Psi\in \ttt^{\times}_{\ell}$ of a highest $\ell$-weight module $V$ is uniquely determined and the associated $\ell$-weight space $V_{\Psi}$ is of dimension 1. In particular, a highest $\ell$-weight vector $v\in V$ is uniquely determined up to a scalar factor. Such a vector $v$ also satisfies $U_q^+(\bor) v = 0$ so that $V = \uqb v = U_q^-(\bor)v$ by the triangular decomposition given in the precedent subsection.

 We can define Verma modules for the notion of $\ell$-weight and show the following result as in the usual Kac-Moody theory (with the associated notion of weight).
\begin{thm}[{\cite[Proposition 3.4]{hj}}]\label{thm:PropSimpleOEx} Let $\Psi \in \ttt^{\times}_{\ell}$. Then there is a unique (up to isomorphism) simple $\uqb$-module $L(\Psi)$ of highest $\ell$-weight $\Psi$.
\end{thm}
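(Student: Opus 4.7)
The plan is to construct a Verma-type module $M(\Psi)$ associated with $\Psi$ and then take its unique simple quotient. Define $J_{\Psi}\subseteq \uqb$ to be the left ideal generated by $\{e_i\}_{i=0}^n$ together with $\{\phi_{i,r}^+-\Psi_{i,r}\}_{i\in I,\,r\geq 0}$; note that $k_i-\Psi_{i,0}=\phi_{i,0}^+-\Psi_{i,0}\in J_{\Psi}$ and, since $\Psi_{i,0}\neq 0$, the element $k_i^{-1}-\Psi_{i,0}^{-1}$ also lies in $J_{\Psi}$. Set $M(\Psi)=\uqb/J_{\Psi}$ and write $v_{\Psi}$ for the class of $1$. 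By construction, as soon as $v_{\Psi}\neq 0$, it is a non-zero highest $\ell$-weight vector of $\ell$-weight $\Psi$ generating $M(\Psi)$.

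The key first step is to verify that $M(\Psi)\neq 0$, which I would do via the triangular decomposition $\uqb\simeq U_q^-(\bor)\otimes U_q^0(\bor)\otimes U_q^+(\bor)$ recalled in Section \ref{sec:DefQaff}. The generators of $J_{\Psi}$ all lie in $\uqb\cdot U_q^+(\bor)+\uqb\cdot (U_q^0(\bor))_+$, where $(U_q^0(\bor))_+$ denotes the augmentation ideal obtained by shifting the $\phi_{i,r}^+$ by $\Psi_{i,r}$; pushing the $U_q^0(\bor)$- and $U_q^+(\bor)$-parts through the PBW-type decomposition, the natural composition $U_q^-(\bor)\hookrightarrow \uqb \twoheadrightarrow M(\Psi)$ is surjective, and a direct check using the decomposition shows that $J_{\Psi}\cap U_q^-(\bor)=0$. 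Consequently $M(\Psi)\simeq U_q^-(\bor)$ as left $U_q^-(\bor)$-modules, in particular $v_{\Psi}\neq 0$.

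Next I would exhibit a unique maximal proper submodule $N(\Psi)\subseteq M(\Psi)$. Since $\ttt\subseteq \uqb$, the module $M(\Psi)$ decomposes as $\bigoplus_{\mu\in \ttt^{\times}}M(\Psi)_{\mu}$; as $U_q^-(\bor)$ shifts weights by elements of $\overline{Q^+}^{-1}$, the top weight of $M(\Psi)$ is $\varpi(\Psi)$ and the corresponding weight space is exactly $\C v_{\Psi}$. Any proper submodule $N$ is weight-graded and must satisfy $N_{\varpi(\Psi)}=0$, for otherwise $v_{\Psi}\in N$ would give $N=M(\Psi)$. The sum $N(\Psi)$ of all proper submodules thus still misses $\C v_{\Psi}$, so it is itself proper and is the desired unique maximal submodule. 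The quotient $L(\Psi):=M(\Psi)/N(\Psi)$ is then a simple $\uqb$-module of highest $\ell$-weight $\Psi$.

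For uniqueness I would run the standard argument. Given any simple $\uqb$-module $L$ of highest $\ell$-weight $\Psi$ with highest $\ell$-weight vector $v$, the defining relations of $M(\Psi)$ yield a surjective $\uqb$-linear map $M(\Psi)\twoheadrightarrow L$ with $v_{\Psi}\mapsto v$; by simplicity of $L$, its kernel is a maximal proper submodule of $M(\Psi)$ and therefore equals $N(\Psi)$, so $L\simeq L(\Psi)$. The only real obstacle in the plan is the nonvanishing statement $M(\Psi)\neq 0$, which is exactly where the triangular decomposition of $\uqb$ is used; once this is established, the remaining arguments are the usual highest-weight-quotient recipe as in the BGG theory for Kac--Moody algebras.
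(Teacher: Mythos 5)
Your overall strategy---Verma-type module, unique maximal proper submodule, simple quotient, standard uniqueness argument---is the one the paper alludes to (``as in the usual Kac--Moody theory'') and would carry the proof. However, there is a genuine error at the very first step: you include $e_0$ among the generators of the left ideal $J_{\Psi}$, whereas the definition of a highest $\ell$-weight module (\cite[Definition 3.2]{hj}, reproduced just before the theorem) only requires $e_i v=0$ for $i\in I=\{1,\dots,n\}$, \emph{not} for $i=0$. In the Drinfeld triangular decomposition $\uqb\simeq U_q^-(\bor)\otimes U_q^0(\bor)\otimes U_q^+(\bor)$, the element $e_0$ does \emph{not} lie in $U_q^0(\bor)U_q^+(\bor)$: for $\g=\su$ one has $e_0=k_1^{-1}x_{1,1}^-$, which carries a nontrivial factor in $U_q^-(\bor)$. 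Your assertion that ``the generators of $J_{\Psi}$ all lie in $\uqb\cdot U_q^+(\bor)+\uqb\cdot(U_q^0(\bor))_+$'' is therefore false, and the argument that $J_{\Psi}\cap U_q^-(\bor)=0$ collapses.

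In fact, with $e_0$ included the ideal is too large and kills the module for generic $\Psi$. Take $\g=\su$ and any $\Psi$ with $\Psi_{1,1}\neq 0$ (e.g.\ the $\ell$-weight of a prefundamental representation). From $e_0 v_{\Psi}=k_1^{-1}x_{1,1}^- v_{\Psi}=0$ one gets $x_{1,1}^- v_{\Psi}=0$; combined with $e_1 v_{\Psi}=x_{1,0}^+v_{\Psi}=0$ and the Drinfeld relation $[x_{1,0}^+,x_{1,1}^-]=\frac{\phi_{1,1}^+}{q_1-q_1^{-1}}$ (recall $\phi_{1,1}^-=0$), this forces $\phi_{1,1}^+v_{\Psi}=0$, i.e.\ $\Psi_{1,1}v_{\Psi}=0$, hence $v_{\Psi}=0$ and $M(\Psi)=0$. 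The explicit realizations of $L_{1,a}^{\pm}$ in Example \ref{ex:Lpmsl2} confirm that $e_0$ does not annihilate the highest $\ell$-weight vector. The fix is to generate $J_{\Psi}$ only by $\{e_i\}_{i\in I}$ (equivalently by $\{x_{j,m}^+\}_{j\in I,m\geq 0}$, once one uses $[h_{i,m},x_{j,0}^+]$ and the $\phi$-relations to see that all $x_{j,m}^+$ annihilate $v_{\Psi}$) together with $\{\phi_{i,r}^+-\Psi_{i,r}\}_{i\in I,r\geq 0}$; with that correction your triangular-decomposition argument gives $M(\Psi)\simeq U_q^-(\bor)$ and the remaining steps go through as written.
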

A simple object $V$ of $\mathcal{O}$ is necessarily of highest $\ell$-weight. Indeed, we can choose $\mu \in \ttt^{\times}$ maximal in $P(V)$ and use the $\ell$-weight space decomposition of $V$ to find $\Psi \in \ttt^{\times}_{\ell}$ with $V_{\Psi}\neq 0$ and $\varpi(\Psi) = \mu$. As $\dim V_{\Psi} < \infty$, there is a $v \in V_{\Psi}$ that is a simultaneous eigenvector for all the $\phi_{i,r}^+$ with $i\in I$ and $r\geq 0$. (Remark that it suffices to find an eigenvector for finitely many $\phi_{i,m}^{+}$ since $\dim V_{\Psi} < \infty$. This is easily done as the operators $\phi_{i,r}^+-\Psi_{i,r}$ with $\Psi = (\Psi_{i,r})_{i\in I,r\geq 0}$ all act nilpotently on $V_{\Psi}$.) Such a vector $v\in V_{\Psi}$ is then a highest $\ell$-weight vector for $V$ since $e_iv \in V_{\mu \overline{\alpha_i}}= 0$ and $\uqb v = V$ (as $V$ is simple with $\mu$ maximal in $P(V)$). \par The above fact and the results of \cite{hj} imply the following important theorem. Set 
$$\mathfrak{r} = \{\Psi=(\Psi_i(z))_{i\in I}\in \ttt^{\times}_{\ell}\,|\, \Psi_i(z) \text{ is a rational function for all } i\in I\}.$$
\begin{thm}[{\cite[Proposition 3.10 and Theorem 3.11]{hj}}]\label{thm:SimplesO} A complete list of non-isomorphic simple objects of $\mathcal{O}$ is $\{L(\Psi)\}_{\Psi\in\mathfrak{r}}$. Also, if $V$ is in $\mathcal{O}$ with $V_{\Psi}\neq 0$ for a $\Psi\in \ttt^{\times}_{\ell}$, then $\Psi\in \mathfrak{r}$.
\end{thm}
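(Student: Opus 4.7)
The plan is to separate the two assertions cleanly. The first reduces almost entirely to what precedes: every simple $V$ of $\mathcal{O}$ is a highest $\ell$-weight module (argued just above the statement), hence isomorphic by \thmref{thm:PropSimpleOEx} to $L(\Psi)$ for a uniquely determined $\Psi \in \ttt^{\times}_{\ell}$; uniqueness of the highest $\ell$-weight also yields non-isomorphism of $L(\Psi)$ and $L(\Psi')$ for $\Psi \neq \Psi'$. The entire content of the theorem therefore lies in (a) the second assertion — rationality of every $\ell$-weight of every object of $\mathcal{O}$ — and (b) the converse that $L(\Psi) \in \mathcal{O}$ whenever $\Psi \in \mathfrak{r}$. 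The rationality constraint on labels of simples of $\mathcal{O}$ follows from (a) applied to $V = L(\Psi)$.

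For (a), I would first dispose of the key case: the highest $\ell$-weight $\Psi$ of a simple $L(\Psi) \in \mathcal{O}$. Let $v$ be a highest $\ell$-weight vector of weight $\mu = \varpi(\Psi)$. Since $v$ generates $L(\Psi)$ under $U_q^-(\bor)$, the weight $\mu$ is maximal in $P(L(\Psi))$, so $x_{i,r}^+ v \in L(\Psi)_{\mu \overline{\alpha_i}} = 0$ for every $i \in I$ and $r \geq 0$. The vectors $\{x_{i,m}^- v\}_{m \geq 1}$ on the other hand all lie in the finite-dimensional weight space $L(\Psi)_{\mu \overline{\alpha_i}^{-1}}$, so a nontrivial linear relation $\sum_{m=1}^{m_0} c_m x_{i,m}^- v = 0$ holds. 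Applying $x_{i,r}^+$ to this relation and using the Drinfeld commutator
\[
[x_{i,r}^+, x_{i,m}^-] = \frac{\phi_{i,r+m}^+}{q_i - q_i^{-1}} \qquad (r \geq 0,\, m > 0),
\]
whose right-hand side lies in $\uqb$ because $r+m > 0$ forces the $\phi^-$ contribution to vanish, together with $x_{i,r}^+ v = 0$, one extracts
\[
\sum_{m=1}^{m_0} c_m \Psi_{i, r+m} = 0 \qquad \text{for every } r \geq 0.
\]
This is a constant-coefficient linear recursion on $(\Psi_{i,n})_{n \geq 0}$, which is exactly the condition that the generating function $\Psi_i(z)$ is a rational function. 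To extend (a) to an arbitrary $\ell$-weight of an arbitrary $V \in \mathcal{O}$, I would pick a common $\phi^+$-eigenvector $w \in V_\Psi$, consider the cyclic submodule $\uqb w$, pass to a simple subquotient via a weight-maximal vector, and propagate the rationality just proved back to $\Psi$ via the $\ell$-weight grading.

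For (b), the standard approach is to realize $L(\Psi)$ with $\Psi \in \mathfrak{r}$ as a simple subquotient of a tensor product of prefundamental modules $L_{i,a}^{\pm}$ and one-dimensional Borel characters, each of which is already known from \cite{hj} to lie in $\mathcal{O}$; closure of $\mathcal{O}$ under tensor products and subquotients then finishes the argument. The main obstacle I anticipate is not the algebraic manipulation (both the vanishing of the $\phi^-$ contribution and the passage from constant-coefficient recursion to rational generating function are mechanical) but the bookkeeping needed to match the original $\ell$-weight $\Psi$ on $V$ with the highest $\ell$-weight of a suitable simple subquotient of $\uqb w$ — that is, to guarantee that the reduction from general $V$ to the simple highest-weight case does not dilute the datum $\Psi$ whose rationality one is trying to prove.
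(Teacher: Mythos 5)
The reduction of the classification clause to (a) ``every $\ell$-weight of every $V\in\mathcal{O}$ is rational'' and (b) ``$\Psi\in\mathfrak{r}\Rightarrow L(\Psi)\in\mathcal{O}$'' is right, and note that the paper itself does not reprove this statement but cites \cite{hj} directly, so the comparison is against what a correct argument should look like rather than against a displayed proof. Your treatment of the \emph{highest} $\ell$-weight case of (a) is correct and is essentially the argument in the literature: finite-dimensionality of $L(\Psi)_{\mu\overline{\alpha_i}^{-1}}$ gives a nontrivial relation $\sum_{m=1}^{m_0} c_m x_{i,m}^- v = 0$; applying $x_{i,r}^+$ with $x_{i,r}^+ v=0$ and $\phi_{i,r+m}^-=0$ for $r+m>0$ extracts $\sum_m c_m \Psi_{i,r+m}=0$ for all $r\ge 0$, i.e.\ a constant-coefficient recurrence for $(\Psi_{i,n})$, hence rationality of $\Psi_i(z)$. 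Part (b) via tensor products of prefundamentals and one-dimensional representations together with Remark~\ref{rem:L(PP')} is also fine.

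The genuine gap is exactly where you flag it, but it is more serious than ``bookkeeping.'' The extension to an arbitrary $\ell$-weight $\Psi$ of an arbitrary $V$ cannot be carried out by passing to a simple subquotient of $\uqb w$. There are two independent obstructions. First, the highest $\ell$-weight $\Psi'$ of a weight-maximal vector in $\uqb w$ (or of a simple subquotient) is in general a \emph{different} element of $\ttt^\times_\ell$ than $\Psi$; rationality of $\Psi'$ says nothing a priori about $\Psi$. Second, even if $\Psi$ happened to survive as an $\ell$-weight of the chosen simple subquotient $L(\Psi')$, you would then need to know that \emph{all} $\ell$-weights of $L(\Psi')$ are rational — but that is precisely the general statement you set out to prove, now for $L(\Psi')$, so the reduction is circular: the only case you have actually established is the \emph{highest} $\ell$-weight. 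The obstruction to running the computation directly on $w$ is that $x_{i,r}^+ w$ need not vanish, so applying $x_{i,r}^+$ to $\sum_m c_m x_{i,m}^- w = 0$ produces the extra term $\sum_m c_m x_{i,m}^-(x_{i,r}^+ w)$ alongside $\big(\sum_m c_m\Psi_{i,r+m}\big)w$, and there is no reason for these two contributions in $V_\mu$ to vanish separately. A workable repair must kill $x_{i,r}^+ w$ before running the computation — for instance by showing $w$ survives in the quotient of $\uqb w$ by the submodule generated by all $x_{i,r}^+ w$ — but whether $w$ does survive is nontrivial and is not addressed; as written, the argument for the second assertion of the theorem is incomplete.
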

\begin{rem}\label{rem:Psishift} For $\Psi = (\Psi_i(z))_{i\in I} \in \mathfrak{r}$ with $a \in \mathbb{C}^{\times}$, we define $\Psi(a)=(\Psi_i(az))_{i\in I}$. Then the pullback of $L(\Psi)$ by the automorphism $\tau_{a}$ of Section \ref{sec:DefQaff} is isomorphic to $L(\Psi(a))$.
\end{rem}
\begin{example}[{\cite[Sections 4 and 5]{hj}}]\label{ex:Lpmsl2} For $a \in \C^{\times}$ and $i\in I$, let $\Psi_{i,a} \in \mathfrak{r}$ denote the $\ell$-weight defined by
$$(\Psi_{i,a})_j(z) = \left\{\begin{array}{ll}
1-az & \text{if } i=j,\\
1 & \text{else.}
\end{array}\right.$$
The modules $L_{i,a}^{+}=L(\Psi_{i,a})$ and $L_{i,a}^- = L(\Psi_{i,a}^{-1})$ are called positive and negative prefundamental representations (resp.). They are of first and foremost importance in the study of Baxter $Q$-operators and of quantum integrable systems (see \cite{fh1}).\par For $\g = \su$, a realization of $L_{1,a}^{+}$ on the vector space with basis $\{w_j\}_{j\geq 0}$ is given via
$$k_1w_j = q^{-2j}w_j,\ e_1w_j = w_{j-1}\text{ and }(q-q^{-1})e_0w_j = -aq^{2+j}[j+1]_qw_{j+1}$$
with $w_{-1}=0$. This action verifies $x_{1,r}^+w_j = \delta_{r,0}w_{j-1}$, $(q-q^{-1})x_{1,m}^-w_j = -aq^{-j}[j+1]_q\delta_{r,1}w_{j+1}$ (for $r \geq 0$ and $m>0$) and $\phi_1^+(z)w_j = q^{-2j}(1-az)w_j$. \par An explicit realization of the negative prefundamental representation $L_{1,a}^{-}$ on the space with basis $\{z_j\}_{j\geq 0}$ is also given for $\g = \su$ via
$$k_1z_j = q^{-2j}z_j,\ e_1z_j = z_{j-1}\text{ and }(q-q^{-1})e_0z_j = aq^{2-j}[j+1]_qz_{j+1}$$
with $z_{-1}=0$. We have $x_{1,r}^+ z_j = a^r q^{2r(1-j)}z_{j-1}$ and $(q- q^{-1})x_{1,m}^-z_j = a^mq^{-(2m+1)j}[j+1]_qz_{j+1}$ (for $r\geq 0$ and $m>0$) with
\begin{center}
$\ds \phi_1^+(z)z_j = q^{-2j}\frac{(1-q^2 a z)}{(1-q^{2(1-j)}a z)(1-q^{-2j}a z)}z_j$.
\end{center} 
More details about the general construction of the modules $L_{i,a}^{\pm}$ are given in Section \ref{sec:qcaract}.
\end{example}
In addition to the remarkable prefundamental representations, the category $\mathcal{O}$ contains all finite-dimensional $\uqb$-modules (cf.~\cite{hj}). In particular, for all $\mu\in \ttt^{\times}$, the simple module $[\mu]$ with highest $\ell$-weight $\mu$ belongs to $\mathcal{O}$. (We associate $\mu = (\mu_i)_{i\in I}\in \ttt^{\times}$ with the element $\mu = (\mu_{i,r})_{i\in I,r\geq 0}\in\ttt_{\ell}^{\times}$ defined by $\mu_{i,r}= \mu_i\delta_{r,0}$. This association was also used in the definition of the constant part $\varpi(\Psi)$ and will be used again without mention.) The module $[\mu]$ is one-dimensional with trivial action of the elements $e_i$, $e_0$, $x_{i,r}^+$ and $x_{i,m}^-$ for $i\in I$, $r\geq 0$ and $m>0$. We will call these representations \textit{invertible} as they verify
$$ [\mu] \otimes [\mu^{-1}] \simeq [\mathbbm{1}]$$
where $\mathbbm{1}\in \ttt^{\times}$ is defined by $\mathbbm{1}=(1)_{i\in I}$.\par  
Let $a\in \C^{\times}$ and $i \in I$. We define the $\ell$-weight $Y_{i,a}\in \mathfrak{r}$ by $Y_{i,a} = \overline{\omega_i}\Psi_{i,a q_i^{-1}}\Psi_{i,aq_i}^{-1}$, that is 
\begin{center}
$(Y_{i,a})_j(z) = \left\{\begin{array}{ll}
q_i \frac{1-azq_i^{-1}}{1-azq_i} & \text{if } i=j,\\
1 & \text{else}.
\end{array}\right.$
\end{center}
The simple $L(\Psi)$ is finite-dimensional if $\Psi$ is a monomial in the $Y_{i,a}$'s and the $\uqb$-action on such a module can be uniquely extended to a $\uqg$-action. In fact, any (type I) simple finite-dimensional $\uqg$-module can be realized in this way (see e.g.~\cite{her,cp1,cp2} for these last results). We also have a similar characterization for simple finite-dimensional $\uqb$-modules:
\begin{prop}[{\cite[Remark 3.11]{fh1}}]\label{prop:CharacSimFD} Fix a simple finite-dimensional $\uqb$-module $L(\Psi)$. Then there is $\mu \in \ttt^{\times}$ and a monomial $\Psi'$ in the $Y_{i,a}$'s such that $\Psi=\mu\Psi'$.
\end{prop}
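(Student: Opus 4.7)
The strategy is to reduce to the Chari--Pressley--Drinfeld parametrization of simple finite-dimensional $\uqg$-modules by dominant monomials in the $Y_{i,a}$'s, using the invertible one-dimensional $\uqb$-modules $[\nu]$ for $\nu \in \ttt^{\times}$ as ``twists'' that shift the constant part of $\Psi$.

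The first ingredient I would establish is the identity $[\nu] \otimes L(\Psi) \simeq L(\nu\Psi)$ for all $\nu \in \ttt^{\times}$ and $\Psi \in \mathfrak{r}$. Since $[\nu]$ is invertible in $\mathcal{O}$ (as $[\nu] \otimes [\nu^{-1}] \simeq [\mathbbm{1}]$), tensoring with $[\nu]$ is an autoequivalence of the category of $\uqb$-modules and in particular preserves simplicity. To identify the highest $\ell$-weight of $[\nu] \otimes L(\Psi)$, one computes the action of $\Delta(\phi_{j,r}^+)$ on the vector $w \otimes v$, with $w$ generating $[\nu]$ and $v$ a highest $\ell$-weight generator of $L(\Psi)$: every correction term to the diagonal part of the coproduct carries either an element $x_{j,s}^-$ with $s>0$ (which annihilates $w$, since $[\nu]$ is killed by every $x_{i,m}^-$ for $m>0$) or an element in $U_q^+(\bor)$ annihilating $v$. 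The action of $\phi_{j,r}^+$ on $w \otimes v$ thus reduces to the product of the two scalars $\nu_{j,r}$ and $\Psi_{j,r}$, identifying the highest $\ell$-weight of $[\nu]\otimes L(\Psi)$ as $\nu\Psi$.

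Given $L(\Psi)$ simple finite-dimensional, I would then choose $\nu \in \ttt^{\times}$ so that $\nu \cdot \varpi(\Psi) = \overline{\lambda}$ for some dominant integral $\lambda \in \sum_{i\in I}\mathbb{N}\omega_i$ (always achievable by adjusting with powers of $\overline{\omega_i}^{\pm 1}$), and replace $L(\Psi)$ with $L(\nu\Psi)$. The crucial step is then to show that $L(\nu\Psi)$ arises as the restriction to $\uqb$ of a simple finite-dimensional $\uqg$-module. Once this lift is secured, the Chari--Pressley--Drinfeld classification identifies $\nu\Psi$ with a monomial $\Psi''$ in the $Y_{i,a}$'s with non-negative exponents, whence $\Psi = \nu^{-1}\Psi''$ with $\nu^{-1} \in \ttt^{\times}$, as required.

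The main obstacle is precisely this extension of the $\uqb$-action to a $\uqg$-action. Since $\uqb$ already contains the Drinfeld generators $x_{i,r}^+$ for $r \geq 0$ and $x_{i,r}^-$ for $r > 0$, only the other half of the Drinfeld generators (equivalently, the single generator $f_0$) remains to be defined on the finite-dimensional space $L(\nu\Psi)$. Finite-dimensionality combined with the rationality granted by \thmref{thm:SimplesO} forces these missing actions to exist and to be uniquely compatible with the Drinfeld relations. Alternatively, one may bypass the extension by a direct analysis of the rational functions $\Psi_i(z) = \Psi_{i,0}P_i(z)/Q_i(z)$: finite-dimensionality pairs the roots of $P_i$ and $Q_i$ in ratios of $q_i^2$, which matches precisely the form produced by a monomial in $Y_{i,a} = \overline{\omega_i}\Psi_{i,aq_i^{-1}}\Psi_{i,aq_i}^{-1}$.
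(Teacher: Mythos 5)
The paper cites this result from \cite[Remark 3.11]{fh1} rather than proving it, so there is no internal proof to compare against; what matters is whether your argument actually closes.

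The central step of your main strategy has a genuine gap. You choose $\nu\in\ttt^{\times}$ so that $\nu\,\varpi(\Psi)$ lies in $\overline{P^{+}}$ and then assert that finite-dimensionality together with the rationality of $\Psi$ forces the $\uqb$-action on $L(\nu\Psi)$ to extend to $\uqg$. This is false for an arbitrary such $\nu$. Take $\Psi$ trivial except $\Psi_1(z)=q_1$, so that $L(\Psi)=[\overline{\omega_1}]$; its constant part $\overline{\omega_1}$ is already dominant integral, so by your prescription one could take $\nu=1$ and try to extend. But a one-dimensional $\uqg$-module must have all $e_i,f_i$ acting by $0$, whence $k_i^2=1$, contradicting $k_1=q_1$ (as $q$ is not a root of unity). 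More generally, for $\Psi=Y_{1,1}$ with $n\ge 2$ one could equally well pick $\nu=\overline{\omega_2}$, giving $\nu\Psi=\overline{\omega_2}Y_{1,1}$, which does not extend; only $\nu=1$ works. The point is that the correct $\nu$ is not ``any $\nu$ making the constant part dominant integral'' — it must be chosen so that $\nu\Psi$ is \emph{exactly} a monomial in the $Y_{i,a}$'s, and proving such a $\nu$ exists is precisely the content of the proposition. So the step ``finite-dimensionality forces the missing actions to exist'' is circular: it is only true once you already know $\nu\Psi$ is a $Y$-monomial.

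Your final sentence — the ``alternative'' direct analysis of the rational functions $\Psi_i(z)$ — is actually the standard route and the part that must be fleshed out rather than sketched. Concretely: for each $i\in I$ one restricts to the copy of $U_{q_i}(\bor)$ in $\widehat{\mathfrak{sl}}_2$ generated by $x_{i,r}^{+}$, $x_{i,m}^{-}$, $\phi_{i,r}^{+}$, $k_i^{\pm1}$; the highest $\ell$-weight vector generates a finite-dimensional quotient of a rank-one $\uqb$-Verma module of $\ell$-weight $\Psi_i$, and the rank-one analysis (which is what ``pairs the roots of $P_i$ and $Q_i$ in ratios of $q_i^2$'' is standing in for) shows $\Psi_i(z)=\mu_i\, q_i^{\deg P_i}\,P_i(zq_i^{-1})/P_i(zq_i)$ for some constant $\mu_i$ and polynomial $P_i$ with $P_i(0)=1$, i.e.\ $\Psi_i$ is a $Y_{i,a}$-monomial up to a constant. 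Assembling over $i$ gives the claim. Your $[\nu]\otimes L(\Psi)\simeq L(\nu\Psi)$ observation is correct and can serve as a cosmetic normalization, but it cannot replace this rank-one computation, which is where the real work is.
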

The following finite-dimensional modules are of capital importance in the study of $\uqg$.
\begin{example}[see e.g.~{\cite[Example 3.12]{fh1}}]\label{ex:KRsl2} For fixed $k\in \mathbb{N}_{>0}$, $i \in I$ and $a \in \C^{\times}$ we have the Kirillov--Reshitikhin module  
\begin{center}
$W_{k,a}^{(i)} = L(Y_{i,a}Y_{i,aq_i^2}\dots Y_{i,aq_i^{2(k-1)}})$
\end{center}\vspace*{-1mm}
and the fundamental representation $V_i(a) = W_{1,a}^{(i)} = L(Y_{i,a})$.\newpage These simple modules over $\uqb$ (and over $\uqg$) generalize the evaluation representations of \cite{cp1}. A realization of $W_{k,aq^{1-2k}}^{(1)}$ on the space with basis $\{v_j\}_{j=0}^k$ is given for $\g = \su$ by
$$k_1v_j = q^{k-2j}v_j, \quad e_1v_j = v_{j-1}, \quad e_0v_j = aq^{2-k}[j+1]_q[k-j]_qv_{j+1},$$
$$f_1v_j = [j+1]_q[k-j]_qv_{j+1} \ \text{  and }\  af_0v_j =q^{k-2}v_{j-1}$$
with $v_{-1} = v_{k+1} = 0$. We get $x_{1,r}^+v_j = a^rq^{2r(1-j)}v_{j-1}$ and $x_{1,r}^-v_j = a^rq^{-2jr}[j+1]_q[k-j]_qv_{j+1}$ for $r\in \mathbb{Z}$ as well as
$$\ds \phi_{1}^{\pm}(z)v_j= q^{k-2j}\frac{(1-azq^{-2k})(1-azq^2)}{(1-azq^{2(1-j)})(1-azq^{-2j})}v_j.$$
Note that there is no known explicit description of the modules $W_{k,a}^{(i)}$ for $\g$ of general type.
\end{example}
\begin{rem}[{\cite[Remark 3.6]{hj}}]\label{rem:L(PP')} For $\Psi,\Psi'\in \ttt^{\times}_{\ell}$, the submodule of $L(\Psi)\otimes L(\Psi')$ generated by a tensor product of highest $\ell$-weight vectors has highest $\ell$-weight $\Psi\Psi'$. The simple module $L(\Psi\Psi')$ is thus a subquotient of $L(\Psi)\otimes L(\Psi')$. This fact will be used in the next sections.
\end{rem}
\subsection{$q$-characters and prefundamental representations}\label{sec:qcaract} One of the principal tools in the study of the category $\mathcal{O}$ is the $q$-character map. To define it, let $\mathcal{E}_{\ell}$ be the ring of functions $c:\mathfrak{r}\rightarrow\Z$ satisfying the two conditions
\begin{itemize}
\item[(i)] $\{\varpi(\Psi)\,|\,\Psi\in \mathfrak{r},\, c(\Psi)\neq 0\}$ is contained in $\bigcup_{i=1}^r D(\lambda_i)$ for some $\lambda_1,\dots ,\lambda_r\in \mathfrak{t}^{\times}$ and
\item[(ii)] $\{\Psi\in \mathfrak{r}\,|\, c(\Psi)\neq 0,\, \varpi(\Psi)=\mu\}$ is a finite set for any $\mu \in \ttt^{\times}$
\end{itemize}
where the sets $D(\lambda)$ are the ones of Definition \ref{def:O}. We also let $\mathcal{E}$ be the ring of maps $d:\ttt^{\times}\rightarrow \Z$ such that $d(\mu) = 0$ if $\mu \in \ttt^{\times}$ is taken outside a finite union of sets $D(\lambda)$.  The ring structure for $\mathcal{E}_{\ell}$ and $\mathcal{E}$ are obtained via convolution, that is
\begin{center}
$\ds (c_1\cdot c_2)(\Psi) = \sum_{\Psi_1\Psi_2 = \Psi} c_1(\Psi_1)c_2(\Psi_2)$ and $\ds (d_1\cdot d_2)(\mu) = \sum_{\mu_1\mu_2 = \mu} d_1(\mu_1)d_2(\mu_2)$.
\end{center}
Fix $V$ in $\mathcal{O}$. Then the $q$-character $\chi_q(V)$ is the element of $\mathcal{E}_{\ell}$ defined by (cf.~\cite{fr,hj})
\begin{center}
$\ds \chi_q(V)= \sum_{\Psi\in \mathfrak{r}} \dim (V_{\Psi})[\Psi]$
\end{center}
where, for $\Psi \in \mathfrak{r}$, we wrote $[\Psi]$ for the map in $\mathcal{E}_{\ell}$ 
given by $[\Psi](\Psi') = \delta_{\Psi,\Psi'}$. This $q$-character can be seen as a generating function for the dimensions of the $\ell$-weight spaces of $V$. We may also define the ordinary character $\chi(V)$ as the generating function for the dimensions of the (ordinary) weight spaces of $V$, that is
$$\ds \chi(V)= \sum_{\mu \in \ttt^{\times}} \dim (V_{\mu}) [\mu]\in\mathcal{E}$$
where, for $\mu \in \ttt^{\times}$, we defined $[\mu]\in \mathcal{E}$ by $[\mu](\mu') = \delta_{\mu,\mu'}$. Note that the invertible representation $[\mu]$ of $\uqb$ satisfies $\chi([\mu]) = [\mu]\in \mathcal{E}$ so that this last notation should not lead to confusion.\par
If $V$ is of highest $\ell$-weight $\Psi$ (or more generally if $V$ has a unique $\ell$-weight $\Psi$ with $\varpi(\Psi)$ maximal in $P(V)$), we can also consider its normalized $q$-character $\overline{\chi}_q(V)$ given by
\begin{center}
$\overline{\chi}_q(V) = [\Psi^{-1}]\chi_q(V)$.
\end{center}
We have $\chi_q(V\otimes V') = \chi_q(V)\chi_q(V')$ with $\chi_q(V\oplus V') = \chi_q(V)+\chi_q(V')$ for any $V,V'$ in $\mathcal{O}$. This is in fact part of another result due to \cite{hj}.
\begin{thm}[{\cite[Proposition 3.12]{hj}} and {\cite[Theorem 3]{fr}}]\label{thm:qcaract} Taking $q$-characters induces an injective ring morphism $\chi_q : K_0(\mathcal{O}) \rightarrow \mathcal{E}_{\ell}$. In particular, $K_0(\mathcal{O})$ is a commutative ring by commutativity of the convolution product of $\mathcal{E}_{\ell}$.
\end{thm}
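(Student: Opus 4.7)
The plan is to verify in three steps that (1) $\chi_q$ descends to a map out of $K_0(\mathcal{O})$, (2) it is a ring morphism, and (3) it is injective.

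\textbf{Well-definedness on $K_0$.} For $V$ in $\mathcal{O}$ we have $V = \bigoplus_{\Psi \in \mathfrak{r}} V_\Psi$ with $V_\Psi \subseteq V_{\varpi(\Psi)}$, so conditions (i) and (ii) defining $\mathcal{E}_\ell$ follow directly from conditions (iii) and (ii) of Definition \ref{def:O} (the latter combined with the fact that a finite-dimensional module over the commuting family $\{\phi_{i,r}^+\}$ has only finitely many generalized joint eigenspaces). To factor $\chi_q$ through $K_0(\mathcal{O})$, I would then check additivity on any short exact sequence $0 \to U \to V \to W \to 0$ in $\mathcal{O}$. Since every $\uqb$-morphism intertwines $\uqhp$, the generalized joint eigenspace decomposition is compatible with the sequence, and a standard linear-algebra argument on finite-dimensional weight spaces yields $\dim V_\Psi = \dim U_\Psi + \dim W_\Psi$ for each $\Psi \in \mathfrak{r}$.

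\textbf{Multiplicativity.} To establish $\chi_q(V \otimes W) = \chi_q(V)\chi_q(W)$ it suffices to show $V_\Psi \otimes W_{\Psi'} \subseteq (V \otimes W)_{\Psi\Psi'}$, from which the identity $\dim (V\otimes W)_{\Psi''} = \sum_{\Psi\Psi' = \Psi''} \dim V_\Psi \dim W_{\Psi'}$ follows by comparing $\ell$-weight decompositions inside each (finite-dimensional) ordinary weight space of $V \otimes W$. The inclusion relies on the fact that the generating series $\phi_i^+(z)$ is \emph{almost group-like}: its coproduct equals $\phi_i^+(z)\otimes \phi_i^+(z)$ modulo corrections lying in $U_q^-(\g)\otimes U_q^+(\g)$ that strictly lower the ordinary weight on the left tensor factor. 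Filtering $V\otimes W$ by this weight ordering makes the corrections nilpotent on the associated graded, so on $V_\Psi \otimes W_{\Psi'}$ the operator $\phi_{i,r}^+$ has generalized eigenvalue equal to the $z^r$-coefficient of $\Psi_i(z)\Psi'_i(z)$.

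\textbf{Injectivity.} Suppose $\chi_q(x) = 0$ for $x = \sum_{\Psi \in S} n_\Psi [L(\Psi)] \in K_0(\mathcal{O})$ with $S \subseteq \mathfrak{r}$ finite. Pick $\Psi_0 \in S$ maximal for the partial order on $\mathfrak{r}$ obtained by pulling back $\leq$ on $\mathfrak{t}^{\times}$ along $\varpi$. Since $L(\Psi_0)_{\Psi_0}$ is one-dimensional and no $L(\Psi)$ with $\Psi \in S\setminus\{\Psi_0\}$ can carry $\Psi_0$ as an $\ell$-weight (any such $\Psi$ would force $\varpi(\Psi_0) \leq \varpi(\Psi)$ with $\Psi_0 \neq \Psi$, contradicting maximality of $\Psi_0$), the value at $\Psi_0$ of $\chi_q(x) \in \mathcal{E}_\ell$ equals $n_{\Psi_0}$. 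Hence $n_{\Psi_0} = 0$ and induction on $|S|$ concludes. Commutativity of $K_0(\mathcal{O})$ is then inherited from that of $\mathcal{E}_\ell$ by injectivity of $\chi_q$.

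The main obstacle is the multiplicativity step: the generators $\phi_{i,r}^+$ of $\uqhp$ are not group-like, so no direct ``Cartan'' computation is available, and one must verify carefully the triangular form of their coproduct and the nilpotence of the correction terms on each $\ell$-weight space before concluding that the decomposition into $\ell$-weight spaces is multiplicative under tensor products.
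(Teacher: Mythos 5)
The paper does not prove this theorem itself; it imports it from Hernandez--Jimbo and Frenkel--Reshetikhin, so there is no in-paper proof to compare against. Your well-definedness and injectivity steps are sound (the short-exact-sequence argument on finite-dimensional weight spaces and the maximality-in-$\varpi$ argument are both standard and correct). The problem is in the multiplicativity step.

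You reduce multiplicativity to the inclusion $V_\Psi \otimes W_{\Psi'} \subseteq (V\otimes W)_{\Psi\Psi'}$, but this inclusion is false in general. The correct fact is only the equality of dimensions $\dim (V\otimes W)_{\Psi''} = \sum_{\Psi\Psi'=\Psi''}\dim V_\Psi \dim W_{\Psi'}$, and the underlying $\ell$-weight subspaces do \emph{not} match the naive tensor products. Concretely: on a fixed ordinary weight space $(V\otimes W)_\omega = \bigoplus_{\mu} V_\mu \otimes W_{\omega\mu^{-1}}$, the operators $\phi_{i,r}^+$ are block triangular with respect to the $\mu$-filtration, with group-like leading term on the diagonal and corrections strictly shifting $\mu$. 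For a block triangular operator with \emph{distinct} eigenvalues on different diagonal blocks, the generalized eigenspace for a given eigenvalue of a non-extremal block is never the block itself but rather a ``tilted'' subspace mixing in the other blocks (e.g.\ for $\left(\begin{smallmatrix}\lambda_1 & 0 \\ c & \lambda_2\end{smallmatrix}\right)$ with $\lambda_1\neq \lambda_2$ and $c\neq 0$, the $\lambda_1$-eigenline is not $\C e_1$). Hence even though the generalized eigenvalue of the \emph{associated graded} on $V_\Psi\otimes W_{\Psi'}$ is indeed $\Psi\Psi'$, it does not follow that $V_\Psi\otimes W_{\Psi'}$ sits inside the genuine $\ell$-weight space $(V\otimes W)_{\Psi\Psi'}$. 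The argument should instead go directly from block triangularity to the equality of characteristic polynomials (and hence of generalized eigenspace \emph{dimensions}) between $\phi_{i,r}^+$ on $(V\otimes W)_\omega$ and the diagonal-block action, without asserting any containment of subspaces. With that replacement the proof goes through.
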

The $\ell$-weights of a finite-dimensional $\uqg$-module are monomials in the variables $(Y_{i,a})^{\pm 1}$'s (cf.~\cite{fr}) and the $q$-character of such a module may hence be seen as an element of the ring $\mathcal{Y}=\Z[(Y_{i,a})^{\pm 1}]_{i\in I,a\in \C^{\times}}$. We will therefore follow the convention of \cite{hj,fh1,hl2} and forget the brackets $[\cdot]$ when writing an element of $\mathcal{Y}$ in a $q$-character. (In other words, we associate $\Psi$ and $[\Psi] = \delta_{\Psi,\Psi'}$ in the image of the $q$-character map for any $\Psi \in \mathcal{Y}$.) \par We will moreover use 
$$\ds A_{i,a} = Y_{i,aq_i^{-1}}Y_{i,aq_i}\Bigg(\prod_{\{j\in I| C_{ji}=-1\}} Y_{j,a}\prod_{\{j\in I|C_{ji}=-2\}} Y_{j,aq^{-1}}Y_{j,aq}\prod_{\{j\in I|C_{ji}=-3\}} Y_{j,aq^{-2}}Y_{j,a}Y_{j,aq^2}\Bigg)^{-1}.$$
We have $\varpi(A_{i,a}) = \overline{\alpha_i}$ and $\varpi(Y_{i,a}) = \overline{\omega_i}$. Thus $A_{i,a}$ may be seen as the analog of a simple root in $\mathcal{Y}$. We also have the following result (which is used implicitly in Proposition \ref{prop:limitOm} below).
\begin{prop}[{\cite[Proposition 3]{fr}}] For $V$ a simple finite-dimensional $\uqg$-module $$\overline{\chi}_q(V)\in\Z[A_{i,a}^{-1}]_{i\in I,a\in \C^{\times}}.$$
\end{prop}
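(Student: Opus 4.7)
The plan is to reduce the statement to the case of fundamental representations and then conclude via multiplicativity of $q$-characters on tensor products. Let $V = L(\Psi)$ be a simple finite-dimensional $\uqg$-module. By the discussion preceding Proposition \ref{prop:CharacSimFD}, the highest $\ell$-weight $\Psi$ is a monomial in the $Y_{i,a}$'s with non-negative exponents, so I can write $\Psi = \prod_{k=1}^{s} Y_{i_k,a_k}$.

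First I would establish the base case: for each $i\in I$ and $a\in \C^\times$, the normalized character of the fundamental representation satisfies $\overline{\chi}_q(V_i(a)) \in \Z[A_{j,b}^{-1}]_{j\in I,\, b\in \C^\times}$. In rank one ($\g=\su$) this is a direct computation from Example \ref{ex:KRsl2} with $k=1$: the two-dimensional module has $\chi_q(V_1(a)) = Y_{1,a} + Y_{1,aq^2}^{-1} = Y_{1,a}(1+A_{1,aq}^{-1})$. For $\g$ of general type, the Frenkel--Mukhin algorithm produces $\chi_q(V_i(a))$ recursively from the dominant monomial $Y_{i,a}$, each new monomial being obtained from a previous one by multiplication by some $A_{j,b}^{-1}$; correctness of this recursion relies on a careful analysis of the action of the Drinfeld generators $x_{j,r}^{\pm}$ on $\uqhp$-generalized eigenspaces.

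Second, I would apply multiplicativity. By Theorem \ref{thm:qcaract}, $\chi_q$ is multiplicative on tensor products, hence for $T = V_{i_1}(a_1)\otimes \cdots \otimes V_{i_s}(a_s)$ we have $\chi_q(T) = \prod_{k=1}^{s}\chi_q(V_{i_k}(a_k)) = \Psi \cdot \prod_{k=1}^{s}\overline{\chi}_q(V_{i_k}(a_k))$, which by the base case lies in $\Psi \cdot \Z[A_{j,b}^{-1}]$. By Remark \ref{rem:L(PP')} (applied inductively), the submodule of $T$ generated by the tensor of highest $\ell$-weight vectors has highest $\ell$-weight $\Psi$, so $V = L(\Psi)$ appears as a subquotient of $T$. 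Because each $\ell$-weight space is a generalized $\uqhp$-eigenspace and passing to subquotients can only decrease the dimensions of such eigenspaces, every $\ell$-weight of $V$ occurs as an $\ell$-weight of $T$. Therefore every monomial in $\chi_q(V)$ already appears in $\chi_q(T)$, giving $\overline{\chi}_q(V) \in \Z[A_{j,b}^{-1}]$ as required.

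The main obstacle is the base case at Step~1 for general type $\g$: rigorously justifying the Frenkel--Mukhin recursion amounts to showing that adjacent $\ell$-weights in $\chi_q(V_i(a))$ differ by a single factor $A_{j,b}^{-1}$, which ultimately depends on the ``minimality'' of fundamental modules among simple objects with dominant highest $\ell$-weight. In contrast, the reduction of Steps~2--3 is purely formal once the base case and the multiplicativity of $\chi_q$ are known.
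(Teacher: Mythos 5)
Your reduction in Steps 2--3 is sound: multiplicativity of $\chi_q$ on tensor products, the subquotient claim via Remark~\ref{rem:L(PP')}, and the observation that generalized $\uqhp$-eigenspaces can only shrink when passing to subquotients together imply $\overline{\chi}_q(V)\in\Z[A_{j,b}^{-1}]_{j\in I,b\in\C^{\times}}$ once the statement is known for the fundamental representations $V_i(a)$. The gap is that this base case \emph{is} the proposition, restricted to a class of simple finite-dimensional modules that is not structurally easier, and you do not prove it. You defer to the Frenkel--Mukhin algorithm, but certifying that the algorithm's output equals $\chi_q(V_i(a))$, and even that the recursion only ever produces monomials of the form $Y_{i,a}\prod A_{j,b}^{-1}$, already requires the structural fact you are trying to establish. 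So the tensor-product reduction, while correct, transfers the entire difficulty to the base case without discharging it; you acknowledge this yourself in the final paragraph, which is honest, but it means the argument as written is not a proof.

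The proof of [FR, Proposition~3] does not pass through fundamental representations. It works uniformly for every finite-dimensional module by restricting, for each $i\in I$, to the rank-one subalgebra $U_{q_i}(\widehat{\mathfrak{sl}}_2)\subseteq\uqg$ generated by $\{x_{i,r}^{\pm},\,\phi_{i,r}^{\pm}\}_{r}$: the restriction is a finite-dimensional $U_{q_i}(\widehat{\mathfrak{sl}}_2)$-module whose $q$-character can be computed explicitly, and the compatibility of $\chi_q$ with these restrictions forces every $\ell$-weight of $V$ to be obtained from $\Psi$ by multiplying successively by factors $A_{j,b}^{-1}$. This uniform rank-one analysis is the key idea your proposal is missing, and it makes the detour through fundamental representations unnecessary: whatever argument proves the base case for $V_i(a)$ would in fact prove the proposition directly.
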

\begin{example}\label{ex:qcharW} Fix $\g = \su$ and consider $V = W_{k,aq^{1-2k}}^{(1)}$ where $a \in \C^{\times}$ and $k\geq 0$. Then, using Example \ref{ex:KRsl2} and the convention above, we get $\chi_q(V) = Y_{1,aq^{1-2k}}\dots Y_{1,aq^{-1}}\overline{\chi}_q(V)$ with
\begin{center}
$\overline{\chi}_q(V) = 1+ A_{1,a}^{-1} + A_{1,a}^{-1}A_{1,aq^{-2}}^{-1}+\dots +A_{1,a}^{-1}A_{1,aq^{-2}}^{-1}\dots A_{1,aq^{2(1-k)}}^{-1}$.
\end{center}
\end{example}
\begin{thm}[{\cite[Theorem 4.1]{fh1}}, {\cite[Theorem 5.3]{hl2}}]\label{prop:qcharprefond} Fix $a \in \C^{\times}$ and $i\in I$. Then 
\begin{center}
$\chi_q(L_{i,a}^+) = [\Psi_{i,a}]\chi(L_{i,a}^+)$ and $\chi_q(L_{i,a}^-) \in [\Psi_{i,a}^{-1}](1+A_{i,a}^{-1}\Z[[A_{j,b}^{-1}]]_{j\in I, b\in\C^{\times}})$
\end{center}
where the character $\chi(L_{i,a}^+)$ (seen as an element of $\mathcal{E}_{\ell}$ by the obvious inclusion $\mathcal{E}\subseteq \mathcal{E}_{\ell}$) is equal to the character $\chi(L_{i,a}^-)$ and does not depend on the parameter $a$.
\end{thm}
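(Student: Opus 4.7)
My plan is to handle the two prefundamental representations $L_{i,a}^-$ and $L_{i,a}^+$ separately, both through the asymptotic realization as (formal) limits of appropriately renormalized Kirillov--Reshetikhin modules $W_{k,aq_i^{1-2k}}^{(i)}$ as $k \to \infty$, in the spirit of \cite{hj, fh1, hl2}. The limits are to be understood in a suitable completion of $\mathcal{E}_\ell$.

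For $L_{i,a}^-$, the first step is to renormalize each KR module by the inverse of its highest $\ell$-weight $\prod_{s=0}^{k-1}Y_{i,aq_i^{1-2k+2s}}$. By the proposition of \cite{fr} recalled just above, the normalized character $\overline{\chi}_q(W_{k,aq_i^{1-2k}}^{(i)})$ lies in $\Z[A_{j,b}^{-1}]$, and direct inspection of the $\ell$-weights immediately below the highest one (transparent in Example \ref{ex:qcharW} for $\su$, and controlled in general via the Frenkel--Mukhin algorithm) shows that every non-constant term of $\overline{\chi}_q(W_{k,\ast}^{(i)})$ carries a factor of $A_{i,a}^{-1}$. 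Passing to the limit, the inverted highest $\ell$-weight converges to $[\Psi_{i,a}^{-1}]$ while the normalized part accumulates into a formal series in $1 + A_{i,a}^{-1}\Z[[A_{j,b}^{-1}]]$, giving the claimed expression for $\chi_q(L_{i,a}^-)$.

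For $L_{i,a}^+$, I would use the parallel construction in which the renormalization forces the $\ell$-weight structure of the limit to collapse onto its ordinary weight structure: concretely, $\phi_j^+(z)$ should act on each weight space of $L_{i,a}^+$ by the scalar equal to the weight value times $(\Psi_{i,a})_j(z)$. For $\g = \su$ this is immediate from Example \ref{ex:Lpmsl2}. This collapse directly yields $\chi_q(L_{i,a}^+) = [\Psi_{i,a}]\chi(L_{i,a}^+)$. The identity $\chi(L_{i,a}^+) = \chi(L_{i,a}^-)$ and the independence of both from $a$ then follow easily: independence from $a$ is a consequence of \remref{rem:Psishift} combined with the fact that the shift automorphism $\tau_b$ acts trivially on the generators $k_i^{\pm 1}$ (all of $\Z$-degree zero), so that $L_{i,ab}^\pm \simeq L_{i,a}^\pm(b)$ has the same ordinary character as $L_{i,a}^\pm$; the equality between $\chi(L_{i,a}^+)$ and $\chi(L_{i,a}^-)$ then comes from the fact that the two renormalizations in the asymptotic construction differ only in the $\ell$-weight (not weight) direction.

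The main obstacle I anticipate is justifying the asymptotic collapse of $\ell$-weights onto ordinary weights for $L_{i,a}^+$ in arbitrary type. For $\g = \su$ the collapse is transparent from the explicit formulas, but in general it requires a careful manipulation of the rational factors $Y_{j,\ast}^{\pm 1}$ appearing in the $\ell$-weights of the approximating KR modules, to ensure that all contributions not directly stemming from $\Psi_{i,a}$ telescope away in the limit.
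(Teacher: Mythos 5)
The paper does not prove this statement; it is recalled from \cite[Theorem 4.1]{fh1} and \cite[Theorem 5.3]{hl2}, whose strategy (asymptotic limits of Kirillov--Reshetikhin modules) is the one you adopt. Your sketch for $L_{i,a}^-$ is sound: the normalized $q$-characters of $W_{k,aq_i^{1-2k}}^{(i)}$ converge in $\Z[[A_{j,b}^{-1}]]$ (Proposition \ref{prop:limitOm}, Remark \ref{rem:LnegW}), and the Frenkel--Mukhin algorithm supplies the leading factor $A_{i,a}^{-1}$. The subsidiary claims are also fine as sketches: $\chi(L_{i,a}^{\pm})$ is independent of $a$ because $\tau_b$ fixes $\mathfrak{t}$, and $\chi(L_{i,a}^+)=\chi(L_{i,a}^-)$ because the two limits are built from spectrally shifted copies of the same KR modules, which share an ordinary character.

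The genuine gap is exactly the step you defer as your ``main obstacle,'' and it is not a peripheral technicality: the collapse of the $\ell$-weight decomposition of $L_{i,a}^+$ onto its ordinary weight decomposition \emph{is} the identity $\chi_q(L_{i,a}^+)=[\Psi_{i,a}]\chi(L_{i,a}^+)$. The paper itself flags this subtlety in the paragraph following Remark \ref{rem:LnegW}: normalized $q$-characters of KR modules always depend on the spectral parameter, but $\overline{\chi}_q(L_{i,a}^+)$ must not, so the formal-series convergence in $\Z[[A_{j,b}^{-1}]]$ used for $\Om$ cannot be the mechanism that produces the answer for $\Op$. What must actually be shown is that when $L_{i,a}^+$ is realized as a limit of $W_{k,\ast}^{(i)}$'s (spectral parameter shifted so that the highest $\ell$-weight telescopes to a scalar times $\Psi_{i,a}$), every other normalized $\ell$-weight is a rational function whose zeros and poles sit at $a q_i^{2k+O(1)}$ and hence disappear in the limit, leaving only a constant in $\ttt^{\times}$. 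This requires an explicit analysis of KR $\ell$-weights in general type via \cite{fm} together with a convergence argument different in kind from the one for $\Om$; asserting it by analogy with the $\su$ computation of Example \ref{ex:Lpmsl2} does not establish it, and until it is established the $L_{i,a}^+$ half of the theorem is unproved.
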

\begin{example}\label{ex:qcharLmsu} For $\g=\su$, Example \ref{ex:Lpmsl2} gives $\chi_q(L_{1,a}^+)=[\Psi_{1,a}](1-[\overline{\alpha_1}]^{-1})^{-1} = [\Psi_{1,a}]\chi(L_{1,a}^+)$ and $\chi_q(L_{1,a}^-) = [\Psi_{1,a}^{-1}](1+A_{1,a}^{-1}(1+\sum_{j\geq 1} A_{1,aq^{-2}}^{-1}\dots A_{1,aq^{-2j}}^{-1}))$. This agrees with Theorem \ref{prop:qcharprefond}.
\end{example}
We now recall the definition of the main categories of interest in this paper.
\begin{defn}[{\cite[Definition 3.9]{hl2}}]\label{def:Opm} An object $V$ of $\mathcal{O}$ belongs to $\mathcal{O}^+$ (resp. $\mathcal{O}^-$) if the image of any simple constituent of $V$ in 
 $K_0(\mathcal{O})$ is contained inside the subring generated by the classes of finite-dimensional 
 and positive (resp. negative)
 prefundamental representations.
\end{defn}
In other terms, a module $V$ in $\mathcal{O}$ belongs to $\mathcal{O}^+$ (resp. $\mathcal{O}^-$) if the highest $\ell$-weights of its simple constituents are all monomials in the following $\ell$-weights:
\begin{itemize}
\item[(i)] the $Y_{i,a}$'s with $i\in I$ and $a\in \C^{\times}$,
\item[(ii)] the $\Psi_{i,a}$'s (resp. the $(\Psi_{i,a}^{-1})$'s) with $i\in I$ and $a\in \C^{\times}$,
\item[(iii)] the $\mu$'s with $\mu \in \ttt^{\times}$.
\end{itemize}
The so-defined categories $\mathcal{O}^{\pm}$ are full monoidal subcategories of $\mathcal{O}$ which are especially interesting from the point of view of cluster algebras (see e.g.~\cite{hl2,bi} and the upcoming Remark \ref{rem:MonCatClust}). The $q$-character map is also remarkably well-behaved on the negative subcategory $\Om$.
\begin{prop}[see e.g.~{\cite[Theorem 7.6]{hl2}}]\label{prop:limitOm} Fix $V$ a simple object in $\Om$. Then, there are simple finite-dimensional $\uqg$-modules $(V_k)_{k\geq 1}$ with $\textstyle \lim_{k\rightarrow \infty} \overline{\chi}_q(V_k) = \overline{\chi}_q(V)$ as formal power series in $\Z[[A_{i,a}^{-1}]]_{i\in I,a\in \C^{\times}}$.
\end{prop}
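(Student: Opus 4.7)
The plan is to exhibit the approximating sequence $(V_k)$ explicitly using Kirillov--Reshetikhin modules and to verify the limit in the $A^{-1}$-adic topology of $\Z[[A_{j,b}^{-1}]]$. I would proceed in three steps: factorize the highest $\ell$-weight of $V$, use a telescoping identity to replace each negative prefundamental factor by a dominant monomial in the $Y_{j,b}$'s, and finally argue for coefficient-wise convergence of the normalized $q$-characters.

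First, since $V\in\Om$, Definition~\ref{def:Opm} gives a factorization $\Psi=\mu\cdot M\cdot\prod_{t=1}^N \Psi_{i_t,a_t}^{-1}$ for the highest $\ell$-weight $\Psi$ of $V=L(\Psi)$, with $\mu\in\ttt^\times$, $M$ a monomial in the $Y_{j,b}$'s and $(i_t,a_t)\in I\times\C^\times$ (not necessarily distinct). The identity $Y_{i,c}=\overline{\omega_i}\Psi_{i,cq_i^{-1}}\Psi_{i,cq_i}^{-1}$ telescopes to
$$\prod_{r=0}^{k-1} Y_{i,aq_i^{1-2k+2r}} \;=\; \overline{\omega_i}^{\,k}\,\Psi_{i,aq_i^{-2k}}\,\Psi_{i,a}^{-1},$$
which is exactly the highest $\ell$-weight of the Kirillov--Reshetikhin module $W_{k,aq_i^{1-2k}}^{(i)}$. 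Thus this KR module approximates $\Psi_{i,a}^{-1}$ up to a ``spectator'' factor $\overline{\omega_i}^{\,k}\Psi_{i,aq_i^{-2k}}$ whose prefundamental part is shifted to spectral infinity as $k\to\infty$.

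Guided by this, I would set $\Psi^{(k)}:=\mu\cdot M\cdot\prod_{t=1}^N\prod_{r=0}^{k-1} Y_{i_t,a_t q_{i_t}^{1-2k+2r}}$, which, for $k$ large, is a dominant monomial in the $Y_{j,b}$'s times an element of $\ttt^\times$. By Proposition~\ref{prop:CharacSimFD} and the remarks before it, $V_k:=L(\Psi^{(k)})$ is then finite-dimensional and extends to $\uqg$. Moreover $\Psi^{(k)}=\Psi\cdot\prod_t \overline{\omega_{i_t}}^{\,k}\Psi_{i_t,a_t q_{i_t}^{-2k}}$, so $V_k$ differs from $V$ at the level of highest $\ell$-weights by an asymptotically spectrally-negligible shift.

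The crux is the convergence $\overline{\chi}_q(V_k)\to\overline{\chi}_q(V)$ in $\Z[[A_{j,b}^{-1}]]$. Both sides lie in this ring by the previous proposition, so it suffices to check that, for every fixed monomial $m$ in the $A_{j,b}^{-1}$'s, the coefficient in $\overline{\chi}_q(V_k)$ stabilizes (in $k$) to that in $\overline{\chi}_q(V)$. The rank-one case ($N=1$, $\mu=\mathbbm{1}$, $M=1$) is transparent: Example~\ref{ex:qcharW} shows that $\overline{\chi}_q(W_{k,aq^{1-2k}}^{(1)})$ is precisely the degree-$k$ truncation of $\overline{\chi}_q(L_{1,a}^{-})$ computed in Example~\ref{ex:qcharLmsu}, and Theorem~\ref{prop:qcharprefond} gives the analogous form in general type. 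The main obstacle is the multi-factor case, since normalized $q$-characters are \emph{not} multiplicative on tensor products; my plan here is an induction on $N$ exploiting the fact that the spectator factor $\Psi_{i_t,a_t q_{i_t}^{-2k}}$ pushes every non-stabilizing contribution to $A_{j,b}^{-1}$-terms with $b$ of very negative spectral shift, which cannot appear in the fixed monomial $m$ once $k$ is large enough. The structural packaging of this stabilization as a continuity statement for $\chi_q$ on the topological completion of $K_0(\Om)$ is exactly the content of \cite[Theorem~7.6]{hl2}.
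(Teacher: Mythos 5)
The paper does not prove this proposition; it is cited verbatim from \cite[Theorem~7.6]{hl2}, and the text following the statement merely recalls (from \cite{hj,hl2}) the inductive system $F_{k',k}$ that underlies the construction. There is therefore no ``paper's own proof'' to match against, only the reference. Your construction of $V_k$ via the telescoping identity $\prod_{r=0}^{k-1}Y_{i,aq_i^{1-2k+2r}}=\overline{\omega_i}^{\,k}\Psi_{i,aq_i^{-2k}}\Psi_{i,a}^{-1}$ is correct, and it coincides with what the paper itself uses implicitly (Remark~\ref{rem:LnegW} is exactly the $N=1$, $M=1$ instance), and your observation that the rank-one case is already handled by Examples~\ref{ex:qcharW} and~\ref{ex:qcharLmsu} is accurate.

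However, the proposal has a genuine gap precisely where you flag it. The multi-factor stabilization argument is only a plan (``my plan here is an induction on $N$\dots''), and this is not a minor omission: it \emph{is} the theorem. Two points need care. First, your parenthetical that ``normalized $q$-characters are not multiplicative on tensor products'' is incorrect as stated: since $\chi_q(V\otimes W)=\chi_q(V)\chi_q(W)$ and the highest $\ell$-weight of $V\otimes W$ is $\Psi_1\Psi_2$ by Remark~\ref{rem:L(PP')}, one has $\overline{\chi}_q(V\otimes W)=\overline{\chi}_q(V)\overline{\chi}_q(W)$. What may fail is the identity $L(\Psi_1\Psi_2)\simeq L(\Psi_1)\otimes L(\Psi_2)$, i.e.\ your $V_k=L(\Psi^{(k)})$ is only a quotient of $L(M)\otimes\bigotimes_t W^{(i_t)}_{k,a_tq_{i_t}^{1-2k}}$, and $V=L(\Psi)$ is \emph{a priori} only a quotient of $L(M)\otimes\bigotimes_t L^-_{i_t,a_t}$. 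Establishing that these tensor products are in fact simple (for the target $V$ this is essentially the content of Theorem~\ref{thm:ProdPrefond} combined with \cite[Theorems~7.6, 7.9]{hl2}; for the finite-dimensional approximants it requires the cyclicity/cocyclicity input mentioned in the paper after the proposition) is the core of \cite[Theorem~7.6]{hl2}, and the ``spectator drifts to spectral infinity'' heuristic does not substitute for it: you need a concrete bound showing that any monomial of $\chi_q(V_k)$ differing from those of $\chi_q(V)$ involves some $A_{j,b}^{-1}$ with $b$ outside a $k$-independent finite set, and this requires control on the $q$-character of $L(\Psi^{(k)})$ itself, not just of the tensor factors. The paper sidesteps all of this by citing \cite{hl2}; your sketch identifies the right objects but stops short of the real work.
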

In fact, with the above notation, we can define an inductive linear system
\begin{center}
$F_{k',k} : V_k \rightarrow V_{k'}$ \quad (with $k\leq k'$)
\end{center}
so that we have convergence of the action of the subalgebra $\widetilde{U}_q(\g)\subseteq \uqg$ generated by the set $\{x_{i,r}^+,k_i^{-1}x_{i,r}^-,k_i^{-1}\phi_{i,r}^{\pm}, k_i^{-1}\,|\, i\in I, r \in \Z\}$. (This subalgebra $\widetilde{U}_q(\g)$, defined in \cite{hj}, is called \textit{asymptotic algebra} and is strongly related to the shifted quantum affine algebras alluded to in Section \ref{sec:Intro}.)  After identifying the direct limit of this system with $V$, we recover the associated $\uqb$-action up to a choice for the eigenvalue of $k_1,\dots ,k_n$ on the highest $\ell$-weight vector of $V$. (This follows from the fact that $e_0,\dots ,e_n \in \widetilde{U}_q(\g)$, see \cite{hj}.) In other terms, every simple object of $\Om$ is the direct limit of a system of simple finite-dimensional $\uqg$-modules. \par
The maps $F_{k',k}$ defining the inductive system above are introduced in \cite{hj} (see also \cite{hl2}). To recall their construction, write $\Psi_k$ for the highest $\ell$-weight of the simple finite-dimensional module $V_k$. Then, for $k\leq k'$, $F_{k',k}$ is obtained as the composition
$$ V_k = L(\Psi_k) \arr L(\Psi_k)\otimes L(\Psi_{k'}\Psi_k^{-1}) \arr L(\Psi_{k'}) = V_{k'}$$
where the first map is defined by $v \mapsto v\otimes v_0$ for a fixed highest $\ell$-weight vector $v_0 \in L(\Psi_{k'}\Psi_k^{-1})$. The second map is deduced from Remark \ref{rem:L(PP')} and from a notable cyclicity property of tensor products of finite-dimensional fundamental representations
(see e.g.~\cite[Equation 3.21]{hj} and the references  therein).
\begin{rem}\label{rem:LnegW}
The above inductive system is studied in \cite{hj} for 
$ V = L_{i,a}^-$ and $V_k = W_{k,aq_i^{1-2k}}^{(i)} $. It is notably simple for $\g = \su$ as Proposition \ref{prop:limitOm} then follows from Examples \ref{ex:qcharW} and \ref{ex:qcharLmsu}. 
\end{rem}
Although the subcategories $\mathcal{O}^+$ and $\mathcal{O}^-$ are defined in a similar way, they are (as explained in Section \ref{sec:Intro}) instrinsically different from a representation-theoretic perspective. For example, Theorem \ref{prop:qcharprefond} shows that the normalized $q$-character of a negative prefundamental representation $L_{i,a}^-$ depends on the parameter $a\in \C^{\times}$ whereas this is not the case for the normalized $q$-character of $L_{i,a}^+$. This seemingly non-critical fact has deep consequences regarding the analysis done above and makes it in particular hard to extend Theorem \ref{prop:qcharprefond} to the category $\mathcal{O}^+$. (The module $L_{i,a}^+$ is also related, under technical conditions, to a limit of Kirillov--Reshitikhin modules, see \cite{hj}. However, the normalized $q$-characters of the latter modules depend on the spectral parameter $a$ and the limit underlying an analog of Theorem \ref{prop:qcharprefond} for $\mathcal{O}^+$ would thus need to erase this dependance. Such a limit cannot then be as naive as the one used for $\mathcal{O}^-$.) 
\par
Another symptom of the difference between the subcategories $\mathcal{O}^{+}$ and $\mathcal{O}^-$ can be observed when studying the \textit{shifted quantum affine algebras} mentioned in Section \ref{sec:Intro}. Indeed, the action of $\uqb$ on $L_{i,a}^-$ can be extended to the action of such a shifted algebra even though this is not possible for $L_{i,a}^+$ (cf.~\cite[Section 4]{hshift}). In fact, as we have already stated in Section \ref{sec:Intro}, the analog of positive prefundamental representations for shifted algebras are one-dimensional and are thus extremely far from being as complicated as the infinite-dimensional $\uqb$-modules $L_{i,a}^+$.
\par
Nevertheless, even with these differences, the subcategories $\mathcal{O}^{\pm}$ can be related on the level of Grothendieck rings. Indeed, it is shown in \cite[Sections 5-7]{hl2} that there is a ring isomorphism $$ D : K_0(\mathcal{O}^+) \simeq K_0(\mathcal{O}^-) $$ 
that sends classes of simple modules to classes of simple modules and is given by
$$ D[V_i(a)] = [V_i(a^{-1})] \quad \text{and}\quad D[L_{i,a}^+] = [L_{i,a^{-1}}^-]$$
on the classes of finite-dimensional fundamental and positive prefundamental representations (resp.). More precisely, let $V=L(\Psi_1)$ be a finite-dimensional simple $\uqb$-module. Then $$\textstyle \Psi_1= \mu\prod_{i\in I}\prod_{k=1}^{s_i} Y_{i,a_k^{(i)}}$$ for some $\mu\in \C^{\times}$ by Proposition \ref{prop:CharacSimFD} and $D[L(\Psi_1)] = [L(\Psi_2)]$ where $$ \textstyle \Psi_2 = \mu\prod_{i\in I}\prod_{k=1}^{s_i} Y_{i,(a_k^{(i)})^{-1}} $$
is obtained from $\Psi_1$ by mapping each variable $Y_{i,a}$ to $Y_{i,a^{-1}}$. In particular, $D$ sends classes of \textbf{finite-dimensional} $\uqb$-modules to classes of \textbf{finite-dimensional} $\uqb$-modules. We will call this map $D:K_0(\mathcal{O}^+)\simeq K_0(\mathcal{O}^-)$ \textit{Hernandez--Leclerc's duality}.
\begin{rem} 
 A natural question regarding the isomorphism $D$ is whether or not this duality extends to an automorphism of the Grothendieck ring $K_0(\mathcal{O})$ of the whole category $\mathcal{O}$. More elaborately, one could ask whether or not there is a autofunctor $\mathcal{D}_q$ of $\mathcal{O}$ with $D[M] = [\mathcal{D}_q(M)]$ for any $M$ in $\mathcal{O}$. These questions are partially answered in the next pages. Indeed we define in Section \ref{sec:Fq} a functor $\F_q$ relating the category $\mathcal{O}$ corresponding to $\uqgm$ to the one associated to $\uqg$ and consider (assuming that $q$ is a formal variable, see Section \ref{sec:FuncD}) the composition of $\F_q$ with a natural functor $\mathcal{B}_{q^{-1},q}$. The resulting composition $\mathcal{D}_q$ is an exact autofunctor of $\mathcal{O}$ and induces a ring automorphism of $K_0(\mathcal{O})$ which reproduces the duality $D$ on an interesting subring of $K_0(\mathcal{O}^{+})$ (see Theorem \ref{thm:DCat}). Moreover, letting aside the functorial interpretation of Hernandez--Leclerc's duality, we prove in Appendix \ref{app:K0D} that the map $D$ can be extended to a ring automorphism of all of the Grothendieck ring $K_0(\mathcal{O})$. This result does not tell however if the extended automorphism sends classes of simple modules to classes of simple modules (as does $D$ and the map induced from the functor $\mathcal{D}_q$). Our result also does not tell whether or not the extended map can be induced from a well-chosen autofunctor of $\mathcal{O}_q$.
\end{rem}
We now end this section by stating some important results about tensor products in $\mathcal{O}$. 
\begin{theorem}[{\cite[Theorem 4.11]{fh1}}]\label{thm:ProdPrefond}
Any tensor product of prefundamental representations of the same sign (that is positive or negative) is simple. 
\end{theorem}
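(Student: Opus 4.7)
The plan is to establish simplicity of $V := L^{\epsilon}_{i_1,a_1} \otimes \cdots \otimes L^{\epsilon}_{i_k,a_k}$ (for $\epsilon \in \{+,-\}$) by reducing to two complementary properties: first, that the tensor $v := v_1 \otimes \cdots \otimes v_k$ of highest $\ell$-weight vectors generates $V$ over $\uqb$ (cyclicity), and second, that every non-zero submodule of $V$ contains $v$ (cocyclicity). Combined, these force any non-zero submodule $W \subseteq V$ to satisfy $W \supseteq \uqb\, v = V$, which gives simplicity.

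The first step is to analyze the $\ell$-weight structure of $V$. By Theorem \ref{prop:qcharprefond}, the $q$-character of each factor $L^{\epsilon}_{i_j,a_j}$ factors as $[\Psi^{\epsilon}_j]\,\overline{\chi}_q(L^{\epsilon}_{i_j,a_j})$, where $\Psi^{+}_{j}=\Psi_{i_j,a_j}$, $\Psi^{-}_{j}=\Psi_{i_j,a_j}^{-1}$, and every non-trivial term of $\overline{\chi}_q$ has constant part strictly less than $\mathbbm{1}$. Multiplicativity of $\chi_q$ under tensor products (Theorem \ref{thm:qcaract}) then shows that $\Psi := \prod_j \Psi^{\epsilon}_j$ is the unique $\ell$-weight of $V$ whose constant part equals the maximum $\mathbbm{1}$, and that the associated space $V_\Psi = \C v$ is one-dimensional.

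For cocyclicity, take $W \subseteq V$ non-zero and pick $\mu \in P(W)$ maximal for the partial order on $\ttt^\times$. Any $\ell$-weight eigenvector $w \in W_\mu$ satisfies $x_{i,r}^+ w \in W_{\mu\overline{\alpha_i}} = 0$ for all $i\in I$, $r \geq 0$, hence $U_q^+(\bor)\cdot w = 0$; by the triangular decomposition, $\uqb\, w = U_q^-(\bor)\, w$ is a highest $\ell$-weight submodule of $V$ of highest $\ell$-weight $\Phi$ with $\varpi(\Phi) = \mu \leq \mathbbm{1}$. To force $\mu = \mathbbm{1}$ (and hence $w \in \C v$ by the $\ell$-weight analysis above), I would expand $w$ in a tensor basis of weight vectors and use the Drinfeld coproduct formulas on $x_{j,s}^+$ and $\phi_{j,s}^+$ to show that the vanishing condition $U_q^+(\bor)\cdot w = 0$ propagates across the tensor factors, recovering the unique highest $\ell$-weight vector $v$ on each factor.

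For cyclicity, the negative case is the easier one: by Proposition \ref{prop:limitOm} each $L^{-}_{i_j,a_j}$ is the direct limit of Kirillov--Reshetikhin modules, so $V$ is a direct limit of tensor products of KR modules. Cyclicity of such tensor products for the chosen spectral parameters is a classical fact (see \cite[Equation 3.21]{hj} and the references therein), and cyclicity passes to the inductive limit. In the positive case, no such finite-dimensional approximation is available, and one must argue directly using $\uqb\, v = U_q^-(\bor)\cdot v$ and an induction on the weight, showing that $U_q^-(\bor)\cdot v$ meets every weight space of $V$ via the explicit action of the Drinfeld generators $x_{i,m}^-$ on each factor. This last step---cyclicity in the positive case---is the main obstacle, since the limit argument from \cite{hl2} used in the negative case has no direct analogue for positive prefundamental representations (cf.\ the discussion before and after Proposition \ref{prop:limitOm}), and it constitutes the technical heart of the argument of \cite[Theorem 4.11]{fh1}.
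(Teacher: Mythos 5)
The paper merely cites \cite[Theorem 4.11]{fh1} for this statement and contains no proof of its own, so your sketch can only be judged on its internal logic, and it has two serious gaps.

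The cocyclicity step is circular. Having located a maximal-weight $\ell$-weight vector $w$ in a nonzero submodule satisfying $U_q^+(\bor)\cdot w = 0$, you propose to force $\mu=\mathbbm{1}$ by ``expanding $w$ in a tensor basis and using the Drinfeld coproduct formulas on $x_{j,s}^+$ and $\phi_{j,s}^+$.'' But the $\uqb$-module structure on a tensor product in $\mathcal{O}$ is given by the ordinary Hopf coproduct $\Delta$, not the Drinfeld coproduct $\Delta_d$ (which in this paper is only defined on $\uqhp$ and is used to form the auxiliary module $V(u)\otimes_d W$, not the actual tensor product); $\Delta(x_{j,s}^+)$ admits no clean factorwise expansion. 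More fundamentally, the implication you need is false for tensor products of arbitrary simple highest $\ell$-weight modules: any reducible $L(\Psi_1)\otimes L(\Psi_2)$ in $\mathcal{O}$ contains a singular vector (annihilated by $U_q^+(\bor)$) of weight strictly below the top, namely a highest $\ell$-weight vector of a proper submodule. Ruling out such vectors for products of prefundamental representations is precisely what the theorem asserts, so no argument indifferent to the specific prefundamental input — as your sketch is — can close this step.

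The cyclicity you invoke as ``classical'' in the negative case is also not obviously available. The theorem imposes no genericity on $a_1,\dots,a_k$; they may coincide or be in resonance. Cyclicity of a tensor product of Kirillov--Reshetikhin modules $W^{(i_1)}_{k,a_1q_{i_1}^{1-2k}}\otimes\cdots\otimes W^{(i_k)}_{k,a_kq_{i_k}^{1-2k}}$ depends on the ratios $a_j/a_l$, which (at least in the simply-laced case) are unchanged by the shift $q^{1-2k}$ and can lie at bad values. You would need to argue that the inductive limit is insensitive to cyclicity failures at finite stages, or appeal to reordering — which is itself tantamount to part of the theorem. Finally, you explicitly concede that cyclicity in the positive case is unresolved. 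Taken together, the proposal does not establish the result.
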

The following theorem is well-known (see \cite[Theorem 3]{ch} or \cite[Propositions 9.2.4 and 9.5.3]{efk}). Note that we intrinsically use Proposition \ref{prop:CharacSimFD} in order to state it for $\uqb$-modules.
\begin{theorem}[see e.g.~\cite{ch,efk}]
\label{thm:genSim} Let $V$ and $W$ be irreducible finite-dimensional modules over $\uqb$ (or $\uqg$). Then $V(a)\otimes W$ is reducible for at most finitely many $a \in \C^{\times}$. 
\end{theorem}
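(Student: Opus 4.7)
The plan is to follow the classical approach of Chari \cite{ch}, which rests on a reduction to $\uqg$-modules, a cyclicity statement for tensor products at generic $a$, and a comparison of $q$-characters. For the reduction, by Proposition \ref{prop:CharacSimFD} every simple finite-dimensional $\uqb$-module is of the form $[\mu] \otimes V'$ for some $\mu \in \ttt^{\times}$ and $V'$ whose $\uqb$-action extends to $\uqg$. Tensoring with the one-dimensional invertible representations $[\mu]$ is an exact autoequivalence preserving and reflecting irreducibility of tensor products, so I may assume from the outset that $V$ and $W$ are simple finite-dimensional $\uqg$-modules. Write $v \in V$ and $w \in W$ for highest $\ell$-weight vectors with $\ell$-weights $\Psi_V, \Psi_W$. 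Combining Remarks \ref{rem:Psishift} and \ref{rem:L(PP')} shows that the cyclic submodules $M_1(a) := \uqg\cdot(v\otimes w) \subseteq V(a)\otimes W$ and $M_2(a) := \uqg\cdot(w\otimes v) \subseteq W\otimes V(a)$ are both highest $\ell$-weight modules of the common highest $\ell$-weight $\Psi(a) := \Psi_V(a)\Psi_W$, so both admit $L(\Psi(a))$ as their unique simple quotient.

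The technical crux is the cyclicity claim: $M_1(a) = V(a)\otimes W$ and $M_2(a) = W\otimes V(a)$ for all but finitely many $a \in \C^{\times}$. Since the action of the Drinfeld generators on the fixed underlying vector space $V\otimes W$ depends on $a$ through matrix entries that are Laurent polynomials in $a$, the functions $a \mapsto \dim M_i(a)$ are lower semi-continuous in the Zariski topology and attain their maxima on a Zariski-open subset of $\C^{\times}$. The main obstacle is to show that this maximum equals $\dim V\cdot \dim W$, which is traditionally handled via a leading-order analysis as $a \to \infty$ using the Drinfeld triangular decomposition and coproduct, as in \cite{ch}; this is also where the specific structure of $\uqg$ (as opposed to that of a general bialgebra) enters decisively.

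Finally, I would conclude by comparing $q$-characters. For generic $a$, cyclicity gives a surjection $V(a)\otimes W \twoheadrightarrow L(\Psi(a))$, while Theorem \ref{thm:qcaract} yields $\chi_q(V(a)\otimes W) = \chi_q(V(a))\chi_q(W)$. For $a$ outside a further finite set, the spectral parameters appearing in $\Psi_V(a)$ and those in $\Psi_W$ do not collide, and a standard analysis of $\chi_q(L(\Psi(a)))$ for such generic highest $\ell$-weights then gives $\chi_q(L(\Psi(a))) = \chi_q(V(a))\chi_q(W)$. Combined with the surjection above and the injectivity of $\chi_q$ on $K_0(\mathcal{O})$ from Theorem \ref{thm:qcaract}, this forces $V(a)\otimes W \cong L(\Psi(a))$, which is simple.
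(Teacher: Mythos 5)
The paper does not prove this theorem: it is invoked as a known result with citations to \cite{ch} (Theorem~3) and \cite{efk} (Propositions~9.2.4 and~9.5.3), and the only remark made in the text is precisely your reduction step, namely that Proposition~\ref{prop:CharacSimFD} is used to pass from $\uqb$-modules to $\uqg$-modules. That reduction, and the identification of cyclicity of $V(a)\otimes W$ and $W\otimes V(a)$ on the tensor of highest $\ell$-weight vectors as the technical crux (established in Chari's work via a Zariski semi-continuity and leading-order analysis), are accurate and match the cited references.

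The conclusion, however, has a genuine gap. You claim that ``a standard analysis of $\chi_q(L(\Psi(a)))$ for such generic highest $\ell$-weights then gives $\chi_q(L(\Psi(a))) = \chi_q(V(a))\chi_q(W)$,'' but there is no independent route to this character identity: for a simple module $L(\Psi(a))$ with $\Psi(a)=\Psi_V(a)\Psi_W$, knowing that its $q$-character factors as $\chi_q(V(a))\chi_q(W)$ is \emph{equivalent} to knowing $V(a)\otimes W\cong L(\Psi(a))$, which is exactly the simplicity you are trying to prove. (The Frenkel--Mukhin algorithm, which could in principle compute $\chi_q(L(\Psi(a)))$ from $\Psi(a)$ alone, is not known to terminate or be correct for arbitrary highest $\ell$-weights, so it does not rescue this.) Notice also that your proof of cyclicity of $M_2(a)=W\otimes V(a)$ ends up being unused in your concluding step, which is a symptom of the problem. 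The actual conclusion in \cite{ch} and in the Kashiwara-type arguments uses both cyclicity statements together with duality: one shows that $(V(a)\otimes W)^*\cong W^*\otimes V(a)^*$ is again a generic tensor product of simple modules, hence highest $\ell$-weight cyclic on the tensor of its highest $\ell$-weight vectors, which identifies the socle of $V(a)\otimes W$ as $L(\Psi(a))$. Since the highest $\ell$-weight vector $v\otimes w$ must then lie in the socle and simultaneously generates all of $V(a)\otimes W$ by cyclicity, simplicity follows. You should replace your $q$-character step with this duality argument.
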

Tensor products of simple finite-dimensional $\uqb$-modules are hence ``generically'' simple. The corresponding statement is nevertheless not true for the whole category $\mathcal{O}$ as, for example, there exists no $a\in\C^{\times}$ making $L_{1,1}^+(a)\otimes L_{1,1}^-$ simple when $\g = \su$ (see e.g.~\cite[Example 5.1]{her}, \cite{bjmst} or the upcoming Lemma \ref{lem:ProdTensLpm}).
\section{The functor $\F_q$}\label{sec:Fq}
This section is devoted to the study of the covariant functor $\F_q:\mathcal{O}_{q^{-1}}\arr \mathcal{O}_{q}$ alluded to in Section \ref{sec:Intro} (with $\mathcal{O}_{q^{\pm 1}}$ the category $\mathcal{O}$ corresponding to the quantum loop algebra $\uqgpm$). We prove in particular that this functor reverses tensor products and that it changes the sign of prefundamental representations. We also
 deduce the image $\F_q(V)$ of any object $V$ of $\mathcal{O}_{q^{-1}}$ as well as new relations in the ring $K_0(\mathcal{O}_q)$ and a functorial interpretation of the 
duality $D$. \par
As we will work simultaneously with the quantum loop algebras $\uqg$ and $\uqgm$, we will need to distinguish, using different notations, the concepts related to each of these algebras. This was in particular done above with the specification of the subscript $q^{\pm 1}$ on the category $\mathcal{O}_{q^{\pm 1}}$ of Section \ref{sec:O}. We will use the convention that a concept always refers to $\uqg$ unless explicitly specified (with notation or comments). Here are examples of these  ``new'' notations. \par We denote by $\overline{\overline{\textcolor{white}{\alpha}}} : P_{\mathbb{Q}}\arr \ttt^{\times}$ the injective morphism of groups obtained from the one $\overline{\textcolor{white}{\alpha}}$ of Section \ref{sec:O} by changing the quantum parameter to $q^{-1}$. This map induces (as in Section \ref{sec:O}) a partial order $\unlhd$ on $\ttt^{\times}$ with $\mu\unlhd \nu$ if and only if $\nu\mu^{-1}\in \overline{\overline{Q_+}}$. Clearly, $\overline{\overline{\omega}}=\overline{\omega}^{-1}$ for all $\omega \in P_{\mathbb{Q}}$ and it follows that the partial orders $\leq$ and $\unlhd$ are reciprocal (i.e.~$\mu\leq \nu$ if and only if $\nu\unlhd \mu$).
\subsection{First properties}\label{sec:Comon} The defining relations of $\uqgpm$ imply that the correspondence $$e_i\mapsto e_i, \qquad f_i\mapsto f_i, \quad \text{ and } \quad k_i^{\pm 1}\mapsto k_i^{\pm 1} \qquad (0\leq i\leq n)$$ induces an algebra anti-isomorphism $\Xi_q:\uqg\arr \uqgm$. 
Let us define $$\pp_q = S_{q^{-1}}\circ \Xi_q:\uqg\arr \uqgm$$
where $S_{q^{-1}}$ is the antipode of $\uqgm$ given in Section \ref{sec:DefQaff}. Then $\pp_q$ is an algebra isomorphism (since $S_{q^{-1}}$ is an algebra anti-isomorphism) and verifies
$$ \pp_q(e_i) = -k_i^{-1}e_i, \quad \pp_q(f_i) = -f_ik_i \ \text{ and } \ \pp_q(k_i)=k_i^{-1}$$
for $0\leq i\leq n$. This is the map appearing in Section \ref{sec:Intro}.\par
Note that $\pp_q(\uqb) \subseteq \uqbm$. It hence follows that the pullback $\pp_q^*$ induces a covariant functor from the category of all $\uqbm$-module to the category of all $\uqb$-modules. Write $\F_q$ for the restriction of this functor to the category $\mathcal{O}_{q^{-1}}$. Then $\F_q(V) = V$ as vector spaces for all $V$ in $\mathcal{O}_{q^{-1}}$. We will use the symbol $\star$ to distinguish the $\uqbm$-action on $V$ from the $\uqb$-action on $\F_q(V)$ (which is typically written using concatenation).
\begin{prop}\label{prop:FO} Let $V$ be in $\mathcal{O}_{q^{-1}}$. Then $\F_q(V)$ is in $\mathcal{O}_q$.
\end{prop}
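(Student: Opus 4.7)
The plan is to check the three conditions of Definition \ref{def:O} for $\mathcal{F}_q(V)$, using the fact that $V \in \mathcal{O}_{q^{-1}}$ satisfies the analogous conditions for $U_{q^{-1}}(\bor)$ relative to the order $\unlhd$. The key input is the explicit formula $\pp_q(k_i^{\pm 1}) = k_i^{\mp 1}$, which tells us exactly how $\ttt$-eigenspaces transform under pullback.

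First, I would compare the weight-space decomposition of $V$ (as a $U_{q^{-1}}(\bor)$-module via $\star$) with the one of $\mathcal{F}_q(V)$ (as a $\uqb$-module via $k_i \cdot v = \pp_q(k_i) \star v = k_i^{-1} \star v$). For any $\mu = (\mu_i)_{i \in I} \in \ttt^\times$, a vector $v \in V$ satisfies $k_i \star v = \mu_i v$ for all $i$ if and only if $k_i \cdot v = \mu_i^{-1} v$ for all $i$. Hence, writing $V_\mu^\star$ for the $U_{q^{-1}}(\bor)$-weight space and $\mathcal{F}_q(V)_\nu$ for the $\uqb$-weight space, we get the identity $\mathcal{F}_q(V)_{\mu^{-1}} = V_\mu^\star$ of subspaces of $V$.

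From this identity, conditions (i) and (ii) of Definition \ref{def:O} for $\mathcal{F}_q(V)$ follow immediately from the corresponding conditions for $V \in \mathcal{O}_{q^{-1}}$: the decomposition $V = \bigoplus_\mu V_\mu^\star$ rewrites as $\mathcal{F}_q(V) = \bigoplus_\nu \mathcal{F}_q(V)_\nu$, and each weight space $\mathcal{F}_q(V)_\nu = V_{\nu^{-1}}^\star$ is finite-dimensional. It also follows that $P(\mathcal{F}_q(V)) = \{\mu^{-1} : \mu \in P(V)\}$.

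For condition (iii), I would use the reciprocity between $\leq$ and $\unlhd$ recalled in the preamble of this section. Since $V$ lies in $\mathcal{O}_{q^{-1}}$, there exist $\lambda_1, \ldots, \lambda_s \in \ttt^\times$ such that for every $\mu \in P(V)$, one has $\mu \unlhd \lambda_j$ for some $j$. Unpacking, $\lambda_j \mu^{-1} \in \overline{\overline{Q^+}}$, and since $\overline{\overline{\omega}} = \overline{\omega}^{-1}$, this means $\mu \lambda_j^{-1} \in \overline{Q^+}$, i.e., $\mu^{-1} \leq \lambda_j^{-1}$. Therefore $P(\mathcal{F}_q(V)) \subseteq \bigcup_{j=1}^s D(\lambda_j^{-1})$, proving (iii). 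No step here is a real obstacle; the whole proof is a bookkeeping exercise built on the formula $\pp_q(k_i) = k_i^{-1}$ and the reciprocity of the two partial orders.
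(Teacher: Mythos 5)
Your proof is correct and follows essentially the same route as the paper: identify $\F_q(V)_\mu$ with $V_{\mu^{-1}}$ via $\pp_q(k_i)=k_i^{-1}$ to get conditions (i)–(ii), then use the reciprocity of $\leq$ and $\unlhd$ to transfer the bounds on $P(V)$ to $P(\F_q(V))$ for condition (iii). The explicit unpacking of the order-reciprocity is a slightly more detailed version of what the paper states in one line, but the argument is the same.
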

\begin{proof} Remark that the weight spaces $V_{\mu^{-1}}$ and $\F_q(V)_{\mu}$ are naturally identified for $\mu\in \ttt^{\times}$ as $\pp_q(k_i) = k_i^{-1}$. The module $\F_q(V)$ is hence $\ttt$-diagonalizable and $\dim \F_q(V)_{\mu} = \dim V_{\mu^{-1}} < \infty$ whenever $\mu \in \ttt^{\times}$. It therefore suffices to show that the weights of $\F_q(V)$ are appropriately bounded above with respect to the order $\leq$ on $\ttt^{\times}$. \par For this, note that the fact that $V$ is in $\mathcal{O}_{q^{-1}}$ forces the set $P(V)\subseteq \ttt^{\times}$ to be appropriately bounded above for the reciprocal 
order $\unlhd$. We can thus find some $\lambda_1,\dots ,\lambda_s \in \ttt^{\times}$ such that $$\textstyle P(V)\subseteq \bigcup_{i=1}^s \{\mu\in \ttt^{\times}\,|\,\mu\unlhd \lambda_i\}=\bigcup_{i=1}^s\{\mu\in \ttt^{\times}\,|\, \lambda_i\leq \mu\}$$  and the above reasoning gives 
$$ \textstyle P(\F_q(V)) = \{\mu\in \ttt^{\times}\,|\, \mu^{-1}\in P(V)\} \subseteq \bigcup_{i=1}^s\{\mu \in \ttt^{\times}\,|\, \mu\leq \lambda_i^{-1}\}= \bigcup_{i=1}^s D(\lambda_i^{-1})$$
as wanted.
\end{proof}
We can hence consider $\F_q$ as a functor from $\mathcal{O}_{q^{-1}}$ to $\mathcal{O}_q$. It is trivially exact and preserves the dimension of modules since it is defined as a restriction of a pullback. Furthermore, as $\pp_q^{-1} = \pp_{q^{-1}}$, we have that $\F_q$ is invertible with inverse $\F_{q^{-1}}:\mathcal{O}_q\rightarrow \mathcal{O}_{q^{-1}}$. It then follows in particular that $\F_q$ preserves the irreducibility of modules. It is also compatible with spectral parameter shifts and tensor products in the sense of the following two results.\newpage
\begin{prop}\label{prop:Fshift} Fix $a\in \C^{\times}$ and let $V$ be in $\mathcal{O}_{q^{-1}}$. Then  $\F_q(V(a)) \simeq (\F_q(V))(a)$. 
\end{prop}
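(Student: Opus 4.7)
The plan is to unwind the definitions and reduce the claim to the simple algebraic statement that $\pp_q$ commutes with spectral shifts, in the sense that $\tau_a^{q^{-1}}\circ \pp_q = \pp_q\circ \tau_a^q$ as algebra maps $\uqg\to \uqgm$ (where $\tau_a^q$ and $\tau_a^{q^{-1}}$ denote the spectral-shift automorphisms of $\uqg$ and $\uqgm$, respectively). Once this identity is established, the two $\uqb$-modules $\F_q(V(a))$ and $(\F_q(V))(a)$ have literally the same underlying vector space and the same action, so the proposition follows (with an actual equality, not only an isomorphism).

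First I would trace through the definitions. By construction, the underlying vector space of $\F_q(V(a))$ is $V$, and the $\uqb$-action is given by $x\cdot v = \pp_q(x)\star_{V(a)}v = \tau_a^{q^{-1}}(\pp_q(x))\star v$, where $\star$ denotes the original $\uqbm$-action on $V$. Similarly, $(\F_q(V))(a)$ has the same underlying vector space and action $x\cdot v = \tau_a^q(x)\cdot_{\F_q(V)}v = \pp_q(\tau_a^q(x))\star v$. Hence the two modules coincide as soon as the composed algebra maps $\tau_a^{q^{-1}}\circ \pp_q$ and $\pp_q\circ \tau_a^q$ agree on all of $\uqg$.

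The main observation (and really the only substantive step) is that $\pp_q$ is homogeneous of degree zero for the $\Z$-grading introduced in Section~\ref{sec:DefQaff} (where $\deg e_0 = -\deg f_0 = 1$ and all other Drinfeld--Jimbo generators lie in degree $0$). Indeed, this can be checked directly on generators from the explicit formulas $\pp_q(e_i) = -k_i^{-1}e_i$, $\pp_q(f_i) = -f_i k_i$ and $\pp_q(k_i^{\pm 1}) = k_i^{\mp 1}$: each image lies in the same $\Z$-degree as the corresponding generator. Since both $\uqg$ and $\uqgm$ carry the same $\Z$-grading (it depends only on the defining presentation, not on the quantum parameter), $\pp_q$ is $\Z$-graded.

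Once $\pp_q$ is known to be $\Z$-graded, the verification is immediate: for a $\Z$-homogeneous element $x\in \uqg$ of degree $m$, the element $\pp_q(x)\in \uqgm$ has $\Z$-degree $m$, so
\[
\tau_a^{q^{-1}}(\pp_q(x)) = a^m\pp_q(x) = \pp_q(a^m x) = \pp_q(\tau_a^q(x)).
\]
By linearity the equality $\tau_a^{q^{-1}}\circ \pp_q = \pp_q\circ \tau_a^q$ holds on all of $\uqg$, and the proposition follows from the unwinding above. I do not expect any real obstacle here; the only thing one needs to be careful about is that the shift automorphism $\tau_a$ exists and is defined by the same recipe on both $\uqg$ and $\uqgm$, so the comparison of the two composites is meaningful.
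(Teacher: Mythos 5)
Your proof is correct and follows essentially the same route as the paper, which simply notes that $\pp_q$ is compatible with the $\Z$-grading, i.e.\ $\tau_{a,q^{-1}}\circ\pp_q=\pp_q\circ\tau_{a,q}$; you have just spelled out the verification on generators and the unwinding of the pullback actions in more detail.
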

\begin{proof}
This follows from the fact that $\pp_q$ is compatible with the $\Z$-grading of $\uqgpm$ (that is $\tau_{a,q^{-1}}\circ \pp_q = \pp_q\circ \tau_{a,q}$ with $\tau_{a,q^{\pm 1}}$ the automorphism of $\uqgpm$ given in Section \ref{sec:DefQaff}).
\end{proof}
\begin{rem} The above proposition gives in particular $\F_q(V(q)) \simeq (\F_q(V))(q)$ which is somewhat counter-intuitive as $q$ does not ``play the same role'' for the algebras $\uqg$ and $\uqgm$.
\end{rem}
\begin{theorem}\label{thm:Comon} Let $V,W$ be in $\mathcal{O}_{q^{-1}}$ and consider the usual ``flip'' $\tau : V\otimes W \rightarrow W\otimes V$ given by $\tau(v\otimes w)=w\otimes v$. Then $\tau$ induces a $\uqb$-linear isomorphism $\F_q(V)\otimes \F_q(W)\arr \F_q(W\otimes V)$.
\end{theorem}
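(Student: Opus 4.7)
The map $\tau$ is a linear bijection $\F_q(V)\otimes \F_q(W)\to \F_q(W\otimes V)$ (both sides agree with $V\otimes W$ and $W\otimes V$ as vector spaces), so the content of the statement is that $\tau$ intertwines the two $\uqb$-actions. Writing $\star$ for the $\uqbm$-actions and $\cdot$ for $\uqb$-actions, and denoting by $\Delta_q$ (resp.\ $\Delta_{q^{-1}}$) the coproduct of $\uqb$ (resp.\ $\uqbm$), the $\uqb$-action on $\F_q(V)\otimes \F_q(W)$ is $(\pp_q\otimes \pp_q)(\Delta_q(x))$ applied via $\star\otimes \star$, while the $\uqb$-action on $\F_q(W\otimes V)$ is $\Delta_{q^{-1}}(\pp_q(x))$ applied via $\star\otimes\star$ on $W\otimes V$. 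Consequently the plan is to reduce everything to the single identity
\begin{equation*}
\Delta_{q^{-1}}\circ \pp_q \;=\; \tau\circ(\pp_q\otimes\pp_q)\circ\Delta_q
\end{equation*}
of maps $\uqb\to \uqbm\otimes \uqbm$, where on the right $\tau$ denotes the flip of tensor factors. Indeed, given this identity, a direct Sweedler computation shows $\tau(x\cdot(v\otimes w))=x\cdot\tau(v\otimes w)$ for every $x\in\uqb$ and every $v\in V$, $w\in W$.

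To establish the identity, I would first prove the auxiliary relation $\Delta_{q^{-1}}\circ\Xi_q=(\Xi_q\otimes\Xi_q)\circ\Delta_q$. This is checked by inspection on the Drinfeld–Jimbo generators $\{e_i,k_i^{\pm 1}\}_{0\leq i\leq n}$ of $\uqb$, using that the formulas defining the coproducts of $\uqgpm$ have the same shape and that $\Xi_q$ fixes $e_i$, $f_i$, $k_i^{\pm 1}$. The extension to arbitrary elements is then formal: $\Xi_q$ is an algebra anti-homomorphism and so is $\Xi_q\otimes \Xi_q$, so if the identity holds for $a$ and $b$, then
\begin{equation*}
\Delta_{q^{-1}}(\Xi_q(ab))=\Delta_{q^{-1}}(\Xi_q(b))\Delta_{q^{-1}}(\Xi_q(a))=(\Xi_q\otimes\Xi_q)(\Delta_q(b)\Delta_q(a)^{\text{wait}})\,,
\end{equation*}
where the reversal of the two factors combines with the anti-multiplicativity of $\Xi_q\otimes\Xi_q$ to give exactly $(\Xi_q\otimes\Xi_q)(\Delta_q(ab))$. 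The second step is the standard Hopf algebra identity $\Delta\circ S=\tau\circ(S\otimes S)\circ\Delta$ applied to $S_{q^{-1}}$ in $\uqgm$. Composing these two facts and recalling $\pp_q=S_{q^{-1}}\circ\Xi_q$ yields
\begin{equation*}
\Delta_{q^{-1}}(\pp_q(x))=\tau\circ(S_{q^{-1}}\otimes S_{q^{-1}})\circ\Delta_{q^{-1}}(\Xi_q(x))=\tau\circ(\pp_q\otimes\pp_q)\circ\Delta_q(x),
\end{equation*}
which is the required identity.

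The main subtlety — and essentially the only real point to keep track of — is that $\Xi_q$ is an algebra \emph{anti}-isomorphism while $S_{q^{-1}}$ reverses both multiplication and comultiplication, so their composition $\pp_q$ is an algebra \emph{iso}morphism that nonetheless reverses comultiplication (hence the appearance of $\tau$). Once the coproduct identity is in hand, the verification that $\tau$ is $\uqb$-linear is mechanical, and bijectivity is clear, so $\tau$ provides the desired isomorphism $\F_q(V)\otimes \F_q(W)\to \F_q(W\otimes V)$.
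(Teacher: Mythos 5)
Your proposal is correct and follows exactly the same route as the paper: the paper's one-line proof simply records that $S_{q^{-1}}$ is a coalgebra anti-isomorphism and $\Xi_q$ is a coalgebra isomorphism, and you have unwound precisely these two facts into the identity $\Delta_{q^{-1}}\circ \pp_q = \tau\circ(\pp_q\otimes\pp_q)\circ\Delta_q$ and verified it. The only blemish is the stray ``wait'' marker in your second display (the factors $\Delta_q(a),\Delta_q(b)$ should appear in the order $\Delta_q(a)\Delta_q(b)$), but the surrounding prose already gives the correct account, so the argument is sound.
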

\begin{proof} 
It suffices to note that the antipode $S_{q^{-1}}$ of $\uqgm$ and the map $\Xi_q:\uqg\arr\uqgm$ introduced at the beginning of this section are respectively a coalgebra anti-isomorphism and a coalgebra isomorphism.
\end{proof}
Denote by $L_{q^{\pm 1}}(\Psi)$ the irreducible $U_{q^{\pm 1}}(\bor)$-module of highest $\ell$-weight $\Psi\in \ttt^{\times}_{\ell}$ and by $Y_{i,a}^{(q^{\pm 1})}$ the highest $\ell$-weight of the fundamental representation $V_i^{q^{\pm 1}}(a)$ of $U_{q^{\pm 1}}(\bor)$ (cf.~Example \ref{ex:KRsl2}). 
\subsection{Image of simple modules}\label{sec:ImSimple} Fix $\Psi\in \mathfrak{r}$. Then the image of the simple object $L_{q^{-1}}(\Psi)$ of $\mathcal{O}_{q^{-1}}$ through the functor $\F_q$ is a simple object of $\mathcal{O}_q$. We thus have
$$ \F_q (L_{q^{-1}}(\Psi)) \simeq L_q(\Psi^{\F_q})$$
for some (unique) $\Psi^{\F_q}\in \mathfrak{r}$. The goal of this subsection is to characterize the map $\Psi\mapsto \Psi^{\F_q}$. A naive strategy to do this would be to compute the action of the elements $\pp_q(\phi_{i,r}^+)$ (for $i\in I$ and $r\geq 0$) on a fixed highest $\ell$-weight vector $v$ of $L_{q^{-1}}(\Psi)$. (Remark that $v$ is a good candidate for being a highest $\ell$-weight vector for $ \F_q (L_{q^{-1}}(\Psi)) \simeq L_q(\Psi^{\F_q})$ since $\pp_q(e_i)\star v = -k_i^{-1}e_i\star v = 0$ for all $i\in I$.) It seems however difficult (if possible) to obtain an expression of these $\pp_q(\phi_{i,r}^+)$ in terms of the Drinfeld generators of $U_{q^{-1}}(\g)$ and our naive strategy is thus hardly applicable. Let us proceed differently and show first that the map $\Psi\mapsto \Psi^{\F_q}$ behaves well with respect to the structure of the subgroup $\mathfrak{r}\subseteq \ttt_{\ell}^{\times}$.\par 
We will use the following lemma which ties the constant parts of $\Psi$ and $\Psi^{\F_q}$. This lemma relies on the fact that $\mu < \varpi(\Psi)$ for any weight $\mu\neq\varpi(\Psi)$ of a module of highest $\ell$-weight $\Psi$. The latter fact can be proven using the triangular decomposition of $\uqb$ (see Section \ref{sec:DefQaff}).
\begin{lem}\label{lemma:varpiPsiF} Let $\Psi\in \mathfrak{r}$. Then $\varpi(\Psi^{\F_q})=(\varpi(\Psi))^{-1}$. In particular, $\mu^{\F_q} = \mu^{-1}$ for $\mu\in\ttt^{\times}$.
\end{lem}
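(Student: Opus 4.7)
The plan is to sidestep the naive approach flagged by the author --- expanding $\pp_q(\phi_{i,r}^+)$ in the Drinfeld generators of $U_{q^{-1}}(\g)$ --- and instead read off $\varpi(\Psi^{\F_q})$ from the weight-theoretic characterization of the constant part that is cited in the excerpt. Since $\F_q(L_{q^{-1}}(\Psi)) \simeq L_q(\Psi^{\F_q})$ is of highest $\ell$-weight $\Psi^{\F_q}$, its unique $\leq$-maximal weight is $\varpi(\Psi^{\F_q})$; it therefore suffices to identify that unique maximal weight directly on the left-hand side.

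First I would recall how $\F_q$ acts on weight spaces. Because $\pp_q(k_i) = k_i^{-1}$, the vector-space identity $V = \F_q(V)$ identifies the $\uqbm$-weight space $V_{\nu}$ with the $\uqb$-weight space $\F_q(V)_{\nu^{-1}}$ for every $\nu \in \ttt^{\times}$ (as already observed in the proof of Proposition \ref{prop:FO}). I would then reconcile the two partial orders by combining $\overline{\overline{\omega}} = \overline{\omega}^{-1}$ with commutativity of $\ttt^{\times}$, which together yield the equivalence $\nu \unlhd \mu \iff \nu^{-1} \leq \mu^{-1}$. Applying this with $\mu = \varpi(\Psi)$, together with the fact cited in the excerpt (every $\uqbm$-weight $\nu \neq \varpi(\Psi)$ of $L_{q^{-1}}(\Psi)$ satisfies $\nu \unlhd \varpi(\Psi)$ strictly), translates into the statement that every $\uqb$-weight $\mu \neq \varpi(\Psi)^{-1}$ of $\F_q(L_{q^{-1}}(\Psi))$ satisfies $\mu < \varpi(\Psi)^{-1}$. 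Hence $\varpi(\Psi)^{-1}$ is the unique $\leq$-maximal weight of $\F_q(L_{q^{-1}}(\Psi))$, and comparing with $L_q(\Psi^{\F_q})$ forces $\varpi(\Psi^{\F_q}) = \varpi(\Psi)^{-1}$.

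For the second assertion, if $\Psi = \mu \in \ttt^{\times}$ (viewed as an $\ell$-weight via $\mu_{i,r} = \mu_i\delta_{r,0}$), then $L_{q^{-1}}(\mu)$ is the one-dimensional invertible $\uqbm$-module $[\mu]$, and $\F_q([\mu])$ is one-dimensional with $\uqb$-weight $\mu^{-1}$, hence isomorphic to $L_q(\mu^{-1})$; this yields $\mu^{\F_q} = \mu^{-1}$. There is no real obstacle beyond the bookkeeping that simultaneously inverts weights (via $\pp_q(k_i) = k_i^{-1}$) and swaps the two partial orders; once these two inversions are aligned, the lemma follows with no need to evaluate $\pp_q$ on the generators $\phi_{i,r}^+$.
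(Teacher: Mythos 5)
Your argument is correct and is essentially the same as the paper's: both identify the unique $\leq$-maximal weight of $\F_q(L_{q^{-1}}(\Psi))$ by combining the weight-space inversion $\F_q(V)_{\mu} = V_{\mu^{-1}}$ (from the proof of Proposition \ref{prop:FO}) with the reciprocity of $\unlhd$ and $\leq$, then equate it with $\varpi(\Psi^{\F_q})$. The paper phrases the final step as a pair of mutual inclusions of weight sets in the cones $D(\cdot)$, while you phrase it as uniqueness of the maximal element; these are the same argument, and your handling of the ``in particular'' clause via the one-dimensionality of $[\mu]$ is likewise the implicit route in the paper.
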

\begin{proof} Since $\varpi(\Psi)\in P(L_{q^{-1}}(\Psi)) \subseteq \{\mu\in \ttt^{\times}\,|\,\mu \unlhd \varpi(\Psi)\}$, the proof of Proposition \ref{prop:FO} gives 
\begin{align*}
(\varpi(\Psi))^{-1} &\in P(L_q(\Psi^{\F_q}))
=\{\mu\in \ttt^{\times}\,|\,\mu^{-1}\in P(L_{q^{-1}}(\Psi))\} \subseteq  D((\varpi(\Psi))^{-1}).
\end{align*}
However $\varpi(\Psi^{\F_q}) \in P(L_q(\Psi^{\F_q}))\subseteq D(\varpi(\Psi^{\F_q}))$ and the lemma follows from the above fact.
\end{proof}
\begin{rem} Let $\Psi \in \mathfrak{r}$ satisfy $\varpi(\Psi) = \overline{\overline{\omega}}$ with $\omega\in P_{\mathbb{Q}}$. Then $\varpi(\Psi^{\F_q})=(\varpi(\Psi))^{-1} = \overline{\overline{\omega}}^{-1} = \overline{\omega}$ and the map $\Psi \mapsto \Psi^{\F_q}$ fixes the weight lattice $P_{\mathbb{Q}}$ (even if it inverts the elements of $\ttt^{\times}$).
\end{rem}
\begin{prop}\label{prop:Fgroup} The map $\Psi\mapsto \Psi^{\F_q}$ is a group automorphism of $\mathfrak{r}$.
\end{prop}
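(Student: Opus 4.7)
My plan is to verify three things: well-definedness of the assignment $\Psi\mapsto \Psi^{\F_q}$ on $\mathfrak{r}$, the homomorphism property, and bijectivity. Well-definedness is essentially built into the setup, since $\F_q$ is exact and preserves irreducibility (as $\pp_q$ is an algebra isomorphism), so $\F_q(L_{q^{-1}}(\Psi))$ is simple in $\mathcal{O}_q$ by Proposition \ref{prop:FO} and Theorem \ref{thm:SimplesO} then delivers a unique $\Psi^{\F_q}\in\mathfrak{r}$. Bijectivity will follow at once from the invertibility of $\F_q$: since $\pp_q\circ\pp_{q^{-1}}=\mathrm{id}$ and $\pp_{q^{-1}}\circ\pp_q=\mathrm{id}$ (a direct check on the Drinfeld--Jimbo generators), the functors $\F_q$ and $\F_{q^{-1}}$ are mutually inverse, and applying them successively to $L_{q^{-1}}(\Psi)$ forces $(\Psi^{\F_q})^{\F_{q^{-1}}}=\Psi$, with the symmetric relation holding on the $\uqg$-side.

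The substantive content is the homomorphism property, which I would obtain by combining Theorem \ref{thm:Comon} with Remark \ref{rem:L(PP')} on both sides of the flip. Fix highest $\ell$-weight vectors $v\in L_{q^{-1}}(\Psi)$ and $v'\in L_{q^{-1}}(\Psi')$, and let $M$ denote the $\uqbm$-submodule of $L_{q^{-1}}(\Psi)\otimes L_{q^{-1}}(\Psi')$ generated by $v\otimes v'$. By Remark \ref{rem:L(PP')}, $M$ has highest $\ell$-weight $\Psi\Psi'$ and its unique simple quotient is $L_{q^{-1}}(\Psi\Psi')$, so applying the exact functor $\F_q$ produces a surjection $\F_q(M)\twoheadrightarrow L_q((\Psi\Psi')^{\F_q})$. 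On the other hand, the isomorphism of Theorem \ref{thm:Comon} identifies $\F_q(M)$ with the $\uqb$-submodule $N\subseteq L_q((\Psi')^{\F_q})\otimes L_q(\Psi^{\F_q})$ generated by $v'\otimes v$ (the image of $v\otimes v'$ under the flip). To conclude, I would apply Remark \ref{rem:L(PP')} once more to $N$ to identify its unique simple quotient as $L_q(\Psi^{\F_q}(\Psi')^{\F_q})$; matching this with the simple quotient $L_q((\Psi\Psi')^{\F_q})$ of $\F_q(M)\simeq N$ and invoking Theorem \ref{thm:SimplesO} then yields $(\Psi\Psi')^{\F_q}=\Psi^{\F_q}(\Psi')^{\F_q}$.

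The step I expect to be the main obstacle is the subclaim, needed before applying Remark \ref{rem:L(PP')} to $N$, that $v$ and $v'$ remain highest $\ell$-weight vectors when viewed respectively in $L_q(\Psi^{\F_q})=\F_q(L_{q^{-1}}(\Psi))$ and $L_q((\Psi')^{\F_q})=\F_q(L_{q^{-1}}(\Psi'))$. This cannot be verified by direct calculation, since as Section \ref{sec:ImSimple} emphasises, $\pp_q(\phi_{i,r}^+)$ has no tractable expression in the Drinfeld generators of $U_{q^{-1}}(\g)$. The argument must therefore be purely weight-theoretic: by Lemma \ref{lemma:varpiPsiF}, $v'$ lies in the weight space of $\F_q(L_{q^{-1}}(\Psi'))$ associated to $\varpi(\Psi')^{-1}=\varpi((\Psi')^{\F_q})$, which is precisely the one-dimensional highest weight space of $L_q((\Psi')^{\F_q})$. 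Hence $v'$ must coincide up to a nonzero scalar with the chosen highest $\ell$-weight vector of $L_q((\Psi')^{\F_q})$ and is itself one, and the analogous reasoning handles $v$.
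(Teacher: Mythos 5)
Your argument is correct, and it takes a genuinely different route from the paper's. The paper works at the level of $q$-characters: it sets $\Phi=\Psi^{\F_q}(\Psi')^{\F_q}$, applies $\mathcal{G}=\F_{q^{-1}}$ and Remark~\ref{rem:L(PP')} to see that $L_{q^{-1}}(\Phi^{\mathcal{G}})$ is a composition factor of $V\otimes W$, deduces that $\Phi^{\mathcal{G}}=\Psi_1\Psi_2$ with $\Psi_1,\Psi_2$ $\ell$-weights of $V$ and $W$, and then uses Lemma~\ref{lemma:varpiPsiF} together with the strict ordering $\mu<\varpi(\Psi)$ for non-top weights to force $\Psi_1=\Psi$, $\Psi_2=\Psi'$. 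You instead work directly at the module level: identify $\F_q(M)$ with the highest-$\ell$-weight submodule $N\subseteq L_q((\Psi')^{\F_q})\otimes L_q(\Psi^{\F_q})$ via the flip of Theorem~\ref{thm:Comon}, and then match unique simple quotients. Both proofs combine the same three ingredients (Theorem~\ref{thm:Comon}, Remark~\ref{rem:L(PP')}, Lemma~\ref{lemma:varpiPsiF}), but your submodule-based argument bypasses the $q$-character bookkeeping at the cost of having to justify that $v$ and $v'$ remain highest $\ell$-weight vectors under $\F_q$. You correctly identify this as the crux and handle it via the one-dimensionality of the top weight space $L_q((\Psi')^{\F_q})_{\varpi((\Psi')^{\F_q})}$; this small fact (which follows from the triangular decomposition and the injectivity of $\overline{\textcolor{white}{\alpha}}:P_{\mathbb{Q}}\to\ttt^{\times}$, since $U_q^-(\bor)$ has only scalars in $Q$-degree zero) is not stated verbatim in the paper — which only records that the $\ell$-weight space is one-dimensional — so it would deserve a line of justification, but it is true and your argument goes through.
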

\begin{proof}
Take $\Psi,\Psi'\in\mathfrak{r}$ with $V=L_{q^{-1}}(\Psi)$ and $W= L_{q^{-1}}(\Psi')$ in $\mathcal{O}_{q^{-1}}$. Set also $\Phi = \Psi^{\F_q}(\Psi')^{\F_q}$ and write $\F = \F_q$ with $\mathcal{G}=\F_{q^{-1}}:\mathcal{O}_q\rightarrow \mathcal{O}_{q^{-1}}$. Then $\mathcal{G}$ is the inverse of $\F$ and is an exact functor which reverses tensor products by precedent results. We can hence use Remark \ref{rem:L(PP')} to deduce that $L_{q^{-1}}(\Phi^{\mathcal{G}}) = \mathcal{G}(L_q(\Phi))$ is a composition factor of $\mathcal{G}(L_q((\Psi')^{\F})\otimes L_q(\Psi^{\F}))\simeq V\otimes W$.\par It follows that $\Phi^{\mathcal{G}}$ occurs in the $q$-character $\chi_q(V\otimes W)=\chi_q(V)\chi_q(W)$ and must accordingly be of the form $\Phi^{\mathcal{G}} = \Psi_1\Psi_2$ for some $\ell$-weights $\Psi_1$ of $V$ and $\Psi_2$ of $W$. Lemma \ref{lemma:varpiPsiF} also gives
\begin{equation}\label{eq:varpiF}
\varpi(\Psi_1)\varpi(\Psi_2)=\varpi(\Phi^{\mathcal{G}}) = (\varpi(\Phi))^{-1} = (\varpi(\Psi^{\F})\varpi((\Psi')^{\F}))^{-1} = \varpi(\Psi)\varpi(\Psi').
\end{equation}
Suppose $\Psi_1 \neq \Psi$. Then the fact discussed above gives $\varpi(\Psi_1)<\varpi(\Psi)$ with $\varpi(\Psi_2)\leq \varpi(\Psi')$ and the resulting inequality $\varpi(\Psi_1)\varpi(\Psi_2)< \varpi(\Psi)\varpi(\Psi')$ contradicts \eqref{eq:varpiF}. Therefore $\Psi_1 = \Psi$. Similarly, $\Psi_2 = \Psi'$ and $\Psi^{\F}(\Psi')^{\F}=\Phi = (\Phi^{\mathcal{G}})^{\F} = (\Psi_1\Psi_2)^{\F} = (\Psi\Psi')^{\F}$ as wanted.
\end{proof}
Since the group $\mathfrak{r}$ is generated by $\ttt^{\times}$ and the highest $\ell$-weights of prefundamental representations, Lemma \ref{lemma:varpiPsiF} and Proposition \ref{prop:Fgroup} show that it is enough to characterize the $\ell$-weights $(\Psi_{i,a}^{\pm 1})^{\F_q}$ for $i\in I$ and $a \in \C^{\times}$ in order to characterize the action of $\F_q$ on $\mathcal{O}_{q^{-1}}$. For this, it is useful to study first 
 the image $\F_q(V)$ for $V$ a (finite-dimensional)
 fundamental representation of $\uqbm$ (see Example \ref{ex:KRsl2}). This relies on the notion of lowest $\ell$-weight.
\begin{defn}[{\cite[Section 3.6]{hj}}] Fix $\Psi = (\Psi_{i,r})_{i\in I,r\geq 0}\in \ttt^{\times}_{\ell}$ with $V$ a $\uqb$-module. Then $V$ has lowest $\ell$-weight $\Psi$ if there is a non-zero $v \in V$ satisfying $V = \uqb v$ with $U_q^-(\bor)v = 0$ as well as $\phi_{i,r}^+ v = \Psi_{i,r}v$ for all $i \in I$ and $r \geq 0$ (with $U_q^-(\bor)$ as in the end of Section \ref{sec:DefQaff}). 
\end{defn}
Results that hold for the highest $\ell$-weight modules $L_q(\Psi)$ typically also hold for their lowest $\ell$-weight counterpart. In particular, we have $\mu > \varpi(\Psi)$ for any weight $\mu\neq \varpi(\Psi)$ of a simple $\uqb$-module $V$ of \textbf{lowest $\ell$-weight} $\Psi$. This can be applied to finite-dimensional irreducible representations of $\uqb$. Indeed, the highest $\ell$-weight of such a representation has the form $$\Psi = {\textstyle \mu\prod_{i\in I}\prod_{k=1}^{s_i} Y^{(q)}_{i,a_k^{(i)}}}$$ with $\mu \in \ttt^{\times}$ (cf.~Proposition \ref{prop:CharacSimFD}) and the results of \cite{fm} imply that $L_q(\Psi)$ is of lowest $\ell$-weight
$$ \textstyle \Psi'=\mu\prod_{i\in I}\prod_{k=1}^{s_i} \Big(Y^{(q)}_{i^*,a_k^{(i)}q^{r^{\vee}h^{\vee}}}\Big)^{-1} $$
where 
\begin{itemize}[leftmargin = 4.2mm]
\item[--] $h^{\vee}$ is the dual Coxeter number of the finite-dimensional simple 
Lie algebra $\dot{\g}$ underlying $\g$,
\item[--] $r^{\vee}$ is the maximal number of edges connecting two vertices in the Dynkin diagram of $\dot{\g}$ and 
\item[--] $i^*\in I$ is defined by $w_0(\alpha_i)=-\alpha_{i^*}$ for $w_0$ the longest element in the Weyl group of $\dot{\g}$.
\end{itemize}
We will need another lemma (remark the similarity with Theorem \ref{prop:qcharprefond}). 
\begin{lem}\label{lem:weightsFond} Let $i\in I$. Then $\overline{\omega_i-\alpha_i}\in \ttt^{\times}$ is a weight of the fundamental representation $V=V_{i}^q(1)$ of $\uqb$ and the set $P(V)$ of weights for $V$ is contained in $\{\overline{\omega_i}\}\cup\overline{\omega_i-\alpha_i-Q^+}$.
\end{lem}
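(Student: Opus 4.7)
I would work inside the canonical extension of $V = V_i^q(1)$ to a finite-dimensional irreducible $\uqg$-module, which exists by the discussion preceding \propref{prop:CharacSimFD}. Fix a highest $\ell$-weight vector $v\in V$; then $v$ has weight $\overline{\omega_i}$, satisfies $e_jv = 0$ for every $j\in I$, and $k_jv = q_j^{\delta_{ij}}v$. For the first assertion, the defining relation $[e_i,f_i] = (k_i-k_i^{-1})/(q_i-q_i^{-1})$ applied to $v$, together with $e_iv=0$ and $k_iv = q_iv$, gives $e_i(f_iv) = v \neq 0$. Hence $f_iv = x_{i,0}^-v \neq 0$ is a vector of weight $\overline{\omega_i - \alpha_i}$, which thus lies in $P(V)$.

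For the containment claim, the semisimplicity of $U_q(\dot{\g})$ on finite-dimensional modules (valid because $q$ is not a root of unity) lets me decompose $V = V(\omega_i) \oplus \bigoplus_k V(\lambda_k)$ as a $U_q(\dot{\g})$-module, with $V(\omega_i) = U_q(\dot{\g})v$ the top irreducible component and the remaining dominant weights $\lambda_k$ satisfying $\lambda_k < \omega_i$. In $V(\omega_i)$, standard $\mathfrak{sl}_2$-theory gives $f_jv = 0$ for every $j \neq i$ (because $\langle \omega_i,\alpha_j^\vee\rangle = 0$); combined with the identity $V(\omega_i) = U_q^-(\dot{\g})v$ coming from the triangular decomposition, this forces every nonzero product $f_{j_1}\cdots f_{j_k}v$ with $k\geq 1$ to have $j_k = i$ (otherwise the innermost action $f_{j_k}v$ vanishes), so its weight $\overline{\omega_i - \alpha_i - \sum_{\ell<k}\alpha_{j_\ell}}$ lies in $\overline{\omega_i - \alpha_i - Q^+}$.

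It remains to show $\lambda_k \leq \omega_i - \alpha_i$ for the other components, which immediately places every weight of $V(\lambda_k)$ in $\overline{\omega_i - \alpha_i - Q^+}$. Writing $\omega_i - \lambda_k = \sum_{j\in I} n_j\alpha_j \in Q^+\setminus\{0\}$ with $n_j\geq 0$ and supposing $n_i = 0$ for contradiction, the dominance inequalities $\langle\lambda_k,\alpha_l^\vee\rangle \geq 0$ at the nodes $l\neq i$ become $\sum_{j\neq i} n_jC_{l,j} \leq 0$. But the sub-Cartan matrix $(C_{l,j})_{l,j\neq i}$ is of finite type, hence symmetrizable positive definite; pairing with the nonnegative vector $(n_j)_{j\neq i}$ then forces $(n_j)_{j\neq i} = 0$, contradicting $\omega_i - \lambda_k \neq 0$. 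The main obstacle is precisely this positive-definiteness argument for the sub-Cartan matrix; the remaining steps consist of direct $\mathfrak{sl}_2$-calculations and weight bookkeeping.
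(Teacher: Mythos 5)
Your proof is correct. For the first assertion you use the same computation as the paper --- $[e_i,f_i]v=v$ on the highest $\ell$-weight vector forces $f_iv\neq 0$ --- phrased directly rather than as a contradiction, so this part is essentially the paper's argument. For the containment $P(V)\subseteq\{\overline{\omega_i}\}\cup\overline{\omega_i-\alpha_i-Q^+}$, the paper invokes \cite[Corollary 2.13]{h3} as a black box, whereas you give a self-contained proof: restrict $V$ to the finite quantum group $U_q(\dot{\g})$ and decompose it into irreducibles by complete reducibility; show that the top component generated by $v$ has all of its lower weights in $\omega_i-\alpha_i-Q^+$ because $f_jv=0$ for $j\neq i$ forces any nonzero $f_{j_1}\cdots f_{j_r}v$ with $r\geq 1$ to have $j_r=i$; and then rule out, for every other dominant weight $\lambda$ appearing with $\omega_i-\lambda=\sum_j n_j\alpha_j\in Q^+\setminus\{0\}$, the possibility $n_i=0$ by pairing the dominance inequalities $\langle\lambda,\alpha_l^\vee\rangle\geq 0$ at the nodes $l\neq i$ against the symmetrized deleted-node Cartan matrix $(d_lC_{l,j})_{l,j\neq i}$ and using its positive definiteness. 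Both routes establish $\mu\leq\omega_i-\alpha_i$ for every $\mu\in P(V)\setminus\{\omega_i\}$; the citation to \cite{h3} is shorter and packages the same weight bookkeeping in the language of Frenkel--Mukhin $q$-characters, while your version is elementary, avoids any appeal to $q$-character theory, and makes the deleted-node positive-definiteness mechanism explicit.
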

\begin{proof}
Suppose $\overline{\omega_i-\alpha_i}\not\in P(V)$ and take $v\in V$ a highest $\ell$-weight vector. Then $e_iv=0$ and $k_iv = q_iv$ as $\varpi(Y_{i,1}^{(q)}) = \overline{\omega_i}$. The induced $\uqg$-action on $V$ also verifies $f_i v \in V_{\overline{\omega_i-\alpha_i}}$ so that $f_iv = 0$ by hypothesis. Thus, by the defining relations of $\uqg$,
$$ \textstyle v = \left(\frac{k_i-k_i^{-1}}{q_i-q_i^{-1}}\right)v = [e_i,f_i]v = 0$$
but this implies that $v$ is not a highest $\ell$-weight vector. Hence $\overline{\omega_i-\alpha_i}\in P(V)$. The second assertion follows directly from \cite[Corollary 2.13]{h3}. 
\end{proof}
\begin{theorem}\label{thm:FondF} Fix $i\in I$ and set $\Psi = Y_{i,1}^{(q^{-1})}$. Then there is $\gamma_{i} \in \C^{\times}$ such that $\Psi^{\F_q} = Y_{i,\gamma_i}^{(q)}$.
\end{theorem}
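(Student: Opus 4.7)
The plan is to leverage the fact that $\pp_q : \uqg \to \uqgm$ is a \emph{global} algebra isomorphism (not merely a Borel-to-Borel one), which allows the functor $\F_q$ to be upgraded to a pullback functor between the module categories of the full quantum loop algebras. The module $V := V_i^{q^{-1}}(1) = L_{q^{-1}}(\Psi)$ extends uniquely from $\uqbm$ to a simple finite-dimensional $\uqgm$-module, and pulling back this action through $\pp_q$ endows $V$ with the structure of a simple finite-dimensional $\uqg$-module whose restriction to $\uqb$ is precisely $\F_q(V)$. Since $\pp_q$ is an isomorphism, this $\uqg$-module is still simple and of the same finite dimension as $V$.

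Next I would check that the extended $\uqg$-action is of type I. By \lemref{lemma:varpiPsiF}, one has $\varpi(\Psi^{\F_q}) = \overline{\overline{\omega_i}}^{-1} = \overline{\omega_i}$, which lies in $\overline{P}$. Since all weights of $\F_q(V)$ are bounded above by this element, they too lie in $\overline{P}$, so the extended $\uqg$-action is of type I. The classification of type I simple finite-dimensional $\uqg$-modules recalled just before \propref{prop:CharacSimFD} then guarantees that $\Psi^{\F_q}$ is a \emph{dominant} monomial in the variables $Y_{j,a}^{(q)}$, with no $\mu \in \ttt^{\times}$ prefactor (as would otherwise appear in the more general statement of \propref{prop:CharacSimFD}). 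Hence $\Psi^{\F_q} = \prod_{k=1}^{N} Y_{j_k,a_k}^{(q)}$ for some $j_k \in I$ and $a_k \in \C^{\times}$, with $N \geq 1$ because $\F_q(V)$ is non-trivial.

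The endgame is purely combinatorial. Taking constant parts yields the identity $\overline{\omega_i} = \varpi(\Psi^{\F_q}) = \prod_{k=1}^{N} \overline{\omega_{j_k}} = \overline{\sum_{k=1}^{N} \omega_{j_k}}$ in $\ttt^{\times}$. The injectivity of the map $P_{\mathbb{Q}} \to \ttt^{\times}$ together with the linear independence of the fundamental weights $\{\omega_j\}_{j \in I}$ in $P_{\mathbb{Q}}$ forces $N = 1$ and $j_1 = i$, giving $\Psi^{\F_q} = Y_{i,\gamma_i}^{(q)}$ for some $\gamma_i \in \C^{\times}$. I do not anticipate a serious obstacle: the only point requiring care is justifying that the $\uqg$-action coming from $\pp_q$-pullback is of type I, which reduces to the fact $\overline{\omega_i} \in \overline{P}$; after that Chari--Pressley and linear independence of the fundamental weights do all the work.
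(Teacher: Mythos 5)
Your proof is correct, but it takes a genuinely different route than the paper's. The paper works entirely within $\uqb$-modules and therefore only has \propref{prop:CharacSimFD} at its disposal, which allows a prefactor $\mu \in \ttt^{\times}$; since $\mu$ is an arbitrary element of $\ttt^{\times}$ (not necessarily in $\overline{P}$), taking constant parts alone does not pin down $\Psi^{\F_q}$. To compensate, the paper combines the highest \emph{and} lowest $\ell$-weight of $\F_q(V)$ (via Frenkel--Mukhin) with \lemref{lemma:varpiPsiF} to reduce to two cases ($s_j = \delta_{i,j}$ or $s_j = \delta_{i^*,j}$ with $i \neq i^*$), and then invokes \lemref{lem:weightsFond} to rule out the second case. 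Your argument sidesteps all of this by observing that $\pp_q$ is a global isomorphism $\uqg \to \uqgm$, so that $\F_q(V)$ is the restriction of a simple finite-dimensional $\uqg$-module; once you verify type~I (which does follow from the weights lying in $\overline{P}$, since $q_i$ is not a root of unity so no sign twist can occur), the Chari--Pressley classification gives that $\Psi^{\F_q}$ is a monomial in the $Y_{j,a}^{(q)}$'s with no $\mu$ prefactor, and a single constant-part comparison plus linear independence of the $\{\omega_j\}_{j\in I}$ finishes. This avoids the lowest-weight analysis, the $i^*$ case distinction, and \lemref{lem:weightsFond} entirely, and is arguably the cleaner argument. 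The one point you should make fully explicit is that the highest $\ell$-weight of $\F_q(V)$ as a $\uqb$-module agrees with its highest $\ell$-weight as a $\uqg$-module (the $\uqg$-highest vector satisfies the $\uqb$-highest conditions and $\F_q(V)$ remains simple over $\uqb$, so the two notions coincide); this is needed to transfer the type-I classification back to $\Psi^{\F_q}$.
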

\begin{rem} This is compatible with Lemma \ref{lemma:varpiPsiF} as $\varpi(Y_{i,\gamma_i}^{(q)}) = \overline{\omega_i}= (\overline{\overline{\omega_i}})^{-1} = (\varpi(Y_{i,1}^{(q^{-1})}))^{-1}$.
\end{rem}
\begin{proof}
Fix $V = L_{q^{-1}}(\Psi)$ in $\mathcal{O}_{q^{-1}}$ and note that, as $\dim \F_q(V) = \dim V < \infty$, the highest and lowest $\ell$-weights of $\F_q(V)$ must respectively have the form
$$\Psi^{\F_q} = \mu\textstyle \prod_{j\in I}\prod_{k=1}^{s_j} Y_{j,a_k^{(j)}}^{(q)} \quad \text{and} \quad \textstyle (\Psi^{\F_q})' = \mu \prod_{j\in I}\prod_{k=1}^{s_j} \Big(Y_{j^*,a_k^{(j)}q^{r^{\vee}h^{\vee}}}^{(q)}\Big)^{-1}$$
for some
 $\mu\in \ttt^{\times}$.
By Lemma \ref{lemma:varpiPsiF} (and the analogous result for simple lowest $\ell$-weight modules) these $\ell$-weights must satisfy the relations $\varpi(\Psi^{\F_q}) = (\varpi(\Psi))^{-1}$ and $\varpi((\Psi^{\F_q})') = (\varpi(\Psi'))^{-1}$ where $\Psi' = \Big(Y_{i^*,q^{-r^{\vee}h^{\vee}}}^{(q^{-1})}\Big)^{-1}$ is the lowest $\ell$-weight of $V$. Hence, as $\varpi(Y_{j,a}^{(q^{\pm 1})}) = \overline{\pm\omega_j}$, we have 
$$ \frac{\varpi(\Psi^{\F_q})}{\varpi((\Psi^{\F_q})')} = \overline{\omega} = \overline{\omega_i+\omega_{i^{*}}}=\frac{\varpi(\Psi')}{\varpi(\Psi)}$$
for $\omega = \sum_{j\in I}s_j(\omega_j+\omega_{j^*})$. However the group morphism $\overline{\textcolor{white}{\alpha}}:P_{\mathbb{Q}}\rightarrow \ttt^{\times}$ is injective and the fundamental weights are free in the weight lattice of $\dot{\g}$. There are thus only two cases left.
\begin{itemize}
\item[(Case 1)] Suppose $s_j = \delta_{i,j}$ for all $j\in I$. Then $\Psi^{\F_q} = \mu Y_{i,\gamma_i}^{(q)}$ for $\gamma_i = a_1^{(i)}\in \C^{\times}$ and the result follows from the relation $\varpi(\Psi^{\F_q}) = \varpi(\mu Y_{i,\gamma_i}^{(q)}) = \mu \overline{\omega_i} = \overline{\omega_i} = (\varpi(\Psi))^{-1}$.
\item[(Case 2)] Suppose $i\neq i^*$ with $s_j = \delta_{i^*,j}$ for all $j\in I$. Then $\Psi^{\F_q} = \mu Y_{i^*,a}^{(q)}$ for some $a\in \C^{\times}$ and $\varpi(\Psi^{\F_q}) = \mu \overline{\omega_{i^*}} = \overline{\omega_i} = \varpi(\Psi)^{-1}$ implies $\mu = \overline{\omega_i-\omega_{i^*}}$. Furthermore Lemma \ref{lem:weightsFond} (for the parameter $q^{-1}$) shows that $\overline{\overline{\omega_i-\alpha_i}} = \overline{\alpha_i-\omega_i} \in P(V)$. The inverse weight $\overline{\omega_i-\alpha_i}$ thus belongs to $P(\F_q(V))$ (see the proof of Proposition \ref{prop:FO}), but, by Lemma \ref{lem:weightsFond}, 
$$ P(\F_q(V)) = P(L_q(\Psi^{\F_q})) = \mu P(L_q(Y_{i^*,a}^{(q)})) \subseteq \{\overline{\omega_i}\}\cup \overline{\omega_i-\alpha_{i^*}-Q^+} $$
where we have used the relation $\mu = \overline{\omega_i-\omega_{i^*}}$. The injectivity of the group morphism $\overline{\textcolor{white}{\alpha}}:P_{\mathbb{Q}}\rightarrow \ttt^{\times}$ then gives $\alpha_i-\alpha_{i^*} \in Q^+$ but this is impossible as $i\neq i^*$ by hypothesis.
\end{itemize}
This concludes the proof since only the first case remains. 
\end{proof}
The following theorem uses again the shift $\gamma_i\in \C^{\times}$ appearing in Proposition \ref{thm:FondF}. Remark that this shift could \textit{a priori} depend on the choice of $i\in I$. 
\begin{theorem}\label{thm:preFondF}
Fix $i\in I$ and set $\Psi = \Psi_{i,1}$. Then $\Psi^{\F_q} = \Psi_{i,\gamma_i}^{-1}$.
\end{theorem}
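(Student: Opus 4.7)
The plan is to reduce Theorem \ref{thm:preFondF} to Theorem \ref{thm:FondF} by applying the group automorphism $(\cdot)^{\F_q}$ of Proposition \ref{prop:Fgroup} to the defining identity expressing $Y_{i,a}^{(q^{-1})}$ as a product of positive prefundamental $\ell$-weights, and then to invoke an elementary observation about rational functions with a nontrivial multiplicative periodicity. Setting $X := \Psi_{i,1}^{\F_q}$ and $Z := \Psi_{i,\gamma_i}^{-1} \in \mathfrak{r}$ (viewed as an $\ell$-weight for $\uqb$), the claim becomes $X = Z$.

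First I would write, in $\mathfrak{r}$ (as a subgroup of $\ttt_\ell^\times$ for $\uqgm$), the identity
$$Y_{i,a}^{(q^{-1})} = \overline{\overline{\omega_i}}\, \Psi_{i,aq_i}\, \Psi_{i,aq_i^{-1}}^{-1} \qquad (a \in \C^\times),$$
obtained from the formula recalled in Section \ref{sec:qcaract} after swapping $q_i$ and $q_i^{-1}$ (which reflects the change of quantum parameter $q \leftrightarrow q^{-1}$). Applying $(\cdot)^{\F_q}$, and using Lemma \ref{lemma:varpiPsiF} to compute $(\overline{\overline{\omega_i}})^{\F_q} = \overline{\omega_i}$, Theorem \ref{thm:FondF} together with Proposition \ref{prop:Fshift} to get $(Y_{i,a}^{(q^{-1})})^{\F_q} = Y_{i,a\gamma_i}^{(q)}$, and Proposition \ref{prop:Fshift} once more to write $(\Psi_{i,aq_i^{\pm 1}})^{\F_q} = X(aq_i^{\pm 1})$, one obtains
$$\overline{\omega_i}\, \Psi_{i,a\gamma_i q_i^{-1}}\, \Psi_{i,a\gamma_i q_i}^{-1} = \overline{\omega_i}\, X(aq_i)\, X(aq_i^{-1})^{-1} \qquad (a \in \C^\times).$$
Cancelling $\overline{\omega_i}$ and substituting $w = aq_i$ then rewrites this, via the relations $\Psi_{i,w\gamma_i q_i^{-2}} = Z(wq_i^{-2})^{-1}$ and $\Psi_{i,w\gamma_i} = Z(w)^{-1}$, as the functional equation
$$(XZ^{-1})(w) = (XZ^{-1})(w q_i^{-2}) \qquad (w \in \C^\times).$$

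To finish, I would exploit that $q$ is not a root of unity: the scaling $z \mapsto q_i^{-2} z$ then has infinite orbit on every point of $\C^\times$, so any rational function invariant under it has neither zero nor pole in $\C^\times$ and must be a constant. Applied componentwise to $X Z^{-1} \in \mathfrak{r}$ (which lies in $\mathfrak{r}$ by Proposition \ref{prop:Fgroup}), this forces $XZ^{-1} = \nu$ for some $\nu \in \ttt^\times$. Lemma \ref{lemma:varpiPsiF} then pins $\nu$ down: applying $\varpi$ and noting that $\varpi(\Psi_{i,1}) = \mathbbm{1} = \varpi(\Psi_{i,\gamma_i}^{-1})$ yields $\nu = \mathbbm{1}$, whence $X = Z$ as claimed. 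The main (mildly delicate) point of the plan is verifying that the functional equation above suffices to determine $X$ up to an element of $\ttt^\times$; this is exactly what the elementary rational-function periodicity argument supplies, and once this is in hand the final identification via Lemma \ref{lemma:varpiPsiF} is automatic.
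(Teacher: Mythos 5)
Your proof is correct and takes essentially the same route as the paper: both apply $(\cdot)^{\F_q}$ to the multiplicative relation linking $Y_{i,a}^{(q^{-1})}$ and the prefundamental $\ell$-weights, extract a $q_i^{-2}$-periodicity for the resulting rational functions, use the fact that $q$ is not a root of unity to force constancy, and finally pin down the constant via $\varpi$ and Lemma \ref{lemma:varpiPsiF}. The only cosmetic difference is that the paper writes the relation as $\Psi = \overline{\omega_i}Y_{i,q_i^{-1}}^{(q^{-1})}(\Psi(q_i^{-2}))$ and argues on the component $(1-\gamma_i z)\Psi_i^{\F_q}(z)$ directly, while you phrase the same periodicity through the ratio $X Z^{-1}$.
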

\begin{proof} Applying $\F_q$ on the relation $\Psi =\overline{\omega_i}Y_{i,q_i^{-1}}^{(q^{-1})}(\Psi(q_i^{-2}))$ of $\mathfrak{r}$ and using Theorem \ref{thm:FondF} with Proposition \ref{prop:Fshift}, Proposition \ref{prop:Fgroup} and Lemma \ref{lemma:varpiPsiF} give
\begin{equation}\label{eq:PsiPrefondFond}
\Psi^{\F_q} = \overline{-\omega_i}Y_{i,q_i^{-1}\gamma_i}^{(q)}(\Psi^{\F_q}(q_i^{-2})).
\end{equation}
In particular, the rational functions $\Psi^{\F_q}_j(z)$ for $j\in I$ with $i\neq j$ satisfy $\Psi^{\F_q}_j(z) = \Psi^{\F_q}_j(zq_i^{-2})$ and must therefore be constant. Moreover Equation \eqref{eq:PsiPrefondFond} at $j=i$ gives
$$ (1-\gamma_iz)\Psi_i^{\F_q}(z) = (1-\gamma_i zq_i^{-2})\Psi_i^{\F_q}(zq_i^{-2})$$
and the rational function $(1-\gamma_iz)\Psi_i^{\F_q}(z)$ must hence also be constant. It thus follows that $\Psi^{\F_q} = \mu\Psi_{i,\gamma_i}^{-1}$ for some $\mu\in \ttt^{\times}$ but $\mu = \varpi(\Psi^{\F_q}) = (\varpi(\Psi))^{-1} = 1$ by Lemma \ref{lemma:varpiPsiF}. 
\end{proof}
\begin{cor} The functor $\F_q$ maps objects of $\mathcal{O}_{q^{-1}}^+$ (resp.~$\mathcal{O}_{q^{-1}}^-$) to objects of $\mathcal{O}_{q}^-$ (resp.~$\mathcal{O}_{q}^+$).
\end{cor}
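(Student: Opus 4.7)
The plan is to reduce the statement to the case of simple modules and then to translate the explicit formulas of Theorems~\ref{thm:FondF} and~\ref{thm:preFondF} into a statement about the multiplicative structure of highest $\ell$-weights.

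First, since $\F_q$ is an exact invertible functor sending simple modules to simple modules (as noted right after Proposition~\ref{prop:FO}), the simple constituents of $\F_q(V)$ are exactly the $\F_q$-images of those of $V$. Because membership in $\mathcal{O}^{\pm}$ is detected on simple constituents via the characterization stated immediately after Definition~\ref{def:Opm}, it suffices to prove the statement for a single simple module $L_{q^{-1}}(\Psi)$ whose highest $\ell$-weight $\Psi$ is a monomial in elements of $\ttt^{\times}$, in the $Y^{(q^{-1})}_{i,a}$'s and in the $\Psi_{i,a}^{\varepsilon}$'s, with $\varepsilon=+1$ for $\mathcal{O}_{q^{-1}}^+$ and $\varepsilon=-1$ for $\mathcal{O}_{q^{-1}}^-$.

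Next, I would use Proposition~\ref{prop:Fgroup} to reduce the computation of $\Psi^{\F_q}$ to a computation on each of these generators. Lemma~\ref{lemma:varpiPsiF} gives $\mu^{\F_q}=\mu^{-1}$ for $\mu\in\ttt^{\times}$, while Theorems~\ref{thm:FondF} and~\ref{thm:preFondF}, combined with Proposition~\ref{prop:Fshift} (which, through Remark~\ref{rem:Psishift} and the spectral shift automorphism $\tau_a$, lifts the formulas from $a=1$ to arbitrary $a\in\C^{\times}$), yield $(Y^{(q^{-1})}_{i,a})^{\F_q}=Y^{(q)}_{i,a\gamma_i}$ and $(\Psi_{i,a})^{\F_q}=\Psi_{i,a\gamma_i}^{-1}$. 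Applying the group morphism $\Psi\mapsto\Psi^{\F_q}$ factorwise to a monomial of the type described above, one finds that $\Psi^{\F_q}$ is itself a monomial in $\ttt^{\times}$, in the $Y^{(q)}_{j,b}$'s and in the $\Psi_{j,b}^{-\varepsilon}$'s, which is precisely the condition for $\F_q(L_{q^{-1}}(\Psi))=L_q(\Psi^{\F_q})$ to belong to $\mathcal{O}_q^{-\varepsilon}$.

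I do not expect any genuine difficulty here: the corollary is essentially a bookkeeping exercise on top of the results already proved in this subsection, the key point being that $\Psi\mapsto\Psi^{\F_q}$ is a group homomorphism whose values on the three families of generating $\ell$-weights have been computed above. The only point deserving a moment's care is the compatibility of $\F_q$ with the passage to simple constituents in $\mathcal{O}$, which is itself immediate from the exactness and irreducibility-preservation of the functor.
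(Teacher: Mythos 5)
Your proof is correct and takes essentially the same route as the paper, which simply cites Definition~\ref{def:Opm}, Lemma~\ref{lemma:varpiPsiF}, Proposition~\ref{prop:Fgroup}, Theorems~\ref{thm:FondF} and~\ref{thm:preFondF}, and the exactness and irreducibility-preservation of $\F_q$; you merely spell out the bookkeeping that the one-line proof leaves implicit, including the (correctly identified) use of Proposition~\ref{prop:Fshift} to pass from the parameter $a=1$ in the cited theorems to arbitrary $a\in\C^{\times}$.
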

\begin{proof}
This directly follows from Definition \ref{def:Opm}, Lemma \ref{lemma:varpiPsiF}, Proposition \ref{prop:Fgroup}, Theorem \ref{thm:FondF} and Theorem \ref{thm:preFondF} (with the fact that $\F_q$ is exact and preserves irreducibility of modules).
\end{proof}
We now prove that the shift $\gamma_i\in \C^{\times}$ appearing in $\Psi_{i,1}^{\F_q}=\Psi_{i,\gamma_i}^{-1}$ does not depend on $i\in I$. For this goal, note that $[V]\mapsto [\F_q(V)]$ gives a well-defined ring isomorphism $K_0(\mathcal{O}_{q^{-1}})\simeq K_0(\mathcal{O}_q)$ (also written $\F_q$) that sends the subring $K_0(\mathcal{O}_{q^{-1}}^{-})$ of $K_0(\mathcal{O}_{q^{-1}})$ to $K_0(\mathcal{O}_q^{+})$. Recall also from Section \ref{sec:qcaract} the ring isomorphism $D_q: K_0(\mathcal{O}_q^+)\simeq K_0(\mathcal{O}_q^-)$ of \cite{hl2} which acts on equivalence classes of fundamental representations as 
$$D_q[V_i^q(a)] = [V_i^q(a^{-1})].$$ Consider finally the ring automorphism $\mathcal{H}_q=\F_q\circ D_{q^{-1}}\circ \F_{q^{-1}}\circ D_{q}$ of $K_0(\mathcal{O}_q^+)$.
\begin{lem}\label{lem:Hq} Let $i\in I$ and $a \in \C^{\times}$. Then $\mathcal{H}_q[V_i^q(a)] = [V_i^q(a\gamma_i^2)]$.
\end{lem}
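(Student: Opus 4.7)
The plan is to apply each of the four ring homomorphisms in the composition $\mathcal{H}_q = \F_q \circ D_{q^{-1}}\circ \F_{q^{-1}}\circ D_q$ to the class $[V_i^q(a)]$ and track the spectral shifts using Theorem \ref{thm:FondF} and Proposition \ref{prop:Fshift}. The preliminary step is to clarify what shift Theorem \ref{thm:FondF} produces for the inverse functor $\F_{q^{-1}}: \mathcal{O}_q \arr \mathcal{O}_{q^{-1}}$. Write $\gamma_i$ for the shift in Theorem \ref{thm:FondF} and let $\gamma_i'\in\C^{\times}$ denote the analogous shift obtained by applying Theorem \ref{thm:FondF} with roles swapped, so that $\F_{q^{-1}}(V_i^q(1))\simeq V_i^{q^{-1}}(\gamma_i')$.

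The first key step is to show that $\gamma_i' = \gamma_i^{-1}$. For this I would combine the identities $\F_q(V_i^{q^{-1}}(1))\simeq V_i^q(\gamma_i)$ and $\F_{q^{-1}}(V_i^q(1))\simeq V_i^{q^{-1}}(\gamma_i')$, applying $\F_q$ to the second relation and using Proposition \ref{prop:Fshift}, to obtain
\begin{equation*}
V_i^q(1)\simeq \F_q(\F_{q^{-1}}(V_i^q(1)))\simeq \F_q(V_i^{q^{-1}}(1))(\gamma_i')\simeq V_i^q(\gamma_i\gamma_i'),
\end{equation*}
where the outer isomorphism follows from the fact that $\F_q$ and $\F_{q^{-1}}$ are mutually inverse functors. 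Comparing highest $\ell$-weights then forces $\gamma_i\gamma_i' = 1$.

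The second step is the direct computation. Starting from $[V_i^q(a)]\in K_0(\mathcal{O}_q^+)$, applying $D_q$ yields $[V_i^q(a^{-1})]$. Applying $\F_{q^{-1}}$ and using Proposition \ref{prop:Fshift} together with $\gamma_i' = \gamma_i^{-1}$ gives $[V_i^{q^{-1}}(a^{-1}\gamma_i^{-1})]$. The isomorphism $D_{q^{-1}}$ inverts the spectral parameter on fundamental representations, so this becomes $[V_i^{q^{-1}}(a\gamma_i)]$. A final application of $\F_q$, again with Proposition \ref{prop:Fshift}, produces $[V_i^q(a\gamma_i\cdot\gamma_i)] = [V_i^q(a\gamma_i^2)]$, as desired.

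There is no serious obstacle here: the whole argument is a careful bookkeeping of spectral shifts, and the only mildly non-trivial ingredient is the preliminary identification $\gamma_i' = \gamma_i^{-1}$, which itself is a routine consequence of $\F_q$ and $\F_{q^{-1}}$ being mutually inverse. The compatibility of $\F_q$ with tensor products from Theorem \ref{thm:Comon} is not needed at this stage, as fundamental representations already behave well under the spectral-shift automorphism.
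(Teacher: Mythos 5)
Your proof is correct and follows essentially the same route as the paper: both arguments track the spectral shift through the four maps in $\mathcal{H}_q$ using Theorem~\ref{thm:FondF}, Proposition~\ref{prop:Fshift}, and the fact that $\F_q$ and $\F_{q^{-1}}$ are mutually inverse. The paper collapses your preliminary step (identifying $\gamma_i'=\gamma_i^{-1}$) into a single line by directly observing $\F_{q^{-1}}(V_i^q(a^{-1}))\simeq(\F_{q^{-1}}\circ\F_q)(V_i^{q^{-1}}((a\gamma_i)^{-1}))=V_i^{q^{-1}}((a\gamma_i)^{-1})$, but the underlying computation is identical.
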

\begin{proof} As $\F_{q^{-1}}$ is the inverse of $\F_q$, Theorem \ref{thm:FondF} implies 
$$ \textstyle \F_{q^{-1}}(V_i^q(a^{-1})) \simeq (\F_{q^{-1}}\circ \F_q)(V_i^{q^{-1}}((a\gamma_i)^{-1})) = V_i^{q^{-1}}((a\gamma_i)^{-1})$$ 
and it follows that
$ \mathcal{H}_q [V_i^q(a)] = (\F_q\circ D_{q^{-1}})[V_i^{q^{-1}}((a\gamma_i)^{-1})]=[V_i^q(a\gamma_i^2)]$ as wanted.
\end{proof}
Fix now $i\in I$. The next lemma follows essentially from \cite{fm}.
\begin{lem}\label{lemma:redTP} There exists $a\in\C^{\times}$ making the tensor product $V_{1}^q(1)\otimes V_i^q(a)$ reducible. 
\end{lem}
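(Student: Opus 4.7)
The goal is to exhibit at least one value of $a \in \mathbb{C}^\times$ making the tensor product $V_1^q(1) \otimes V_i^q(a)$ reducible. Note that by Theorem \ref{thm:genSim}, the set of such values is known to be finite, so only \emph{existence} requires work; the generic simplicity result gives no a priori guarantee that this set is non-empty.

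The plan is to invoke the classical analysis of normalized $R$-matrices between fundamental representations developed in [fm]. For our pair of vertices $(1, i)$ in the Dynkin diagram of $\dot{\mathfrak{g}}$, there is a normalized $R$-matrix
$$R_{V_1^q,V_i^q}(z): V_1^q(z)\otimes V_i^q \longrightarrow V_i^q\otimes V_1^q(z),$$
which is a rational function of $z$ whose denominator is a polynomial $d_{1,i}(z) \in \mathbb{C}[z]$. At any root $a_0 \in \mathbb{C}^{\times}$ of this denominator, the specialization of $R_{V_1^q,V_i^q}$ at $z=a_0$ fails to be an isomorphism, and a standard argument (based on the cyclicity of tensor products of highest $\ell$-weight vectors, cf.~Remark \ref{rem:L(PP')}) shows that the cyclic submodule of $V_1^q(a_0)\otimes V_i^q$ generated by the tensor of highest $\ell$-weight vectors is then proper. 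Shifting spectral parameters via the automorphism $\tau_{a_0^{-1}}$ of Section \ref{sec:DefQaff}, this translates to the reducibility of $V_1^q(1)\otimes V_i^q(a_0^{-1})$.

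The main obstacle is thus to establish that $d_{1,i}(z)$ is a \emph{non-constant} polynomial for every $i \in I$. This is precisely the content of the explicit denominator formula coming from [fm] (and refinements by Akasaka--Kashiwara, Chari, and Hernandez): $d_{1,i}(z)$ is determined by the entries of the inverse of the quantized Cartan matrix of $\dot{\mathfrak{g}}$ evaluated at the pair $(1, i)$. Since $\dot{\mathfrak{g}}$ is simple and its Dynkin diagram is connected, these entries are non-trivial for every pair $(1, i)$, so $d_{1,i}(z)$ has at least one root in $\mathbb{C}^{\times}$. This supplies the required value of $a$ and completes the proof.
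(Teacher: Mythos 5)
Your proposal is correct in spirit but follows a genuinely different route from the paper's. You go through the pole structure of the normalized $R$-matrix $R_{V_1^q,V_i^q}(z)$, reducing the statement to the non-constancy of the denominator polynomial $d_{1,i}(z)$, which you justify by appealing to a denominator formula expressed via the inverse \emph{quantized} Cartan matrix. The paper never invokes $R$-matrices at this point. Instead it works with the factorization (from \cite[Theorem 4.1]{fm}) of the lowest $\ell$-weight of $V_1^q(1)$ in terms of the variables $A_{j,b}^{-1}$, reads off the total exponent $u_i=(C^{-1})_{i,1}+(C^{-1})_{i,1^*}$ of the $A_{i,\cdot}$'s from the constant part of the resulting identity in $\mathfrak{r}$, invokes the strict positivity of the \emph{classical} inverse Cartan matrix \cite{wz} to obtain $u_i>0$, and then concludes reducibility from \cite[Lemma 2.6 and Theorem 6.7]{fm}. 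Both routes rest on \cite{fm} and on positivity properties of an inverse Cartan matrix, but the paper's argument is the more self-contained one: your key claim that $d_{1,i}(z)$ is non-constant ``because the inverse quantized Cartan matrix entries are non-trivial for a connected Dynkin diagram'' is a deeper and more specialized statement than the passing phrasing suggests, is not contained in \cite{fm} (which treats $q$-characters, not $R$-matrix denominators), and would require citing the more recent literature on denominator formulas. The paper's route avoids that entirely by using only the elementary and explicitly cited positivity of the classical $C^{-1}$, which suffices via the $q$-character-based reducibility criterion of \cite{fm}.
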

\begin{proof} By \cite[Theorem 4.1]{fm}, the lowest $\ell$-weight $\Psi'$ of $V_1^q(1)$ factorizes as $$\textstyle \Psi' = Y_{1,1}^{(q)} \prod_{j\in I, b\in \C^{\times}} A_{j,b}^{-u_{j,b}}$$ where $\{u_{j,b}\}_{j\in I,b\in\C^{\times}}\subseteq \mathbb{N}$ has finite support and where $A_{j,b}$ is the $\ell$-weight introduced in the discussion following Theorem \ref{thm:qcaract}. The comments made before Theorem \ref{thm:FondF} thus imply
$$ \textstyle Y_{1,1}^{(q)} Y_{1^*,q^{r^{\vee}h^{\vee}}}^{(q)} = \prod_{j\in I,b\in \C^{\times}}A_{j,b}^{u_{j,b}}. $$
We want to show that $u_{i,b}> 0$ for some $b \in \C^{\times}$. Toward this goal, let $u_j = \sum_{b\in \mathbb{C}^{\times}} u_{j,b}\in \mathbb{N}$ (for $j\in I$) and remark that the constant part of the above equality gives 
\begin{equation}\label{eq:CPartFM} 
\textstyle \overline{\omega_1+\omega_{1^*}} = \overline{\sum_{j\in I} u_j\alpha_j}.
\end{equation}
The inverse Cartan matrix also produces a decomposition
$$\textstyle \omega_1+\omega_{1^*} = \sum_{j\in I}((C^{-1})_{j,1}+(C^{-1})_{j,1^{*}})\alpha_j.$$
and the injectivity of the group morphism $\overline{\textcolor{white}{\alpha}}:P_{\mathbb{Q}}\rightarrow \ttt^{\times}$ with \eqref{eq:CPartFM} hence imply $$u_i = (C^{-1})_{i,1}+(C^{-1})_{i,1^*}$$
(as the simple roots $\{\alpha_j\}_{j\in I}$ are free in the root lattice of $\dot{\g}$). In particular, the strict positivity of the inverse Cartan matrices of simple Lie algebras of finite type (see e.g.~\cite{wz}) implies $u_i>0$ and it follows that $u_{i,b} > 0$ for some $b\in \C^{\times}$ as claimed. The lemma is then a direct consequence of \cite[Lemma 2.6 and Theorem 6.7]{fm}.
\end{proof}
\begin{theorem}\label{thm:SpecShiftEq} The spectral shifts $\gamma_1$ and $\gamma_i$ are equal. 
\end{theorem}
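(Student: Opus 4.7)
The plan is to use Lemma~\ref{lemma:redTP} to produce a reducible tensor product, then repeatedly apply the ring automorphism $\mathcal{H}_q$ of Lemma~\ref{lem:Hq} to generate infinitely many such reducible products, and finally invoke the genericity of simplicity from Theorem~\ref{thm:genSim}. The crucial preliminary observation is that each of the four factors of $\mathcal{H}_q=\F_q\circ D_{q^{-1}}\circ \F_{q^{-1}}\circ D_q$ sends classes of simple modules to classes of simple modules: Hernandez--Leclerc's duality $D_{q^{\pm 1}}$ does so by construction, and $\F_{q^{\pm 1}}$ because they are exact invertible functors that preserve irreducibility. Hence $\mathcal{H}_q$ preserves the number of simple constituents appearing in any element of $K_0(\mathcal{O}_q^+)$.

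Fix $a\in\C^\times$ as in Lemma~\ref{lemma:redTP}, so that the class $[V_1^q(1)][V_i^q(a)]$ in $K_0(\mathcal{O}_q^+)$ is a sum of at least two simple classes. Applying $\mathcal{H}_q^n$ for $n\in\Z$ and using Lemma~\ref{lem:Hq}, the class $[V_1^q(\gamma_1^{2n})][V_i^q(a\gamma_i^{2n})]$ is also a sum of at least two simple classes, so the corresponding tensor product is reducible. Composing with the spectral shift $\tau_{\gamma_1^{-2n}}$ (a categorical equivalence, see Section~\ref{sec:DefQaff}) transfers this reducibility to $V_1^q(1)\otimes V_i^q(a(\gamma_i/\gamma_1)^{2n})$. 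Theorem~\ref{thm:genSim} then forces the orbit $\{a(\gamma_i/\gamma_1)^{2n}:n\in\Z\}$ to be finite, so that $\zeta=(\gamma_i/\gamma_1)^2$ must be a root of unity.

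The main obstacle is to promote this last conclusion to the strict equality $\gamma_i=\gamma_1$: a nontrivial root-of-unity value of $\zeta$ gives a finite orbit and is thus a priori compatible with Theorem~\ref{thm:genSim}. To close this gap I would return to the explicit construction of $\gamma_i$ in Theorems~\ref{thm:FondF} and \ref{thm:preFondF}, using the compatibility of the algebra isomorphism $\pp_q$ with the $\Z$-grading of $\uqgpm$ (Proposition~\ref{prop:Fshift}) to argue that $\gamma_i$ depends on $i$ only through data that is insensitive to a sign ambiguity; since $q$ is not a root of unity, this would prevent $\gamma_i/\gamma_1$ from being any nontrivial root of unity. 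Alternatively, one could try to strengthen the iteration by simultaneously using $\mathcal{H}_q$ on products of fundamental classes indexed by several different nodes, pinning down each $\gamma_j^2$ jointly rather than merely the ratio $(\gamma_i/\gamma_1)^2$.
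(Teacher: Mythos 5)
Your first steps coincide exactly with the paper's: apply $\mathcal{H}_q^n$ to the reducible tensor product supplied by Lemma~\ref{lemma:redTP}, shift by the spectral parameter, and invoke Theorem~\ref{thm:genSim} to conclude that $(\gamma_i/\gamma_1)^2$ must be a root of unity. You are also right that this conclusion alone does not give equality, and you honestly flag the gap. But neither of the two remedies you sketch closes it: the ``explicit construction'' route is not actually carried out in Theorems~\ref{thm:FondF} and \ref{thm:preFondF} (those proofs are purely weight-theoretic and give no control on the scalar $\gamma_i$ beyond its existence), and the ``joint pinning'' idea only reproduces the same root-of-unity constraint node by node.

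The ingredient the paper uses, and which you are missing, is \cite[Proposition 6.15]{fm}: if $V_1^q(b)\otimes V_i^q(a)$ is reducible, then the ratio $b$ of spectral parameters lies in $q^{\Z}$. Applied to the shifted products $V_1^q(a_m)\otimes V_i^q(a)$ with $a_m=(\gamma_1/\gamma_i)^{2m}$, this forces $a_1=(\gamma_1/\gamma_i)^2=q^{r_1}$ for some $r_1\in\Z$. Now combine the two constraints: your argument shows $(\gamma_1/\gamma_i)^2$ is a root of unity, and Frenkel--Mukhin shows it is an integer power of $q$; since $q$ is not a root of unity, the only possibility is $r_1=0$, i.e.\ $(\gamma_1/\gamma_i)^2=1$. (Equivalently, in the paper's phrasing: if $\gamma_1\neq\gamma_i$ then $q^{r_1/2}$ is not a root of unity, so the $a_m$ are pairwise distinct, yielding infinitely many reducibility points and contradicting Theorem~\ref{thm:genSim} directly.) Without citing \cite[Proposition 6.15]{fm} there is no way to rule out a nontrivial root of unity for $(\gamma_i/\gamma_1)^2$ using only the tools you list, so the gap in your proposal is genuine.
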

\begin{proof} By Lemma \ref{lemma:redTP}, there is a non-trivial decomposition in $K_0(\mathcal{O}_q^+)$ of the form
$$ \textstyle [V_1^q(1)\otimes V_i^q(a)] = [M_1]+[M_2] $$
where $M_1,M_2$ are non-zero finite-dimensional $\uqb$-modules. Lemma \ref{lem:Hq} then gives
$$ \mathcal{H}_q[V_1^q(1)\otimes V_i^q(a)] =[V_1^q(\gamma_1^2)\otimes V_i^q(a\gamma_i^2)] = \mathcal{H}_q[M_1]+\mathcal{H}_q[M_2]$$
so that, by induction, 
\begin{equation}\label{eq:nTDK} 
\mathcal{H}_q^m[V_1^q(1)\otimes V_i^q(a)] =[V_1^q(\gamma_1^{2m})\otimes V_i^q(a\gamma_i^{2m})] = \mathcal{H}_q^m[M_1]+\mathcal{H}_q^m[M_2]
\end{equation}
whenever $m\in \mathbb{N}$. Fix such an integer $m$ and note that $\mathcal{H}_q^m[M_1]$ and $\mathcal{H}_q^m[M_2]$ are equivalence classes of finite-dimensional $\uqb$-modules. (Indeed both $D_{q}$ and the map induced from $\F_{q}$ send classes of finite-dimensional modules to classes of finite-dimensional modules, see Section \ref{sec:qcaract}.) Equation \eqref{eq:nTDK} thus shows that (after a spectral parameter shift)
$$V_1^q(a_m)\otimes V_i^q(a) \simeq (V_1^q(\gamma_1^{2m})\otimes V_i^q(a\gamma_i^{2m}))((\gamma_i^{2m})^{-1})$$
is reducible for $a_m = (\frac{\gamma_1}{\gamma_i})^{2m}$ and it follows from \cite[Proposition 6.15]{fm} that $a_m = q^{r_m}$ for a $r_m\in \Z$. In particular, if $\gamma_i\neq \gamma_1$, the quantity $$\textstyle \frac{\gamma_1}{\gamma_i} = \sqrt{a_1} = q^{\frac{r_1}{2}}$$ is not a root of unity and the reducibility of the tensor products $V_1^q(a_m)\otimes V_i^q(a)$ for $m\in \mathbb{N}$ contradicts Theorem \ref{thm:genSim}. This implies $\gamma_i=\gamma_1$ and concludes the proof.
\end{proof}
The next corollary follows from Proposition \ref{prop:Fshift}, Lemma \ref{lemma:varpiPsiF}, Proposition \ref{prop:Fgroup}, Theorem \ref{thm:preFondF} and Theorem \ref{thm:SpecShiftEq} (see also Remark \ref{rem:Psishift} for the notation used).
\begin{cor}\label{cor:F2} There is $\gamma\in \C^{\times}$ such that $\Psi^{\F_q} = (\Psi^{-1})(\gamma)$ for all $\Psi \in \mathfrak{r}$.\hfill $\qed$
\end{cor}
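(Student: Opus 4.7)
The plan is to verify that both $\Psi \mapsto \Psi^{\F_q}$ and $\Psi \mapsto (\Psi^{-1})(\gamma)$, with $\gamma := \gamma_1 \in \C^{\times}$, are group homomorphisms $\mathfrak{r} \to \mathfrak{r}$ that agree on a chosen set of generators; once this is established, the corollary follows at once.

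First, I would note that $\mathfrak{r}$ is generated as a group by the families $\{\mu\,|\,\mu \in \ttt^{\times}\}$ and $\{\Psi_{i,a}\,|\,i \in I,\, a \in \C^{\times}\}$. Indeed, any rational function of $z$ that is defined and nonzero at $z=0$ is, up to a nonzero scalar, a product of factors $(1-az)^{\pm 1}$, and these are precisely the non-trivial entries of $\Psi_{i,a}^{\pm 1}$; the remaining scalar belongs to $\ttt^{\times}$.

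Next, both maps are group homomorphisms. For $\Psi \mapsto \Psi^{\F_q}$, this is exactly Proposition \ref{prop:Fgroup}. For $\Psi \mapsto (\Psi^{-1})(\gamma)$, inversion is a homomorphism since $\mathfrak{r}$ is abelian, and the spectral shift $\Psi_i(z) \mapsto \Psi_i(\gamma z)$ is manifestly compatible with the pointwise product of generating functions.

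Finally, I would compare the two homomorphisms on the generators. On a constant $\mu \in \ttt^{\times}$ the spectral shift has no effect, so $(\mu^{-1})(\gamma) = \mu^{-1}$, and this coincides with $\mu^{\F_q}$ by Lemma \ref{lemma:varpiPsiF}. On a prefundamental $\ell$-weight $\Psi_{i,a} = \Psi_{i,1}(a)$, the combination of Theorem \ref{thm:preFondF} and Theorem \ref{thm:SpecShiftEq} gives $\Psi_{i,1}^{\F_q} = \Psi_{i,\gamma}^{-1}$, while a direct computation yields $(\Psi_{i,a}^{-1})(\gamma) = \Psi_{i,a\gamma}^{-1}$. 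The only remaining point is to lift the module-level statement of Proposition \ref{prop:Fshift} into the relation $(\Psi(a))^{\F_q} = \Psi^{\F_q}(a)$ on highest $\ell$-weights; this is the main (minor) obstacle, and it follows by applying Proposition \ref{prop:Fshift} to the simple module $L_{q^{-1}}(\Psi(a)) \simeq L_{q^{-1}}(\Psi)(a)$ and invoking the uniqueness of simple highest $\ell$-weight modules from Theorem \ref{thm:PropSimpleOEx}. Combining, one obtains $\Psi_{i,a}^{\F_q} = \Psi_{i,\gamma}^{-1}(a) = \Psi_{i,a\gamma}^{-1} = (\Psi_{i,a}^{-1})(\gamma)$, and the corollary is established on the generators, hence on all of $\mathfrak{r}$.
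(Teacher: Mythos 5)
Your proof is correct and follows essentially the same route the paper intends: the corollary's proof in the paper is just a citation of Propositions \ref{prop:Fshift} and \ref{prop:Fgroup}, Lemma \ref{lemma:varpiPsiF}, Theorems \ref{thm:preFondF} and \ref{thm:SpecShiftEq}, and Remark \ref{rem:Psishift}, and you have assembled precisely these ingredients, checking agreement of the two group homomorphisms on the generating set $\ttt^{\times}\cup\{\Psi_{i,a}\}$ and lifting Proposition \ref{prop:Fshift} to highest $\ell$-weights via Remark \ref{rem:Psishift} and the uniqueness in Theorem \ref{thm:PropSimpleOEx}.
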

\begin{rem} The relation $\Psi^{\F_q} = (\Psi^{-1})(\gamma)$ agrees with Theorem \ref{thm:FondF} as $(Y_{i,1}^{(q^{-1})})^{-1} = Y_{i,1}^{(q)}$.
\end{rem}
\begin{example}\label{ex:FqLp} Take $\g = \su$. Using the basis $\{z_j\}_{j\geq 0}$ given in Example \ref{ex:Lpmsl2} for the negative prefundamental representation $L_{1,a}^{-,q^{-1}}$ of $U_{q^{-1}}(\bor)$, we get
$$\pp_q(k_1)\star z_j = k_1^{-1}\star z_j = q^{-2j}z_j, \quad \pp_q(e_1)\star z_j = -k_1^{-1}e_1\star z_j= -q^{2(1-j)}z_{j-1} \text{ and}
$$
$$\pp_q(e_0)\star z_j = -k_1e_0\star z_j = q^{2(j+1)}a(q-q^{-1})^{-1}q^{j-2}[j+1]_qz_{j+1} = a(q-q^{-1})^{-1}q^{3j}[j+1]_qz_{j+1}.$$\\[-2.5mm]
Using instead the basis $\{w_j\}_{j\geq 0}$ defined by $w_j = (-1)^j q^{j(j-1)}z_j$, we obtain
$$ \pp_q(k_1)\star w_j = q^{-2j}w_j, \quad \pp_q(e_1)\star w_j = w_{j-1}, \quad \pp_q(e_0)\star w_j = a(q^{-1}-q)^{-1}q^{j}[j+1]_qw_{j+1}$$
and $\F_q(L_{1,a}^{-,q^{-1}})$ is therefore isomorphic to the positive prefundamental representation $L_{1,a q^{-2}}^{+,q}$ of $\uqb$ (see again Example \ref{ex:Lpmsl2}). Thus $\gamma = q^{-2}$ if $\g = \su$. 
\end{example}
\begin{rem} The shift $\gamma$ underlying Corollary \ref{cor:F2} is somewhat difficult to compute explicitly for Lie algebras other that $\g=\su$ and its exact value may depend on the correspondence chosen between the Drinfeld--Jimbo and the Drinfeld generating sets of $\uqg$.
\end{rem}
Consider now the functor $\mathcal{G}_q:\mathcal{O}_{q^{-1}}\arr \mathcal{O}_q$ given by $\mathcal{G}_q = \tau_{\gamma^{-1},q}^*\circ \F_q$
with $\tau_{\gamma^{-1},q}^*$ the pullback by the automorphism $\tau_{\gamma^{-1},q}$ of $\uqg$. This functor is again exact and reverses tensor products. Its action on the category $\mathcal{O}_{q^{-1}}$ is also totally specified by the following corollary.
\begin{cor}\label{cor:F} Let $\Psi \in \mathfrak{r}$. Then $\mathcal{G}_q(L_{q^{-1}}(\Psi))\simeq L_q(\Psi^{-1})$.
\end{cor}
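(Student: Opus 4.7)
The plan is to deduce this corollary directly by chaining Corollary \ref{cor:F2} with the spectral-shift behaviour of simple modules recorded in Remark \ref{rem:Psishift}. There is no new representation-theoretic input required; the content is entirely a bookkeeping of the twist by $\gamma$.

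First, I would apply Corollary \ref{cor:F2}, which tells us that $\Psi^{\F_q}=(\Psi^{-1})(\gamma)$ for every $\Psi\in\mathfrak{r}$, together with the defining property $\F_q(L_{q^{-1}}(\Psi))\simeq L_q(\Psi^{\F_q})$ from the beginning of Section \ref{sec:ImSimple}. Combining these gives the intermediate identification
\[
\F_q(L_{q^{-1}}(\Psi))\simeq L_q\bigl((\Psi^{-1})(\gamma)\bigr).
\]

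Next, I would apply the pullback functor $\tau^*_{\gamma^{-1},q}$. By Remark \ref{rem:Psishift}, for any $\Phi\in\mathfrak{r}$ and any $a\in\C^\times$ the pullback of $L_q(\Phi)$ by $\tau_{a,q}$ is isomorphic to $L_q(\Phi(a))$. Taking $\Phi=(\Psi^{-1})(\gamma)$ and $a=\gamma^{-1}$, and using that the $\mathfrak{r}$-action $\Phi\mapsto\Phi(a)$ is a group action satisfying $\Phi(\gamma)(\gamma^{-1})=\Phi$, one obtains
\[
\mathcal{G}_q(L_{q^{-1}}(\Psi))
=\tau^*_{\gamma^{-1},q}\bigl(\F_q(L_{q^{-1}}(\Psi))\bigr)
\simeq L_q\bigl((\Psi^{-1})(\gamma)(\gamma^{-1})\bigr)
=L_q(\Psi^{-1}),
\]
which is exactly the claimed isomorphism.

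There is really no hard step here; the only thing to verify carefully is the compatibility of the two shift conventions, namely that $(\Phi(\gamma))(\gamma^{-1})=\Phi$ holds as elements of $\mathfrak{r}$ (immediate from the definition $\Phi(a)=(\Phi_i(az))_{i\in I}$), and that Remark \ref{rem:Psishift} indeed produces $L_q(\Phi(a))$ rather than $L_q(\Phi(a^{-1}))$ under our sign convention for $\tau_a$. Once these conventions are aligned with Section \ref{sec:DefQaff}, the corollary is immediate.
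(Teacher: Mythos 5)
Your proof is correct and follows exactly the paper's route: apply Corollary \ref{cor:F2} to get $\F_q(L_{q^{-1}}(\Psi))\simeq L_q((\Psi^{-1})(\gamma))$, then undo the spectral shift via $\tau^*_{\gamma^{-1},q}$ using Remark \ref{rem:Psishift}. The only difference is that you spell out the bookkeeping that the paper leaves implicit.
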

\begin{proof}
The corollary follows by applying the pullback $\tau_{\gamma^{-1},q}^*$ to Corollary \ref{cor:F2}.
\end{proof}
\subsection{Induced relations in the Grothendieck ring}\label{sec:FGroth}
The functor $\mathcal{G}_q$ defined above can be used to deduce new relations for the Grothendieck ring $K_0(\mathcal{O}_q)$. We illustrate this procedure on the celebrated $Q\widetilde{Q}$-system of \cite{fh2} which is recalled below.\newpage
\begin{theorem}[{\cite[Section 3.1]{fh2}}]\label{thm:QQtilde} For $i\in I$ and $a\in \C^{\times}$, define
\begin{equation}\label{eq:Psitilde} \tilde{\Psi}_{i,a}^ {(q)} =\Psi_{i,a}^{-1}\prod_{\{j\in I|C_{i,j}=-1\}}\Psi_{j,aq_i}\prod_{\{j\in I|C_{i,j}=-2\}}\Psi_{j,a}\Psi_{j,aq_i^2}\prod_{\{j\in I|C_{i,j}=-3\}} \Psi_{j,aq_i^{-1}}\Psi_{j,aq_i}\Psi_{j,aq_i^3}
\end{equation}
and let $X_{i,a}^{(q)} = L_q(\tilde{\Psi}_{i,a}^{(q)})$ be the associated simple object in $\mathcal{O}_q$.\par Denote $\chi_i^{(q)}=\chi(L_{i,1}^{+,q})$ the character of the prefundamental represention $L_{i,1}^{+,q}$ of $\uqb$.  We view this character and the one of $X_{i,a}^{(q)}$ as elements of $K_0(\mathcal{O}_q)$ by identifying the map $[\mu]\in \mathcal{E}$ of Section \ref{sec:qcaract} with the class of the invertible representation $[\mu]$ of $\uqb$. \par Define finally 
$$ \check{\chi}_{i,a}^{(q)}=\chi(X_{i,a}^{(q)})\left(\textstyle \left[\overline{\frac{\alpha_i}{2}}\right]-\left[\overline{\frac{\alpha_i}{2}}\right]^{-1}\right) $$
and consider $$Q_{i,a}^{(q)} = (\chi_i^{(q)})^{-1}[L_{i,a}^{+,q}]\quad\text{ and }\quad\tilde{Q}_{i,a}^{(q)} = (\check{\chi}_{i,a}^{(q)})^{-1}[X_{i,aq_i^{-2}}^{(q)}].$$  
Then the following relation holds 
\begin{align}\label{eq:QQtilde}
{\textstyle \left[\overline{\frac{\alpha_i}{2}}\right]}Q_{i,aq_i^{-1}}^{(q)}&\tilde{Q}_{i,aq_i}^{(q)}-{\textstyle\left[\overline{\frac{\alpha_i}{2}}\right]^{-1}}Q_{i,aq_i}^{(q)}\tilde{Q}_{i,aq_i^{-1}}^{(q)} =\\ &\prod_{\{j\in I|	C_{i,j}=-1\}}Q_{j,a}^{(q)}\prod_{\{j\in I|	C_{i,j}=-2\}}Q_{j,aq^{-1}}^{(q)}Q_{j,aq}^{(q)}\prod_{\{j\in I|C_{i,j}=-3\}}Q_{j,aq^{-2}}^{(q)}Q_{j,a}^{(q)}Q_{j,aq^2}^{(q)}.\nonumber
\end{align}
\end{theorem}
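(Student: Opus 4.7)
Since this statement is attributed explicitly to \cite[Section 3.1]{fh2}, my plan is to invoke that reference directly—the proof is not new content of the present paper. For the reader's benefit I would, however, include a brief recollection of the Frenkel--Hernandez strategy, so that one sees how each ingredient matches the notation introduced above (in particular the appearance of the invertible classes $[\overline{\alpha_i/2}]^{\pm 1}$).

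The strategy in \cite{fh2} is representation-theoretic rather than combinatorial. One constructs a non-split short exact sequence of $\uqb$-modules in the subcategory $\mathcal{O}_q^+$ whose three terms are, after tensoring by suitable invertible modules $[\mu]$ realizing the monomials $[\overline{\alpha_i/2}]^{\pm 1}$, the tensor products $L_{i,aq_i^{-1}}^{+,q} \otimes X_{i,aq_i}^{(q)}$, $L_{i,aq_i}^{+,q} \otimes X_{i,aq_i^{-1}}^{(q)}$, and the tensor product of positive prefundamental representations appearing on the right-hand side of \eqref{eq:QQtilde}. The required submodule is produced by Remark \ref{rem:L(PP')}, applied to a tensor product of highest $\ell$-weight vectors, and the quotient is pinned down via a $q$-character argument combined with Theorem \ref{thm:ProdPrefond}, which guarantees that a tensor product of same-sign prefundamental representations remains simple.

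Taking classes in $K_0(\mathcal{O}_q)$ turns the short exact sequence into the raw additive identity underlying \eqref{eq:QQtilde}. Dividing through by the characters $\chi_j^{(q)}$ and by the factor $\check{\chi}_{i,a}^{(q)}$ is legitimate since Theorem \ref{prop:qcharprefond} shows that the $q$-characters of the positive prefundamentals (and of $X_{i,a}^{(q)}$) factorize as a highest-$\ell$-weight monomial times a character; these normalization factors are thus non-zero-divisors in the commutative Grothendieck ring (Theorem \ref{thm:qcaract}), and the identity can be interpreted in a natural localization thereof.

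The only genuinely non-formal input is the construction of the short exact sequence itself, which requires a careful composition-factor analysis of the tensor products $L_{i,a}^{+,q}\otimes X_{i,b}^{(q)}$ and is the main technical obstacle; this analysis is imported wholesale from \cite{fh2}. In the remainder of Section \ref{sec:FGroth} the identity \eqref{eq:QQtilde} will be fed into the ring isomorphism $K_0(\mathcal{O}_{q^{-1}})\simeq K_0(\mathcal{O}_q)$ induced by $\mathcal{G}_q$ (Corollary \ref{cor:F}) to yield new relations for $K_0(\mathcal{O}_q)$.
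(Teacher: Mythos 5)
Your decision to invoke the cited reference is exactly what the paper does: Theorem~\ref{thm:QQtilde} is stated as an imported result from \cite{fh2}, with the only original content being the remark (following \cite{fhr}) that the $Q$- and $\tilde{Q}$-variables of \cite{fh2} must be renormalized, a normalization that has been built into the statement. No proof of the theorem appears in the present text.

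However, your reconstruction of the \cite{fh2} strategy contains a concrete error that should be flagged. The module $X_{i,a}^{(q)} = L_q(\tilde{\Psi}_{i,a}^{(q)})$ does \emph{not} lie in $\mathcal{O}_q^+$: the highest $\ell$-weight $\tilde{\Psi}_{i,a}^{(q)}$ contains the factor $\Psi_{i,a}^{-1}$ alongside positive factors $\Psi_{j,\cdot}$, and is therefore not a monomial in the $Y_{j,b}$'s, the $\Psi_{j,b}$'s and elements of $\ttt^{\times}$ as Definition~\ref{def:Opm} requires. In fact $X_{i,a}^{(q)}$ is an object of $\mathcal{O}_q$ that lies in neither $\mathcal{O}_q^{+}$ nor $\mathcal{O}_q^{-}$, and the same is true of the tensor products $L_{i,\cdot}^{+,q}\otimes X_{i,\cdot}^{(q)}$, so the short exact sequence you propose cannot take place inside $\mathcal{O}_q^+$. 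You also appear to be conflating the $Q\widetilde{Q}$-system with the $QQ^*$-system: the non-split short-exact-sequence mechanism you describe is precisely that of Theorem~\ref{thm:QQ*} (from \cite{her}, for $L_{i,a}^{*,q}\otimes L_{i,a}^{-,q}$ inside $\mathcal{O}_q^-$), which the paper treats separately a few paragraphs later and which yields Corollary~\ref{cor:QQ*} after applying $\mathcal{G}_q$; the $Q\widetilde{Q}$-system of \cite{fh2,fhr} is instead established by $q$-character computations tied to affine opers and Bethe Ansatz equations, not by a short exact sequence in a sign-pure subcategory. Since you ultimately just cite \cite{fh2}, the proof stands, but the expository paragraph should be removed or corrected.
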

\begin{rem} As stated in \cite[Section 5.7]{fhr}, the proof given in \cite{fh2} for the $Q\widetilde{Q}$-system is incomplete and the variables $Q_{i,a}^{(q)}$ and $\tilde{Q}_{i,a}^{(q)}$ used there must be renormalized. The adequate renormalization has been done in Theorem \ref{thm:QQtilde}.
\end{rem}
\begin{rem} Equation \eqref{eq:QQtilde} originated in \cite{mrv1,mrv2} (see also \cite{mv1,mv2} for the yangian version of this equation) from the study of affine ${}^L\g$-opers (with ${}^L\g$ the Langlands dual of $\g$) and is deeply linked to the Bethe Ansatz equations associated to $\g$.
\end{rem} For a simply-laced Kac-Moody algebra $\g$, this equation reduces to
$$ {\textstyle \left[\overline{\frac{\alpha_i}{2}}\right]}Q_{i,aq_i^{-1}}^{(q)}\tilde{Q}_{i,aq_i}^{(q)}-{\textstyle \left[\overline{\frac{\alpha_i}{2}}\right]^{-1}}Q_{i,aq_i}^{(q)}\tilde{Q}_{i,aq_i^{-1}}^{(q)} = \prod_{j\sim i \text{ in } I} Q_{j,a}^{(q)}.$$
In particular, for $\g = \su$, we have $X_{i,a}^{(q)} = L_{i,a}^{-,q}$ and hence $\chi(X_{i,a}^{(q)}) = \chi_i^{(q)} = (1-[\overline{\alpha_1}]^{-1})^{-1}$ by Example \ref{ex:Lpmsl2} and Theorem \ref{prop:qcharprefond}. The equation above is thus in this case equivalent to the \textit{quantum Wronskian relation} of \cite{blz}, that is $$ [L_{1,aq^{-1}}^{+,q}][L_{1,aq^{-1}}^{-,q}]-[\overline{\alpha_1}]^{-1}[L_{1,aq}^{+,q}][L_{1,aq^{-3}}^{-,q}] = \chi(L_{1,1}^{+,q}).$$
Consider the ring isomorphism $K_0(\mathcal{O}_{q^{-1}}) \arr K_0(\mathcal{O}_q)$ given by $[V]\mapsto [\mathcal{G}_q(V)]$. 
\begin{cor}\label{cor:QQtilde2} For $i\in I$ and $a\in \C^{\times}$, let $\mathcal{X}_{i,a}^{(q)}$ be the simple object $\mathcal{G}_q (X_{i,a}^{(q^{-1})})$ of $\mathcal{O}_q$. Define
$$ \mathcal{Q}_{i,a}^{(q)} = (\chi_i^{(q)})^{-1}[L_{i,a}^{-,q}] \quad\text{ and }\quad \tilde{\mathcal{Q}}_{i,a}^{(q)} = (\widetilde{\chi}_{i,a}^{(q)})^{-1}[\mathcal{X}_{i,aq_i^2}^{(q)}]$$
with 
$\chi_i^{(q)}=\chi(L_{i,1}^{+,q})$ and $\widetilde{\chi}_{i,a}^{(q)}=\chi(\mathcal{X}_{i,a}^{(q)})\left(\textstyle \left[\overline{\frac{\alpha_i}{2}}\right]-\left[\overline{\frac{\alpha_i}{2}}\right]^{-1}\right)$ again seen as elements of $K_0(\mathcal{O}_q)$.\par\noindent Then the following relation holds
\begin{align}\label{eq:QQtilde2}
{\textstyle \left[\overline{\frac{\alpha_i}{2}}\right]}\mathcal{Q}_{i,aq_i}^{(q)}&\tilde{\mathcal{Q}}_{i,aq_i^{-1}}^{(q)}-{\textstyle\left[\overline{\frac{\alpha_i}{2}}\right]}^{-1}\mathcal{Q}_{i,aq_i^{-1}}^{(q)}\tilde{\mathcal{Q}}_{i,aq_i}^{(q)} \\&= \prod_{\{j\in I|C_{i,j}=-1\}}\mathcal{Q}_{j,a}^{(q)}\prod_{\{j\in I|C_{i,j}=-2\}}\mathcal{Q}_{j,aq^{-1}}^{(q)}\mathcal{Q}_{j,aq}^{(q)}\prod_{\{j\in I|C_{i,j}=-3\}}\mathcal{Q}_{j,aq^{-2}}^{(q)}\mathcal{Q}_{j,a}^{(q)}\mathcal{Q}_{j,aq^2}^{(q)}.\nonumber
\end{align}
\end{cor}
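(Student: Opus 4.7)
The plan is to transport the $Q\widetilde{Q}$-system of Theorem~\ref{thm:QQtilde} for $\uqgm$ through the ring isomorphism $K_0(\mathcal{O}_{q^{-1}}) \simeq K_0(\mathcal{O}_q)$ induced by the functor $\mathcal{G}_q$ defined at the end of Section~\ref{sec:ImSimple}. Although $\mathcal{G}_q$ reverses tensor products by Theorem~\ref{thm:Comon}, the induced map on Grothendieck rings is a genuine ring isomorphism because $K_0(\mathcal{O}_q)$ is commutative by Theorem~\ref{thm:qcaract}.

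First I would write down the $Q\widetilde{Q}$-system \eqref{eq:QQtilde} with $q$ replaced by $q^{-1}$ throughout. This substitutes each $q_i$ by $q_i^{-1}$ in the spectral shifts and replaces $[\overline{\alpha_i/2}]$ by $[\overline{\overline{\alpha_i/2}}] = [\overline{\alpha_i/2}]^{-1}$ (using the identity $\overline{\overline{\omega}} = \overline{\omega}^{-1}$ from the beginning of Section~\ref{sec:Comon}). The left-hand side of the resulting identity in $K_0(\mathcal{O}_{q^{-1}})$ becomes
\[
\textstyle \left[\overline{\frac{\alpha_i}{2}}\right]^{-1} Q_{i,aq_i}^{(q^{-1})} \tilde{Q}_{i,aq_i^{-1}}^{(q^{-1})} - \left[\overline{\frac{\alpha_i}{2}}\right] Q_{i,aq_i^{-1}}^{(q^{-1})}\tilde{Q}_{i,aq_i}^{(q^{-1})},
\]
while the right-hand side is just a reordering of the product appearing in \eqref{eq:QQtilde2}.

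Next I would apply $\mathcal{G}_q$ termwise. Corollary~\ref{cor:F} gives $\mathcal{G}_q(L_{i,a}^{+,q^{-1}}) \simeq L_{i,a}^{-,q}$ and $\mathcal{G}_q(X_{i,a}^{(q^{-1})}) = \mathcal{X}_{i,a}^{(q)}$ by definition. Since $\mathcal{G}_q = \tau_{\gamma^{-1},q}^* \circ \F_q$ and the spectral twist preserves weight spaces, Lemma~\ref{lemma:varpiPsiF} implies that every invertible representation $[\mu]$ is sent to $[\mu^{-1}]$; in particular the factors $[\overline{\alpha_i/2}]^{\pm 1}$ are swapped back. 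For the scalar characters, writing $\chi_i^{(q^{-1})}$ as a sum of classes of invertible modules in $K_0(\mathcal{O}_{q^{-1}})$ and applying $\mathcal{G}_q$ gives the character of $\mathcal{G}_q(L_{i,1}^{+,q^{-1}}) \simeq L_{i,1}^{-,q}$, which equals $\chi_i^{(q)}$ by Theorem~\ref{prop:qcharprefond}. The same computation applied to $X_{i,a}^{(q^{-1})}$ yields $\check{\chi}_{i,a}^{(q^{-1})} \mapsto \widetilde{\chi}_{i,a}^{(q)}$, whence $Q_{i,a}^{(q^{-1})} \mapsto \mathcal{Q}_{i,a}^{(q)}$ and $\tilde{Q}_{i,a}^{(q^{-1})} \mapsto \tilde{\mathcal{Q}}_{i,a}^{(q)}$.

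Combining these images and using commutativity of $K_0(\mathcal{O}_q)$ to absorb the reversal of tensor products produces precisely relation \eqref{eq:QQtilde2}. The main obstacle is purely bookkeeping: two successive inversions, one from the substitution $q \to q^{-1}$ and one from $\mathcal{G}_q$, must be tracked on the signs of the $[\overline{\alpha_i/2}]^{\pm 1}$ factors, on the spectral shifts $q_i^{\pm 1}$, and on each of the three product families on the right-hand side, so as to verify that they recombine into the exact form of \eqref{eq:QQtilde2} rather than into a superficially different but equivalent expression.
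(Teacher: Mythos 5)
Your proposal is correct and follows essentially the same route as the paper: write the $Q\widetilde{Q}$-system at parameter $q^{-1}$, apply the ring isomorphism $K_0(\mathcal{O}_{q^{-1}})\simeq K_0(\mathcal{O}_q)$ induced by $\mathcal{G}_q$, and identify each transported term using Corollary~\ref{cor:F}, the weight-space correspondence (which shows $\chi(V)\mapsto\chi(\mathcal{G}_q(V))$) and the equality $\chi(L_{i,1}^{+,q})=\chi(L_{i,1}^{-,q})$ from Theorem~\ref{prop:qcharprefond}. A small remark on the final paragraph: only the factors $\left[\overline{\alpha_i/2}\right]^{\pm 1}$ undergo two successive inversions (one from $q\to q^{-1}$, one from $\mathcal{G}_q$); the spectral shifts $q_i^{\pm 1}$ are inverted only once, by the substitution $q\to q^{-1}$, since $\mathcal{G}_q$ preserves the spectral parameter (Corollary~\ref{cor:F} sends $L_{q^{-1}}(\Psi)$ to $L_q(\Psi^{-1})$, not to $L_q(\Psi^{-1}(a^{-1}))$), which is precisely what converts the $aq_i^{\mp 1}$ in \eqref{eq:QQtilde} into the $aq_i^{\pm 1}$ of \eqref{eq:QQtilde2}.
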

\begin{proof} Let $V$ be in $\mathcal{O}_{q^{-1}}$. Then the proof of Proposition \ref{prop:FO} and the definition of the functor $\mathcal{G}_q$ give $\dim V_{\mu}=\dim \F_q(V)_{\mu^{-1}}=\dim\mathcal{G}_q(V)_{\mu^{-1}}$. It therefore follows from Corollary \ref{cor:F} that the map $[V]\mapsto[\mathcal{G}_q(V)]$ sends $\chi(V)\in K_0(\mathcal{O}_{q^{-1}})$ to 
$$\textstyle  \sum_{\mu\in \ttt^{\times}}\dim (V_{\mu})[\mu^{-1}] = \sum_{\mu\in \ttt^{\times}}\dim (\mathcal{G}_q(V)_{\mu^{-1}})[\mu^{-1}] = \chi(\mathcal{G}_q(V))\in K_0(\mathcal{O}_q).$$ 
Hence the elements $\chi_i^{(q^{-1})}$ and $\check{\chi}_{i,a}^{(q^{-1})}$ defined for the parameter $q^{-1}$ using Theorem \ref{thm:QQtilde} are respectively sent to (cf.~Theorem \ref{prop:qcharprefond}, Corollary \ref{cor:F} and the beginning of Section \ref{sec:Fq})
$$\chi(L_{i,1}^{-,q})=\chi_i^{(q)} \quad \text{ and }\quad \chi(\mathcal{X}_{i,a}^{(q)})\left({\textstyle \left[\overline{\overline{\frac{\alpha_i}{2}}}\right]^{-1}-\left[\overline{\overline{\frac{\alpha_i}{2}}}\right]}\right)=\widetilde{\chi}_{i,a}^{(q)}.$$
Relation \eqref{eq:QQtilde} for $K_0(\mathcal{O}_{q^{-1}})$ thus maps to \eqref{eq:QQtilde2} since the equivalence classes $Q_{i,a}^{(q^{-1})}$ and $\tilde{Q}_{i,a}^{(q^{-1})}$ of Theorem \ref{thm:QQtilde} respectively map to $\mathcal{Q}_{i,a}^{(q)}$ and $\tilde{\mathcal{Q}}_{i,a}^{(q)}$ (again by Corollary \ref{cor:F}).
\end{proof}
For $\g = \su$, one can show that \eqref{eq:QQtilde2} amounts to 
$$ [L_{1,aq}^{-,q}][L_{1,aq}^{+,q}]-[\overline{\alpha_1}]^{-1}[L_{1,aq^{-1}}^{-,q}][L_{1,aq^3}^{+,q}] = \chi(L_{1,1}^{+,q}) $$
which is the quantum Wronskian relation given above up to a shift of the parameter $a$. This coincidence is however fortuitous and the relations given in Theorem \ref{thm:QQtilde} and Corollary \ref{cor:QQtilde2} are typically non-equivalent when $\g \neq \su$. For example, if $\g = \widehat{\mathfrak{sl}}_3$, Corollary \ref{cor:F} implies
$$\mathcal{X}_{1,a}^{(q)} = \mathcal{G}_q(X_{1,a}^{(q^{-1})}) \simeq L(\Psi_{1,a}\Psi_{2,aq^{-1}}^{-1}) = X_{2,aq^{-1}}^{(q)}$$
and \eqref{eq:QQtilde2} gives the relation
$$ (\chi(X_{2,aq^{-2}}^{(q)}))^{-1}[L_{1,aq}^{-,q}][X_{2,a}^{(q)}]-[\overline{\alpha_1}]^{-1}(\chi(X_{2,a}^{(q)}))^{-1}[L_{1,aq^{-1}}^{-,q}][X_{2,aq^2}^{(q)}] = \textstyle\frac{\chi_1^{(q)}}{\chi_2^{(q)}}(1-[\overline{\alpha_1}]^{-1})[L_{2,a}^{-,q}] $$
but the relations obtained from Theorem \ref{thm:QQtilde} have the form
$$ (\chi(X_{1,aq}^{(q)}))^{-1}[L_{1,aq^{-1}}^{+,q}][X_{1,aq^{-1}}^{(q)}] - [\overline{\alpha_1}]^{-1}(\chi(X_{1,aq^{-1}}^{(q)}))^{-1}[L_{1,aq}^{+,q}][X_{1,aq^{-3}}^{(q)}]=\textstyle\frac{\chi_1^{(q)}}{\chi_2^{(q)}}(1-[\overline{\alpha_1}]^{-1})[L_{2,a}^{+,q}]$$ 
for $i=1$ and 
$$ (\chi(X_{2,aq}^{(q)}))^{-1}[L_{2,aq^{-1}}^{+,q}][X_{2,aq^{-1}}^{(q)}] - [\overline{\alpha_2}]^{-1}(\chi(X_{2,aq^{-1}}^{(q)}))^{-1}[L_{2,aq}^{+,q}][X_{2,aq^{-3}}^{(q)}]=\textstyle\frac{\chi_2^{(q)}}{\chi_1^{(q)}}(1-[\overline{\alpha_2}]^{-1})[L_{1,a}^{+,q}]$$ 
for $i=2$. The last two relations clearly differ from the first one.\par
We now wish to apply $\mathcal{G}_q$ to the $QQ^*$-system of \cite[Section 6]{hl2}. This system also depends on parameters $i\in I$ with $a\in \C^{\times}$ and naturally describes the mutation of a remarkable 
cluster algebra associated to $K_0(\mathcal{O}_q^+)$ (see the upcoming Remark \ref{rem:MonCatClust}). It can be written as
\begin{equation}\label{eq:QQ*+}
[L_{i,a}^{\circ,q}][L_{i,a}^{+,q}] = [\overline{\omega_i}]\prod_{\{j\in I|C_{j,i}\neq 0\}}[L_{j,aq_j^{-C_{j,i}}}^{+,q}]+[\,\overline{\omega_i-\alpha_i}\,]\prod_{\{j\in I|C_{j,i}\neq 0\}}[L^{+,q}_{j,aq_j^{C_{j,i}}}]
\end{equation}
with $L_{i,a}^{\circ,q}$ the simple object of $\mathcal{O}_q^+$ with highest $\ell$-weight $\Psi_1 =Y_{i,aq_i^{-1}}^{(q)} \prod_{\{j\in I|C_{j,i}<0\}} \Psi_{j,aq_j^{-C_{j,i}}}.$ \par Similarly (see \cite[Section 7]{hl2}),
\begin{equation}\label{eq:QQ*-}
[L_{i,a}^{*,q}][L_{i,a}^{-,q}] = [\overline{\omega_i}]\prod_{\{j\in I|C_{j,i}\neq 0\}}[L_{j,aq_j^{C_{j,i}}}^{-,q}]+[\,\overline{\omega_i-\alpha_i}\,]\prod_{\{j\in I|C_{j,i}\neq 0\}}[L^{-,q}_{j,aq_j^{-C_{j,i}}}]
\end{equation}
with $L_{i,a}^{*,q}$ the simple object of $\mathcal{O}_q^-$ with highest $\ell$-weight $\Psi_2 =Y_{i,aq_i}^{(q)}\prod_{\{j\in I|C_{j,i}<0\}} \Psi_{j,aq_j^{C_{j,i}}}^{-1}.$
\begin{rem} For $\g = \su$, \eqref{eq:QQ*+} reduces to Baxter's QT-relation (cf.~\cite{fh1})
$$[V_1^q(aq^{-1})][L_{1,a}^{+,q}]=[\overline{\omega_1}][L_{1,aq^{-2}}^{+,q}]+[\overline{\omega_1}]^{-1}[L_{1,aq^2}^{+,q}].$$
\end{rem}
By Corollary \ref{cor:F}, the relation of $K_0(\mathcal{O}_{q^{-1}}^-)$ obtained from \eqref{eq:QQ*-} by inverting the parameter $q$ is sent to \eqref{eq:QQ*+} by the ring isomorphism $[V]\mapsto[\mathcal{G}_q(V)]$. We hence do not obtain new relations for $K_0(\mathcal{O}_q)$ in this case. However, as shown in \cite{her}, \eqref{eq:QQ*-} may be deduced from a short exact sequence of $\mathcal{O}_q^-$ describing the decomposition of the module $L_{i,a}^{*,q} \otimes L_{i,a}^{-,q}$ in simple factors. We state this more precisely in the next theorem.
\begin{theorem}[{\cite[Theorem 5.16]{her}}]\label{thm:QQ*} Let $i\in I$ and $a\in \C^{\times}$. Then there is a non-split short exact sequence in $\mathcal{O}_q^-$
$$ 0\arr \Big( [\,\overline{\omega_i-\alpha_i}\,]\otimes \bigotimes_{\{j\in I| C_{j,i}\neq 0\}} L_{j,aq_j^{-C_{j,i}}}^{-,q}\Big)\arr L_{i,a}^{*,q}\otimes L_{i,a}^{-,q} \arr \Big([\overline{\omega_i}]\otimes \bigotimes_{\{j\in I| C_{j,i}\neq 0\}} L_{j,aq_j^{C_{j,i}}}^{-,q}\Big)\arr 0 $$
where the extremal (non-zero) factors are simple modules.
\end{theorem}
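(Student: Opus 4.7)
The plan is to leverage the $K_0$-identity \eqref{eq:QQ*-} (attributed to Hernandez--Leclerc) and enhance it to a genuine short exact sequence of $\uqb$-modules via a highest $\ell$-weight analysis combined with a cyclicity argument. Set $M_1 = [\,\overline{\omega_i-\alpha_i}\,]\otimes\bigotimes_{\{j|C_{j,i}\neq 0\}}L_{j,aq_j^{-C_{j,i}}}^{-,q}$ and $M_2 = [\,\overline{\omega_i}\,]\otimes\bigotimes_{\{j|C_{j,i}\neq 0\}}L_{j,aq_j^{C_{j,i}}}^{-,q}$. First I would verify simplicity of $M_1$ and $M_2$: both are tensor products of same-sign prefundamental representations, hence simple by \thmref{thm:ProdPrefond}, twisted by a one-dimensional invertible module, which preserves simplicity. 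Combined with \eqref{eq:QQ*-} this gives $[L_{i,a}^{*,q}\otimes L_{i,a}^{-,q}] = [M_1]+[M_2]$ in $K_0(\mathcal{O}_q^-)$, so the tensor product has exactly $M_1$ and $M_2$ as simple composition factors, each with multiplicity one.

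Second, I would identify $M_2$ as a simple quotient of $L_{i,a}^{*,q}\otimes L_{i,a}^{-,q}$. Choose highest $\ell$-weight vectors $v\in L_{i,a}^{*,q}$ and $v'\in L_{i,a}^{-,q}$. Standard properties of the Drinfeld coproduct (the ``off-diagonal'' terms in $\Delta(\phi_{i,r}^+)$ involve positive root currents annihilating $v'$) show that $v\otimes v'$ is itself a highest $\ell$-weight vector of $\ell$-weight $\Psi_2\Psi_{i,a}^{-1}$; a short calculation using $Y_{i,aq_i}^{(q)}=\overline{\omega_i}\Psi_{i,a}\Psi_{i,aq_i^2}^{-1}$ verifies that this $\ell$-weight coincides with the highest $\ell$-weight of $M_2$. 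Hence by \remref{rem:L(PP')} the simple module $M_2\simeq L_q(\Psi_2\Psi_{i,a}^{-1})$ is a simple quotient of the submodule $\uqb(v\otimes v')$. The crucial next step is to prove that $\uqb(v\otimes v') = L_{i,a}^{*,q}\otimes L_{i,a}^{-,q}$, i.e., that the tensor product is cyclic. Granting cyclicity, $M_2$ is a simple quotient of the whole tensor product, and the kernel, whose composition series contains only $M_1$, must therefore be isomorphic to $M_1$, yielding the desired short exact sequence.

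The main technical obstacle is establishing this cyclicity. I would tackle it via \propref{prop:limitOm}: realize each of $L_{i,a}^{*,q}$ and $L_{i,a}^{-,q}$ as an inductive limit of simple finite-dimensional $\uqg$-modules (for instance $L_{i,a}^{-,q}$ as a limit of Kirillov--Reshetikhin modules as in \remref{rem:LnegW}), obtain $L_{i,a}^{*,q}\otimes L_{i,a}^{-,q}$ as an inductive limit of tensor products of finite-dimensional simple modules, invoke the generic cyclicity of such tensor products (\thmref{thm:genSim} together with \cite{ch,fm}), and check that cyclicity is preserved under passage through the structural maps $F_{k',k}$ recalled after \propref{prop:limitOm}. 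Non-splitness is then immediate: if $L_{i,a}^{*,q}\otimes L_{i,a}^{-,q}\simeq M_1\oplus M_2$, then $v\otimes v'$ — whose weight $\overline{\omega_i}$ does not appear in $M_1$, since $\overline{\omega_i-\alpha_i}<\overline{\omega_i}$ — would lie entirely in the $M_2$ summand, forcing $\uqb(v\otimes v')\subseteq M_2$ and contradicting cyclicity.
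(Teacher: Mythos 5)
This theorem is not proved in the present paper; it is cited verbatim from Hernandez \cite[Theorem 5.16]{her}, and the remark immediately following it explains that Hernandez's proof hinges on the specialization at $u=1$ of the affine $R$-matrix $R_{L_{i,a}^{*,q},\,L_{i,a}^{-,q}}(u)$. Your proposal is therefore a genuinely different route: bootstrap the short exact sequence from the $K_0$-identity \eqref{eq:QQ*-} together with a cyclicity claim. The logical skeleton is sound: simplicity of $M_1,M_2$ via Theorem \ref{thm:ProdPrefond}; the $K_0$-identity pins down the composition factors with multiplicity one each; the identification of the highest $\ell$-weight of the submodule $\uqb(v\otimes v')$ with that of $M_2$ is correct (and follows from Remark \ref{rem:L(PP')}); cyclicity would indeed force $M_2$ to be the unique simple quotient and $M_1$ the kernel; and non-splitness does follow formally from cyclicity by the weight argument you give.

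The problem is that the cyclicity of $L_{i,a}^{*,q}\otimes L_{i,a}^{-,q}$ (i.e.~that it is generated by $v\otimes v'$) carries the \emph{entire} weight of the proof, and your sketch does not establish it. If cyclicity fails, the generated submodule $\uqb(v\otimes v')$ — a highest $\ell$-weight module of length at most two whose unique simple quotient is $M_2$ — could simply equal $M_2$, in which case $M_2$ would be a \emph{sub}module and you would obtain at best the short exact sequence in the opposite direction, or a split one. Your proposed inductive-limit argument for cyclicity has two concrete gaps. First, the structural maps $F_{k',k}$ of Section \ref{sec:qcaract} (and their refinements $G_{\ell,k}(u)$ in Section \ref{sec:ROm}) are \emph{not} $\uqb$-linear — the paper points this out explicitly after Theorem \ref{thm:ROmLin} — so ``cyclicity passes through the structural maps'' is precisely the kind of statement that needs justification and does not follow from the axioms of a direct limit. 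Second, the finite-dimensional tensor products $V_k\otimes W_k$ sit at \emph{specific} spectral parameters forced by the choice of Kirillov–Reshetikhin approximants, so the generic-irreducibility statement of Theorem \ref{thm:genSim} gives no information at those points; one would have to invoke the explicit cyclicity criteria of \cite{fm,ch} (which depend delicately on tensor ordering and on $q$-power ratios of the spectral parameters) and verify they are met here, which you have not done. In effect, proving the requisite cyclicity \emph{is} the hard content of Hernandez's proof — his $R$-matrix analysis at $u=1$ is exactly what shows the tensor product is a highest $\ell$-weight module — so as written the proposal circles back to the crux without resolving it.
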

Using the functor $\mathcal{G}_q$ on the exact sequence of $\mathcal{O}_{q^{-1}}$ given by Theorem \ref{thm:QQ*} (with quantum parameter $q^{-1}$) produces the following ``categorified version'' of the $QQ^*$-system \eqref{eq:QQ*+}. (Recall that a tensor product of prefundamental representations of the same sign is necessarily simple by Theorem \ref{thm:ProdPrefond} and can therefore be written in any order up to isomorphism. This is used implicitly below in order to deduce Corollary \ref{cor:QQ*} from Theorem \ref{thm:QQ*}.)
\begin{cor}\label{cor:QQ*} Let $i\in I$ and $a\in \C^{\times}$. There is a non-split short exact sequence in $\mathcal{O}_q^+$
$$ 0\arr \Big( [\,\overline{\omega_i-\alpha_i}\,]\otimes \bigotimes_{\{j\in I| C_{j,i}\neq 0\}} L_{j,aq_j^{C_{j,i}}}^{+,q}\Big)\arr L_{i,a}^{\circ,q}\otimes L_{i,a}^{+,q} \arr \Big([\overline{\omega_i}]\otimes \bigotimes_{\{j\in I| C_{j,i}\neq 0\}} L_{j,aq_j^{-C_{j,i}}}^{+,q}\Big)\arr 0 $$
where the extremal (non-zero) factors are simple modules. \hfill $\qed$
\end{cor}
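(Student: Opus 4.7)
The strategy is to apply the functor $\mathcal{G}_q$ to the short exact sequence of Theorem~\ref{thm:QQ*} stated for the quantum parameter $q^{-1}$ (rather than $q$), and then identify the images of all the modules involved. Since $\mathcal{G}_q = \tau_{\gamma^{-1},q}^* \circ \F_q$ is a composition of two exact invertible functors, it is itself exact and invertible; in particular it preserves short exact sequences as well as their splitting class.

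First, I would write down Theorem~\ref{thm:QQ*} for $\uqgm$ explicitly, yielding a non-split SES in $\mathcal{O}_{q^{-1}}^-$ whose outer terms involve the invertible modules $[\overline{\overline{\omega_i-\alpha_i}}\,]$ and $[\overline{\overline{\omega_i}}\,]$ tensored with the negative prefundamental modules $L_{j,b}^{-,q^{-1}}$, and whose middle term is $L_{i,a}^{*,q^{-1}} \otimes L_{i,a}^{-,q^{-1}}$. Next, I would apply $\mathcal{G}_q$ and use Theorem~\ref{thm:Comon} (which says $\F_q$, and thus $\mathcal{G}_q$, reverses tensor products). A priori this reverses the order of all the factors in each term, but by Theorem~\ref{thm:ProdPrefond} any two prefundamental representations of the same sign commute up to isomorphism under $\otimes$, and invertible modules commute with everything, so the reordering is harmless.

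The third step is identifying the images of the simple factors by means of Corollary~\ref{cor:F}. For invertibles, $\mathcal{G}_q([\overline{\overline{\omega}}\,]) \simeq L_q(\overline{\overline{\omega}}^{-1}) = L_q(\overline{\omega}) = [\overline{\omega}]$ since $\overline{\overline{\omega}} = \overline{\omega}^{-1}$ for $\omega \in P_{\mathbb{Q}}$. For the negative prefundamentals, $\mathcal{G}_q(L_{j,b}^{-,q^{-1}}) \simeq L_q(\Psi_{j,b}) = L_{j,b}^{+,q}$. The only nontrivial identification is that of $\mathcal{G}_q(L_{i,a}^{*,q^{-1}})$: the highest $\ell$-weight of $L_{i,a}^{*,q^{-1}}$ is
\[
Y_{i,aq_i^{-1}}^{(q^{-1})}\prod_{\{j\in I|C_{j,i}<0\}}\Psi_{j,aq_j^{-C_{j,i}}}^{-1},
\]
so its image under $\mathcal{G}_q$ is the simple module of highest $\ell$-weight obtained by inverting, namely $(Y_{i,aq_i^{-1}}^{(q^{-1})})^{-1}\prod_j \Psi_{j,aq_j^{-C_{j,i}}}$. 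A direct inspection of the rational functions defining $Y_{i,b}^{(q^{\pm 1})}$ gives $(Y_{i,b}^{(q^{-1})})^{-1} = Y_{i,b}^{(q)}$, so this image is $L_{i,a}^{\circ,q}$ by definition.

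Once all the identifications are in place, the resulting exact sequence matches the one in the statement. Non-splitness follows from invertibility of $\mathcal{G}_q$ (a splitting in $\mathcal{O}_q^+$ would pull back to a splitting in $\mathcal{O}_{q^{-1}}^-$ via $\mathcal{G}_{q^{-1}}$, contradicting Theorem~\ref{thm:QQ*}), and simplicity of the outer factors follows from Theorem~\ref{thm:ProdPrefond} together with the fact that tensoring by an invertible module preserves simplicity. The only slightly delicate point is the $\ell$-weight manipulation identifying $\mathcal{G}_q(L_{i,a}^{*,q^{-1}})$ with $L_{i,a}^{\circ,q}$; everything else is a bookkeeping exercise using results already established in Sections~\ref{sec:Comon} and~\ref{sec:ImSimple}.
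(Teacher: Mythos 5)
Your proposal is correct and matches the paper's own argument, which likewise deduces Corollary~\ref{cor:QQ*} by applying $\mathcal{G}_q$ to Theorem~\ref{thm:QQ*} at parameter $q^{-1}$, using Theorem~\ref{thm:Comon} to reverse the tensor factors and Theorem~\ref{thm:ProdPrefond} to reorder them, and Corollary~\ref{cor:F} to identify the images (the paper presents this argument in the paragraph preceding the corollary rather than in a separate proof environment). Your computation of $\mathcal{G}_q(L_{i,a}^{*,q^{-1}})\simeq L_{i,a}^{\circ,q}$ via $(Y_{i,b}^{(q^{-1})})^{-1}=Y_{i,b}^{(q)}$ is exactly the identification the paper relies on.
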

\begin{rem} The proof given in \cite{her} for Theorem \ref{thm:QQ*} relies heavily on the analysis of the specialization at $u=1$ of the $R$-matrix (see Section \ref{sec:RMat}) $$R_{L_{i,a}^{*,q},L_{i,a}^{-,q}}(u) : L_{i,a}^{*,q}(u)\otimes L_{i,a}^{-,q} \arr L_{i,a}^{-,q}\otimes L_{i,a}^{*,q}(u)$$
and this proof thus cannot be adapted to Corollary \ref{cor:QQ*} before obtaining a general construction of such $R$-matrices for $\mathcal{O}_q^+$.
\end{rem}
\subsection{Functorial interpretation of Hernandez--Leclerc's duality}\label{sec:FuncD}
In this subsection, we understand the quantum parameter $q$ as being a formal variable and view $\uqg$ as an algebra over the algebraic closure $\mathbbm{k}_q$ of $\C(q)$ in $\bigcup_{m>0} \C((q^{1/m}))$. For another formal parameter $q'$, we have a natural isomorphism of $\C$-algebras $\mathbbm{k}_q\simeq \mathbbm{k}_{q'}$ that sends $q$ to $q'$ (this map would not be well-defined without the assumption that $q$ and $q'$ are formal variables). This isomorphism induces a second isomorphism of $\C$-algebras $$\varsigma_{q',q}:\uqg\arr U_{q'}(\g)$$ which sends elements of the Drinfeld generating set of $\uqg$ to the associated elements in the Drinfeld generating set of $U_{q'}(\g)$. 
\begin{rem}\label{rem:DJB} By adequately choosing the correspondences between the Drinfeld and Drinfeld--Jimbo generating sets of $\uqg$ and $U_{q'}(\g)$, one can ensure that the isomorphism $\varsigma_{q',q}$ also maps elements of the Drinfeld--Jimbo generating set of $\uqg$ to the corresponding elements in the Drinfeld--Jimbo generating set of $U_{q'}(\g)$. Such a choice is always possible (see e.g.~\cite[Section 2.2]{hj} for details) and will be assumed throughout this subsection (and in this subsection only).  
\end{rem} 
Define $\mathcal{B}_{q',q}$ as the covariant functor $U_{q'}(\bor)\otimes_{\uqb} - $ from the category of all $\uqb$-modules to the category of all $U_{q'}(\bor)$-modules where the (right) $\uqb$-action on $U_{q'}(\bor)$ is through the isomorphism $\varsigma_{q',q}$. This is an invertible functor (and is thus exact) since 
$$ \mathcal{B}_{q',q} \circ \mathcal{B}_{q,q'} = U_{q'}(\bor)\otimes_{\uqb} (\uqb \otimes_{U_{q'}(\bor)} -) \simeq U_{q'}(\bor) \otimes_{U_{q'}(\bor)} -$$
is canonically isomorphic to the identity functor of the category of all $\uqb$-modules. Moreover, $\varsigma_{q',q}$ induces a $\mathbbm{k}_{q'}$-algebra isomorphism $\mathbbm{k}_{q'}\otimes_{\mathbbm{k}_q} \uqb\simeq U_{q'}(\bor)$ and we can hence naturally identify the image $\mathcal{B}_{q',q}(V)$ of a $\uqb$-module $V$ with the $U_{q'}(\bor)$-module defined on $\mathbbm{k}_{q'}\otimes_{\mathbbm{k}_q} V$ via the action $x(1\otimes_{\mathbbm{k}_q}v) = 1\otimes_{\mathbbm{k}_q} \varsigma_{q',q}^{-1}(x)v$ (for any $v\in V$ and $x\in U_{q'}(\bor)$). Thus, $\mathcal{B}_{q',q}$ sends $\uqb$-modules of dimension $n$ over $\mathbbm{k}_q$ to $U_{q'}(\bor)$-modules of the same dimension over $\mathbbm{k}_{q'}$ (for $n\in \mathbb{N}\cup\{\infty\}$). It also preserves irreducibility as does any exact functor with exact inverse.
\begin{rem}\label{rem:BVact} Consider a $\uqb$-module $V$ with a fixed $\mathbbm{k}_q$-basis $\{v_j\}_{j\in J}\subseteq V$ and coefficients $\{e_{i,j,s}(q),k_{i,j,s}(q)\}_{i\in I, j,s \in J} \subseteq \mathbbm{k}_q$ such that 
$\textstyle e_i v_j = \sum_{s\in J} e_{i,j,s}(q) v_s \text{ and } k_i v_j = \sum_{s\in J} k_{i,j,s}(q) v_s.$
Then the $U_{q'}(\bor)$-action on the $\mathbbm{k}_{q'}$-basis $\{1\otimes_{\mathbbm{k}_q}v_j\}_{j\in J}$ of $\mathcal{B}_{q',q}(V)$ is (see Remark \ref{rem:DJB})
$$\textstyle e_i (1\otimes_{\mathbbm{k}_q} v_j) = \sum_{s\in J} e_{i,j,s}(q') (1\otimes_{\mathbbm{k}_q} v_s) \text{ and } k_i v_j = \sum_{s\in J} k_{i,j,s}(q') (1\otimes_{\mathbbm{k}_q} v_s).$$
\end{rem}
\begin{prop} Let $V$ be an object of $\mathcal{O}_q$. Then $\mathcal{B}_{q',q}(V)$ is in $\mathcal{O}_{q'}$.
\end{prop}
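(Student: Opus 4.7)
The plan is to transport the $\mathfrak{t}$-weight space decomposition of $V$ across the identification $\mathcal{B}_{q',q}(V)\simeq \mathbbm{k}_{q'}\otimes_{\mathbbm{k}_q} V$ that is recalled just before the proposition. Concretely, the $\mathbb{C}$-algebra isomorphism $\varsigma_{q',q}$ restricts coordinate-wise to a group isomorphism $\mathfrak{t}^{\times}_{q}\simeq \mathfrak{t}^{\times}_{q'}$, $\mu=(\mu_i)_{i\in I}\mapsto \tilde{\mu}:=(\varsigma_{q',q}(\mu_i))_{i\in I}$, and I will use this bijection to match the weight data on the two sides.

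First I would verify, using the explicit description of the action given just before the statement (cf.~the preceding Remark), that for each $v\in V_{\mu}$ one has $k_i\,(1\otimes_{\mathbbm{k}_q} v) = \varsigma_{q',q}(\mu_i)\,(1\otimes_{\mathbbm{k}_q} v)$; this is where the compatibility of Drinfeld--Jimbo presentations fixed in the preceding Remark enters. Tensoring the decomposition $V=\bigoplus_{\mu}V_{\mu}$ with $\mathbbm{k}_{q'}$ over $\mathbbm{k}_q$ then produces
\[ \mathcal{B}_{q',q}(V)=\bigoplus_{\mu\in\mathfrak{t}_q^{\times}} \bigl(\mathbbm{k}_{q'}\otimes_{\mathbbm{k}_q}V_{\mu}\bigr), \]
and the preceding computation shows that each summand lies in the $\tilde{\mu}$-weight space. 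Conversely any weight of $\mathcal{B}_{q',q}(V)$ must be of the form $\tilde{\mu}$, whence equality $\mathcal{B}_{q',q}(V)_{\tilde{\mu}}=\mathbbm{k}_{q'}\otimes_{\mathbbm{k}_q}V_{\mu}$. This immediately yields conditions (i) and (ii) of \propref{def:O}: the module is $\mathfrak{t}$-diagonalizable and $\dim_{\mathbbm{k}_{q'}}\mathcal{B}_{q',q}(V)_{\tilde{\mu}}=\dim_{\mathbbm{k}_q} V_{\mu}<\infty$.

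For condition (iii), I would check that the bijection $\mu\mapsto\tilde{\mu}$ intertwines the partial orders $\leq$ defined (for each parameter) in \secref{sec:O}. Because $\varsigma_{q',q}$ sends the Drinfeld--Jimbo generators to their counterparts for $q'$, it maps $\overline{\omega_i}^{(q)}\in\mathfrak{t}_q^{\times}$ to $\overline{\omega_i}^{(q')}\in\mathfrak{t}_{q'}^{\times}$ and hence $\overline{Q^+}$ to $\overline{Q^+}$; thus $\tilde{\mu}\leq\tilde{\nu}$ if and only if $\mu\leq\nu$. Consequently, any family $\lambda_1,\dots,\lambda_s\in\mathfrak{t}_q^{\times}$ witnessing $P(V)\subseteq\bigcup_i D(\lambda_i)$ yields $P(\mathcal{B}_{q',q}(V))\subseteq\bigcup_i D(\tilde{\lambda_i})$, which closes the argument. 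The only real subtlety, and where care is required, is keeping track of the two base fields and making sure the order-preserving bijection $\mu\mapsto\tilde{\mu}$ is the right one; everything else is a direct extension of scalars.
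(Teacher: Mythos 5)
Your proof is correct and follows essentially the same route as the paper: identify the weight spaces of $\mathcal{B}_{q',q}(V)$ with those of $V$ via extension of scalars (Remark~\ref{rem:BVact}), then observe that the substitution $q\mapsto q'$ carries $\overline{Q^+}$ to $\overline{Q^+}$ so the partial order is preserved. The only cosmetic difference is that you phrase the order-preservation abstractly via $\varsigma_{q',q}$ sending $\overline{\omega_i}^{(q)}$ to $\overline{\omega_i}^{(q')}$, whereas the paper writes out $\mu_i(q)=q^{d_i\sum_j m_j C_{ij}}$ explicitly; note that the relevant input here is simply that the underlying field isomorphism sends $q_i$ to $q_i'$, not really the Drinfeld--Jimbo compatibility you cite.
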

\begin{proof}
Fix $\mu(q) = (\mu_i(q))_{i\in I}\in (\mathbbm{k}_q^{\times})^{I}$. Then $\mathbbm{k}_{q'}\otimes_{\mathbbm{k}_q}V_{\mu(q)} =(\mathcal{B}_{q',q}(V))_{\mu(q')}$ (see Remark \ref{rem:BVact}) and it follows that $\mathcal{B}_{q',q}(V)$ is isomorphic to the direct sum of its weight spaces which are all finite-dimensional over $\mathbbm{k}_{q'}$. Consider now $\lambda_1(q),...,\lambda_s(q) \in (\mathbbm{k}_q^{\times})^{I}$ with $P(V) \subseteq \bigcup_{j=1}^s D(\lambda_j(q))$ and fix a weight $\omega(q') \in P(\mathcal{B}_{q',q}(V))\subseteq (\mathbbm{k}_{q'}^{\times})^{I}$. Then $\omega(q) \in P(V)$ by the precedent argument and there is hence $1\leq r\leq s$ such that $\omega(q)\leq_q \lambda_r(q)$ with $\leq_q$ the partial order on $\ttt^{\times} = (\mathbbm{k}_q^{\times})^I$ defined in Section \ref{sec:O}. Let us denote by $\leq_{q'}$ the partial order on $(\mathbbm{k}_{q'}^{\times})^{I}$ obtained from $\leq_q$ by changing the quantum parameter to $q'$. We want to show that $\omega(q')\leq_{q'} \lambda_r(q')$. For this goal, write $\lambda_r(q)(\omega(q))^{-1} =\mu(q) = (\mu_i(q))_{i\in I}$ so that $\omega(q)\leq_q \lambda_r(q)$ implies $$ \textstyle \mu_i(q) = q^{d_i\sum_{j\in I} m_j C_{ij}} = \overline{\sum_{j\in I}m_j\alpha_j}$$
for some $\{m_j\}_{j\in I}\subseteq \mathbb{N}$. Therefore $ \mu_i(q') = (q')^{d_i\sum_{j\in I} m_j C_{ij}}$ and $\mu(q')=\lambda_r(q')(\omega(q'))^{-1}$ is the image of $\sum_{j\in I}m_j\alpha_j \in Q_+$ by the map obtained from $\overline{\textcolor{white}{\alpha}} : P_{\mathbb{Q}} \rightarrow \ttt^{\times}$ by changing the quantum parameter $q$ by $q'$. This ends the proof.
\end{proof} 
The functor $\mathcal{B}_{q',q}$ thus gives rise to a functor from $\mathcal{O}_q$ to $\mathcal{O}_{q'}$ by restriction. Let us denote by $\mathfrak{r}_q$ the set of sequences $\Psi(q) = (\Psi_{i,r}(q))_{i\in I,r\geq 0}\subseteq\mathbbm{k}_q$ with $\varpi(\Psi(q))=(\Psi_{i,0}(q))_{i\in I} \in \ttt^{\times} = (\mathbbm{k}_q^{\times})^I$ and such that $\Psi_{i}(z,q) = \sum_{r\geq 0}\Psi_{i,r}(q)z^r$ is a rational function in $z$ for any $i\in I$.
Let us also write $L_q(\Psi(q))$ for the simple object of $\mathcal{O}_q$ of highest $\ell$-weight $\Psi(q)\in \mathfrak{r}_q$.
\begin{prop}\label{prop:B} Fix $\Psi(q)=(\Psi_{i,r}(q))_{i\in I,r\geq 0}\in\mathfrak{r}_q$. Then $\mathcal{B}_{q',q}(L_q(\Psi(q))) \simeq L_{q'}(\Psi(q'))$.
\end{prop}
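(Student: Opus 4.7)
The plan is to produce a highest $\ell$-weight vector in $\mathcal{B}_{q',q}(L_q(\Psi(q)))$ of weight $\Psi(q')$, invoke the preservation of irreducibility by $\mathcal{B}_{q',q}$, and then apply the uniqueness part of Theorem \ref{thm:PropSimpleOEx} (stated over $U_{q'}(\bor)$).

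First, I would verify that $\Psi(q') = (\Psi_{i,r}(q'))_{i \in I, r \geq 0}$ genuinely lies in $\mathfrak{r}_{q'}$: the $\C$-algebra isomorphism $\mathbbm{k}_q \simeq \mathbbm{k}_{q'}$ sending $q$ to $q'$ carries the rational function $\Psi_i(z,q)$ to the rational function $\Psi_i(z,q')$ and sends $\Psi_{i,0}(q) \in \mathbbm{k}_q^{\times}$ to $\Psi_{i,0}(q') \in \mathbbm{k}_{q'}^{\times}$, so the defining conditions of $\mathfrak{r}_{q'}$ are satisfied. Next, fix a highest $\ell$-weight vector $v \in L_q(\Psi(q))$ and set $v' := 1 \otimes_{\mathbbm{k}_q} v \in \mathcal{B}_{q',q}(L_q(\Psi(q)))$. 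Using the defining formula $x \cdot (1 \otimes_{\mathbbm{k}_q} w) = 1 \otimes_{\mathbbm{k}_q} \varsigma_{q',q}^{-1}(x) w$ and Remark \ref{rem:DJB} (which ensures $\varsigma_{q',q}$ maps $e_i^q \mapsto e_i^{q'}$ and $\phi_{i,r}^{+,q} \mapsto \phi_{i,r}^{+,q'}$), I would compute
\[ e_i^{q'} \cdot v' = 1 \otimes_{\mathbbm{k}_q} e_i^q v = 0, \qquad \phi_{i,r}^{+,q'} \cdot v' = 1 \otimes_{\mathbbm{k}_q} \Psi_{i,r}(q) v = \Psi_{i,r}(q') v' \]
for all $i \in I$ and $r \geq 0$, where the last equality uses that the right $\mathbbm{k}_q$-action on $\mathbbm{k}_{q'}$ sends $\Psi_{i,r}(q)$ to $\Psi_{i,r}(q')$. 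Since $v$ generates $L_q(\Psi(q))$ over $\uqb$ and $\varsigma_{q',q}$ restricts to an isomorphism $\uqb \simeq U_{q'}(\bor)$, the vector $v'$ then generates $\mathcal{B}_{q',q}(L_q(\Psi(q)))$ as a $U_{q'}(\bor)$-module.

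Hence $\mathcal{B}_{q',q}(L_q(\Psi(q)))$ is a highest $\ell$-weight module of highest $\ell$-weight $\Psi(q')$. Since $\mathcal{B}_{q',q}$ is exact with exact inverse $\mathcal{B}_{q,q'}$, it preserves irreducibility (as already recorded in the paragraph defining $\mathcal{B}_{q',q}$), so this module is simple; uniqueness in Theorem \ref{thm:PropSimpleOEx} applied to $U_{q'}(\bor)$ then yields $\mathcal{B}_{q',q}(L_q(\Psi(q))) \simeq L_{q'}(\Psi(q'))$, as required. I do not foresee any serious obstacle; the only delicate point is bookkeeping across the scalar extension $\mathbbm{k}_{q'} \otimes_{\mathbbm{k}_q} -$, namely the fact that a scalar $\lambda(q) \in \mathbbm{k}_q$ sitting to the right of the tensor sign becomes $\lambda(q') \in \mathbbm{k}_{q'}$ when crossed over, and the simultaneous use of Remark \ref{rem:DJB} to ensure $\varsigma_{q',q}$ respects both the Drinfeld generators (used to express the $\ell$-weight action) and the Drinfeld--Jimbo generators (used to express annihilation by the $e_i$).
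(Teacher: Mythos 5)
Your proof is correct and follows essentially the same approach as the paper: fix a highest $\ell$-weight vector $v$, observe that $1 \otimes_{\mathbbm{k}_q} v$ is a highest $\ell$-weight vector of $\ell$-weight $\Psi(q')$ via the scalar-crossing identity $1 \otimes_{\mathbbm{k}_q} \Psi_{i,r}(q) v = \Psi_{i,r}(q')(1 \otimes_{\mathbbm{k}_q} v)$, and combine this with the already-established fact that $\mathcal{B}_{q',q}$ preserves irreducibility. The only difference is that you spell out explicitly the points the paper leaves implicit (that $\Psi(q') \in \mathfrak{r}_{q'}$, that $v'$ generates, and the appeal to uniqueness of $L_{q'}(\Psi(q'))$), which is fine.
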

\begin{proof} Fix a highest $\ell$-weight vector $v$ in $L_q(\Psi(q))$ . Then $1\otimes_{\mathbbm{k}_q}v$ is a highest $\ell$-weight vector with highest $\ell$-weight $\Psi(q')$ for the simple $U_{q'}(\bor)$-module $\mathcal{B}_{q',q}(L_q(\Psi(q)))$ as 
$$\phi_{i,r}^+(1\otimes_{\mathbbm{k}_q}v) = 1\otimes_{\mathbbm{k}_q}\phi_{i,r}^+v = 1\otimes_{\mathbbm{k}_q}\Psi_{i,r}(q)v = \Psi_{i,r}(q')(1\otimes_{\mathbbm{k}_q}v)$$ 
and $e_i(1\otimes_{\mathbbm{k}_q}v)=1\otimes_{\mathbbm{k}_q}e_iv=0$. This finishes the proof.
\end{proof}
\begin{lem} Let $V,W$ be $\uqb$-modules. Then $\mathcal{B}_{q',q}(V\otimes_{\mathbbm{k}_{q}} W) \simeq \mathcal{B}_{q',q}(V)\otimes_{\mathbbm{k}_{q'}} \mathcal{B}_{q',q}(W)$.
\end{lem}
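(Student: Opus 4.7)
The plan is to produce the natural base-change isomorphism and then check that it is $U_{q'}(\bor)$-linear on the Drinfeld--Jimbo generators $\{e_i, k_i^{\pm 1}\}_{i=0}^n$. Under the identification $\mathcal{B}_{q',q}(M)\simeq \mathbbm{k}_{q'}\otimes_{\mathbbm{k}_q} M$ explained before Remark \ref{rem:BVact}, the claim reduces to the purely algebraic isomorphism
\[
\varphi : \mathbbm{k}_{q'}\otimes_{\mathbbm{k}_q}(V\otimes_{\mathbbm{k}_q}W) \;\xrightarrow{\sim}\; (\mathbbm{k}_{q'}\otimes_{\mathbbm{k}_q}V)\otimes_{\mathbbm{k}_{q'}}(\mathbbm{k}_{q'}\otimes_{\mathbbm{k}_q}W),\qquad 1\otimes(v\otimes w)\longmapsto (1\otimes v)\otimes(1\otimes w),
\]
whose bijectivity is standard (it is a $\mathbbm{k}_{q'}$-linear isomorphism of vector spaces: on a $\mathbbm{k}_q$-basis $\{v_j\otimes w_s\}$ of $V\otimes_{\mathbbm{k}_q}W$ both sides are free $\mathbbm{k}_{q'}$-modules with matching bases).

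Next I would verify that $\varphi$ intertwines the $U_{q'}(\bor)$-actions. By Remark \ref{rem:DJB} the isomorphism $\varsigma_{q',q}$ may be chosen so that $\varsigma_{q',q}(x)=x$ for every Drinfeld--Jimbo generator $x\in\{e_i,k_i^{\pm 1}\}_{i=0}^n$, and the coproduct formulas
\[
\Delta(e_i)=e_i\otimes 1+k_i\otimes e_i,\qquad \Delta(k_i^{\pm 1})=k_i^{\pm 1}\otimes k_i^{\pm 1}
\]
recalled in Section \ref{sec:DefQaff} have coefficients independent of $q$. Thus $\varsigma_{q',q}$ is a Hopf algebra isomorphism on the relevant sub-Hopf-algebra, and the comultiplications of $\uqb$ and $U_{q'}(\bor)$ are literally the same expressions in the corresponding generators. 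A direct computation then shows, for instance,
\[
\varphi\bigl(e_i\cdot 1\otimes(v\otimes w)\bigr)=\varphi\bigl(1\otimes(e_iv\otimes w+k_iv\otimes e_iw)\bigr)
=(1\otimes e_iv)\otimes(1\otimes w)+(1\otimes k_iv)\otimes(1\otimes e_iw),
\]
which is exactly $e_i\cdot\varphi(1\otimes(v\otimes w))$ computed with the coproduct of $U_{q'}(\bor)$; the analogous verification for $k_i^{\pm 1}$ is immediate since the coproduct is group-like. As $\{e_i,k_i^{\pm 1}\}_{i=0}^n$ generates $\uqb$, this suffices.

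The only mildly delicate point is the coherence between the two ways to view $\mathcal{B}_{q',q}$: as $U_{q'}(\bor)\otimes_{\uqb}-$ and as base change of $\mathbbm{k}_q$-modules along $\mathbbm{k}_q\hookrightarrow\mathbbm{k}_{q'}$. One must make sure that under the identification $\mathbbm{k}_{q'}\otimes_{\mathbbm{k}_q}\uqb\simeq U_{q'}(\bor)$ induced by $\varsigma_{q',q}$, the $U_{q'}(\bor)$-action on $\mathbbm{k}_{q'}\otimes_{\mathbbm{k}_q}V$ described in Remark \ref{rem:BVact} agrees with the one on $U_{q'}(\bor)\otimes_{\uqb}V$. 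This is where the compatibility of $\varsigma_{q',q}$ with Drinfeld--Jimbo generators (Remark \ref{rem:DJB}) is essential, but once noted it is routine. With this in place, $\varphi$ furnishes the claimed $U_{q'}(\bor)$-linear isomorphism and the lemma follows.
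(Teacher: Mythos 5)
Your proof is correct and follows the same route as the paper: the paper also invokes the canonical base-change isomorphism $\mathbbm{k}_{q'}\otimes_{\mathbbm{k}_{q}}(V\otimes_{\mathbbm{k}_{q}}W)\simeq(\mathbbm{k}_{q'}\otimes_{\mathbbm{k}_{q}}V)\otimes_{\mathbbm{k}_{q'}}(\mathbbm{k}_{q'}\otimes_{\mathbbm{k}_{q}}W)$ and notes that it is "easily seen to be $U_{q'}(\g)$-linear." You simply spell out the linearity check on the Drinfeld--Jimbo generators via Remark \ref{rem:DJB} and the $q$-independence of the coproduct formulas, which the paper leaves implicit.
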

\begin{proof} Use the obvious isomorphism $\mathbbm{k}_{q'}\otimes_{\mathbbm{k}_{q}} (V\otimes_{\mathbbm{k}_{q}} W) \simeq (\mathbbm{k}_{q'}\otimes_{\mathbbm{k}_{q}} V)\otimes_{\mathbbm{k}_{q'}}(\mathbbm{k}_{q'}\otimes_{\mathbbm{k}_{q}} W)$ (which can be easily seen to be $U_{q'}(\g)$-linear for the underlying actions).
\end{proof}
Consider the covariant autofunctor $\mathcal{D}_{q} = \mathcal{G}_q \circ \mathcal{B}_{q^{-1},q}$ of $\mathcal{O}_q$. By the above result, this is an invertible exact functor which reverses tensor products of modules and preserves irreducibility as well as dimensions (over $\mathbbm{k}_q$). It thus induces a ring automorphism of $K_0(\mathcal{O}_q)$ which sends equivalence classes of simple modules to equivalence classes of simple modules. Moreover, we have $\mathcal{D}_{q}(L_q(\Psi(q))) \simeq L_q((\Psi(q^{-1}))^{-1})$ by Corollary \ref{cor:F} and Proposition \ref{prop:B} so that 
$$ \mathcal{D}_q([\mu(q)]) \simeq [(\mu(q^{-1}))^{-1}],\quad \mathcal{D}_{q}(L_{i,a(q)}^{\pm,q}) \simeq L_{i,a(q^{-1})}^{\mp,q} \ \ \text{and}\ \ \mathcal{D}_{q}(V_i^{q}(a(q))) \simeq V_i^{q}(a(q^{-1}))$$
for $\mu(q)\in \ttt^{\times} = (\mathbbm{k}_q^{\times})^I$, $i\in I$ and $a(q)\in \mathbbm{k}_q$. In particular, 
$$ \mathcal{D}_q([\overline{\omega}]) \simeq [\overline{\omega}],\quad \mathcal{D}_{q}(L_{i,q^r}^{\pm,q}) \simeq L_{i,q^{-r}}^{\mp,q} \ \ \text{and}\ \ \mathcal{D}_{q}(V_i^{q}(q^r)) \simeq V_i^{q}(q^{-r})$$
for $\omega \in P_{\mathbb{Q}}$, $i\in I$ and $r\in \Z$. This observation leads to the following two definitions.
\begin{defn}\label{def:OZ} A $\ell$-weight $\Psi(q)=(\Psi_{i,r}(q))_{i\in I,r\geq 0}\in \mathfrak{r}_q$ is said to be \textit{$\ell$-integral} if 
\begin{itemize}
\item[(i)] Its constant part $\varpi(\Psi(q))$ belongs to the image of the morphism $\overline{\textcolor{white}{\alpha}} : P_{\mathbb{Q}} \rightarrow \ttt^{\times}$ and
\item[(ii)] The roots and poles of the rational function $\Psi_i(z,q)$ are contained in $q^{\mathbb{Z}}$ for any $i\in I$.
\end{itemize}
\end{defn}
\begin{defn}[inspired by {\cite{hl1}} and {\cite[Definition 4.1]{hl2}}]\label{def:OZ2} The category $\mathcal{O}_{\Z,q}$ is the full subcategory of $\mathcal{O}_q$ whose objects have simple constituents with $\ell$-integral highest $\ell$-weights.
\end{defn}
This is a monoidal subcategory as $\chi_q(V\otimes W) = \chi_q(V)\chi_q(W)$. Also, Definition \ref{def:OZ} and the above results show that $\mathcal{D}_q(L_q(\Psi(q))) \simeq L_q((\Psi(q^{-1}))^{-1})$ is in $\mathcal{O}_{\Z,q}$ for all $\ell$-integral $\ell$-weights $\Psi(q) \in \mathfrak{r}_q$. The functor $\mathcal{D}_q$ thus restricts to an autofunctor of the category $\mathcal{O}_{\Z,q}$. \par
Write now $\mathcal{O}_{\mathbb{Z},q}^{\pm} = \mathcal{O}_{\Z,q}\cap \mathcal{O}^{\pm}_q$ and recall Hernandez--Leclerc's duality $D_q:K_0(\mathcal{O}^+_q)\simeq K_0(\mathcal{O}_q^-)$ from Section \ref{sec:qcaract}. Then \cite[Theorem 7.9]{hl2} give
$$D_q^{\pm 1}[L_q(\Psi(q))]=[L_q((\Psi(q^{-1}))^{-1})]= [\mathcal{D}_q(L_q(\Psi(q)))]$$ for any $L_q(\Psi(q))$ in $\mathcal{O}_{\Z,q}^{\pm}$. In particular, the fact that both $D_q^{\pm 1}$ and $\mathcal{D}_q$ are trivially compatible with countable sums of equivalence classes of simple modules shows that $D_q$ restricts to a ring isomorphism $D_q: K_0(\mathcal{O}_{\Z,q}^+)\simeq K_0(\mathcal{O}_{\Z,q}^-)$ satisfying $ D_q^{\pm 1}[V] = [\mathcal{D}_q(V)]$ for every $V$ in $\mathcal{O}_{\Z,q}^{\pm}$. We write this result in a proper theorem. This is the main result of this subsection. 
\begin{theorem}\label{thm:DCat}
The functor $\mathcal{D}_q$ is involutive and restricts to a covariant autofunctor of $\mathcal{O}_{\Z,q}$ which is exact, reverses tensor products and preserves the irreducibility of modules as well as their dimension (over $\mathbbm{k}_q$). In addition, the ring automorphism of $K_0(\mathcal{O}_{\Z,q})$ induced from $\mathcal{D}_q$ extends the ring isomorphism $D_q:K_0(\mathcal{O}_q^+)\simeq K_0(\mathcal{O}_q^-)$ in the sense that 
$$ [\mathcal{D}_q(V)] = D_q^{\pm 1}[V] \text{ for any }V\text{ in }\mathcal{O}_{\mathbb{Z},q}^{\pm}. $$
We thus say that $\mathcal{D}_q$ categorifies Hernandez--Leclerc's duality $D_q$ on $\mathcal{O}_{\Z,q}^{\pm}$. \hfill $\qed$
\end{theorem}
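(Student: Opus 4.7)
The plan is to verify each clause of the theorem in turn, leveraging what has already been established in Sections \ref{sec:Comon}--\ref{sec:ImSimple} and in the first part of the present subsection, so that the proof largely amounts to assembly. First, the structural properties claimed for the endofunctor $\mathcal{D}_q$ of $\mathcal{O}_q$ (exactness, reversal of tensor products, preservation of irreducibility, preservation of dimension over $\mathbbm{k}_q$) are each inherited from the decomposition $\mathcal{D}_q = \mathcal{G}_q \circ \mathcal{B}_{q^{-1},q}$: we have recorded these properties for $\mathcal{G}_q$ in the discussion surrounding Corollary \ref{cor:F} and for $\mathcal{B}_{q^{-1},q}$ in the discussion following Remark \ref{rem:BVact}, and all four properties are stable under composition.

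Next, I would pin down the image of a simple module. Combining Proposition \ref{prop:B} with Corollary \ref{cor:F} yields
\[
\mathcal{D}_q\bigl(L_q(\Psi(q))\bigr) \simeq L_q\bigl((\Psi(q^{-1}))^{-1}\bigr) \quad \text{for every } \Psi(q)\in\mathfrak{r}_q.
\]
The involution $\Psi(q)\mapsto(\Psi(q^{-1}))^{-1}$ inverts the constant part and sends each rational function $\Psi_i(z,q)$ with roots and poles in $q^{\Z}$ to another such rational function; therefore, in the sense of Definition \ref{def:OZ}, it preserves $\ell$-integrality. Since $\mathcal{D}_q$ is exact and $\mathcal{O}_{\Z,q}$ is defined by a property of simple constituents (Definition \ref{def:OZ2}), it follows that $\mathcal{D}_q$ restricts to an endofunctor of $\mathcal{O}_{\Z,q}$, inheriting all the properties collected above.

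For the involutivity, I would iterate the displayed formula: the double application of $\Psi(q)\mapsto(\Psi(q^{-1}))^{-1}$ returns $\Psi(q)$, so $\mathcal{D}_q^2$ acts as the identity on every simple module up to isomorphism, which is enough to conclude that the induced automorphism of $K_0(\mathcal{O}_{\Z,q})$ squares to the identity. To upgrade this to a natural equivalence $\mathcal{D}_q^2 \simeq \mathrm{id}$ of endofunctors, I would unwind $\mathcal{D}_q^2$ as an alternating composition of the invertible functors $\F_{q^{\pm 1}}$, $\tau^*_{\gamma^{\pm 1},q^{\pm 1}}$ and $\mathcal{B}_{q^{\pm 1},q^{\mp 1}}$, and then apply the canonical natural isomorphisms $\F_{q^{-1}}\circ \F_q \simeq \mathrm{id}$ (from Section \ref{sec:Comon}) and $\mathcal{B}_{q,q^{-1}}\circ\mathcal{B}_{q^{-1},q}\simeq\mathrm{id}$ (from Section \ref{sec:FuncD}), checking that the commutations among twists, pullbacks and base changes match up correctly.

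Finally, the categorification statement is essentially packaged in the paragraph just before the theorem: \cite[Theorem 7.9]{hl2} asserts $D_q^{\pm 1}[L_q(\Psi(q))] = [L_q((\Psi(q^{-1}))^{-1})]$ for every simple module $L_q(\Psi(q))$ of $\mathcal{O}_{\Z,q}^{\pm}$, which matches $[\mathcal{D}_q(L_q(\Psi(q)))]$ by the displayed formula above. Exactness of $\mathcal{D}_q$ and $\Z$-linearity of $D_q^{\pm 1}$ then promote the equality from simples to arbitrary equivalence classes in $K_0(\mathcal{O}_{\Z,q}^{\pm})$. The only step that goes beyond bookkeeping is the functorial form of involutivity: keeping careful track of the natural isomorphisms that witness $\mathcal{D}_q^2 \simeq \mathrm{id}$ — rather than merely pointwise agreement on isomorphism classes — is where I expect the main technical care to be required.
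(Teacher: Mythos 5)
Your treatment of everything except the functorial involutivity agrees with the paper: the structural properties of $\mathcal{D}_q$ are simply inherited from $\mathcal{G}_q$ and $\mathcal{B}_{q^{-1},q}$, the action on simples is $L_q(\Psi(q))\mapsto L_q((\Psi(q^{-1}))^{-1})$ and preserves $\ell$-integrality, and the identity $[\mathcal{D}_q(V)]=D_q^{\pm 1}[V]$ is exactly \cite[Theorem 7.9]{hl2} packaged through the preceding discussion. The paper's own proof explicitly notes that these are all already established and only involutivity remains. Your observation that $\mathcal{D}_q^2$ fixes isomorphism classes of simples — hence induces the identity on $K_0$ — is also correct and is the easy half of the claim.

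Where you diverge from the paper, and where your argument has a real gap, is in upgrading the pointwise statement to a natural isomorphism $\mathcal{D}_q^2\simeq\mathrm{id}$. You propose to "unwind $\mathcal{D}_q^2$ as an alternating composition of $\F_{q^{\pm 1}}$, $\tau^*_{\gamma^{\pm 1},q^{\pm 1}}$ and $\mathcal{B}_{q^{\pm 1},q^{\mp 1}}$" and then apply the known cancellations $\F_{q^{-1}}\circ\F_q\simeq\mathrm{id}$ and $\mathcal{B}_{q,q^{-1}}\circ\mathcal{B}_{q^{-1},q}\simeq\mathrm{id}$. But $\mathcal{D}_q^2 = \mathcal{G}_q\circ\mathcal{B}_{q^{-1},q}\circ\mathcal{G}_q\circ\mathcal{B}_{q^{-1},q}$ contains $\F_q$ twice and $\mathcal{B}_{q^{-1},q}$ twice, \emph{not} the inverse pairs you name; neither $\F_{q^{-1}}$ nor $\mathcal{B}_{q,q^{-1}}$ actually occurs in the composite. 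To bring the known cancellations into play you would first need a genuine commutation lemma — something like a natural isomorphism $\mathcal{B}_{q^{-1},q}\circ\mathcal{G}_q\simeq \mathcal{G}_{q^{-1}}\circ\mathcal{B}_{q,q^{-1}}$, including matching the spectral shift $\gamma$ between the two quantum parameters — and this is not established anywhere in the paper. You flag the issue ("checking that the commutations \dots match up correctly") but do not address it, so the argument is incomplete precisely at the one step the paper singles out as requiring proof.

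The paper takes a different and slicker route: it observes that $\mathcal{G}_q$ is canonically isomorphic to the scalar-extension functor $\uqb\otimes_{\uqbm}-$, where the right $\uqbm$-action on $\uqb$ is via $\tau_{\gamma_1,q}\circ\pp_{q^{-1}}$. Once both $\mathcal{G}_q$ and $\mathcal{B}_{q^{-1},q}$ are written this way, $\mathcal{D}_q^2(V)$ becomes a chain
$$\uqb\otimes_{\uqbm}\uqbm\otimes_{\uqb}\uqb\otimes_{\uqbm}\uqbm\otimes_{\uqb}V,$$
which collapses to $V$ by the associativity of the tensor product and the fact that adjacent $A\otimes_B B$ simplify canonically. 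This makes the natural isomorphism manifest without any commutation lemmas. If you wanted to complete your route you would essentially be re-proving this bimodule identification; it is better to adopt it outright.
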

\begin{rem}\label{rem:MonCatClust}
The categories $\mathcal{O}_{\Z,q}^{\pm}$ defined above are instances of monoidal categorifications of cluster algebras (see e.g.~\cite{kkop2,kkop3} and \cite[Section 8.4]{hshift} with the references therein). In this context, the map $K_0(\mathcal{O}_{\Z,q}^+)\simeq K_0(\mathcal{O}_{\Z,q}^-)$ induced by Hernandez--Leclerc's duality $D$ is a cluster algebra isomorphism and the above theorem states that this isomorphism is also the shadow of a functor $\mathcal{O}^+_{\Z,q} \arr \mathcal{O}^-_{\Z,q}$ between the categorifications. Furthermore, Theorem \ref{thm:DCat} realizes this peculiar functor as the restriction of an autofunctor $\mathcal{D}_q$ of the category $\mathcal{O}_{\Z,q}$ and implies that the map $K_0(\mathcal{O}_{\Z,q}^+)\simeq K_0(\mathcal{O}_{\Z,q}^-)$ can be extended to a ring involution of $K_0(\mathcal{O}_{\Z,q})$. This is the meaning of: ``$\mathcal{D}_q$ categorifies $D_q$ on $\mathcal{O}_{\Z,q}$''. 
\end{rem}
\begin{proof}
The only remaining thing to prove is the involutivity of $\mathcal{D}_q$. For this goal, remark that the functor $\mathcal{G}_q$ of Section \ref{sec:Fq} is canonically isomorphic to the functor $\uqb\otimes_{\uqbm}-$ where the (right) action of $\uqbm$ on $\uqb$ is defined as $x\cdot y = x(\tau_{\gamma_1,q}\circ \pp_{q^{-1}}(y))$ for $x\in \uqb$ and $y\in \uqbm$. It is then easy to construct a functorial isomorphism
$$ \mathcal{D}_q\circ\mathcal{D}_q(V) \simeq \uqb \otimes_{\uqbm} \uqbm \otimes_{\uqb} \uqb \otimes_{\uqbm} \uqbm \otimes_{\uqb} V \simeq V$$
for any $V$ in $\mathcal{O}_q$. This concludes the proof by the above comments.
\end{proof}
We end this section by stating the following result which is proved in Appendix \ref{app:K0D}. Observe that this result is true for $q$ a formal variable or $q\in \C^{\times}$ which is not a root of unity and that it answers a question of Hernandez--Leclerc about the possible extensions of their duality $D$.
\begin{theorem}\label{thm:DExt}
There exists an involutive ring isomorphism $\tilde{D}_q$ of $K_0(\mathcal{O}_q)$ with the property that $\tilde{D}_q([V]) = D_q^{\pm 1}([V])$ for any $V$ in $\mathcal{O}^{\pm}_q$.
\end{theorem}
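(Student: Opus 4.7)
The plan is to exhibit a ``product basis'' of $K_0(\mathcal{O}_q)$ arising from the positive/negative decomposition of $\ell$-weights, to define $\tilde{D}_q$ on this basis by combining $D_q$ and $D_q^{-1}$, and to verify the ring-theoretic properties. The functor $\F_q$ enters structurally through Theorem~\ref{thm:PosNeg} and Corollary~\ref{cor:F2}, which match the simple-to-simple structure of $\mathcal{O}^+_q$ and $\mathcal{O}^-_{q^{-1}}$ and thereby symmetrize the roles of $\mathcal{O}^+_q$ and $\mathcal{O}^-_q$ in the construction.

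Every $\Psi\in\mathfrak{r}$ factors uniquely as $\Psi=\Psi^+\cdot\Psi^-$, where $\Psi^+$ lies in the submonoid generated by $\ttt^{\times}$ and the $\Psi_{i,a}$'s (so $L_q(\Psi^+)\in\mathcal{O}^+_q$) and $\Psi^-$ in the submonoid generated by the $\Psi_{i,a}^{-1}$'s (so $L_q(\Psi^-)\in\mathcal{O}^-_q$); by Theorem~\ref{thm:ProdPrefond} both factors are simple. Remark~\ref{rem:L(PP')}, combined with the one-dimensionality of the top $\ell$-weight space of any highest-$\ell$-weight module, yields
\[
[L_q(\Psi^+)]\cdot[L_q(\Psi^-)] \;=\; [L_q(\Psi)] \;+\; \sum_{\varpi(\Psi')<\varpi(\Psi)} m_{\Psi'}\,[L_q(\Psi')]
\]
in $K_0(\mathcal{O}_q)$. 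The triangularity of this transition (with unit diagonal) in the partial order on constant parts permits iterative inversion within the support conditions of $\mathcal{E}_\ell$, and I would thereby establish $\{[L_q(\Psi^+)][L_q(\Psi^-)]\}_{\Psi\in\mathfrak{r}}$ as a topological $\mathbb{Z}$-basis of $K_0(\mathcal{O}_q)$. I would then set
\[
\tilde{D}_q\bigl([L_q(\Psi^+)][L_q(\Psi^-)]\bigr) \;:=\; D_q[L_q(\Psi^+)]\cdot D_q^{-1}[L_q(\Psi^-)]
\]
and extend $\mathbb{Z}$-linearly.

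Multiplicativity follows immediately from the commutativity of $K_0(\mathcal{O}_q)$ (Theorem~\ref{thm:qcaract}) and the ring-homomorphism properties of $D_q$ and $D_q^{-1}$: for $A_j\in K_0(\mathcal{O}^+_q)$ and $B_j\in K_0(\mathcal{O}^-_q)$,
\[
\tilde{D}_q\bigl((A_1B_1)(A_2B_2)\bigr) \;=\; D_q(A_1A_2)\,D_q^{-1}(B_1B_2) \;=\; \tilde{D}_q(A_1B_1)\,\tilde{D}_q(A_2B_2).
\]
Involutivity follows analogously from $D_q\circ D_q^{-1}=\mathrm{id}=D_q^{-1}\circ D_q$ after re-expanding the intermediate element in the product basis. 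The compatibility $\tilde{D}_q|_{K_0(\mathcal{O}^{\pm}_q)} = D_q^{\pm 1}$ rests on two points: first, $D_q$ and $D_q^{-1}$ agree on the subring of classes of finite-dimensional modules (both send $Y_{i,a}\mapsto Y_{i,a^{-1}}$ and fix $\mu\in\ttt^{\times}$), so the two ``sides'' of the definition are consistent; second, on the pure prefundamental classes $[\mu]$ and $[L^{\pm}_{i,a}]$ the product basis reduces to a single term, and $\tilde{D}_q$ visibly matches $D_q^{\pm 1}$ there.

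The main obstacle is twofold. One must establish the product basis assertion rigorously (iterative inversion of the triangular system inside $K_0(\mathcal{O}_q)\hookrightarrow\mathcal{E}_\ell$, ensuring the resulting infinite sums continue to satisfy the support conditions) and reconcile the product-basis expansion of a general class $[V]\in K_0(\mathcal{O}^{\pm}_q)$ (which generically involves factors lying outside $\mathcal{O}^{\pm}_q$, e.g.\ arising from the identity $Y_{i,a}=\overline{\omega_i}\Psi_{i,aq_i^{-1}}\Psi_{i,aq_i}^{-1}$) with the direct action of $D_q^{\pm 1}$. Closing this last step requires a careful combinatorial argument using the commutativity of the Grothendieck ring together with the interaction between positive and negative prefundamentals via the duality; this is where one transports structure through $\F_q$ to exploit the symmetry between $\mathcal{O}^+_q$ and $\mathcal{O}^-_{q^{-1}}$ established in Section~\ref{sec:Fq}.
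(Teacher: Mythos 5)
Your plan is genuinely different from the paper's, and it has a real gap at the point you flag as the "main obstacle" — one that your observation about $D_q$ and $D_q^{-1}$ agreeing on finite-dimensional classes does not close. Your multiplicativity computation presupposes the identity $\tilde{D}_q(AB)=D_q(A)\,D_q^{-1}(B)$ for \emph{all} $A\in K_0(\mathcal{O}^+_q)$, $B\in K_0(\mathcal{O}^-_q)$, but your definition only gives it for the basis pairs $(A,B)=([L_q(\Psi^+)],[L_q(\Psi^-)])$ with $\Psi^\pm$ the coprime positive/negative parts of a single $\Psi$. Multiplying two basis elements lands you at $[L_q(\alpha)][L_q(\beta)]$ with $\alpha$ and $\beta$ no longer coprime (the simplest case is $[L^+_{i,a}][L^-_{i,a}]$); re-expanding such a product in your basis and applying $\tilde{D}_q$ term by term must be shown to equal $D_q[L_q(\alpha)]\cdot D_q^{-1}[L_q(\beta)]$, and nothing in your argument does this. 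Equivalently, you need the bilinear map $(A,B)\mapsto D_q(A)D_q^{-1}(B)$ to descend along the (topological) surjection $K_0(\mathcal{O}^+_q)\otimes K_0(\mathcal{O}^-_q)\twoheadrightarrow K_0(\mathcal{O}_q)$, and the kernel of that surjection contains relations such as $[L^+_{i,a}][L^-_{i,a}]=(\text{nontrivial sum of simple classes})$ that the finite-dimensional overlap does not control. You acknowledge the difficulty and say it "requires a careful combinatorial argument", but you do not supply it — and this consistency check is really the content of the theorem.

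The paper sidesteps the issue by never defining $\tilde{D}_q$ on a basis: it builds the formal involution $I_q=(\,\cdot\circ\vartheta\,)$ on $\mathcal{E}_\ell$, checks $I_q\circ\chi_q=\chi_q\circ\tilde{I}_q$ on prefundamental, Kirillov--Reshetikhin and invertible classes (Lemmas~\ref{lem:IqPos}, \ref{lem:IqW} and Corollary~\ref{cor:IqNeg}), proves that every class is an at most countable sum of elements of the subring $K_{0,q}$ (Proposition~\ref{prop:K0q}, which also does the work of your triangularity inversion, though it only proves topological spanning, not a basis), deduces from the injectivity of $\chi_q$ that a ring isomorphism $\tilde{I}_q:K_0(\mathcal{O}_{q^{-1}})\simeq K_0(\mathcal{O}_q)$ exists, and then sets $\tilde{D}_q([V])=\tilde{I}_q[\mathcal{G}_{q^{-1}}(V)]$. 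Being a composition of ring isomorphisms, $\tilde{D}_q$ is automatically a ring automorphism, and only the restriction property $\tilde{D}_q|_{K_0(\mathcal{O}_q^\pm)}=D_q^{\pm 1}$ requires a separate check on generators together with compatibility with countable sums. If you try to close your gap you will almost certainly be forced to route through the $q$-character embedding in the same way, at which point the direct basis construction becomes redundant.
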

\begin{rem} One may wonder if $\tilde{D}_q$ can be induced from some autofunctor of $\mathcal{O}_q$. It is also unclear whether or not it preserves classes of simple modules (see the end of Appendix \ref{app:K0D}). 
\end{rem}
\section{$R$-matrices in the category $\mathcal{O}$}\label{sec:RMat}
This section constructs $R$-matrices for the subcategory $\mathcal{O}^{+}$ of $\mathcal{O}$. We first recall the corresponding construction for finite-dimensional simple modules (through the universal $R$-matrix) with the (affine) $R$-matrices obtained for the subcategory $\mathcal{O}^-$ in \cite{her}. We then use the results of the latter paper and the functor $\F_q$ of Section \ref{sec:Fq} to obtain the $R$-matrices for $\mathcal{O}^+$ with a factorization of these braidings and a partial characterization of their singularities.
\par 
We will use the notation of Section \ref{sec:Qaff} and will mention explicitly when a notation should be understood inside the representation theory of $\uqgm$. (This will happen only when making use of the functor $\F_q$.) For example, the category $\mathcal{O}$ above is the category $\mathcal{O}_q$ of Section \ref{sec:Fq}.
\subsection{$R$-matrices for finite-dimensional representations}\label{sec:Rdimfin} 
The universal $R$-matrix $\mathcal{R}(u)$ is a remarkable element of the (slightly completed, see \cite{hbourb}) tensor product $(\uqg\hat{\otimes}\uqg)[[u]]$. It was introduced by Drinfeld in \cite{dr2} and admits a factorization in four components
\begin{equation}\label{eq:RmatfdFac}
\mathcal{R}(u) = \mathcal{R}^-(u)\mathcal{R}^{0}(u)\mathcal{R}^+(u)\mathcal{R}^{\infty}
\end{equation}
with $\mathcal{R}^{\pm}(u) \in (U_q^{\pm}(\bor)\hat{\otimes}U_q^{\mp}(\bor^-))[[u]]$. Here, $U_q(\bor^-)\subseteq \uqg$ is the subalgebra generated by the set $\{f_i,k_i^{\pm 1}\}_{i=0}^n$ and (as in Section \ref{sec:DefQaff})
$$U_q^{\pm}(\bor)= \uqb\cap U_q^{\pm}(\g)  \text{ with }  U_q^{\pm}(\bor^-) = U_q(\bor^-)\cap U_q^{\pm}(\g).$$
To describe the component $\mathcal{R}^{\infty}$ appearing in \eqref{eq:RmatfdFac}, let $(\cdot,\cdot)$ be the standard invariant symmetric bilinear form of $\dot{\g}$ (see e.g.~\cite[Chapter 2]{kac}) normalized such that $(\alpha_i,\alpha_j)= d_iC_{i,j}$ for $i,j\in I$. Recall also that $q = e^h$ for some $h\in \C^{\times}$ (see Section \ref{sec:DefQaff}). Then 
$\mathcal{R}^{\infty}=e^{-ht_{\infty}}$
where $t_{\infty}$ is the canonical element for the form $(\cdot,\cdot)$ in the tensor square of the Cartan subalgebra $\dot{\h}$ of $\dot{\g}$ (see \cite{da1}). In other terms, for $V,W$ objects in the category $\mathcal{O}$,
$$ \mathcal{R}^{\infty}(v\otimes w) = q^{-(\nu,\omega)}v\otimes w$$
if $v \in V$ and $w \in W$ are weight vectors of respective weights $(q^{(\nu,\alpha_i)})_{i\in I}$ and $(q^{(\omega,\alpha_i)})_{i\in I}$ with $\nu,\omega \in \C\otimes_{\Z} P$. (Note that the assumption just made on the weights of $v$ and $w$ is done without loss of generality as $q=e^h$. Indeed, fix $\mu=(\mu_i)_{i\in I} \in \ttt^{\times}$ and $\omega = \frac{1}{h}\sum_{i\in I} \frac{1}{d_i}\log(\mu_i)\omega_i \in \C\otimes_{\Z} P$ where we consider the principal branch of the logarithm. Then $\mu_i = e^{\log(\mu_i)}= q^{(\omega,\alpha_i)}$ for every $i\in I$ by definition of the standard form \cite[Chapter 2]{kac}.) \par An expression for the component $\mathcal{R}^0(u)$ of \eqref{eq:RmatfdFac} is known (see e.g.~\cite[Section 7]{fh1}) but will not be used in this paper. We will nevertheless need the fact that this component belongs to the subalgebra $(\uqb\hat{\otimes}U_q(\bor^-))[[u]]$. We will also consider the following explicit expressions which hold for $\g =\su$
\begin{center}
$\mathcal{R}^+(u) = \displaystyle \prod_{m\geq 0}^{\arr}\exp_q\big((q^{-1}-q)u^m x_{1,m}^+\otimes x_{1,-m}^-\big),$\\
$\mathcal{R}^-(u) = \displaystyle \prod_{m > 0}^{\leftarrow}\exp_{q^{-1}}\big((q^{-1}-q)u^mk_1^{-1}x_{1,m}^-\otimes x_{1,-m}^+k_1\big),$\\
$\mathcal{R}^0(u) = \displaystyle \exp\Big((q^{-1}-q)\sum_{m>0}\frac{mu^m}{[m]_q(q^m+q^{-m})}h_{1,m}\otimes h_{1,-m}\Big)$
 \end{center}
with $\exp_{q^{\pm 1}}(x) = \sum_{r\geq 0}q^{\pm \frac{r}{2}(1-r)}\frac{x^r}{[r]_q!}$ the $q$-exponential function.\par
The universal $R$-matrix is of fundamental importance in the theory of integrable systems as it gives a non-trivial (and somewhat universal) solution of the Yang--Baxter equation (cf.~\cite{hbourb} and the references therein). It can also act on any tensor product $V\otimes W$ of finite-dimensional irreducible $\uqg$-modules to produce a well-defined map $\Eval{\mathcal{R}(u)}{V\otimes W}{}:V(u)\otimes W\rightarrow V(u)\otimes W$ such that, for $\tau : V(u)\otimes W \rightarrow W\otimes V(u)$ the usual flip (as in Theorem \ref{thm:Comon}), 
\begin{equation}\label{eq:RM1} 
\tau\circ\Eval{\mathcal{R}(u)}{V\otimes W}{} : V(u)\otimes W \rightarrow W\otimes V(u)
\end{equation}
is a linear isomorphism with respect to the action of the algebra $U_{q,u}(\g) = \uqg \otimes \C(u)$. We call \textit{rational $R$-matrix}\footnote{The term ``rational'' is used here to distinguish the $R$-matrices described here from those of Section \ref{sec:ROm}.} the isomorphism $ R_{V,W}(u) $ obtained by renormalizing \eqref{eq:RM1} such that 
\begin{equation}\label{eq:NormR}
 R_{V,W}(u)(v\otimes w) = w\otimes v
 \end{equation}
for $v\in V$ and $w\in W$ two highest $\ell$-weight-vectors. It is well-known that there exists exactly one $U_{q,u}(\g)$-linear isomorphism $V(u)\otimes W\simeq W\otimes V(u)$ satisfying \eqref{eq:NormR} when $V,W$ are simple finite-dimensional $\uqg$-modules. The rational $R$-matrix $R_{V,W}(u)$ could thus also be defined as the unique such isomorphism. Moreover, as its name suggests, the map $R_{V,W}(u)$ is rational in the spectral parameter $u$ and has only a finite set of poles. This still holds for the inverse map since (by the unicity mentioned above)
\begin{equation}\label{eq:RInv} R_{V,W}^{-1}(u) = R_{W,V}(u^{-1})
\end{equation}
and the map $R_{V,W}(u)$ induces a $\uqg$-linear isomorphism $R_{V,W}(a) : V(a)\otimes W\arr W\otimes V(a) $ for any $a\in \C^{\times}$ which is neither a pole of $R_{V,W}(u)$ nor of $R_{V,W}^{-1}(u)$ (see \cite{her} and the references therein for more details about the many facts stated above).  
\begin{example}\label{ex:RmatKR} Take $\g = \su$ with $\{v_i\}_{i=0}^k$ the basis of $V_k = W_{k,a q^{1-2k}}^{(1)}$ given in Example \ref{ex:KRsl2}. Then the rational $R$-matrix $R_{k}(u):V_k(u)\otimes V_k \rightarrow V_k\otimes V_k(u)$ may be expressed as
\begin{align*}
R_{k}(u)(v_i\otimes v_j) = \sum_{\mu = \max(-i,j-k)}^{\min(j,k-i)}\left(\sum_{\lambda = \max(0,\mu)}^{\min(j,k-i)} \upsilon_{i,j,k}^{\mu,\lambda}(u)\gamma_{i,j,k}^{\mu,\lambda}(u) \right)v_{j-\mu}\otimes v_{i+\mu}
\end{align*}
with $\upsilon_{i,j,k}^{\mu,\lambda}(u)=u^{\lambda}q^{\mu(j-i-\mu)}\qbin{i+\lambda}{\lambda}{q}\qbin{j-\mu}{\lambda-\mu}{q}$ and with $\gamma_{i,j,k}^{\mu,\lambda}(u)$ given by
\begin{align*}
\textstyle \left(\prod_{s=1}^{\lambda} \frac{[k+1-i-s]_q}{[j+1-i-\lambda-s]_{q,u}}\right)\left(\prod_{s=1}^{\lambda-\mu} \frac{[k+\mu-j+s]_q}{[j-i-2\lambda+s]_{q,u}}\right)\left(\prod_{s=1-\lambda}^{\min(i,k-j)} \frac{[j-i-\lambda+s]_{q,u}}{[k+1-\lambda-s]_{q,u}}\right)\left(\prod_{s=1}^{j-\lambda} \frac{[s-k-1]_{q,u}}{[s-i-\lambda-1]_{q,u}}\right)
\end{align*}
where $[x]_{q,u} = \frac{q^x u - q^{-x}}{q-q^{-1}}$ for $x \in \Z$. In particular, for $k=1$, we recover the well-known matrix
\begin{align*}
\frac{1}{1-uq^2}{\small \begin{pmatrix}
1-uq^2 & 0 & 0 & 0\\
0 & (1-q^2)u & q(1-u) & 0 \\
0 & q(1-u)& 1-q^2 & 0 \\
0 & 0 & 0 & 1-uq^2
\end{pmatrix} }
\end{align*}
in the ordered base $(v_0\otimes v_0, v_0\otimes v_1, v_1\otimes v_0, v_1\otimes v_1)$ of $V_1(u)\otimes V_1$ (and of $V_1\otimes V_1(u)$).
\end{example}
As the components $\mathcal{R}^0(u)$ and $\mathcal{R}^{\pm}(u)$ belong to $(\uqb \hat{\otimes} U_q(\bor^-))[[u]]$, the universal $R$-matrix can in practice be specialized on any tensor product $V\otimes W$ with $V$ a simple object of $\mathcal{O}$ and $W$ a finite-dimensional simple $\uqg$-module. The specialization induces a $R$-matrix $R_{V,W}(u)$ (which is an isomorphism of modules over $U_{q,u}(\bor)=\uqb\otimes \C(u)$) and Equation \eqref{eq:RInv} allows us to deduce the reciprocal braiding $$ R_{W,V}(u):W(u)\otimes V\arr V\otimes W(u).$$ However, this method cannot be used when both $V$ and $W$ are infinite-dimensional modules in $\mathcal{O}$ and the construction of a $U_{q,u}(\bor)$-linear isomorphism $V(u)\otimes W\simeq W\otimes V(u)$ is then extremely non-trivial. Such an isomorphism may in fact not exist as the next lemma shows. 
\begin{lem}[{\cite[Example 5.2]{her}}, see also \cite{bjmst}]\label{lem:ProdTensLpm} Let $V = L_{1,1}^+$ and $W = L_{1,1}^-$ for $\g =\su$. Then $V(u)\otimes W$ and $W \otimes V(u)$ are not isomorphic as modules over $U_{q,u}(\bor)$. 
\end{lem}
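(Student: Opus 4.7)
The plan is to proceed by contradiction: assume there exists a $U_{q,u}(\bor)$-linear isomorphism $\Phi\colon V(u)\otimes W \arr W\otimes V(u)$ and extract an obstruction. With the notation of Example~\ref{ex:Lpmsl2}, let $\{v_j\}_{j\geq 0}$ and $\{z_j\}_{j\geq 0}$ denote the standard bases of $V = L_{1,1}^+$ and $W = L_{1,1}^-$. The weight-$\mathbbm{1}$ subspaces of both tensor products are one-dimensional, spanned respectively by $v_0\otimes z_0$ and $z_0\otimes v_0$. Each of these vectors is annihilated by $U_q^+(\bor)$ for weight reasons (there are no weights above $\mathbbm{1}$ in either tensor product) and is a $\uqhp$-eigenvector with common $\ell$-weight $\Psi_{1,u}\Psi_{1,1}^{-1}$. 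Any nonzero $\Phi$ must therefore send $v_0\otimes z_0$ to $\alpha(u)\,z_0\otimes v_0$ for some $\alpha(u)\in\C(u)^{\times}$.

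The next step is to extract explicit constraints on $\alpha(u)$ from lowering operators. Starting from the Drinfeld--Jimbo coproduct $\Delta(e_0)=e_0\otimes 1+k_1^{-1}\otimes e_0$ together with $e_0=k_1^{-1}x_{1,1}^-$, one derives $\Delta(x_{1,1}^-)=x_{1,1}^-\otimes k_1+1\otimes x_{1,1}^-$, after which the explicit formulas of Example~\ref{ex:Lpmsl2} give
\[
e_0(v_0\otimes z_0)= \tfrac{q^2}{q-q^{-1}}\bigl(-u\,v_1\otimes z_0+v_0\otimes z_1\bigr),\qquad e_0(z_0\otimes v_0)= \tfrac{q^2}{q-q^{-1}}\bigl(z_1\otimes v_0-u\,z_0\otimes v_1\bigr).
\]
Combined with the analogous analysis of $e_1=x_{1,0}^+$ (and its kernel on the two weight-$\overline{-\alpha_1}$ subspaces, each two-dimensional), these data determine $\Phi$ on that subspace up to a single free parameter $\beta(u)$.

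To reach a contradiction, I would iterate with the higher Drinfeld modes $x_{1,m}^-$ for $m\geq 2$. The crucial structural asymmetry is that $x_{1,m}^-$ annihilates $V(u)$ for every $m\geq 2$ (since the action on the basis of $V$ involves only the mode $m=1$), whereas on $W$ it acts non-trivially for every $m>0$. In $V(u)\otimes W$ the descendants of $v_0\otimes z_0$ under such higher modes remain of the form $v_0\otimes(\,\cdot\,)$, while in $W\otimes V(u)$ the cross-terms of $\Delta(x_{1,m}^-)$ necessarily produce vectors involving $v_1$ on the right tensorand. Pulling these constraints back through $\Phi$ and comparing with the already-determined values on the weight-$\overline{-\alpha_1}$ subspace yields an overdetermined system on $\alpha(u)$ and $\beta(u)$ that has no solution with $\alpha(u)$ a unit of $\C(u)$, reproducing the polar obstruction observed in~\cite{her,bjmst}. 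The main obstacle will be to control $\Delta(x_{1,m}^-)$ for $m\geq 2$, which admits no simple closed form; I would circumvent this either by working modulo the natural filtration on $\uqb$ arising from the triangular decomposition of Section~\ref{sec:DefQaff}, or by recasting the comparison in terms of the generating series $\Delta(\phi_1^+(z))$ whose bialgebraic structure is more tractable and whose action on the two highest vectors already encodes all the required constraints through its singular parts.
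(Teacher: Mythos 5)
The paper gives no proof of this lemma: it is cited from \cite[Example 5.2]{her} (and \cite{bjmst}), so there is no internal argument to compare against. Judged on its own terms, your sketch has the right high-level strategy but contains a factual error in the middle step and does not actually reach the contradiction.

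What is correct: the weight-$\mathbbm{1}$ spaces are one-dimensional, the highest $\ell$-weights agree, any $\Phi$ must send $v_0\otimes z_0\mapsto\alpha(u)\,z_0\otimes v_0$, the $e_0$-computations, and the key structural observation that $x_{1,m}^-$ kills $L_{1,1}^{+}$ for $m\geq 2$ while acting nontrivially on $L_{1,1}^{-}$ for every $m>0$. The gap is in how you describe the consequence of this asymmetry. You claim that in $V(u)\otimes W$ the higher-mode descendants of $v_0\otimes z_0$ ``remain of the form $v_0\otimes(\cdot)$,'' and that in $W\otimes V(u)$ the cross-terms of $\Delta(x_{1,m}^-)$ ``necessarily produce vectors involving $v_1$ on the right tensorand.'' Both claims are wrong, already at $m=2$. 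From $\Delta(h_{1,1})=h_{1,1}\otimes 1+1\otimes h_{1,1}-(q^2-q^{-2})\,x_{1,1}^-\otimes x_{1,0}^+$ one gets
$$\Delta(x_{1,2}^-) = x_{1,2}^-\otimes k_1 + 1\otimes x_{1,2}^- + x_{1,1}^-\otimes\phi^+_{1,1} + (q-q^{-1})(q^{-2}-1)\,(x_{1,1}^-)^2\otimes k_1 x_{1,0}^+.$$
Acting on $v_0\otimes z_0\in V(u)\otimes W$, the term $x_{1,1}^-\otimes\phi^+_{1,1}$ (not any primitive term) produces a nonzero $v_1\otimes z_0$ component, so the descendants do \emph{not} stay in $v_0\otimes(\cdot)$. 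Acting on $z_0\otimes v_0\in W\otimes V(u)$, the same cross-term gives $(x_{1,1}^-z_0)\otimes(\phi^+_{1,1}v_0)\propto z_1\otimes v_0$, and the last term is killed by $x_{1,0}^+ v_0=0$, so there is \emph{no} $z_0\otimes v_1$ component at all. The mechanism you describe is thus the reverse of what actually happens. The ``overdetermined system'' is asserted but never written down.

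The good news is that your approach can be salvaged almost trivially once the coproduct of $x_{1,2}^-$ is in hand. One computes that in $V(u)\otimes W$ the two descendants coincide,
$$x_{1,1}^-(v_0\otimes z_0)=x_{1,2}^-(v_0\otimes z_0)=\tfrac{1}{q-q^{-1}}\bigl(-u\,v_1\otimes z_0+v_0\otimes z_1\bigr),$$
whereas in $W\otimes V(u)$ they are linearly independent for generic $u$,
$$x_{1,1}^-(z_0\otimes v_0)=\tfrac{1}{q-q^{-1}}\bigl(z_1\otimes v_0-u\,z_0\otimes v_1\bigr),\qquad x_{1,2}^-(z_0\otimes v_0)=\tfrac{1-u}{q-q^{-1}}\,z_1\otimes v_0.$$
Hence $x_{1,1}^--x_{1,2}^-$ annihilates $v_0\otimes z_0$ but not $z_0\otimes v_0$. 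Applying $\Phi$ to $(x_{1,1}^--x_{1,2}^-)(v_0\otimes z_0)=0$ forces $\alpha(u)(x_{1,1}^--x_{1,2}^-)(z_0\otimes v_0)=0$ and so $\alpha(u)=0$, a contradiction. This completes the argument without ever needing $e_1$, the parameter $\beta(u)$, or the modes $m\geq 3$; you should fill in exactly this computation rather than appeal to a filtration or the generating series $\Delta(\phi_1^+(z))$, whose action on the two highest weight vectors is in fact identical and carries no obstruction.
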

Fortunately, the construction of $R$-matrices can be extended to the subcategories $\mathcal{O}^{\pm}$ of $\mathcal{O}$ by using results of \cite{her} with the functor $\F_q$ of Section \ref{sec:Fq}. This is done in the next subsection. Note nevertheless that the existence of braidings for $\mathcal{O}^{\pm}$ should be expected from the (already mentioned) fact that tensor products of prefundamental representations of the same sign can be rewritten in any order (up to isomorphism). 
\subsection{Affine $R$-matrices}\label{sec:ROm} The strategy considered in \cite{her} for the construction of $R$-matrices in the subcategory $\mathcal{O}^-$ relies on a well-chosen inductive system. To outline this strategy, fix $V$ and $W$ simple objects of $\mathcal{O}^-$ with $(V_k)_{k\geq 1},(W_k)_{k\geq 1}$ finite-dimensional simple $\uqg$-modules such that $\lim_{k\rightarrow \infty}\overline{\chi}_q(V_k) = \overline{\chi}_q(V)$ and $\lim_{k\rightarrow \infty}\overline{\chi}_q(W_k)=\overline{\chi}_q(W)$ (see Proposition \ref{prop:limitOm}).\par
Note $\Psi_k$ and $\Psi_k'$ the highest $\ell$-weights of $V_k$ and $W_k$. Then, for $k\leq \ell$, we may consider the composition 
\begin{align*}
V_k(u)\otimes W_k &= L(\Psi_k)(u)\otimes L(\Psi_k') \arr L(\Psi_k)(u)\otimes L(\Psi'_k)\otimes L(\Psi_{\ell}\Psi_k^{-1})(u)\otimes L(\Psi'_{\ell}(\Psi'_k)^{-1})\\
&\arr L(\Psi_k)(u)\otimes L(\Psi_{\ell}\Psi_k^{-1})(u)\otimes L(\Psi'_k) \otimes L(\Psi'_{\ell}(\Psi'_k)^{-1})\\
&\arr L(\Psi_{\ell})(u)\otimes L(\Psi'_{\ell}) = V_{\ell}(u)\otimes W_{\ell}
\end{align*}
where the first and last arrows are analogous to the maps used for the definition of the system $\{F_{\ell,k}\}_{0\leq k\leq \ell}$ of Section \ref{sec:qcaract} and where the second arrow is the inverse of the rational $R$-matrix 
$$ L(\Psi_{\ell}\Psi_k^{-1})(u) \otimes L(\Psi'_k) \xrightarrow{\sim} L(\Psi'_k)\otimes L(\Psi_{\ell}\Psi_k^{-1})(u). $$
We denote this composition by $G_{\ell,k}(u)$. There is also an analogous map
$$ G_{\ell,k}'(u) : W_k\otimes V_k(u) \rightarrow W_{\ell} \otimes V_{\ell}(u). $$
By \cite[Section 5]{her}, the sets $\{G_{\ell,k}(u)\}_{0\leq k\leq \ell}$ and $\{G_{\ell,k}'(u)\}_{0\leq k\leq \ell}$ define inductive linear systems and allow the convergence of the action of the (asymptotic) subalgebra $\widetilde{U}_q(\g)$ of Section \ref{sec:qcaract} on the respective direct limits. This is enough to define a $\uqb$-action on these limits with
$$ \lim_{k \arr \infty} V_k(u)\otimes W_k \simeq V(u)\otimes W \quad \text{ and }\quad \lim_{k \arr \infty} W_k(u)\otimes V_k \simeq W(u)\otimes V.$$
\begin{lem}[{\cite[Section 5]{her}}]\label{lemma:IndRkOm} With the above notation, let $R_k(u):V_k(u)\otimes W_k \arr W_k\otimes V_k(u)$ be the rational $R$-matrix of Section \ref{sec:Rdimfin}. Then, for $k\leq \ell$,
$$ G_{\ell,k}'(u) \circ R_k(u) = R_{\ell}(u)\circ G_{\ell,k}(u). $$
\end{lem}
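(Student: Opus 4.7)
The plan is to verify the identity on the tensor product $v_k \otimes w_k$ of highest $\ell$-weight vectors and then extend by a $U_{q,u}(\mathfrak{g})$-linearity / cyclicity argument. Write $v_\ell, w_\ell$ for the highest $\ell$-weight vectors of $V_\ell, W_\ell$, and $v_0, w_0$ for those of $L(\Psi_\ell\Psi_k^{-1})$ and $L(\Psi'_\ell(\Psi'_k)^{-1})$ respectively.

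First I would trace both sides on $v_k \otimes w_k$. For $R_\ell(u) \circ G_{\ell,k}(u)$: the first arrow in the definition of $G_{\ell,k}(u)$ sends $v_k \otimes w_k \mapsto v_k \otimes w_k \otimes v_0 \otimes w_0$; the inverse of the rational $R$-matrix acting on the middle two factors preserves the ``highest-tensor-highest'' form, since by the normalization \eqref{eq:NormR} the $R$-matrix sends $v_0 \otimes w_k \mapsto w_k \otimes v_0$, whence its inverse sends $w_k \otimes v_0 \mapsto v_0 \otimes w_k$, yielding $v_k \otimes v_0 \otimes w_k \otimes w_0$; the final projection of the first two factors onto $L(\Psi_\ell)(u)$ and of the last two onto $L(\Psi'_\ell)$ then maps this to $v_\ell \otimes w_\ell$. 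Applying $R_\ell(u)$ gives $w_\ell \otimes v_\ell$ by \eqref{eq:NormR}. For $G'_{\ell,k}(u) \circ R_k(u)$: we have $R_k(u)(v_k \otimes w_k) = w_k \otimes v_k$, and a symmetric four-step analysis of $G'_{\ell,k}(u)$ sends this ``highest-tensor-highest'' to ``highest-tensor-highest'', again producing $w_\ell \otimes v_\ell$. So both sides of the displayed equation take the value $w_\ell \otimes v_\ell$ on $v_k \otimes w_k$.

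To promote this pointwise agreement to equality of maps, I would use that $R_k(u)$ and $R_\ell(u)$ are $U_{q,u}(\mathfrak{g})$-linear by construction, and that $G_{\ell,k}(u), G'_{\ell,k}(u)$ are $\uqg$-intertwiners up to a common invertible weight-shift coming from the eigenvalues of the $k_i$ on $v_0 \otimes w_0$. For generic $u$ the tensor product $V_k(u) \otimes W_k$ is a simple $U_{q,u}(\mathfrak{g})$-module (Theorem \ref{thm:genSim}), hence cyclic with generator $v_k \otimes w_k$; two (twisted) intertwiners agreeing on this cyclic vector must coincide, and the identity then persists for all $u$ by rationality. The main obstacle will be the careful handling of this weight-shift: the elementary inclusion $v \mapsto v \otimes v_0$ fails to be strictly $\uqg$-linear because the $k_i$-action picks up the scalar $\mu_i(v_0)$, so one must verify that this defect affects both compositions $R_\ell(u) \circ G_{\ell,k}(u)$ and $G'_{\ell,k}(u) \circ R_k(u)$ in exactly the same way. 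Once this bookkeeping is confirmed, the cyclicity argument is routine and the lemma follows.
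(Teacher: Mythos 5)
Your pointwise computation on $v_k\otimes w_k$ is fine, and it is a useful sanity check, but the extension argument rests on a false premise and does not go through.

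The gap is the claim that $G_{\ell,k}(u)$ and $G'_{\ell,k}(u)$ are $\uqg$-intertwiners up to a weight-shift by the $k_i$-eigenvalues of $v_0\otimes w_0$. This is not true, and the paper says so explicitly in Section~\ref{sec:ROm}: ``the maps $G_{\ell,k}(u)$ and $G_{\ell,k}'(u)$ defining the inductive systems above are not $\uqb$-linear.'' The obstruction is not only the $k_i$-scalar. Consider the first step $x\mapsto x\otimes v_0\otimes w_0$. Using $\Delta(f_i)=f_i\otimes k_i^{-1}+1\otimes f_i$, one gets
$$f_i\cdot\big(x\otimes v_0\otimes w_0\big)=\mu_i^{-1}\,(f_i\cdot x)\otimes v_0\otimes w_0\ +\ x\otimes f_i v_0\otimes k_i^{-1}w_0\ +\ x\otimes v_0\otimes f_i w_0,$$
and $f_i v_0$, $f_i w_0$ are nonzero in general since $v_0$ and $w_0$ are \emph{highest} $\ell$-weight vectors of finite-dimensional representations (the same issue occurs for $e_0$). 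The extra terms are not of the form (scalar)$\cdot(\text{something})\otimes v_0\otimes w_0$, so the insertion map is not a twisted intertwiner, and neither is $G_{\ell,k}(u)$ — the defect is genuinely non-scalar. Consequently the cyclicity step (``two twisted intertwiners agreeing on a cyclic vector coincide'') has no purchase: agreeing on $v_k\otimes w_k$ does not propagate because you cannot push the $U_{q,u}(\g)$-action through the $G$-maps.

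What a correct argument actually requires is a reduction to Yang--Baxter at the level of the four-fold tensor products $L(\Psi_k)(u)\otimes L(\Psi_\ell\Psi_k^{-1})(u)\otimes L(\Psi'_k)\otimes L(\Psi'_\ell(\Psi'_k)^{-1})$, using the hexagon axioms for the universal $R$-matrix (so that $R_\ell(u)$, viewed on the quotient $V_\ell(u)\otimes W_\ell$, is induced from a product of elementary rational $R$-matrices on the four factors), and then checking that the resulting expressions for $R_\ell(u)\circ G_{\ell,k}(u)$ and $G'_{\ell,k}(u)\circ R_k(u)$ coincide. The non-intertwining insertions are tamed precisely because the elementary $R$-matrices entering $G_{\ell,k}(u)$, $G'_{\ell,k}(u)$, $R_k(u)$ and $R_\ell(u)$ all come from the same universal $\mathcal{R}(u)$, not because any of the $G$-maps is an intertwiner. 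Your concluding sentence (``one must verify that this defect affects both compositions in exactly the same way; once this bookkeeping is confirmed, the cyclicity argument is routine'') in effect defers the entire substance of the lemma to unwritten bookkeeping, while simultaneously misdescribing the defect as only a $k_i$-scalar. That is where the proof breaks down.
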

Write $G_{\infty,k}(u) : V_k(u)\otimes W_k \arr V(u)\otimes W$ and $G_{\infty,k}'(u):W_k\otimes V_k(u)\arr W \otimes V(u)$ for the canonical maps associated to the systems $\{G_{\ell,k}(u)\}_{0\leq k\leq \ell}$ and $\{G_{\ell,k}'(u)\}_{0\leq k\leq \ell}$. Then the above lemma and the usual universal property of direct limits give a unique map $$R(u) : V(u)\otimes W \rightarrow W\otimes V(u)$$ making the following diagram commute \\[-2.25mm]
\begin{equation}\label{eq:CommDiagRmat}
\begin{tikzcd}[row sep = 1.5em, column sep = 3em]
V_k(u)\otimes W_k \arrow[dd,"G_{\ell,k}(u)",swap]\arrow[rrr,"R_k(u)"]\arrow[dr,"G_{\infty,k}(u)",swap] & & & W_k\otimes V_k(u)\arrow[dd,"G_{\ell,k}'(u)"]\arrow[dl,"G_{\infty,k}'(u)"]\\
& V(u)\otimes W\arrow[r,dashed,"\exists! R(u)"] & W \otimes V(u) & \\
V_{\ell}(u)\otimes W_{\ell}\arrow[rrr,"R_{\ell}(u)",swap]\arrow[ur,"G_{\infty,\ell}(u)",swap] & & & W_{\ell}\otimes V_{\ell}(u)\arrow[ul,"G_{\infty,\ell}'(u)"]
\end{tikzcd}
\end{equation}\\[-0.5mm]
We say that this (unique) induced map $R(u)$ is an \textit{affine $R$-matrix}. It acts rationally on the product $V(u)\otimes W$ with at most countably infinite poles. (This follows from the commutative diagram above with the fact that $G_{\ell,k}(u)$ and $G_{\ell,k}'(u)$ are rational maps.)
\begin{theorem}[{\cite[Section 5]{her}}]\label{thm:ROmLin}
The application $R(u)$ is a linear isomorphism for the action of $U_{q,u}(\bor) = \uqb\otimes \C(u)$. It also induces a $\uqb$-linear isomorphism $V(a)\otimes W \simeq W\otimes V(a)$ whenever $a\in \C^{\times}$ is neither a pole of $R(u)$ nor of its inverse.
\end{theorem}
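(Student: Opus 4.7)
The plan is to transfer all the relevant properties of the rational $R$-matrices $R_k(u)$ to the affine $R$-matrix $R(u)$ by exploiting the universal property of the direct limit embodied in diagram \eqref{eq:CommDiagRmat}, and then to handle the specialization at $a \in \mathbb{C}^\times$ via the rationality of $R(u)$ in the spectral parameter.

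First I would establish the $U_{q,u}(\bor)$-linearity of $R(u)$. Since each $R_k(u)$ is a $U_{q,u}(\g)$-linear isomorphism (see Section \ref{sec:Rdimfin}), it is in particular linear for the action of the asymptotic subalgebra $\widetilde{U}_q(\g) \subseteq \uqg$ introduced in Section \ref{sec:qcaract}. The maps $G_{\ell,k}(u)$ and $G'_{\ell,k}(u)$ are built from highest $\ell$-weight embeddings, flips of pairs of commuting factors, and inverses of rational $R$-matrices; in particular they intertwine the $\widetilde{U}_q(\g)$-action. Passing to the limits and using the commutative diagram \eqref{eq:CommDiagRmat}, the induced map $R(u)$ intertwines the $\widetilde{U}_q(\g)$-actions on $V(u)\otimes W$ and $W\otimes V(u)$. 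Because the $\uqb$-structure on each limit is determined by the $\widetilde{U}_q(\g)$-action together with the scalar eigenvalues of $k_1,\dots,k_n$ on the highest $\ell$-weight vector (cf.~the discussion after Proposition \ref{prop:limitOm}), and since $R(u)$ is already $\ttt$-equivariant by weight considerations (tensor products of weight spaces are preserved by all $G_{\ell,k}^{(\prime)}(u)$ and by the $R_k(u)$), one concludes that $R(u)$ is $U_{q,u}(\bor)$-linear.

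Next I would address invertibility. Running the same inductive-limit construction with $V$ and $W$ interchanged produces an affine $R$-matrix $R'(u) : W(u)\otimes V \arr V\otimes W(u)$ obtained from the finite-dimensional rational braidings $R'_k(u):W_k(u)\otimes V_k \arr V_k\otimes W_k(u)$. Using the identity $R_k^{-1}(u) = R'_k(u^{-1})$ from \eqref{eq:RInv} together with Lemma \ref{lemma:IndRkOm} (and its obvious analog for the primed system), the composition $R'(u^{-1})\circ R(u)$ fits into a diagram whose restriction at each finite level $k$ is the identity on $V_k(u)\otimes W_k$. By uniqueness in the universal property of the direct limit, $R'(u^{-1})\circ R(u) = \mathrm{id}_{V(u)\otimes W}$, and symmetrically $R(u)\circ R'(u^{-1}) = \mathrm{id}_{W\otimes V(u)}$. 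Hence $R(u)$ is an isomorphism with inverse $R'(u^{-1})$, which is again rational in $u$ with at most countably many poles.

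Finally, the specialization statement at $a\in\C^\times$ follows by standard rationality arguments: the diagram \eqref{eq:CommDiagRmat} consists of rational maps in $u$ (the $G^{(\prime)}_{\infty,k}(u)$ are compositions of rational arrows), so $R(u)$ and $R^{-1}(u) = R'(u^{-1})$ both specialize at any $a$ that is neither a pole of $R(u)$ nor of its inverse, and these specializations are mutually inverse $\uqb$-linear maps $V(a)\otimes W \simeq W\otimes V(a)$. The main technical obstacle here is the first step: one must argue that the $\uqb$-action on an infinite-dimensional object of $\Om$ is really reconstructible from the $\widetilde{U}_q(\g)$-action and the weights, since \textit{a priori} the subalgebra $\widetilde{U}_q(\g)$ does not generate all of $\uqb$ — this is exactly where the specific structure of the inductive system from Section \ref{sec:qcaract} (and the fact that $e_0,\dots,e_n \in \widetilde{U}_q(\g)$) is indispensable.
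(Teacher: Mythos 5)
The paper does not prove this statement; it is cited verbatim from \cite[Section 5]{her}, so there is no internal proof to compare against. Evaluating your argument on its own merits, the invertibility step (via $R'(u^{-1})$ and the universal property of the direct limit) and the specialization step are essentially sound and indeed mirror the computation the paper performs just below \thmref{thm:RmatOp}, using the identities $G_{\ell,k}(u^{-1})=H_{\ell,k}'(u)$ and $G_{\ell,k}'(u^{-1})=H_{\ell,k}(u)$.

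The first step, however, contains a genuine gap. You assert that the maps $G_{\ell,k}(u)$ and $G'_{\ell,k}(u)$ ``intertwine the $\widetilde{U}_q(\g)$-action.'' This is false, and the paper says so explicitly right after the theorem: ``This is nevertheless highly non-trivial as the maps $G_{\ell,k}(u)$ and $G_{\ell,k}'(u)$ defining the inductive systems above are not $\uqb$-linear.'' In particular, the highest $\ell$-weight embeddings $v\mapsto v\otimes v_0$ are not module maps for any relevant subalgebra, because the coproduct of $\uqb$ (or $\widetilde{U}_q(\g)$) does not act trivially on the second tensor factor. The correct mechanism is \emph{asymptotic}: the action of $x\in\widetilde{U}_q(\g)\cap\uqb$ on the direct limit is recovered only as the limit
$$x\cdot(v\otimes w)=\lim_{k\to\infty}G_{\infty,k}(u)\bigl(x\cdot G_{\infty,k}^{-1}(u)(v\otimes w)\bigr),$$
and the $\uqb$-linearity of $R(u)$ must be derived by passing $R_k(u)$ through this limiting formula together with \lemref{lemma:IndRkOm}, not by naive equivariance of the structure maps. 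Relatedly, the technical obstacle you single out at the end --- whether $\widetilde{U}_q(\g)$ together with the weights determines the $\uqb$-action --- is in fact the easy part (since $e_0,\dots,e_n\in\widetilde{U}_q(\g)$); the genuinely hard ingredient, which you skip over, is precisely the convergence of the action along the non-equivariant inductive system, which is the content of [her, Section 5].
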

Braidings for the subcategory $\mathcal{O}^-$ can therefore be obtained from the $R$-matrices of finite-dimensional representations. This is nevertheless highly non-trivial as the maps $G_{\ell,k}(u)$ and $G_{\ell,k}'(u)$ defining the inductive systems above are not $\uqb$-linear. They however verify
$$ \displaystyle x\cdot (v\otimes w) = \lim_{k\arr \infty} G_{\infty,k}(u)(x\cdot G_{\infty,k}^{-1}(u)(v\otimes w))$$ 
and 
$$\displaystyle x\cdot (w\otimes v) = \lim_{k\arr \infty} G_{\infty,k}'(u)(x\cdot (G_{\infty,k}'(u))^{-1}(w\otimes v))$$
for any $v\in V(u)$, $w\in W$ and any $x$ in the intersection of $\uqb$ with the subalgebra $\widetilde{U}_q(\g)$ of Section \ref{sec:qcaract}. In particular, the limits appearing in these equalities are well-defined.
\begin{example}\label{ex:RmatLm} Let $\g=\su$ and recall the basis $\{v_{i,k}\}_{i=0}^k$ of $V_k = W_k= W_{k,aq^{1-2k}}^{(1)}$ described in Example \ref{ex:KRsl2}. Then the image of $v_{i,k}\otimes v_{j,k}$ under $G_{\ell,k}(u):V_k(u)\otimes W_k \arr V_{\ell}(u)\otimes W_{\ell}$ is 
$$ \sum_{\nu = 0}^{\min(j,\ell-k)} {\textstyle u^{\nu}q^{\nu(j-i-\nu)}\qbin{i+\nu}{\nu}{q}\left(\prod_{s=1}^{j-\nu}\frac{[s-k-1]_{q,u}}{[s-\ell-1]_{q,u}}\right)\left(\prod_{s=1}^{\nu}\frac{[s+k-\ell-1]_{q}}{[j-\ell-s]_{q,u}}\right)(v_{i+\nu,\ell}\otimes v_{j-\nu,\ell})}$$
where again $[x]_{q,u} = \frac{q^x u-q^{-x}}{q-q^{-1}}$. Also, the limit of the system $\{G_{\ell,k}(u)\}_{0\leq k\leq \ell}$ is isomorphic to $V(u)\otimes W$ with $V = W= L_{1,a}^-$. Let us consider the basis $\{z_i\}_{i\geq 0}\subseteq V$ of Example \ref{ex:Lpmsl2}. Then the image of $v_{i,k}\otimes v_{j,k}$ under the map $G_{\infty,k}(u):V_k(u)\otimes W_k \rightarrow V(u)\otimes W$ is given by
$$ \sum_{0\leq \nu\leq j}{\textstyle u^{\nu}(q^{-1}-q)^{j-\nu}q^{-\nu(i+k)}q^{\frac{1}{2}(j-\nu)(j+3\nu-1)}\qbin{i+\nu}{\nu}{q}\left(\prod_{s=1}^{j-\nu}[s-k-1]_{q,u}\right)
}(z_{i+\nu}\otimes z_{j-\nu}).$$
One can finally verify that the unique map $R(u)$ making the diagram \eqref{eq:CommDiagRmat} commutative is 
$$R(u)(z_i\otimes z_j) = \sum_{0\leq \nu\leq i}{\textstyle\frac{1}{u^{i}} (q-q^{-1})^{i-\nu}q^{\frac{1}{2}(i-\nu)(i-2j+3\nu-1)}\qbin{i+j-\nu}{j}{q} \left(\prod_{s=\nu-i+1}^0 [s]_{q,u}\right)}(z_{\nu}\otimes z_{i+j-\nu})$$
and that this map is indeed a $U_{q,u}(\bor)$-linear isomorphism. Remark that it has no poles on $\C^{\times}$ and is hence holomorphic in the spectral parameter $u$. It therefore produces a $\uqb$-linear isomorphism $L_{1,a}^-\otimes L_{1,b}^{-}\simeq L_{1,b}^{-}\otimes L_{1,a}^-$ for any $b \in \C^{\times}$ (with $u$ evaluated at $b/a$).
\end{example}
Let us now emphasize the quantum parameter as in Section \ref{sec:Fq} and assume that $V,W$ are simple objects of the positive subcategory $\mathcal{O}^+_q$. Then the functor $\F_{q^{-1}}$ of the above-mentioned section sends $V$ and $W$ to simple objects $V'=\F_{q^{-1}}(V)$ and $W' = \F_{q^{-1}}(W)$ of the negative subcategory $\mathcal{O}^-_{q^{-1}}$ of $\mathcal{O}_{q^{-1}}$. There is thus an affine $R$-matrix 
$$ R'(u) : W'(u)\otimes V' \arr V'\otimes W'(u)$$ 
which can also be considered as a $U_{q,u}(\bor)$-linear isomorphism from the module $\F_q\big(W'(u)\otimes V'\big)$ to $\F_q\big(V'\otimes W'(u)\big)$ (as the functor $\F_q$ acts as the identity on morphisms). \par Moreover, as $\F_{q^{-1}}$ is the inverse of $\F_q$, Proposition \ref{prop:Fshift} and Theorem \ref{thm:Comon} give
$$  \F_q\big(W'(u)\otimes V'\big) \simeq (\F_q\circ\F_{q^{-1}})(V)\otimes ((\F_q\circ\F_{q^{-1}})(W))(u) \simeq V\otimes W(u)$$
and, similarly, $\F_q\big(V'\otimes W'(u)\big) \simeq W(u)\otimes V$. Making explicit the isomorphisms used above and inverting the spectral parameter $u$, we deduce that (with the notation of Theorem \ref{thm:Comon})
\begin{equation}\label{eq:RmatOp}
 R(u) = \tau \circ R'(u^{-1}) \circ \tau : V\otimes W(u^{-1}) \arr W(u^{-1})\otimes V
\end{equation}
is a $U_{q,u}(\bor)$-linear isomorphism. The automorphism $\tau_{u,q}$ of Section \ref{sec:DefQaff} then enables us to view this map as an isomorphism from $V(u)\otimes W $ to $W\otimes V(u)$. We will call the latter isomorphism an \textit{affine $R$-matrix} just like the braidings induced from \eqref{eq:CommDiagRmat}. It acts rationally on $V(u)\otimes W$ with the same set of poles as the map $R'(u^{-1})$. This set is hence at most countably infinite by the above comments.\par 
Let us summarize the results of the last paragraphs in a proper theorem. 
\begin{theorem}\label{thm:RmatOp} Let $V$ and $W$ be simple objects of $\mathcal{O}^+_q$. Then $R(u) = \tau \circ R'(u^{-1}) \circ \tau$ produces a $U_{q,u}(\bor)$-linear isomorphism from $V(u)\otimes W$ to $W\otimes V(u)$ where $$R'(u): (\F_{q^{-1}}(W))(u)\otimes \F_{q^{-1}}(V)\arr \F_{q^{-1}}(V)\otimes (\F_{q^{-1}}(W))(u)$$ is the affine $R$-matrix of $\mathcal{O}_{q^{-1}}^-$ induced from diagram \eqref{eq:CommDiagRmat}. \hfill $\qed$
\end{theorem}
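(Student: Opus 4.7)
My plan is simply to make rigorous the informal derivation given in the paragraphs immediately preceding the theorem, since the statement is essentially a summary.

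First, I would invoke the corollary following Theorem~\ref{thm:preFondF}: since $\F_{q^{-1}}$ is an exact, invertible functor that preserves irreducibility and sends $\mathcal{O}^+_q$ into $\mathcal{O}^-_{q^{-1}}$, the modules $V' = \F_{q^{-1}}(V)$ and $W' = \F_{q^{-1}}(W)$ are simple objects of $\mathcal{O}^-_{q^{-1}}$. I may therefore apply the construction of Section~\ref{sec:ROm} (that is, Theorem~\ref{thm:ROmLin} and the diagram~\eqref{eq:CommDiagRmat}) to obtain a $U_{q^{-1},u}(\bor)$-linear isomorphism
$$R'(u)\colon W'(u)\otimes V'\longrightarrow V'\otimes W'(u).$$

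Next, I would pull this isomorphism back through $\F_q$. Since $\F_q$ acts as the identity on underlying vector spaces and morphisms, $R'(u)$ is automatically $\C(u)$-linear and commutes with the $\uqb$-action induced on $\F_q(W'(u)\otimes V')$ and $\F_q(V'\otimes W'(u))$; that is, it is $U_{q,u}(\bor)$-linear for these new structures. The content of the step is to identify these two pulled-back modules with modules built from $V$ and $W$. Using that $\F_q\circ\F_{q^{-1}}$ is naturally isomorphic to the identity, combined with Proposition~\ref{prop:Fshift} (spectral compatibility $\F_q(M(u))\simeq (\F_q M)(u)$) and Theorem~\ref{thm:Comon} (the flip $\tau$ realizes $\F_q(M\otimes N)\simeq \F_q(N)\otimes \F_q(M)$), I obtain canonical $U_{q,u}(\bor)$-linear isomorphisms
$$\F_q(W'(u)\otimes V')\;\xrightarrow{\;\tau\;}\; V\otimes W(u), \qquad \F_q(V'\otimes W'(u))\;\xrightarrow{\;\tau\;}\; W(u)\otimes V.$$
Composing $R'$ with these flips, and then replacing the spectral variable $u$ by $u^{-1}$, yields the $U_{q,u}(\bor)$-linear isomorphism
$$R(u)\;=\;\tau\circ R'(u^{-1})\circ\tau\colon V\otimes W(u^{-1})\longrightarrow W(u^{-1})\otimes V.$$

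To end the proof, I would apply the Hopf algebra automorphism $\tau_{u,q}$ of $U_{q,u}(\bor)$ from Section~\ref{sec:DefQaff}: pulling back by $\tau_{u,q}$ turns $W(u^{-1})$ into $W(u)\cdot u^{-\deg}$ on the relevant $\Z$-graded pieces, and in this way realises $R(u)$ as a $U_{q,u}(\bor)$-linear isomorphism $V(u)\otimes W \to W\otimes V(u)$ that is rational in $u$ with the same poles as $R'(u^{-1})$.

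The only genuinely delicate step is the middle one: one must keep careful track of how the canonical natural isomorphism of Theorem~\ref{thm:Comon} (the ``flip'') composes under iteration of $\F_q$ and $\F_{q^{-1}}$, and verify that the two tensor reversals needed on the two sides are both implemented by the same map $\tau$, so that the conjugate $\tau\circ R'(u^{-1})\circ\tau$ indeed intertwines the $U_{q,u}(\bor)$-actions on $V(u)\otimes W$ and $W\otimes V(u)$ coming from the original $\uqg$-structure (rather than the pulled-back one). Once this naturality bookkeeping is carried out, every other step is either exactness of $\F_q$, rationality of $R'$, or a direct application of Theorem~\ref{thm:ROmLin}.
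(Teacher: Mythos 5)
Your proposal is correct and follows essentially the same route as the paper: the theorem is indeed stated with a terminal $\qed$ because its proof consists precisely of the three paragraphs that precede it, and your rewriting tracks those paragraphs step by step — applying the corollary after Theorem~\ref{thm:preFondF} to land $V',W'$ in $\mathcal{O}^-_{q^{-1}}$, invoking Theorem~\ref{thm:ROmLin} and diagram~\eqref{eq:CommDiagRmat} for $R'(u)$, pulling back through $\F_q$ and identifying the source and target via Proposition~\ref{prop:Fshift} and Theorem~\ref{thm:Comon}, and finally shifting the spectral parameter with $\tau_{u,q}$. Your closing remark about naturality bookkeeping is a fair observation but does not depart from the paper's argument; note only that the last step is cleaner if phrased directly as applying $\tau_{u,q}$ to both sides (using that $\tau_u$ is a Hopf automorphism, so $(V\otimes W(u^{-1}))(u)\simeq V(u)\otimes W$ and $(W(u^{-1})\otimes V)(u)\simeq W\otimes V(u)$), rather than via the slightly informal ``$W(u)\cdot u^{-\deg}$'' description.
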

\begin{example}\label{ex:RmatLp} Let $\g = \su$ with $\{w_i\}_{i\geq 0}$ the basis for $V = W = L_{1,a}^{+,q}$ given in Example \ref{ex:Lpmsl2}. Then it follows from Example \ref{ex:FqLp}, Example \ref{ex:RmatLm} and Theorem \ref{thm:RmatOp} that the affine $R$-matrix $R(u):V(u)\otimes W \rightarrow W\otimes V(u)$ sends $w_i\otimes w_j$ to
$$ \sum_{0\leq \nu\leq j}{\textstyle u^j(q^{-1}-q)^{j-\nu}q^{-\frac{1}{2}(j-\nu)(j-2i+3\nu-1)}\qbin{i+j-\nu}{i}{q}\left(\prod_{s=\nu-j+1}^0 [s]_{q^{-1},u^{-1}}\right)}\varrho_{i,j,\nu} w_{i+j-\nu}\otimes w_{\nu}.$$
where the factor $\varrho_{i,j,\nu} = q^{i(i-1)+j(j-1)-(i+j-\nu)(i+j-\nu-1)-\nu(\nu-1)} = q^{-2(i-\nu)(j-\nu)}$ comes from the isomorphism $V\simeq \F_q(L_{1,a q^2}^{-,q^{-1}})$ of Example \ref{ex:FqLp}. The latter expression can be simplified as 
$$ \sum_{0\leq \nu\leq j}{\textstyle u^{\nu}(q^{-1}-q)^{j-\nu}q^{-\frac{1}{2}(j-\nu)(j+2i-\nu-1)}\qbin{i+j-\nu}{i}{q}\left(\prod_{s=\nu-j+1}^0 [s]_{q,u}\right)}w_{i+j-\nu}\otimes w_{\nu}$$
and it is then easy to verify that $R(u)$ is a $U_{q,u}(\bor)$-linear isomorphism.
\end{example}
We now end this section by giving some basic properties of the affine $R$-matrices of $\mathcal{O}^{\pm}$. 
\begin{prop} Fix $V,W$ simple objects of $\mathcal{O}_q$. Assume that both $V$ and $W$ belong to $\mathcal{O}_q^{-}$ (or $\mathcal{O}_q^+$) and let $R_{V,W}(u):V(u)\otimes W\rightarrow W\otimes V(u)$ be the affine $R$-matrix defined above. Then
\begin{itemize}
\item[(i)] $R_{V,W}(u)$ satisfies the normalization condition \eqref{eq:NormR},
\item[(ii)] The affine $R$-matrix $R_{W,V}(u):W(u)\otimes V\rightarrow V\otimes W(u)$ is equal to $R_{V,W}^{-1}(u^{-1})$.
\end{itemize}
Also $R_{V,W}(u)$ is the rational $R$-matrix of Section \ref{sec:Rdimfin} if both $V$ and $W$ are finite-dimensional.
\end{prop}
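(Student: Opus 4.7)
The plan is to prove the three assertions by leveraging the inductive construction of $R(u)$ from diagram \eqref{eq:CommDiagRmat} in the $\mathcal{O}^-$ case and then transferring the statements to the $\mathcal{O}^+$ case via Theorem \ref{thm:RmatOp} and the functor $\F_q$.

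For (i) in $\mathcal{O}_q^-$, I would pick highest $\ell$-weight vectors $v_k\in V_k$, $w_k\in W_k$, $v\in V$, $w\in W$ compatible with the inductive system, in the sense that $G_{\infty,k}(u)(v_k\otimes w_k)=v\otimes w$ and $G'_{\infty,k}(u)(w_k\otimes v_k)=w\otimes v$. This compatibility is transparent from the construction of the $G_{\ell,k}(u)$'s as compositions built out of the maps $v\mapsto v\otimes v_0$ for highest $\ell$-weight vectors $v_0$ together with inverses of finite-dimensional rational $R$-matrices, the latter sending highest$\otimes$highest to highest$\otimes$highest by \eqref{eq:NormR}. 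The commutativity of \eqref{eq:CommDiagRmat} then forces $R(u)(v\otimes w)=G'_{\infty,k}(u)(R_k(u)(v_k\otimes w_k))=G'_{\infty,k}(u)(w_k\otimes v_k)=w\otimes v$. For (i) in $\mathcal{O}_q^+$, the formula $R(u)=\tau\circ R'(u^{-1})\circ\tau$ from Theorem \ref{thm:RmatOp} reduces the claim to the $\mathcal{O}^-$ case once one observes that highest $\ell$-weight vectors of $V=\F_q(V')$ and $W=\F_q(W')$ coincide, as vectors in the underlying spaces, with highest $\ell$-weight vectors of $V'$ and $W'$ (since $\pp_q(e_i)=-k_i^{-1}e_i$ annihilates the same vectors as $e_i$); the two flips and the normalized action of $R'$ then compose to $v\otimes w\mapsto w\otimes v$.

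For (ii) in $\mathcal{O}_q^-$, I would invoke the relation \eqref{eq:RInv} at the finite-dimensional level: inverting each horizontal arrow of \eqref{eq:CommDiagRmat} and substituting $u\to u^{-1}$ converts the universal diagram defining $R_{V,W}(u^{-1})^{-1}$ into the one defining $R_{W,V}(u)$, since $R_k(u^{-1})^{-1}$ is precisely the rational $R$-matrix $R_{W_k,V_k}(u)$ governing the inductive system for $R_{W,V}(u)$. The uniqueness of the map induced by a direct limit then yields $R_{W,V}(u)=R_{V,W}^{-1}(u^{-1})$. The case of (ii) in $\mathcal{O}_q^+$ follows by applying this relation to $R'$ inside $R(u)=\tau\circ R'(u^{-1})\circ\tau$ and tracking the spectral shift induced by $\tau_{u,q}$. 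For (iii), the finite-dimensional $\mathcal{O}_q^-$ case is immediate: with the constant sequences $V_k=V$, $W_k=W$, all the maps $G_{\ell,k}(u)$ and $G'_{\ell,k}(u)$ become identities, so the direct limit literally produces $R_k(u)$, the rational $R$-matrix. For $V,W$ finite-dimensional in $\mathcal{O}_q^+$, both the affine and the rational $R$-matrices are $U_{q,u}(\bor)$-linear, rational in $u$, and satisfy \eqref{eq:NormR} by (i); Theorem \ref{thm:genSim} ensures that $V(a)\otimes W$ is simple as a $\uqb$-module for all but finitely many $a\in\C^{\times}$, so any normalized $\uqb$-linear endomorphism of $W\otimes V(a)$ is the identity, forcing the two $R$-matrices to coincide at generic values of $u$ and therefore as rational functions.

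The main obstacle I foresee is the spectral-parameter bookkeeping in the $\mathcal{O}^+$ parts of (ii) and (iii): one must carefully account for the shifts $u\mapsto u^{-1}$ introduced by $\tau\circ R'(u^{-1})\circ\tau$ together with the automorphism $\tau_{u,q}$ that reinterprets the resulting map between $V\otimes W(u^{-1})$ and $W(u^{-1})\otimes V$ as one between $V(u)\otimes W$ and $W\otimes V(u)$. The conceptual content is otherwise routine, reducing cleanly to statements already known for $\mathcal{O}^-$ or to the universal property of the direct limit appearing in \eqref{eq:CommDiagRmat}.
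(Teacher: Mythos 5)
Your proposal is correct and takes essentially the same route as the paper: the key step for (ii) is indeed the comparison of the two inductive systems (the paper writes this as $G_{\ell,k}(u^{-1})=H'_{\ell,k}(u)$ and $G'_{\ell,k}(u^{-1})=H_{\ell,k}(u)$) combined with \eqref{eq:RInv} at the finite-dimensional level and the universal property of the direct limit, followed by the transfer to $\mathcal{O}^+$ via \eqref{eq:RmatOp}. The only genuine divergence is your argument for (iii) in the $\mathcal{O}^-$ case via the constant inductive system $V_k=V$, $W_k=W$: this is a valid computation, but the paper instead invokes uniqueness of normalized $U_{q,u}(\bor)$-linear isomorphisms (which you already use for the $\mathcal{O}^+$ case, and which handles both cases uniformly while also sidestepping the question of whether the affine $R$-matrix is \emph{a priori} independent of the chosen inductive system).
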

\begin{proof} Part (i) is easily proven. For part (ii), assume that $V,W$ are objects of $\mathcal{O}^-_q$ and write
\begin{center}
$G_{\ell,k}(u):V_k(u)\otimes W_k \arr V_{\ell}(u)\otimes W_{\ell}$, \qquad $G_{\ell,k}'(u):W_k\otimes V_k(u)\arr W_{\ell}\otimes V_{\ell}(u),$ \\
$H_{\ell,k}(u):W_k(u)\otimes V_k \arr W_{\ell}(u)\otimes V_{\ell},$ \qquad $H_{\ell,k}'(u):V_k\otimes W_k(u) \arr V_{\ell}\otimes W_{\ell}(u)$
\end{center}
for the direct systems underlying the definition of the affine $R$-matrices $R_{V,W}(u)$ and $R_{W,V}(u)$. A straightforward comparison of these inductive systems yields
$$G_{\ell,k}(u^{-1})=H_{\ell,k}'(u) \quad \text{and} \quad G_{\ell,k}'(u^{-1}) = H_{\ell,k}(u)$$
so that
$G_{\infty,k}(u^{-1}) = H'_{\infty,k}(u)$ and $G_{\infty,k}'(u^{-1})=H_{\infty,k}(u)$.
Part (ii) for $\mathcal{O}_q^-$ then follows from Equation \eqref{eq:RInv} and from the unicity underlying the definition of the affine $R$-matrices in \eqref{eq:CommDiagRmat}. The corresponding result for $\mathcal{O}^+_q$ is afterwards a direct consequence of the construction \eqref{eq:RmatOp}. Finally, the last part of the theorem follows from the unicity of rational $R$-matrices.
\end{proof}
\subsection{Factorization through stable maps}\label{sec:Stab} A fundamental problem in the theory of quantum loop algebras is concerned with tensor products of $\ell$-weight vectors. Indeed, the notion of $\ell$-weight vectors does not behave well with respect to the coproduct $\Delta$ of $\uqb$ and tensor products of such vectors for given representations of $\uqb$ are typically not $\ell$-weight vectors for the tensor product of these representations. This gives rise to many technical difficulties and makes in particular more involved the study of the monoidal structure of $\mathcal{O}=\mathcal{O}_q$. There is however a way to compensate for these difficulties using the notion of stable maps. \par
Consider the ordering $\preceq$ on $\ttt^{\times}\times\ttt^{\times}$ induced from the one $\leq$ of $\ttt^{\times}$ by $(\omega_1,\omega_2)\preceq (\omega_1',\omega_2')$ if and only if $\omega_1\omega_2 = \omega_1'\omega_2'\text{ and } \omega_1\leq \omega_1'$. Then, given $V$ and $W$ modules in $\mathcal{O}$ with $\ell$-weight vectors $v\in V$ and $w \in W$, one can define the following subspace of $V(u)\otimes W$:
$$ (v\otimes w)_{\prec} = \sum_{(\omega_1,\omega_2)\prec (\varpi(\Psi),\varpi(\Psi'))} V(u)_{\omega_1}\otimes W_{\omega_2}$$
with $\Psi$ and $\Psi'$ the respective $\ell$-weights of $v$ and $w$. Using this notation, we can follow \cite{her} and define a stable map for $V$ and $W$ as a $\C$-linear isomorphism 
$$S_{V,W}(u):V(u)\otimes W \arr V(u)\otimes W$$
which is rational in the spectral parameter $u$ \textbf{and} is such that $S_{V,W}(u)(v\otimes w)$ belongs to the subspace $(v\otimes w)_{\preceq}=v\otimes w+(v\otimes w)_{\prec}$ \textbf{and} is a $\ell$-weight vector of $\ell$-weight $\Psi(u)\Psi'$ whenever $v\in V(u)$ and $w\in W$ are $\ell$-weight vectors of respective $\ell$-weight $\Psi(u)$ and $\Psi'$. Remark that these stable maps conjecturally generalize the (geometric) stable envelopes defined in \cite{mo} using equivariant $K$-theory of Nakajima quiver varieties (for $\g$ of type ADE). \par 
Such a map always exists but may not be unique. It is nevertheless uniquely determined if $V$ and $W$ are irreducible with at least one of them belonging to the subcategory $\mathcal{O}^-$. Moreover, in this case, the (unique) map $S_{V,W}(u)$ induces a rational $\uqhp$-linear isomorphism (cf.~\cite{her}) $$ S_{V,W}(u):V(u)\otimes_d W \rightarrow V(u)\otimes W$$
where $V(u)\otimes_d W$ is the $\uqhp$-module obtained by using the \textit{Drinfeld coproduct} $$\Delta_d:\uqhp\arr\uqhp\otimes\uqhp$$
which is defined by $\Delta_d(k_i)=k_i\otimes k_i$ and $\Delta_d(h_{i,r})=h_{i,r}\otimes 1+1\otimes h_{i,r}$.\par
A deep result of Hernandez is the following factorization for the affine $R$-matrices of $\mathcal{O}^-$.
\begin{theorem}[{\cite[Section 5]{her}}]\label{thm:RmatFacOm} Fix $V,W$ simple objects of $\mathcal{O}^-$ with $R_{V,W}(u)$ the corresponding affine $R$-matrix (obtained as in Section \ref{sec:ROm}). Then 
\begin{equation}\label{eq:FacRmatOm} R_{V,W}(u) = S_{W,V}(u^{-1})\circ \tau\circ \alpha(u)\circ S_{V,W}^{-1}(u)
\end{equation}
for some (unique) $\uqhp$-linear automorphism  $\alpha(u)$ of the module $V(u)\otimes_d W$. 
\end{theorem}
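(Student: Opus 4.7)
The plan is to define $\alpha(u)$ explicitly as a conjugation of the affine $R$-matrix by the stable maps, and then to verify that this composition is a $\uqhp$-linear automorphism of $V(u)\otimes_d W$. Concretely, I would set
\begin{equation*}
\alpha(u) \,:=\, \tau^{-1}\circ S_{W,V}^{-1}(u^{-1})\circ R_{V,W}(u)\circ S_{V,W}(u),
\end{equation*}
so that the factorization \eqref{eq:FacRmatOm} holds by construction. The uniqueness asserted in the theorem is then immediate from the uniqueness of the stable maps $S_{V,W}(u)$ and $S_{W,V}(u^{-1})$, which holds because $V,W$ are simple and at least one of them lies in $\mathcal{O}^-$, as recalled in the discussion preceding the statement. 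Rationality of $\alpha(u)$ in the spectral parameter follows from the rationality of each of the four factors.

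The substantive content is the $\uqhp$-linearity of $\alpha(u)$ with respect to the Drinfeld coproduct, which I would deduce by chaining three independent compatibilities. \textbf{First}, by definition of a stable map, $S_{V,W}(u):V(u)\otimes_d W\arr V(u)\otimes W$ intertwines the Drinfeld coproduct on its source with the standard coproduct on its target, and similarly for $S_{W,V}(u^{-1})$ after the natural spectral bookkeeping via $\tau_{u,q}$. \textbf{Second}, $R_{V,W}(u)$ is $\uqb$-linear (hence $\uqhp$-linear) for the standard coproducts on $V(u)\otimes W$ and $W\otimes V(u)$ by Theorem \ref{thm:ROmLin}. \textbf{Third}, the flip $\tau$ is $\uqhp$-linear when both sides are equipped with the Drinfeld coproduct, since the explicit formulas $\Delta_d(k_i)=k_i\otimes k_i$ and $\Delta_d(h_{i,m})=h_{i,m}\otimes 1+1\otimes h_{i,m}$ are manifestly cocommutative.

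Chaining these, the Drinfeld structure on $V(u)\otimes_d W$ is carried by $S_{V,W}(u)$ to the standard structure on $V(u)\otimes W$, then by $R_{V,W}(u)$ to the standard structure on $W\otimes V(u)$, then by $S_{W,V}^{-1}(u^{-1})$ back to the Drinfeld structure on $W\otimes_d V(u)$, and finally by $\tau^{-1}$ to the Drinfeld structure on $V(u)\otimes_d W$. Each arrow is $\uqhp$-linear in the appropriate sense, so the full composition $\alpha(u)$ is $\uqhp$-linear for $\Delta_d$, and being invertible (as a composition of isomorphisms) it is an automorphism in that category.

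The main obstacle I anticipate is the careful bookkeeping of spectral-parameter shifts — in particular, identifying $S_{W,V}(u^{-1})$, which \emph{a priori} operates on $W(u^{-1})\otimes V$, with a $\uqhp$-linear map appearing on $W\otimes V(u)$, and checking that the $\uqhp$-linearity of $\tau$ for the Drinfeld coproduct is not spoiled by the substitution $u\mapsto u^{-1}$. A related delicate point is ensuring that the tensor products in the chain genuinely carry a $\uqhp$-module structure via $\Delta_d$ (so that $\alpha(u)$ is a morphism in that category, not merely a formal composition); this requires controlling the $\ell$-weight decomposition of $V(u)\otimes W$, which is precisely where the hypothesis that at least one of $V,W$ belongs to $\mathcal{O}^-$ is used — via the existence and uniqueness of the stable maps invoked above.
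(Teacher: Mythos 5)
The paper does not give its own proof of this theorem; it is quoted directly from \cite[Section 5]{her}, with the surrounding text serving only to introduce the stable maps, recall their key properties ($\uqhp$-linearity for $\Delta_d$, existence, uniqueness when at least one factor lies in $\mathcal{O}^-$), and remark that $\alpha(u)$ is built from the abelian component $\mathcal{R}^0(u)$ of the universal $R$-matrix. There is therefore no ``paper's own proof'' to match against, and your reconstruction is the natural formal consequence of those cited facts.

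Your strategy — define $\alpha(u)$ as the conjugate $\tau^{-1}\circ S_{W,V}^{-1}(u^{-1})\circ R_{V,W}(u)\circ S_{V,W}(u)$, so that the factorization is tautological, and then verify $\uqhp$-linearity by chaining the $\uqhp$-linearity of each factor — is sound as far as the quoted statement goes. The chain $\Delta_d \to \Delta \to \Delta \to \Delta_d \to \Delta_d$ you describe is correct, and the cocommutativity of $\Delta_d$ does make $\tau$ a $\uqhp$-isomorphism between the two Drinfeld tensor products. Uniqueness is, as you note, trivial once the stable maps are fixed. Two minor caveats. First, the $\uqhp$-linearity of $S_{V,W}(u):V(u)\otimes_d W\to V(u)\otimes W$ is \emph{not} ``by definition'' of a stable map — the definition is the triangularity and $\ell$-weight preservation condition; its equivalence with Drinfeld-to-standard $\uqhp$-intertwining is itself a nontrivial theorem of \cite{her} that you are invoking. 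Second, the spectral-parameter bookkeeping you flag (reading $S_{W,V}(u^{-1})$ as a map $W\otimes_d V(u)\to W\otimes V(u)$ rather than $W(u^{-1})\otimes_d V\to W(u^{-1})\otimes V$) is routine once conventions are fixed — the global shift $\tau_u$ intertwines the two since $\tau_u$ is a Hopf automorphism — but it should be spelled out rather than left as an ``anticipated obstacle.'' Finally, be aware that your proof establishes existence/uniqueness of \emph{some} $\uqhp$-linear $\alpha(u)$, which is what the statement literally asserts, but it says nothing about the identification $\alpha(u)=\mathcal{R}^0(u)$-action, which is the part that makes the factorization an explicit and computationally useful result and is what earns it the epithet ``deep'' in the paper. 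If you ever need to \emph{use} this factorization (for instance as Hernandez does to prove the short exact sequence of Theorem~\ref{thm:QQ*}), you will need the explicit form of $\alpha(u)$, not merely its existence.
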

The map $\alpha(u)$ above can be constructed explicitly using the abelian component $\mathcal{R}^0(u)$ of the universal $R$-matrix $\mathcal{R}(u)$. Equation \eqref{eq:FacRmatOm} may thus be seen as an explicit factorization for $R_{V,W}(u)$. This is a remarkable result since it simplifies greatly the computation of braidings for $\mathcal{O}^-$. (Using \eqref{eq:CommDiagRmat} for the explicit computation of an affine $R$-matrix of $\mathcal{O}^-$ is typically very hard.) 
It can also be used in more abstract proofs (see e.g.~the proof of \cite[Theorem 5.16]{her}).\par 
We now finish this paper by giving a similar factorization for the affine $R$-matrices of $\mathcal{O}^+$. For this, let us emphasize again the quantum parameter $q$ and fix simple objects $V,W$ in $\mathcal{O}^+_q$. Then Theorems \ref{thm:RmatOp} and \ref{thm:RmatFacOm} give the following factorization for the $R$-matrix $R_{V,W}(u)$
\begin{align*} R_{V,W}(u) &= \tau \circ S_{V',W'}(u)\circ \tau \circ \alpha(u^{-1})\circ S_{W',V'}^{-1}(u^{-1}) \circ \tau\\
&= (\tau \circ S_{V',W'}(u) \circ \tau)\circ\tau \circ (\tau \circ \alpha(u^{-1})\circ\tau) \circ (\tau \circ S_{W',V'}^{-1}(u^{-1})\circ \tau)
\end{align*}
with $V' = \F_{q^{-1}}(V)$, $W' = \F_{q^{-1}}(W)$ and $\alpha(u)$ a $U_{q^{-1}}(\mathfrak{h}^+)$-linear automorphism of $W'(u)\otimes_d V'$. (The stable maps above are also $U_{q^{-1}}(\mathfrak{h}^+)$-linear as $V',W'$ are simple objects of $\mathcal{O}_{q^{-1}}$.) \par 
Let us write $\beta(u) = \tau\circ \alpha(u^{-1})\circ \tau$ and $ \mathcal{S}_{V,W}(u) = \tau \circ S_{W',V'}(u^{-1})\circ \tau$ so that 
\begin{equation}\label{eq:FacRmatOp2}
R_{V,W}(u) = \mathcal{S}_{W,V}(u^{-1}) \circ \tau \circ \beta(u) \circ \mathcal{S}_{V,W}^{-1}(u).
\end{equation}
Then the automorphism $\tau_{u,q}$ of Section \ref{sec:Qaff} and the functor $\F_q$ of Section \ref{sec:Fq} (with Theorem \ref{thm:Comon}) allow us to view the map $\mathcal{S}_{V,W}(u)$ above as being a rational isomorphism from $\F_q(V'(u)\otimes_d W')$ unto $V(u)\otimes W$ which is linear\footnote{Remark that $\tau:V'(u)\otimes_d W'\arr W'\otimes_d V'(u)$ is $U_{q^{-1}}(\mathfrak{h}^+)$-linear by definition of the coproduct $\Delta_d$.} for the action of the subalgebra $\mathfrak{X}_q=\pp_{q^{-1}}(U_{q^{-1}}(\mathfrak{h}^+))$ of $\uqb$. The map $\beta(u)$ may also be seen as a $\mathfrak{X}_q$-linear automorphism of $\F_q(V'(u)\otimes_d W')$. \par
We wish to compare the factorizations \eqref{eq:FacRmatOm} and \eqref{eq:FacRmatOp2}. In that perspective, define $$\Delta_{d,q}'=(\pp_q^{-1} \otimes \pp_q^{-1})\circ \Delta_{d,q^{-1}}\circ \pp_q : \mathfrak{X}_q \arr \mathfrak{X}_q\otimes \mathfrak{X}_q$$ where $\Delta_{d,q^{-1}}$ is the Drinfeld coproduct associated to the algebra $U_{q^{-1}}(\mathfrak{h}^+)$. Let us furthermore denote $V(u)\otimes'_{d}W$ and $W\otimes'_{d}V(u)$ the $\mathfrak{X}_q$-modules induced from the coproduct $\Delta'_{d,q}$. Then it follows from the (trivial) relation $(\pp_q\otimes \pp_q)\circ \Delta_{d,q}'=\Delta_{d,q^{-1}} \circ \pp_q$ that the $\mathfrak{X}_q$-modules $V(u)\otimes_d' W$ and $\F_q(V(u)\otimes_d W)$ are naturally isomorphic. We can thus view the factorization \eqref{eq:FacRmatOp2} as 
\begin{equation}\label{eq:FacRMatOp3}
V(u)\otimes W \xrightarrow{\mathcal{S}_{V,W}^{-1}(u)} V(u)\otimes'_d W \xrightarrow{\beta(u)} V(u)\otimes_d' W \xrightarrow{\,\tau\,} W \otimes'_d V(u) \xrightarrow{\mathcal{S}_{W,V}(u^{-1})} W\otimes V(u)
\end{equation}
Note that all maps appearing in \eqref{eq:FacRMatOp3} are $\mathfrak{X}_q$-linear. We hence have that
\begin{itemize}
\item[$\bullet$] $\mathcal{S}_{V,W}(u)$ is a rational $\mathfrak{X}_q$-linear isomorphism of $V(u)\otimes_d' W$ unto $V(u)\otimes W$,
\item[$\bullet$] $\beta(u)$ is a $\mathfrak{X}_q$-linear automorphism of $V(u)\otimes_d' W$.
\end{itemize}
Moreover, the map $\mathcal{S}_{V,W}(u)$ verifies a triangularity property similar to that of $S_{V,W}(u)$. Indeed, let $v \in V$ and $w\in W$ be $\ell$-weight vectors of $\ell$-weights $\Psi$ and $\Psi'$ (respectively). Note also $\preceq'$ the partial ordering on $\ttt^{\times}\times \ttt^{\times}$ defined by $(\omega_1,\omega_2)\preceq'(\omega_1',\omega_2')$ if and only if $\omega_1\omega_2=\omega_1'\omega_2'$ and $\omega_1\unlhd \omega_1'$ (where $\unlhd$ is the partial ordering of Section \ref{sec:Fq}). Then the triangularity property of the stable map $S_{W',V'}(u^{-1})$ (of $U_{q^{-1}}(\bor)$) gives 
$$ \mathcal{S}_{V,W}(u)(v\otimes w) \in \tau(w\otimes v+(w\otimes v)_{\prec'})$$
where the subspace $(w\otimes v)_{\prec'}\subseteq W\otimes V(u)$ is defined by $$(w\otimes v)_{\prec'} = \sum_{(\omega_1,\omega_2)\prec'(\varpi(\Psi'),\varpi(\Psi))}W_{\omega_1}\otimes V(u)_{\omega_2}.$$
In addition, an easy computation (using the fact that $\leq$ and $\unlhd$ are reciprocal) shows that $$ \tau(w\otimes v)_{\prec'}\subseteq (v\otimes w)_{\prec}$$
so that $\mathcal{S}_{V,W}(u)(v\otimes w) \in v\otimes w+(v\otimes w)_{\prec}=(v\otimes w)_{\preceq}$. This is the said triangularity property. \par We thus conclude that the factorizations given in \eqref{eq:FacRmatOm} and \eqref{eq:FacRmatOp2} are perfectly analogous, the only real difference between them being the use of the subalgebra $\mathfrak{X}_q$ of $\uqb$ for \eqref{eq:FacRmatOp2} instead of the usual Cartan--Drinfeld subalgebra $\uqhp$ (that is used for \eqref{eq:FacRmatOm}). 
\begin{example} Consider $\g = \su$ and $V = L_{1,a}^{-,q}$ with the basis $\{z_i\}_{i\geq 0}\subseteq V$ of Example \ref{ex:Lpmsl2}. Then the image of $z_i\otimes z_j$ under the stable map $S_{V,V}(u):V(u)\otimes_d V\arr V(u)\otimes V$ is (cf.~\cite{her})
$$ S_{V,V}(u)(z_i\otimes z_j) = \sum_{0\leq \lambda \leq j} {\textstyle u^{\lambda}(q^{-1}-q)^{-\lambda} \qbin{i+\lambda}{\lambda}{q}}\,\frac{q^{\frac{\lambda}{2}(1-3\lambda)}q^{\lambda(j-2i)}}{\prod_{s=1}^{\lambda}[j-i-s]_{q,u}}\,z_{i+\lambda}\otimes z_{j-\lambda}.$$
The inverse map is easily computed to be
$$ S_{V,V}^{-1}(u)(z_i\otimes z_j) = \sum_{0\leq \lambda \leq j} {\textstyle u^{\lambda}(q-q^{-1})^{-\lambda} \qbin{i+\lambda}{\lambda}{q}}\,\frac{q^{\frac{\lambda}{2}(1-3\lambda)}q^{\lambda(j-2i)}}{\prod_{s=1}^{\lambda}[j-i-\lambda+1-s]_{q,u}}\,z_{i+\lambda}\otimes z_{j-\lambda} $$
and one can express the affine $R$-matrix $R_{V}(u):V(u)\otimes V\arr V\otimes V(u)$ of Example \ref{ex:RmatLm} as $$R_{V}(u) = S_{V,V}(u^{-1})\circ\tau\circ \alpha(u)\circ S_{V,V}^{-1}(u)$$ with $\alpha(u)$ the $U_q(\mathfrak{h}^+)$-linear automorphism of $V(u)\otimes_d V$ given by
$$ \alpha(u)(z_i\otimes z_j) = \frac{(-1)^j}{u^{i}} q^{\frac{1}{2}(i-j)(i+j-1)}(q - q^{-1})^{i - j}\,\frac{\prod_{s=1}^i[j-s+1]_{q,u}}{\prod_{s=1}^j [s-i-1]_{q,u}}\,z_i\otimes z_j.$$
The corresponding factorization for the affine $R$-matrix $R_{W}(u):W(u)\otimes W\arr W\otimes W(u)$ of Example \ref{ex:RmatLp} (with $W = L_{1,a}^{+,q}$) is 
$$ R_W(u) = \mathcal{S}_{W,W}(u^{-1})\circ \tau\circ \beta(u)\circ \mathcal{S}_{W,W}(u) $$
where the maps $\mathcal{S}_{W,W}(u)$, $\mathcal{S}_{W,W}^{-1}(u)$ and $\beta(u)$ are respectively given (on the basis $\{w_i\}_{i\geq 0}\subseteq W$ of Example \ref{ex:Lpmsl2}) by 
$$ \mathcal{S}_{W,W}(u)(w_i\otimes w_j) = \sum_{0\leq \lambda \leq i} {\textstyle (q-q^{-1})^{-\lambda} \qbin{j+\lambda}{\lambda}{q}}\,\frac{q^{-\frac{\lambda}{2}(1-2i+\lambda)}}{\prod_{s=1}^{\lambda}[i-j-s]_{q,u}}\,w_{i-\lambda}\otimes w_{j+\lambda},$$ 
$$ \mathcal{S}_{W,W}^{-1}(u)(w_i\otimes w_j) = \sum_{0\leq \lambda \leq i} {\textstyle (q^{-1}-q)^{-\lambda} \qbin{j+\lambda}{\lambda}{q}}\,\frac{q^{-\frac{\lambda}{2}(1-2i+\lambda)}}{\prod_{s=1}^{\lambda}[i-j-\lambda+1-s]_{q,u}}\,w_{i-\lambda}\otimes w_{j+\lambda}$$
and finally
$$ \beta(u)(w_i\otimes w_j)=(-u)^i q^{-\frac{1}{2}(j-i)(j+i-1)}(q^{-1} - q)^{j - i}\,\frac{\prod_{s=1}^j [i-s+1]_{q,u}}{\prod_{s=1}^i [s-j-1]_{q,u}}\,w_i\otimes w_j.$$
(Note that we have implicitly used the isomorphism $W \simeq \F_q(L_{1,a q^{2}}^{-,q^{-1}})$ of Example \ref{ex:FqLp}.)
\end{example}
We end this paper by remarking that the braidings obtained for the categories $\mathcal{O}_q^{\pm}$ (in this section and in \cite{her}) could possibly be used in order to extend the notion of generalised quantum affine Schur--Weyl duality introduced by Kang--Kashiwara--Kim in \cite{kkk} (see also \cite{fuj}). Indeed, the typical method underlying the construction of these dualities is to construct first a quiver-Hecke (or Khovanov--Laura--Rouquier) algebra using the poles of a finite collection $\{R_{i,j}(u):V_i(u)\otimes V_j\arr V_j\otimes V_i(u)\}_{i,j\in \mathcal{J}}$ of $R$-matrices of finite-dimensional simple real (i.e.~of tensor square simple) modules $\{V_i\}_{i\in \mathcal{J}}$. A naive question is therefore whether we can construct analogous dualities and algebras by replacing the finite-dimensional $\{V_i\}_{i\in \mathcal{J}}$ by (well-behaved enough) objects of $\mathcal{O}^{\pm}_q$. This is an interesting problem as the generalized dualities of Kang--Kashiwara--Kim are extremely important in the study of the finite-dimensional representation theory of quantum affine algebras (see e.g.~\cite{fuj,kko}). Extending these dualities could thus lead to highly non-trivial results regarding the subcategories $\mathcal{O}^{\pm}_q$. \par 
 
 Finally, a natural question arising from our work is whether or not the functor $\F_q$ of Section \ref{sec:Fq} can be extended to the setting of shifted quantum affine algebras (and to their truncations, as defined in \cite{hshift}). This can be done easily by using the functors introduced by Hernandez in \cite[Section 7]{hshift} which relate the category $\mathcal{O}_{\mu}$ of modules over the shifted algebra $U_q^{\mu}(\g)$ with the category $\mathcal{O}_q$ (linked to $\uqb$). It is however not clear if the resulting functor is exact or if it behaves well with respect to the fusion product of $U_q^{\mu}(\g)$ (defined in \cite[Section 5]{hshift}).

\appendix
\section{Extension of Hernandez--Leclerc's duality}\label{app:K0D}
This appendix is devoted to the proof of Theorem \ref{thm:DExt}. We fix $q\in \C^{\times}$ which is not a root of unity and use the notation of Section \ref{sec:Fq}. One could however work with $q$ a formal variable and replace the base field $\C$ by $\mathbbm{k}_q$ as in Section \ref{sec:FuncD}. Denote by $\mathcal{E}_{\ell,q^{\pm 1}}$ the set defined in Section \ref{sec:qcaract} for the parameter $q^{\pm 1}$. (Recall that the definition of $\mathcal{E}_{\ell,q}$ involves the order $\leq$.)\par 
Consider the involutive group automorphism $\vartheta$ of $\mathfrak{r}$ given on the highest $\ell$-weights of positive prefundamental and invertible representations by $\Psi_{i,a}\mapsto \Psi_{i,a^{-1}}$ and $\mu \mapsto \mu^{-1}$ ($i\in I$, $a\in\C^{\times}$ and $\mu \in \ttt^{\times}$). Fix $c : \mathfrak{r}\arr \Z$ in $\mathcal{E}_{\ell,q^{-1}}$ and denote by $I_q(c)$ the composite map $I_q(c) = c\circ\vartheta$. 
\begin{prop} The map $I_q(c):\mathfrak{r}\arr \Z$ belongs to $\mathcal{E}_{\ell,q}$.
\end{prop}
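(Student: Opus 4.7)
The plan is to verify the two defining conditions of $\mathcal{E}_{\ell,q}$ for $I_q(c)$ by transferring the analogous conditions for $c \in \mathcal{E}_{\ell,q^{-1}}$ along the bijection $\vartheta$. The essential observation I would establish first is the compatibility of $\vartheta$ with constant parts, namely that $\varpi(\vartheta(\Psi)) = \varpi(\Psi)^{-1}$ for every $\Psi \in \mathfrak{r}$. This is immediate from the definition of $\vartheta$: writing $\Psi = \mu \prod_k \Psi_{i_k,a_k}^{n_k}$ with $\mu \in \ttt^{\times}$, we have $\vartheta(\Psi) = \mu^{-1} \prod_k \Psi_{i_k,a_k^{-1}}^{n_k}$, and since $\varpi(\Psi_{i,a}) = \varpi(\Psi_{i,a^{-1}}) = 1$, the claim follows.

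For condition (ii), I would fix $\mu \in \ttt^{\times}$ and note that $I_q(c)(\Psi) = c(\vartheta(\Psi))$ together with the bijectivity of $\vartheta$ gives a bijection
$$\{\Psi \in \mathfrak{r} \,|\, I_q(c)(\Psi)\neq 0,\, \varpi(\Psi) = \mu\} \xrightarrow{\vartheta} \{\Psi' \in \mathfrak{r} \,|\, c(\Psi')\neq 0,\, \varpi(\Psi') = \mu^{-1}\},$$
and the target is finite because $c \in \mathcal{E}_{\ell,q^{-1}}$.

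For condition (i), the strategy is to use the reciprocity of the orders $\leq$ and $\unlhd$. Since $c \in \mathcal{E}_{\ell,q^{-1}}$, choose $\lambda'_1,\dots,\lambda'_r \in \ttt^{\times}$ such that $c(\Psi') \neq 0$ implies $\varpi(\Psi') \unlhd \lambda'_j$ for some $j$. If $I_q(c)(\Psi) \neq 0$, applying this to $\Psi' = \vartheta(\Psi)$ yields $\varpi(\Psi)^{-1} \unlhd \lambda'_j$ for some $j$. The main computation is then to check that this translates to $\varpi(\Psi) \leq (\lambda'_j)^{-1}$: unwinding both inequalities, each one is equivalent to $(\lambda'_j \varpi(\Psi))^{-1} \in \overline{Q_+}$. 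Setting $\lambda_j = (\lambda'_j)^{-1}$, we obtain $\{\varpi(\Psi) \,|\, I_q(c)(\Psi) \neq 0\} \subseteq \bigcup_{j=1}^r D(\lambda_j)$, which is condition (i).

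The only mildly subtle point is the order-reciprocity bookkeeping in condition (i); once the identity $\varpi(\vartheta(\Psi)) = \varpi(\Psi)^{-1}$ and the equivalence $\mu \leq \nu \Leftrightarrow \nu \unlhd \mu$ are in hand, the argument is essentially formal, so I do not expect any substantial obstacle.
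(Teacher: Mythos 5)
Your proof is correct and takes essentially the same route as the paper's: establish $\varpi(\vartheta(\Psi)) = \varpi(\Psi)^{-1}$, then use it together with the bijectivity of $\vartheta$ for condition (ii) and the reciprocity of $\leq$ and $\unlhd$ for condition (i). The only cosmetic differences are that you spell out the verification of $\varpi(\vartheta(\Psi)) = \varpi(\Psi)^{-1}$ on generators and unwind the order reciprocity directly rather than citing the paper's earlier remark that $\leq$ and $\unlhd$ are mutually reciprocal.
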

\begin{proof}
Since $c$ belongs to $\mathcal{E}_{\ell,q^{-1}}$, we can find $\lambda_1,...,\lambda_r\in \ttt^{\times}$ such that $$\textstyle \{\varpi(\Psi)\,|\,\Psi\in\mathfrak{r},\, c(\Psi)\neq 0\}\subseteq \bigcup_{i=1}^r\{\mu\in\ttt^{\times}\,|\,\mu\unlhd\lambda_i\}$$
where $\unlhd$ is the order of Section \ref{sec:Fq}. Hence $(I_q(c))(\Psi)=(c\circ \vartheta)(\Psi)\neq 0$ for some $\Psi \in \mathfrak{r}$ implies $\varpi(\vartheta(\Psi)) \unlhd \lambda_i$ for at least one $i\in \{1,...,r\}$. However, $\varpi(\vartheta(\Psi))=(\varpi(\Psi))^{-1}$ (as one can easily check on the highest $\ell$-weights of positive prefundamental and invertible representations) and it thus follows that $\varpi(\Psi) \leq \lambda_i^{-1}$ (as the orders $\leq$ and $\unlhd$ are mutually reciprocal). Therefore,
$$ \textstyle \{\varpi(\Psi)\,|\,\Psi\in\mathfrak{r},\, (I_q(c))(\Psi)\neq 0\}\subseteq \bigcup_{i=1}^r\{\mu\in\ttt^{\times}\,|\,\mu\leq\lambda_i^{-1}\}=\bigcup_{i=1}^r D(\lambda_i^{-1}) $$
and $I_q(c)$ satisfies the first property underlying the definition of $\mathcal{E}_{\ell,q}$. For the second one, fix $\mu\in \ttt^{\times}$ and use the precedent results to deduce that the set
$$ \textstyle \{\Psi\in \mathfrak{r}\,|\,(I_q(c))(\Psi)\neq 0,\, \varpi(\Psi) = \mu\} = \vartheta(\{\Psi\in\mathfrak{r}\,|\, c(\Psi)\neq 0, \, \varpi(\Psi)=\mu^{-1} \})$$
is finite (as $c$ belongs to $\mathcal{E}_{\ell,q^{-1}}$). This ends the proof.
\end{proof} 
The correspondence $I_q:c\mapsto c\circ\vartheta$ thus induces a map from $\mathcal{E}_{\ell,q^{-1}}$ to $\mathcal{E}_{\ell,q}$. This is a ring isomorphism. Indeed, $I_{q^{-1}}= I_q^{-1}$ as $\vartheta$ is involutive and the only non-trivial thing to verify is that $I_q$ is compatible with convolution products. This follows from the computation 
\begin{align*}
 (I_q(c_1\cdot c_2))(\Psi) = \sum_{\Psi_1\Psi_2 = \vartheta(\Psi)}c_1(\Psi_1)c_2(\Psi_2) =  \sum_{\Psi_1\Psi_2 {=} \Psi}c_1(\vartheta(\Psi_1))c_2(\vartheta(\Psi_2)) = (I_q(c_1)\cdot I_q(c_2))(\Psi)
\end{align*}
where the substitutions $\Psi_1 \mapsto \vartheta(\Psi_1)$ and $\Psi_2\mapsto \vartheta(\Psi_2)$ were used for the second equality.\par Write $[\Psi]$ as in Section \ref{sec:qcaract} for the element of $\mathcal{E}_{\ell,q^{-1}}\cap\mathcal{E}_{\ell,q}$ given by $[\Psi](\Psi') = \delta_{\Psi,\Psi'}$. Then $I_q([\Psi]) = [\vartheta(\Psi)]$ for any $\Psi \in \mathfrak{r}$. We will construct a ring isomorphism $\tilde{I}_q:K_0(\mathcal{O}_q)\simeq K_0(\mathcal{O}_{q^{-1}})$ making the following diagram commute: 
\begin{equation}\label{eq:DiagD} \tag{$\star$}
\begin{tikzcd}
K_0(\mathcal{O}_{q^{-1}}) \ar[r, "\tilde{I}_q"] \ar[d,"\chi_q",swap] & K_0(\mathcal{O}_{q}) \ar[d,"\chi_q"] \\
\mathcal{E}_{\ell,{q^{-1}}} \ar[r,"I_q"]& \mathcal{E}_{\ell,q}
\end{tikzcd}
\end{equation}
By Theorem \ref{thm:qcaract}, it is enough to show that $I_q\circ \chi_q(K_0(\mathcal{O}_{q^{-1}})) \subseteq \chi_q(K_0(\mathcal{O}_{q}))$. For this, consider the following lemma which is a direct consequence of Theorem \ref{prop:qcharprefond} and the definition of $\vartheta$.
\begin{lem}\label{lem:IqPos} Let $\mu\in \ttt^{\times}$, $i\in I$ and $a\in \C^{\times}$. Then 
$$
I_q\circ \chi_q([\mu]) = \chi_q([\mu^{-1}])\text{ and }I_q\circ \chi_q(L_{i,a}^{+,q^{-1}}) = \chi_q(L_{i,a^{-1}}^{+,q}).\eqno\qed
$$
\end{lem}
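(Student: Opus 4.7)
The plan is to verify each of the two identities separately, with both following structurally from Theorem \ref{prop:qcharprefond} together with the explicit action of $\vartheta$ on the generating $\ell$-weights of $\mathfrak{r}$.

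For the first identity, I would observe that the invertible module $[\mu]$ has a unique $\ell$-weight, namely $\mu$ itself (viewed as a constant sequence in $\ttt^{\times}_{\ell}$ via the identification of Section \ref{sec:O}). Hence $\chi_q([\mu])=[\mu]$ in both $\mathcal{E}_{\ell,q^{-1}}$ and $\mathcal{E}_{\ell,q}$, and since $\vartheta|_{\ttt^{\times}}$ is inversion by definition, applying $I_q$ gives $I_q([\mu])=[\mu^{-1}]=\chi_q([\mu^{-1}])$, as required.

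For the second identity, I would invoke Theorem \ref{prop:qcharprefond} at parameter $q^{-1}$ with scalar $a$ to write $\chi_q(L_{i,a}^{+,q^{-1}})=[\Psi_{i,a}]\,\chi(L_{i,a}^{+,q^{-1}})$, and at parameter $q$ with scalar $a^{-1}$ to write $\chi_q(L_{i,a^{-1}}^{+,q})=[\Psi_{i,a^{-1}}]\,\chi(L_{i,a^{-1}}^{+,q})$. Using that $I_q$ is a ring morphism (as established just above the lemma) and that $\vartheta(\Psi_{i,a})=\Psi_{i,a^{-1}}$ by definition, the problem reduces to the purely character-theoretic claim
$$I_q\bigl(\chi(L_{i,a}^{+,q^{-1}})\bigr)=\chi(L_{i,a^{-1}}^{+,q})$$
in $\mathcal{E}_{\ell,q}$. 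To verify this, I would expand both characters as $\sum_\lambda d_\lambda[\overline{\overline{\lambda}}]$ and $\sum_\lambda d_\lambda[\overline{\lambda}]$ respectively, where $\lambda$ ranges over $P_{\mathbb{Q}}$ and $d_\lambda$ is the dimension of the corresponding weight space; the multiplicity function $\lambda\mapsto d_\lambda$ agrees on both sides because Theorem \ref{prop:qcharprefond} already asserts the $a$-independence of these characters and records them as a single formal power series (the same one that appears in the equality $\chi(L_{i,a}^+)=\chi(L_{i,a}^-)$ built into the theorem and computed, for instance, in Example \ref{ex:qcharLmsu} for $\g=\su$). The relation $\overline{\overline{\omega_j}}(r)=(q^{-1})^{d_j\delta_{jr}}=\overline{\omega_j}(r)^{-1}$ extends multiplicatively to $\overline{\overline{\lambda}}=\overline{\lambda}^{-1}$ on all of $P_{\mathbb{Q}}$, and since $\vartheta$ is inversion on $\ttt^{\times}$, termwise application of $I_q$ sends $[\overline{\overline{\lambda}}]\mapsto[\overline{\lambda}]$, producing the desired equality.

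The main (and only) obstacle is purely notational: one must cleanly distinguish the two embeddings $\overline{\cdot}$ and $\overline{\overline{\cdot}}$ of $P_{\mathbb{Q}}$ into $\ttt^{\times}$ associated respectively to $q$ and $q^{-1}$, and confirm that $\vartheta$ intertwines them as required. Beyond this bookkeeping, both equalities are formal consequences of Theorem \ref{prop:qcharprefond} and the definition of $\vartheta$ on the generators of $\mathfrak{r}$.
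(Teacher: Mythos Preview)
Your proof is correct and follows essentially the same approach as the paper, which records the lemma as a ``direct consequence of Theorem~\ref{prop:qcharprefond} and the definition of $\vartheta$'' without further details. You have simply unpacked that consequence.

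One small point of care: when you assert that the multiplicity function $\lambda\mapsto d_\lambda$ agrees on both sides, your justification appeals to the $a$-independence in Theorem~\ref{prop:qcharprefond}, but what you actually need is $q$-independence --- that $\dim(L_{i,a}^{+,q^{-1}})_{\overline{\overline{\lambda}}}=\dim(L_{i,b}^{+,q})_{\overline{\lambda}}$ for all $\lambda$. Theorem~\ref{prop:qcharprefond} is stated for a fixed quantum parameter and does not literally say this. The fact is nonetheless true: it follows either from the explicit character formula in \cite{hj} (which depends only on root-system data), or from the results of Section~\ref{sec:Fq} already available in the paper, namely $\F_q(L_{i,a}^{+,q^{-1}})\simeq L_{i,a\gamma}^{-,q}$ together with the weight-space identification in the proof of Proposition~\ref{prop:FO} and the equality $\chi(L_{i,b}^{-,q})=\chi(L_{i,b}^{+,q})$ from Theorem~\ref{prop:qcharprefond}. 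The paper silently takes this for granted as well, so your argument is no less rigorous than the paper's, but it is worth being aware that $a$-independence alone does not close the loop.
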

We denote $W_{k,a}^{(i),q^{\pm 1}}$ the Kirillov--Reshitikhin module over $U_{q^{\pm 1}}(\g)$ with parameters $k\in \mathbb{N}_{>0}$, $i\in I$ and $a\in \C^{\times}$ (see Example \ref{ex:KRsl2}). We will need another lemma.
\begin{lem}\label{lem:IqW} Fix $k\in \mathbb{N}_{>0}$, $i\in I$, $a\in \C^{\times}$ and $W = W_{k,a}^{(i),q^{-1}}$. Then $I_q\circ \overline{\chi}_q(W) = \overline{\chi}_q(W_{k,a^{-1}}^{(i),q})$.
\end{lem}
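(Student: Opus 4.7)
The plan is to reduce the lemma to a compatibility statement between the involution $\vartheta$ underlying $I_q$ and the universal combinatorial structure of normalized $q$-characters of Kirillov--Reshetikhin modules.

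First I would check by direct computation that $\vartheta(A_{j,b}^{(q^{-1})}) = A_{j,b^{-1}}^{(q)}$ for every $j\in I$ and $b\in \C^{\times}$. This reduces to the auxiliary identity $\vartheta(Y_{j,b}^{(q^{-1})}) = Y_{j,b^{-1}}^{(q)}$, which in turn follows from the factorization $Y_{j,b}^{(q^{-1})} = \overline{\overline{\omega_j}}\,\Psi_{j,bq_j}\,\Psi_{j,bq_j^{-1}}^{-1}$ (the shifts being reversed relative to the $\uqg$-analog because $q_j \mapsto q_j^{-1}$ on passing to $\uqgm$), together with $\vartheta(\Psi_{j,c}) = \Psi_{j,c^{-1}}$ and $\vartheta(\overline{\overline{\omega_j}}) = \overline{\omega_j}$. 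Combined with \propref{prop:CharacSimFD}, this shows that $I_q$ restricts to a ring isomorphism $\Z[A_{j,b}^{-1,(q^{-1})}]_{j,b} \simeq \Z[A_{j,b}^{-1,(q)}]_{j,b}$.

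Next I would invoke the \emph{universal} form of the normalized $q$-character of a KR module: there exist a finite index set $\mathcal{I}_{i,k}$, nonnegative integers $(m_\Psi)_{\Psi \in \mathcal{I}_{i,k}}$, and integers $(n_{j,c}(\Psi))_{j,c}$, all depending only on $i$, $k$ and the Cartan matrix of $\g$ (and in particular not on the quantum parameter $q' \in \{q,q^{-1}\}$ nor on the spectral parameter $b \in \C^{\times}$), such that
\[
\overline{\chi}_q\bigl(W_{k,b}^{(i),q'}\bigr) \;=\; \sum_{\Psi \in \mathcal{I}_{i,k}} m_\Psi \prod_{(j,c)} \bigl(A_{j,\, b(q')^c}^{(q')}\bigr)^{-n_{j,c}(\Psi)}.
\]
Specialising to $(q',b) = (q^{-1},a)$ and applying $I_q$ monomial by monomial, each variable $A_{j,\,aq^{-c}}^{-1,(q^{-1})}$ transforms via the first step into $A_{j,\,a^{-1}q^{c}}^{-1,(q)}$, so the resulting expression is exactly the same universal formula evaluated at $(q',b) = (q,a^{-1})$; that is, $\overline{\chi}_q(W_{k,a^{-1}}^{(i),q})$. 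This gives the claim.

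The main obstacle will be the universality assertion. I would justify it via the Frenkel--Mukhin algorithm, which is purely combinatorial in the Cartan datum and terminates on KR modules since the latter are known to be \emph{special}; alternatively, by induction on $k$ via the $T$-system of Nakajima--Hernandez, using uniqueness of solutions subject to the initial conditions $W_{0,b}^{(i)} = \mathbf{1}$ and $W_{1,b}^{(i)} = V_i(b)$, together with the observation from the first step that $I_q$ intertwines the $T$-system for $\uqgm$ at spectral parameter $a$ with the $T$-system for $\uqg$ at spectral parameter $a^{-1}$.
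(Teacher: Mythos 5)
Your proposal is correct but takes a genuinely different route from the paper's. The paper proves this lemma by introducing the algebra isomorphism $\iota_q : \uqg \to \uqgm$ defined on Drinfeld generators by $\iota_q(x_{j,r}^{\pm}) = x_{j,-r}^{\pm}$ and $\iota_q(\phi_{j,\pm m}^{\pm})= \phi_{j,\mp m}^{\mp}$, and showing (i) that a suitable variant of \cite[Lemma 4.10]{hSig} gives $\chi_q(\iota_q^*(W)) = I_q\circ\chi_q(W)$, and (ii) that $\iota_q^*(W_{k,a}^{(i),q^{-1}})\simeq W_{k,a^{-1}}^{(i),q}$ by computing the action of $\iota_q(\phi_j^+(z))$ on a highest $\ell$-weight vector; this converts the problem into a single representation-theoretic calculation. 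Your argument replaces this with a purely combinatorial universality statement for KR $q$-characters. Your first step is correct: the paper itself observes (in the proof of Corollary~\ref{cor:IqNeg}) that $I_q$ sends $A_{j,b}'$ (the $A$-variable for parameter $q^{-1}$) to $A_{j,b^{-1}}$, and the factorization $Y_{j,b}^{(q^{-1})} = \overline{\overline{\omega_j}}\,\Psi_{j,bq_j}\,\Psi_{j,bq_j^{-1}}^{-1}$ is exactly what makes the check go through. Your universality assertion is the real load-bearing step, and it is indeed true: the Frenkel--Mukhin algorithm is combinatorial in the Cartan datum and the $\Z$-valued exponents of $q$, and its correctness on KR modules (Nakajima for ADE, Hernandez in general) guarantees that $\overline{\chi}_q(W_{k,b}^{(i),q'})$ is the evaluation of a $q$-independent formal expression in the $A_{j,\,b(q')^c}^{-1}$. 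One caveat worth recording: of your two suggested justifications, the $T$-system route has a latent circularity at the base case, since it presupposes the claim for the fundamental representations $W_{1,b}^{(i)}$; the FM-algorithm route avoids this and is the one to use. What your approach buys is independence from the somewhat ad hoc lemma cited in the paper; what it costs is having to invoke the FM correctness theorem for KR modules, whereas the paper's $\iota_q$ argument reduces everything to one elementary generating-function computation.
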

\begin{proof}
Consider the algebra isomorphism $\iota_q : \uqg\arr \uqgm$ defined by\footnote{This is the composition of the isomorphisms defined in \cite[Proposition 2.2]{hj} and \cite[Proposition 1.6]{c}.} $\iota_q(x_{j,r}^{\pm}) = x_{j,-r}^{\pm}$ and $\iota_q(\phi_{j,\pm m}^{\pm})= \phi_{j,\mp m}^{\mp}$ for $j\in I$, $r\in \Z$ and $m\in \mathbb{N}$. Let also $V= \iota_q^*(W)$ be the pullback of $W$ by $\iota_q$. Then a slightly modified version of the proof of \cite[Lemma 4.10]{hSig} gives $\chi_q(V)=I_q\circ\chi_q(W)$. \par Write
$$\Psi = Y_{i,a}^{(q^{-1})}Y_{i,aq_i^{-2}}^{(q^{-1})}\dots Y_{i,aq_i^{2(1-k)}}^{(q^{-1})}$$
for the highest $\ell$-weight of $W$ and take a corresponding highest $\ell$-weight vector $v\in W$. Then, for any $j\in I$, we have $\iota_q(e_j)\star v=e_j\star v= 0$ (where $\star$ denotes the $\uqbm$-action on $W$) and $$\iota_q(\phi_j^+(z))\star v = \phi_j^-(z^{-1})\star v = \phi_j^+(z^{-1}) \star v = \Psi_j(z^{-1})v = \left\{\begin{array}{ll}
q_i^{k}\frac{1-a^{-1} z q_i^{-1}}{1-a^{-1} z q_i^{2k-1}} & \text{if }i=j,\\
1 & \text{else}
\end{array} \right.$$
where the second equality follows from \cite[Theorem 4.10]{hshift}. It hence follows that $V\simeq W_{k,a^{-1}}^{(i),q}$ as $\iota_q^*$ preserves irreducibility (since it is exact and has an exact inverse). \par Therefore, using the relation $I_q([Y_{i,a}^{(q^{-1})}]) = [\overline{\omega_i}][\Psi_{i,a^{-1}q_i^{-1}}][\Psi_{i,a^{-1}q_i}^{-1}] = [Y_{i,a^{-1}}^{(q)}]$, we get
$$ \overline{\chi}_q(W_{k,a^{-1}}^{(i),q})=[(Y_{i,a^{-1}}^{(q)}Y_{i,a^{-1}q_i^2}^{(q)}\dots Y_{i,a^{-1}q_i^{2(k-1)}}^{(q)})^{-1}]I_q(\chi_q(W)) = I_q([\Psi^{-1}]\chi_q(W)) = I_q\circ\overline{\chi}_q(W)$$
and thus the result. 
\end{proof}
\begin{cor}\label{cor:IqNeg} Fix $i\in I$ and $a\in \C^{\times}$. Then $I_q\circ \chi_q(L_{i,a}^{-,q^{-1}}) = \chi_q(L_{i,a^{-1}}^{-,q})$.
\end{cor}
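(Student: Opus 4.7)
The plan is to reduce the corollary to Lemma~\ref{lem:IqW} by using the asymptotic realization of negative prefundamental representations as limits of Kirillov--Reshitikhin modules (Proposition~\ref{prop:limitOm} and Remark~\ref{rem:LnegW}). First I would split $\chi_q(L_{i,a}^{-,q^{-1}})=[\Psi_{i,a}^{-1}]\,\overline{\chi}_q(L_{i,a}^{-,q^{-1}})$ and apply the ring morphism $I_q$. Since $I_q([\Psi_{i,a}^{-1}])=[\Psi_{i,a^{-1}}^{-1}]$ by Lemma~\ref{lem:IqPos}, the corollary reduces to showing the normalized identity $I_q(\overline{\chi}_q(L_{i,a}^{-,q^{-1}}))=\overline{\chi}_q(L_{i,a^{-1}}^{-,q})$.

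Next, by Remark~\ref{rem:LnegW} applied to the quantum parameter $q^{-1}$ (noting that $(q^{-1})_i=q_i^{-1}$ turns the shift $q_i^{1-2k}$ into $q_i^{2k-1}$), the left-hand side is the limit of the sequence $\overline{\chi}_q(W_{k,aq_i^{2k-1}}^{(i),q^{-1}})$ as formal power series. Applying Lemma~\ref{lem:IqW} term by term produces the sequence $\overline{\chi}_q(W_{k,a^{-1}q_i^{1-2k}}^{(i),q})$, which is precisely the Kirillov--Reshitikhin approximation of $L_{i,a^{-1}}^{-,q}$ via Remark~\ref{rem:LnegW} for the parameter $q$. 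Taking limits on both sides and invoking Proposition~\ref{prop:limitOm} once more would thus conclude the proof, provided $I_q$ commutes with these limits.

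The main obstacle is exactly this commutation, since $\overline{\chi}_q(L_{i,a}^{-,q^{-1}})$ is in general an infinite formal sum. To overcome it, I would perform a short direct computation --- analogous to the one done for $Y$ in the proof of Lemma~\ref{lem:IqW} --- to verify that $I_q$ sends each monomial $[(A_{j,b}^{(q^{-1})})^{-1}]$ to $[(A_{j,b^{-1}}^{(q)})^{-1}]$. By multiplicativity, $I_q$ then restricts to a bijection between the monomial bases of $\Z[[(A_{j,b}^{(q^{-1})})^{-1}]]$ and $\Z[[(A_{j,b}^{(q)})^{-1}]]$. The convergence in Proposition~\ref{prop:limitOm} being pointwise on monomial coefficients, any such monomial-preserving morphism is automatically continuous, which justifies the interchange and completes the argument.
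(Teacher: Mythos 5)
Your proposal matches the paper's own proof almost step for step: both split off the prefactor $[\Psi_{i,a}^{-1}]$, express the normalized $q$-character as the limit of Kirillov--Reshetikhin $q$-characters via Remark~\ref{rem:LnegW}, apply Lemma~\ref{lem:IqW} term by term, and justify interchanging $I_q$ with the limit by noting that $I_q$ sends $A$-monomials of $U_{q^{-1}}(\g)$ to $A$-monomials of $\uqg$, hence is continuous on the relevant rings of formal series. The argument is correct and essentially identical in structure and key computations.
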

\begin{proof}
Recall the $\ell$-weight $A_{i,a}$ defined after Theorem \ref{thm:qcaract} and note $A_{i,a}'$ the $\ell$-weight obtained after changing the parameter $q$ by $q^{-1}$ in the definition of $A_{i,a}$. Then $I_q(A_{i,a}')= A_{i,a^{-1}}$ and it follows that $I_q$ induces a continuous function (for the usual topology on formal series) from $\Z[[(A_{j,b}')^{-1}]]_{j\in I,b\in\C^{\times}}$ to $\Z[[(A_{j,b})^{-1}]]_{j\in I,b\in\C^{\times}}$. By Remark \ref{rem:LnegW}, we thus obtain
\begin{align*}
I_q \circ \chi_q(L_{i,a}^{-,q^{-1}}) &=
 I_q([\Psi_{i,a}^{-1}]{\textstyle \lim_{k\arr\infty}}\overline{\chi}_q(W_{k,aq_i^{2k-1}}^{(i),q^{-1}})) = [\Psi_{i,a^{-1}}^{-1}]{\textstyle \lim_{k\arr\infty}} (I_q\circ\overline{\chi}_q (W_{k,aq_i^{2k-1}}^{(i),q^{-1}}))\\ &= [\Psi_{i,a^{-1}}^{-1}]{\textstyle \lim_{k\arr\infty}} \overline{\chi}_q(W_{k,a^{-1}q_i^{1-2k}}^{(i),q}) = [\Psi_{i,a^{-1}}^{-1}]\overline{\chi}_q(L_{i,a^{-1}}^{-,q}) = \chi_q(L_{i,a^{-1}}^{-,q})
\end{align*}
where we have used Lemma \ref{lem:IqW}. This ends the proof.
\end{proof}
Define $K_{0,q}$ as the subring of $K_0(\mathcal{O}_q)$ generated by the equivalence classes $[\mu]$, $[V_{i}(a)]$ and $[L_{i,a}^{\pm}]$ for $\mu\in \ttt^{\times}$, $i\in I$ and $a \in \C^{\times}$. Then $K_{0,q}$ contains the subrings $K_0^{\pm} \subseteq K_0(\mathcal{O}^{\pm}_q)$ introduced in \cite[Section 5C]{hl2}. This observation and the work of \cite{hl2} foreshadow the following result (which is interesting in its own right).
\begin{prop}\label{prop:K0q} An element of $K_0(\mathcal{O}_q)$ is an at most countable sum of elements of $K_{0,q}$.
\end{prop}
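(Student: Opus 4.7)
The plan is to reduce to individual simple classes and then produce an explicit, convergent iterative expansion of $[L(\Psi)]$ into elements of $K_{0,q}$ using Remark~\ref{rem:L(PP')}. First, Theorem~\ref{thm:qcaract} embeds $K_0(\mathcal{O}_q)$ into $\mathcal{E}_{\ell,q}$ and identifies its elements with formal $\Z$-linear combinations $\sum_{\Psi\in\mathfrak{r}}c_\Psi[L(\Psi)]$ indexed by coefficient functions $c=(c_\Psi)\in\mathcal{E}_{\ell,q}$; such combinations are automatically countable. A countable union of countable sums being countable, it suffices to show that each simple class $[L(\Psi)]$ is itself an at most countable sum of elements of $K_{0,q}$.

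Fix $\Psi\in\mathfrak{r}$. Any nonvanishing rational function factors, up to a scalar, into terms of the form $(1-az)^{\pm 1}$, so we can write $\Psi=\mu\cdot\prod_{k=1}^{s}\Psi_{i_k,a_k}^{\epsilon_k}$ with $\mu=\varpi(\Psi)\in\ttt^{\times}$ and $\epsilon_k\in\{\pm 1\}$; I then set
$$M(\Psi):=[\mu]\cdot\prod_{k=1}^{s}[L_{i_k,a_k}^{\epsilon_k}]\in K_{0,q}.$$
Iteratively applying Remark~\ref{rem:L(PP')} to the tensor product underlying $M(\Psi)$, the simple module $L(\Psi)$ is a composition factor; moreover, the weight space at $\mu$ of this tensor product is the tensor of one-dimensional highest weight spaces and is thus one-dimensional, forcing every other composition factor $L(\Psi_j)$ to satisfy $\varpi(\Psi_j)\lneqq\mu$. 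Hence in $K_0(\mathcal{O}_q)$ we have
$$[L(\Psi)]=M(\Psi)-\sum_{j}n_j[L(\Psi_j)],\qquad \varpi(\Psi_j)<\varpi(\Psi),$$
with a possibly countable but perfectly well-defined sum on the right. Iterating the same step on each $[L(\Psi_j)]$ produces a formal series
$$[L(\Psi)]=\sum_{n\geq 0}(-1)^n\sum_{(j_1,\ldots,j_n)}n_{j_1}n_{j_1,j_2}\cdots n_{j_1,\ldots,j_n}\,M(\Psi_{j_1,\ldots,j_n})$$
whose every summand lies in $K_{0,q}$ and corresponds to a strictly decreasing chain $\mu>\varpi(\Psi_{j_1})>\cdots>\varpi(\Psi_{j_1,\ldots,j_n})$ in $(\ttt^{\times},\leq)$.

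The main obstacle is to justify that this series converges to $[L(\Psi)]$ inside $K_0(\mathcal{O}_q)$, since the partial sums do not stabilize globally. For this I would work through $\chi_q$ and analyze the $\nu$-stratum for each $\nu\in\ttt^{\times}$ separately. A term $M(\Psi_{j_1,\ldots,j_n})$ can contribute to the $\nu$-stratum only when $\nu\leq\varpi(\Psi_{j_1,\ldots,j_n})$, forcing the full chain $\nu\leq\varpi(\Psi_{j_1,\ldots,j_n})<\cdots<\mu$ to live inside the interval $[\nu,\mu]\subseteq\ttt^{\times}$. This interval corresponds via $\overline{\phantom{x}}$ to a bounded box of lattice points in $\overline{Q^+}$ and is therefore finite, capping the depth $n$ of chains contributing to this stratum; combined with the finite-dimensionality of weight spaces in $\mathcal{O}_q$ (which bounds the branching width at each level), this ensures that only finitely many $M(\Psi_{j_1,\ldots,j_n})$ produce non-zero contributions to the $\nu$-stratum. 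The series therefore converges in $\mathcal{E}_{\ell,q}$ and hence in $K_0(\mathcal{O}_q)$, realizing $[L(\Psi)]$ as the desired countable sum and completing the plan.
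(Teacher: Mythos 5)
Your proof is correct and follows essentially the same route as the paper's: factorize $\Psi$ into prefundamental and invertible pieces, form the corresponding product $M(\Psi)\in K_{0,q}$, peel off the strictly-lower-weight simple constituents via Remark~\ref{rem:L(PP')}, iterate, and verify convergence of the resulting countable sum by using the finiteness of intervals $[\nu,\mu]$ in $(\ttt^{\times},\leq)$ together with the finite-dimensionality of weight spaces. The only cosmetic differences are that the paper's factorization of $\Psi$ also keeps $Y_{i,a}$-factors separate (so the analogue of your $M(\Psi)$ involves fundamental representations as well), and that the paper reduces to simple classes by first writing an arbitrary class as a finite sum $\sum_i r_i[V_i]$ and then expanding each $[V_i]$, rather than appealing directly to the identification with $\mathcal{E}_{\ell,q}$.
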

\begin{proof} For each $\ups\in\mathfrak{r}$, let us fix a factorisation 
$$\textstyle \ups=\varpi(\ups)\left(\prod_{s=1}^{m_{\ups}} \Psi_{i_s(\ups),a_s(\ups)}\right)\left(\prod_{s=1}^{n_{\ups}} \Psi_{j_s(\ups),b_s(\ups)}^{-1}\right)\left(\prod_{s=1}^{r_{\ups}} Y_{\ell_s(\ups),c_s(\ups)}^{(q)}\right).$$
Note that $[L_q(\ups)]$ appears exactly once in the decomposition in $K_0(\mathcal{O}_q)$ of the class
$$\textstyle [M_{\ups}] = \left(\prod_{s=1}^{m_{\ups}} [L_{i_s(\ups),a_s(\ups)}^{+,q}]\right)\left(\prod_{s=1}^{n_{\ups}} [L_{j_s(\ups),b_s(\ups)}^{-,q}]\right)\left(\prod_{s=1}^{r_{\ups}} [V^{q}_{\ell_s(\ups)}(c_s(\ups))]\right)\in K_{0,q}$$
in terms of classes of simple modules. Moreover the other simple classes $[L_q(\Psi_1)]$ appearing in this decomposition all verify $\varpi(\Psi_1)< \varpi(\ups)$. We thus have a decomposition of the form
\begin{equation}\label{eq:DecEt1}
[L_q(\ups)] = [M_{\ups}]- \sum_{\substack{\Psi_1\in\mathfrak{r} \\ \varpi(\Psi_1) < \varpi(\ups)}} N_{\Psi_1,\ups}[L_q(\Psi_1)]
\end{equation}
where $N_{\Psi_1,\ups} \in \mathbb{N}$ is the multiplicity of $[L_q(\Psi_1)]$ in $[M_{\ups}]$.\par
Repeating the above for all the classes $[L_q(\Psi_1)]$ appearing in \eqref{eq:DecEt1} and iterating, we get 
\begin{equation}\label{eq:DecFin}
[L_q(\ups)] = [M_{\ups}]+\sum_{r\geq 1}(-1)^r\sum_{\substack{\Psi_1,...,\Psi_r\in\mathfrak{r}\\\varpi(\Psi_r)<\dots<\varpi(\Psi_1)<\varpi(\ups)}}N_{\Psi_r,\Psi_{r-1}}\dots N_{\Psi_2,\Psi_1}N_{\Psi_1,\Omega}[M_{\Psi_r}].
\end{equation} 
We want to show that the right-hand side of \eqref{eq:DecFin} gives a well-defined countable sum in $K_{0,q}$. For that goal, fix a strictly increasing sequence $\mu_r<\dots<\mu_1<\varpi(\ups)$ in $\ttt^{\times}$ and note that (as the weight-spaces of $M_{\Omega}, M_{\Psi_1}, \dots ,M_{\Psi_r}$ are finite-dimensional) there are at most finitely many $\Psi_1,\dots,\Psi_r\in \ttt^{\times}$ with both $\varpi(\Psi_s) = \mu_s$ for every $1\leq s\leq r$ and $N_{\Psi_r,\Psi_{r-1}}\dots N_{\Psi_2,\Psi_1}N_{\Psi_1,\ups}\neq 0$. In particular, the contribution to \eqref{eq:DecFin} of a given $[M_{\Psi}]$ with $\varpi(\Psi) < \varpi(\ups)$ is finite since the set $\{\mu\in\ttt^{\times}\,|\,\varpi(\Psi)\leq\mu\leq\varpi(\ups)\}$ has finite cardinality by definition of the order $\leq$ on $\ttt^{\times}$ (and hence contains finitely many chains of the form $\varpi(\Psi)<\mu_r<\dots<\mu_1<\varpi(\ups)$). In addition, as $D(\varpi(\ups)) = \{\mu\in\ttt^{\times}|\,\mu\leq \varpi(\ups)\}$ is countably infinite, there are countably many sequences of the form $\mu_r<\dots<\mu_1<\varpi(\ups)$ in $\ttt^{\times}$. There are thus also at most countably many $\Psi\in \mathfrak{r}$ such that $[M_{\Psi}]$ contributes to \eqref{eq:DecFin} and it follows that the above decomposition indeed gives a well-defined countable sum in $K_{0,q}$.\par 
The proposition is hence true for classes of simple objects of $\mathcal{O}_q$. Let us now fix an arbitrary element $X$ of $K_0(\mathcal{O}_q)$. Then $X$ can be decomposed as
$ X = \sum_{i=1}^s r_i [V_i]$
for some $s\in\mathbb{N}_{>0}$, some $r_i \in \Z$ and some objects $V_i$ of $\mathcal{O}_q$. Furthermore, for every $1 \leq i\leq s$, we can decompose the equivalence class $[V_i]$ as an at most countable sum of simple classes $[L_q(\Psi)]$ with coefficients in $\mathbb{N}$. (Note that this sum is at most countable since $V_i$ has at most countably many $\ell$-weights.) We can thus use the above results to express each $V_i$ (and finally $X$) as an at most countable sum of elements of $K_{0,q}$. This concludes the proof.
\end{proof}
\begin{cor} There is a ring isomorphism $\tilde{I}_q:K_0(\mathcal{O}_{q^{-1}})\simeq K_0(\mathcal{O}_q)$ making \eqref{eq:DiagD} commute.
\end{cor}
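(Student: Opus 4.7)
The plan is to use the injectivity of $\chi_q$ given by Theorem \ref{thm:qcaract} to reduce the construction of $\tilde{I}_q$ to the inclusion $I_q(\chi_q(K_0(\mathcal{O}_{q^{-1}}))) \subseteq \chi_q(K_0(\mathcal{O}_q))$. Once this inclusion is established, there is a unique map $\tilde{I}_q$ making \eqref{eq:DiagD} commute, and this map is automatically a ring homomorphism because $I_q$ is. Applying the same argument with $q$ and $q^{-1}$ interchanged yields a ring homomorphism $\tilde{I}_{q^{-1}}$ in the reverse direction; the relation $I_{q^{-1}} = I_q^{-1}$ together with the injectivity of both $\chi_q$'s then forces $\tilde{I}_{q^{-1}} = \tilde{I}_q^{-1}$, so $\tilde{I}_q$ is automatically an isomorphism. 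Only the inclusion needs to be proved.

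First I would verify the inclusion on the subring $K_{0,q^{-1}}$. Its generators are handled case-by-case: Lemma \ref{lem:IqPos} controls invertible and positive prefundamental classes, Corollary \ref{cor:IqNeg} controls negative prefundamental ones, and the $k=1$ specialisation of Lemma \ref{lem:IqW} combined with the identity $I_q([Y_{i,a}^{(q^{-1})}]) = [Y_{i,a^{-1}}^{(q)}]$ (observed in the proof of Corollary \ref{cor:IqNeg}) handles the fundamental classes. Since $I_q$ is a ring morphism, these case checks propagate multiplicatively, giving $I_q(\chi_q(K_{0,q^{-1}})) \subseteq \chi_q(K_{0,q})$.

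The main technical step is then to extend the inclusion from $K_{0,q^{-1}}$ to the full Grothendieck ring via Proposition \ref{prop:K0q}. Given $X \in K_0(\mathcal{O}_{q^{-1}})$, that proposition provides an at-most-countable decomposition $X = \sum_k Y_k$ with $Y_k \in K_{0,q^{-1}}$, which yields the equality $\chi_q(X) = \sum_k \chi_q(Y_k)$ in $\mathcal{E}_{\ell,q^{-1}}$. Since $I_q$ is defined pointwise through the involution $\vartheta$, it commutes with such formal sums, so $I_q(\chi_q(X)) = \sum_k \chi_q(Z_k)$ for some classes $Z_k \in K_{0,q}$ chosen (by the previous paragraph) so that $\chi_q(Z_k) = I_q(\chi_q(Y_k))$. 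The expected obstacle is to argue that the formal sum $\sum_k Z_k$ actually assembles into a well-defined element $Z \in K_0(\mathcal{O}_q)$ whose $q$-character is $I_q(\chi_q(X))$.

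Overcoming this obstacle is where the level-wise control of Proposition \ref{prop:K0q} becomes essential. The decomposition $X = \sum_k Y_k$ has the property that, for each fixed $\mu \in \ttt^{\times}$, only finitely many $Y_k$ contribute to the coefficients of $[\Psi]$ with $\varpi(\Psi)= \mu$; since $\vartheta$ inverts constant parts and the partial orders $\leq$ and $\unlhd$ are reciprocal, this finiteness is preserved (with the opposite constant part) in the sum $\sum_k \chi_q(Z_k)$, and the resulting function lies in $\mathcal{E}_{\ell,q}$. One then identifies $Z$ inductively by running the assembly procedure of Proposition \ref{prop:K0q} in reverse, layer by layer on the constant parts: at each step finitely many new simple composition factors are added, and the injectivity of $\chi_q$ guarantees that the resulting $Z \in K_0(\mathcal{O}_q)$ indeed satisfies $\chi_q(Z) = I_q(\chi_q(X))$. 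This completes the inclusion and hence the construction of $\tilde{I}_q$.
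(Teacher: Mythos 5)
Your proof is correct and follows essentially the same route as the paper: establish $I_q\circ\chi_q(K_{0,q^{-1}}) \subseteq \chi_q(K_{0,q})$ via Lemmas \ref{lem:IqPos}, \ref{lem:IqW} and Corollary \ref{cor:IqNeg}, extend to all of $K_0(\mathcal{O}_{q^{-1}})$ using Proposition \ref{prop:K0q} and the compatibility of $\chi_q$ and $I_q\circ\chi_q$ with countable sums, and conclude from the injectivity of $\chi_q$. You add useful detail where the paper calls the compatibility with countable sums ``obvious,'' and your observation that $\tilde{I}_{q^{-1}}=\tilde{I}_q^{-1}$ makes the isomorphism claim explicit; only note that the identity $I_q([Y_{i,a}^{(q^{-1})}])=[Y_{i,a^{-1}}^{(q)}]$ you invoke actually appears in the proof of Lemma \ref{lem:IqW}, not of Corollary \ref{cor:IqNeg}.
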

\begin{proof}
Note that $I_q \circ \chi_q(K_{0,q^{-1}}) \subseteq \chi_q(K_{0,q})$ by Lemma \ref{lem:IqPos}, Lemma \ref{lem:IqW} and Corollary \ref{cor:IqNeg}. Proposition \ref{prop:K0q} and the (obvious) compatibility of the morphisms $\chi_q$ and $I_q\circ\chi_q$ with countable sums of elements of $K_{0,q^{-1}}$ in $K_0(\mathcal{O}_{q^{-1}})$ therefore give $I_q\circ \chi_q(K_0(\mathcal{O}_{q^{-1}})) \subseteq \chi_q(K_0(\mathcal{O}_q))$. The corollary then follows from the injectivity of the $q$-character map (see Theorem \ref{thm:qcaract}).
\end{proof}
Let $\tilde{D}_q$ be the ring automorphism of $K_0(\mathcal{O}_q)$ given by $\tilde{D}_q([V]) = \tilde{I}_q[\mathcal{G}_{q^{-1}}(V)]$ on equivalence classes of objects of $\mathcal{O}_q$. Then the above results and Corollary \ref{cor:F2} give $\tilde{D}_q([\mu]) = [\mu]$ with
$$\tilde{D}_q([V_i^q(a)])=[V_i^q(a^{-1})] \ \ \text{and}\ \ \tilde{D}_q[L_{i,a}^{\pm,q}]=[L^{\mp,q}_{i,a^{-1}}].$$
Thus $\tilde{D}_q(X)=D_q(X)$ (and $\tilde{D}_q(X)=D_q^{-1}(X)$) for all $X$ in the subring $K_{0}^{+}\subseteq K_0(\mathcal{O}_q^{+})$ (resp. $K_{0}^{-}\subseteq K_0(\mathcal{O}_q^{-})$) of \cite[Section 5C]{hl2}. This and \cite[Proposition 5.16]{hl2} (with the compatibility of $\tilde{D}_q$ and $D_q^{\pm 1}$ with countable sums of elements of $K_{0}^{\pm}$) then imply the following theorem. This is the main result of this appendix.
\begin{theorem} The restriction of $\tilde{D}_q$ to $K_0(\mathcal{O}_q^{\pm})$ is equal to $D_q^{\pm 1}$. \hfill $\qed$
\end{theorem}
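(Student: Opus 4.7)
The plan is to exploit the explicit computation of $\tilde{D}_q$ on the generators of the Grothendieck ring $K_0(\mathcal{O}_q^\pm)$ of finite-dimensional and prefundamental representations, together with a density argument that reduces agreement of two ring maps on $K_0(\mathcal{O}_q^\pm)$ to agreement on the subring $K_0^\pm$ of \cite[Section 5C]{hl2}.

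First I would record the explicit formulas for $\tilde{D}_q$ on the distinguished classes. Using the definition $\tilde{D}_q([V])=\tilde{I}_q[\mathcal{G}_{q^{-1}}(V)]$, Corollary~\ref{cor:F2} (which gives $\mathcal{G}_{q^{-1}}(L_q(\Psi))\simeq L_{q^{-1}}(\Psi^{-1})$ on simple objects) and Lemma~\ref{lem:IqPos}, Lemma~\ref{lem:IqW}, Corollary~\ref{cor:IqNeg}, one obtains directly
\[
\tilde{D}_q([\mu])=[\mu],\quad \tilde{D}_q([V_i^q(a)])=[V_i^q(a^{-1})],\quad \tilde{D}_q([L_{i,a}^{\pm,q}])=[L_{i,a^{-1}}^{\mp,q}],
\]
for $\mu\in\ttt^\times$, $i\in I$ and $a\in\C^\times$. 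Comparing with the definition of $D_q$ recalled at the end of Section~\ref{sec:qcaract} shows that $\tilde{D}_q$ coincides with $D_q$ (resp.\ $D_q^{-1}$) on every generator of the subring $K_0^+\subseteq K_0(\mathcal{O}_q^+)$ (resp.\ $K_0^-\subseteq K_0(\mathcal{O}_q^-)$) of \cite[Section 5C]{hl2}. Since both $\tilde{D}_q$ and $D_q^{\pm 1}$ are ring homomorphisms, they therefore agree on all of $K_0^\pm$.

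Next I would upgrade this equality from $K_0^\pm$ to $K_0(\mathcal{O}_q^\pm)$. By \cite[Proposition 5.16]{hl2}, every class $[V]\in K_0(\mathcal{O}_q^\pm)$ admits an expression as an at most countable sum $\sum_{r} n_r X_r$ with $X_r\in K_0^\pm$ and $n_r\in\Z$ (this mirrors the role played here by Proposition~\ref{prop:K0q}). Hence it suffices to prove that both $\tilde{D}_q$ and $D_q^{\pm 1}$ respect such countable sums in $K_0(\mathcal{O}_q^\pm)$. For $D_q^{\pm 1}$ this compatibility is built into the construction of Hernandez--Leclerc's duality (cf.\ Section~\ref{sec:qcaract}); for $\tilde{D}_q$ it follows from the construction of $\tilde{I}_q$ through the commutative diagram \eqref{eq:DiagD}, the injectivity of the $q$-character map (Theorem~\ref{thm:qcaract}), and the fact that $\chi_q$ is itself compatible with such countable sums (as each generalized eigenspace appearing in a $q$-character is finite-dimensional, so any countable family of classes with disjoint contributions to each $\ell$-weight space sums coherently inside $\mathcal{E}_{\ell,q}$).

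Combining these two steps gives $\tilde{D}_q|_{K_0(\mathcal{O}_q^\pm)}=D_q^{\pm 1}$. The main obstacle I anticipate is the second step: one must carefully justify that the formal countable sums produced by \cite[Proposition 5.16]{hl2} are preserved by $\tilde{D}_q$, i.e.\ that applying $\tilde{I}_q\circ[\mathcal{G}_{q^{-1}}(-)]$ termwise converges to the correct element of $K_0(\mathcal{O}_q)$. The cleanest route is to transport everything to $\mathcal{E}_{\ell,q}$ via $\chi_q$: there, compatibility with countable sums of simple classes with weights bounded above is automatic from the definition of the convolution product and the finiteness condition (ii) defining $\mathcal{E}_{\ell,q}$, so the equality $\tilde{D}_q=D_q^{\pm 1}$ descends from an equality already checked on generators.
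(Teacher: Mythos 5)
Your proof follows essentially the same route as the paper: compute $\tilde{D}_q$ on the generators $[\mu]$, $[V_i^q(a)]$, $[L_{i,a}^{\pm,q}]$ via Corollary \ref{cor:F2} and the $I_q$-lemmas, match this against $D_q^{\pm 1}$ on the subring $K_0^{\pm}$ of \cite[Section 5C]{hl2}, and then pass to all of $K_0(\mathcal{O}_q^{\pm})$ via \cite[Proposition 5.16]{hl2} and compatibility with countable sums. The only cosmetic slip is that the displayed isomorphism $\mathcal{G}_{q^{-1}}(L_q(\Psi))\simeq L_{q^{-1}}(\Psi^{-1})$ is really Corollary \ref{cor:F} (with $q\leftrightarrow q^{-1}$) rather than Corollary \ref{cor:F2}, but this does not affect the argument.
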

An important property of Hernandez--Leclerc's duality (which is related to its interpretation in terms of monoidal categorifications of cluster algebras) is that it sends equivalence classes of simple modules to equivalence classes of simple modules. The proof of this property in \cite[Section 7B]{hl2} relies in an essential way on Proposition \ref{prop:limitOm} and is thus hard to extrapolate to the case of the ring automorphism $\tilde{D}_q$. The following question therefore remains unanswered.
\begin{question} Does $\tilde{D}_q$ send classes of simple modules to classes of simple modules?
\end{question}
We conclude this appendix by noting that $\tilde{D}_q$ is involutive, as announced in Theorem \ref{thm:DExt}. Indeed $\tilde{D}_q\circ\tilde{D}_q$ fixes the classes of invertible, fundamental and prefundamental representations in $K_0(\mathcal{O}_q)$. It thus fixes the subring $K_{0,q}\subseteq K_0(\mathcal{O}_q)$ and must therefore be equal to the identity map of $K_0(\mathcal{O}_q)$ by Proposition \ref{prop:K0q} (since it is compatible with countable sums).

\let\oldaddcontentsline\addcontentsline
\renewcommand{\addcontentsline}[3]{}

 \let\addcontentsline\oldaddcontentsline

\end{document}